\DeclareRobustCommand\widecheck[1]{{\mathpalette\@widecheck{#1}}}
\def\@widecheck#1#2{%
    \setbox\z@\hbox{\m@th$#1#2$}%
    \setbox\tw@\hbox{\m@th$#1%
       \widehat{%
          \vrule\@width\z@\@height\ht\z@
          \vrule\@height\z@\@width\wd\z@}$}%
    \dp\tw@-\ht\z@
    \@tempdima\ht\z@ \advance\@tempdima2\ht\tw@ \divide\@tempdima\thr@@
    \setbox\tw@\hbox{%
       \raise\@tempdima\hbox{\scalebox{1}[-1]{\lower\@tempdima\box
\tw@}}}%
    {\ooalign{\box\tw@ \cr \box\z@}}}
\newcommand{\MF}{\mathcal{F}}
\newcommand{\MI}{\mathcal{I}}
\newcommand{\MU}{\mathcal{U}}
\newcommand{\cI}{\mathcal{I}}
\newcommand{\cD}{\mathcal{D}}
\newcommand{\cB}{\mathcal{B}}
\newcommand{\cF}{\mathcal{F}}
\newcommand{\cC}{\mathcal{C}}
\newcommand{\Hyp}{\sf{H}}
\newcommand{\Pro}{{\sf P}}
\newcommand{\Exp}{{\sf E}}
\newcommand{\bN}{\mathbb{N}}
\newcommand{\bR}{{\mathbb{R}}}
\newcommand{\procGF}{Leap rule}
\newcommand{\procKM}{\text{Sum-Intersection rule}}
\newcommand{\procInt}{Intersection rule}
\newtheorem{theorem}{Theorem}[section]
\newtheorem{lemma}[theorem]{Lemma}
\newtheorem{remark}{Remark}[section]
\newtheorem{corollary}[theorem]{Corollary}
\newtheorem{example}[theorem]{Example}
\newtheorem{problem}{Problem}[section]
\newtheorem{definition}{Definition}[section]
\newcommand{\butheta}{\boldsymbol{\uptheta}}
\begin{document}

\begin{frontmatter}

\title{Sequential multiple testing with generalized error control: \\
 an asymptotic optimality theory}
\runtitle{Sequential multiple testing}


\begin{aug}
\author{\fnms{Yanglei} \snm{Song}\ead[label=e1]{ysong44@illinois.edu}}
\and
\author{\fnms{Georgios} \snm{Fellouris}\ead[label=e2]{fexllouri@illinois.edu}}\\

\address{Department of Statistics, \\
University of Illinois, Urbana-Champaign, \\ 
725 S. Wright Street, Champaign 61820, USA\\ 
Email: \printead*{e1} and \printead*{e2}}

\runauthor{Y. Song and G. Fellouris}
\affiliation{University of Illinois, Urbana-Champaign}
\end{aug}

\begin{abstract}
The sequential multiple testing problem is considered   under two generalized error metrics. Under the first one, the probability of at least $k$ mistakes, of any kind,  is controlled. Under the second,  the probabilities of at least $k_1$ false positives and at least $k_2$ false negatives are simultaneously controlled. For each formulation,  the optimal expected sample size is characterized, to a first-order asymptotic approximation as the error probabilities go to 0,  
and  a novel multiple testing  procedure is proposed  and  shown to be asymptotically efficient  under every signal configuration. These results are  established when the data streams for the various hypotheses are independent and each  local log-likelihood ratio statistic satisfies a certain Strong Law of Large Numbers. In the special case of i.i.d. observations in  each stream, the gains of the proposed sequential procedures over fixed-sample size schemes are quantified. 
\end{abstract}

\begin{keyword}[class=MSC]
\kwd[Primary ]{62L10}
\end{keyword}

\begin{keyword}
\kwd{multiple testing}
\kwd{sequential analysis}
\kwd{asymptotic optimality}
\kwd{generalized familywise error rates}
\kwd{mis-classification rate}
\end{keyword}

\end{frontmatter}

\section{Introduction} \label{intro} 
In the early development of multiple testing,
the focus was on  procedures that control  the probability of at least \textit{one} false positive, i.e., falsely rejected null \citep{marcus1976closed,hommel1988stagewise,holm1979simple}. As this requirement can be prohibitive 
when the number of hypotheses is large,  the emphasis gradually shifted to the control of  less stringent error metrics, such as (i)  the expectation \citep{benjamini1995controlling} or the quantiles \citep{LehmannRomano2005} of the 
\textit{false discovery proportion}, i.e.,   the proportion of false positives among the rejected nulls, and (ii) 
the \textit{generalized familywise  error rate},  i.e., 
 the probability of at least $k \geq 1$ false positives \cite{hommel1988controlled,LehmannRomano2005}.
During the last two decades, various procedures have been proposed to control the above error metrics \cite{benjamini2001,romano2006stepup, romano2007control,guo2014further}.
Further,  the problem of maximizing the number of true positives  subject to a  generalized  control on false positives has been  studied in \cite{lehmann_optimal2005, storey2007optimal, sun2009large, pena2011power}, whereas in \cite{bogdan2011asymptotic} the  false negatives are incorporated into the risk function in a  Bayesian decision theoretic framework .


In all previous references it is assumed that the sample size is deterministic. However, in many applications  data are collected in real time and a reliable decision needs to be made as quickly as possible. Such applications  fall into  the
framework of \textit{sequential hypothesis testing},  which was introduced in the ground-breaking work of
Wald \cite{wald1945sequential} and has been studied extensively since then (see, e.g., \citep{tartakovsky2014sequential}). 

When testing simultaneously \textit{multiple} hypotheses with data collected from a different stream for each hypothesis,  there are two natural generalizations of Wald's sequential framework.  In the first one, sampling can be terminated earlier in some data streams \cite{bartroff2014sequential, malloy2014sequential,2014arXiv_Bartroff}. 
In the second, which is the focus of this paper, sampling is terminated at the same time in all streams \cite{de2012sequential,de2012step}. The latter  setup is  motivated by applications such as  multichannel signal detection \citep{tartakovsky2003sequential}, multiple access wireless network \citep{rappaport1996wireless} and multisensor surveillance systems \citep{foresti2003multisensor},   where a centralized decision maker needs to make a decision regarding the presence or absence of signal, e.g., an intruder, in multiple channels/areas monitored by a number of sensors. This framework is also motivated by
online surveys and  crowdsourcing tasks \citep{kittur2008crowdsourcing}, 
where the goal is to find ``correct'' answers to a fixed number of questions, e.g., regarding some product or service, by asking the smallest necessary number of  people.



In this paper we focus on two related, yet distinct, generalized   error metrics.  The first one is  a generalization of the usual mis-classification rate \cite{malloy2014sequential,li2014}, where   the  probability of at least $k\geq 1$ mistakes, of any kind, is controlled.  The second one 
controls  generalized familywise error rates of both types \cite{2014arXiv_Bartroff,De201588}, i.e., the  probabilities  of  at least $k_1\geq 1$ false positives and  at least $k_2 \geq 1$ false negatives.  

Various sequential procedures have been proposed recently to  control  such generalized  familywise error rates  \citep{bartroff2010multistage,de2012sequential,de2012step, bartroff2014sequential,De201588,2014arXiv_Bartroff}. 
To the best of our knowledge, the  efficiency of these procedures is understood only in the case of \textit{classical} familywise error rates, i.e., when $k_1=k_2=1$. Specifically, in the case of independent streams with i.i.d. observations, an asymptotic lower bound was obtained in \cite{song2017asymptotically} for the optimal expected sample size (ESS) as the error probabilities go to 0,  and was shown to be attained, under any signal configuration, by several existing procedures.
However, the results in \cite{song2017asymptotically}  do not extend to \textit{generalized} error metrics, since the technique for the proof of the asymptotic lower bound requires that the probability of not identifying the correct subset of signals goes to 0. Further, as we shall see, existing procedures fail to be asymptotically optimal, in general, under generalized error metrics.

The lack of an  optimality theory under such generalized error control also implies that it is not well understood how the  best possible ESS depends on the  user-specified parameters. This limits the applicability of generalized error metrics, as it is not clear for the practitioner how to select the number of hypotheses to be ``sacrificed" for the sake of a faster decision.  

In this paper, we address this research gap by  developing  an asymptotic optimality theory for the sequential multiple testing problem under the two generalized error metrics mentioned  above.  Specifically, for each formulation   we  characterize the  optimal ESS as the error probabilities go to 0, and   propose a novel, feasible  sequential multiple testing procedure that achieves the optimal ESS under every signal configuration.   These results are established under the assumption of independent data streams, and require that the  log-likelihood ratio statistic in each stream satisfies a certain Strong Law of Large Numbers. Thus, even in the case of classical familywise error rates, we extend the corresponding  results in \cite{song2017asymptotically} by relaxing the i.i.d. assumption in each stream.


Finally, whenever sequential testing procedures are utilized, it is of interest to quantify the savings in the ESS over fixed-sample size schemes with the same error control guarantees.
In the  case of  i.i.d. data streams, we obtain an  asymptotic lower bound for the gains of sequential sampling over \textit{any} fixed-sample size scheme,  
 and also characterize the asymptotic gains  over a specific fixed-sample size procedure.

In order to convey the main ideas and results with the maximum clarity, we first consider the case that  the local hypotheses are simple, and then extend our results to the case of composite  hypotheses. Thus, the remainder of the paper is organized as follows: 
in Section \ref{sec:formulation}  we formulate the
two problems of interest in  the case of simple hypotheses. The case of generalized mis-classification rate is presented in Section \ref{sec:KM}, and  the case of
 generalized familywise error rates in Section \ref{sec:GF}.
In Section \ref{sec:simulation_GF} we present two simulation studies under the second error metric. In Section \ref{sec:composite} we extend our results to the case of composite hypotheses. We conclude and discuss potential extensions of this work in  Section \ref{sec:conclusion}.  Proofs are presented in the Appendix, where we also present more simulation studies and a detailed analysis of the case of composite hypotheses.  For convenience, we list in Table~\ref{tab:roadmap} the procedures that are considered in this work.

\begin{table}[htbp!]
\centering
\caption{Procedures marked with $\dagger$  are new. Procedures in bold font are asymptotically optimal (AO) without  requiring a special testing structure. GMIS is short for generalized mis-classification rate, and  GFWER  for generalized familywise error rates.}
\label{tab:roadmap}
\begin{tabular}{|c|c|c|c|c|}
\hline
Procedure                 & Metric & Section  & Main results & Conditions for AO \\ \hline
\textbf{Sum-Intersection}$\dagger $        &  GMIS                 & \ref{sum_int_def}      & Thrm~\ref{order_sum_first_opt}             &  \eqref{cond_for_asym_opt}                        \\ \hline
\textbf{Leap}$\dagger$                    &  GFWER                & \ref{subsec:leap_rule}                   & Thrm~\ref{main}             &   \eqref{cond_for_asym_opt}                        \\ \hline
{Asym. Sum-Intersection}$\dagger$ &    GFWER              &   \ref{subsec:asym_sum_int_def}                 & Cor~\ref{delta_0_cor}             & \eqref{cond_for_asym_opt} + \eqref{sym_hom_KM} + \eqref{sym_hom_GF}                          \\ \hline
Intersection             &   both              &  \ref{subsec:inter_rule_def}                  & Cor~\ref{inter_sym}/~\ref{delta_0_cor}              &     \eqref{cond_for_asym_opt} + \eqref{sym_hom_KM} / \eqref{sym_hom_GF}                                             \\ \hline
MNP (fixed-sample)                       &  both                & \ref{subsec:fix_rules}                   & Thrm~\ref{KM_comp_fix_sample_rule}/~\ref{GF_comp_fix_sample_rule}             &          Not optimal                 \\ \hline
\end{tabular}
\end{table}

\section{Problem formulation} \label{sec:formulation}
Consider \emph{independent} streams of observations, $X^j := \{X^j(n): n \in \bN\}$, where $j \in [J] := \{1, \ldots, J\}$ and $\bN :=\{1,2, \ldots\}$. For each $j \in [J]$, we denote  by $\Pro^{j}$   the  distribution of $X^j$ and consider  two simple hypotheses for it,
\begin{align}\label{hypo}
\Hyp_0^j\;: \Pro^j = \Pro_0^j \; \text{ versus }\; \Hyp_1^j\;: \Pro^j = \Pro_1^j.
\end{align}
We denote by $\Pro_A$ the distribution of  $(X^1, \ldots,  X^J)$ when $A \subset [J]$ is the subset of  data streams with signal, i.e., in which the alternative hypothesis is correct.  Due to the assumption of independence  among streams, $\Pro_A$ is the following product measure:
\begin{equation} \label{product}
\Pro_{A} := \bigotimes_{j=1}^J \Pro^j ; \quad\; \Pro^j = \begin{cases}
\Pro_0^j ,\quad \text{ if } j \notin A \\
\Pro_1^j , \quad \text{ if } j \in A.
\end{cases} 
\end{equation}
Moreover,  we denote by  $\MF_n^j$ the  $\sigma$-field generated by the first $n$ observations in the  $j$-{th} stream, i.e.,  $\sigma (X^j(1), \ldots, X^j(n))$, and by  $\MF_n$ the $\sigma$-field  generated by the first $n$ observations in  all streams, i.e.,  $\sigma( \cF_{n}^{j},\, j \in [J] )$, where $n \in \bN$. 

Assuming that the data in all streams  become available \textit{sequentially},  the goal is to stop sampling \textit{as soon as possible}, and upon stopping to solve the   $J$ hypothesis  testing  problems   subject to certain   error control guarantees.  Formally,  a \textit{sequential multiple  testing procedure}  is a   pair  $\delta = (T,D)$ where $T$ is  an $\{\MF_n\}$-stopping time at which  sampling is terminated in all streams,  and $D$  an $\MF_T$-measurable, $J$-dimensional  vector of Bernoullis, $(D^1, \ldots, D^J)$, so that the alternative hypothesis is selected  in the $j$-{th} stream  if and only if $D^j=1$. With an abuse of notation, we also identify  $D$ with the subset of streams in which the  alternative hypothesis is selected upon stopping, i.e.,  $\{j \in [J] : \; D^j = 1\}$.

We consider two  kinds of  error control, which lead to two different problems. Their main difference is that the first one does not differentiate between \textit{false positives}, i.e., rejecting the null when it is correct, and \textit{false negatives}, i.e., accepting the null when it is false.  Specifically, in the first one  we control the generalized mis-classification rate, i.e., the   probability of  committing \textit{at least $k$ mistakes, of any kind,} where $k$ is a  user-specified integer such that $1 \leq k < J$.   When $A$ is the true subset of signals, a decision rule $D$ makes at least $k$ mistakes, of any kind, if $D$ and $A$ differ in at least $k$ components, i.e., $ |A \;\triangle\; D| \geq k$, where for any two sets $A$ and $D$, $A \;\triangle\; D$ is their symmetric difference, i.e. $(A \setminus  D) \cup (D \setminus A)$, and $|\cdot|$  denotes set-cardinality. Thus, 
given tolerance level $\alpha \in (0,1)$, the class of  multiple testing  procedures of interest in this case is 
\begin{align*} 
 \Delta_{k}(\alpha)&:= \left\{ (T,D):  \; \max_{A \subset [J]} \Pro_A( |A \;\triangle\; D| \geq k) \leq \alpha\right\}.
\end{align*}
Then, the  first problem is formulated as follows:

\begin{problem} \label{prob:GM}
Given a user-specified integer $k$  in $[1,J)$,  find a  sequential multiple testing  procedure  that (i) controls the generalized mis-classification rate, i.e., it can be designed to belong to  $\Delta_{k}(\alpha)$
for any given $\alpha$,  and  (ii) achieves
 the smallest possible expected sample size,  
\begin{align*} 
N^*_A(k,\alpha) &:= \inf_{(T,D) \in \Delta_{k}(\alpha)} \Exp_A[T], 
\end{align*}
for every $A \subset [J]$, to a first-order asymptotic approximation as $\alpha \to 0$. 
\end{problem}   

In the second problem of interest in this work, we  control generalized familywise error rates of both types, i.e., the probabilities of  \textit{at least $k_1$ false positives  and   at least $k_2$ false negatives}, where $k_1, k_2 \geq 1$ are  integers  such that $k_1 + k_2 \leq J$.  When the true subset of signals is $A$,  a decision rule $D$ makes at least $k_1$ false positives when $|D\setminus A| \geq k_1$ and  at least $k_2$ false negatives when $|A\setminus D| \geq k_2$. Thus, 
given tolerance levels $\alpha, \beta \in (0,1)$, the class of  procedures of interest  in this case is
\begin{align} \label{gfam}
\begin{split}
\Delta_{k_1, k_2}(\alpha,\beta) :=\{(T,D) : & \; \max_{A \subset [J]} \Pro_{A}(|D\setminus A| \geq k_1) \leq \alpha \quad \text{and} \quad\\    & \; \max_{A \subset [J]}\Pro_{A} (|A \setminus D| \geq k_2) \leq \beta \}.
\end{split}
\end{align}
Then, the second  problem is formulated as follows:
\begin{problem} \label{prob:GFAM}
Given user-specified integers $k_1, k_2 \geq 1$  such that $k_1 + k_2 \leq J$,  find a sequential multiple testing procedure that (i) controls generalized familywise error rates of both types, i.e., it can be designed to belong to $\Delta_{k_1,k_2}(\alpha, \beta)$  for any given $\alpha, \beta \in (0,1)$, and (ii)  achieves the  smallest possible expected sample size,
\begin{align*} 
\begin{split}
N^*_A(k_1,k_2,\alpha,\beta) &:=\inf_{(T,D) \in \Delta_{k_1,k_2}(\alpha, \beta)} \Exp_A[T] ,
\end{split}
\end{align*}
for every $A \subset [J]$,  to a first-order asymptotic approximation as $\alpha$ and  $\beta$ go to 0, at arbitrary rates.
\end{problem}

\subsection{Assumptions}   \label{subsec:distributional}

We now state the assumptions that we will make  in the next two sections  in order to solve these two problems.  First of all, for each $j \in [J]$ we assume that the probability measures $\Pro_0^j$ and $\Pro_1^j$  in \eqref{hypo} are mutually absolutely continuous when restricted to $\MF_n^j$, and  we denote the corresponding log-likelihood ratio (LLR) statistic as follows:
\begin{equation*}
\lambda^{j}(n) := \log \frac{d\Pro_{1}^j}{d\Pro_{0}^j}  (\MF_n^j ), \; \text{ for } n \in \bN. 
\end{equation*}
For   $A,C \subset [J]$ and $n \in \bN$ we denote by   $\lambda^{A,C}(n)$ the LLR   of $\Pro_{A}$ versus $\Pro_{C}$ when both measures are restricted to $\mathcal{F}_n$, and from \eqref{product} it follows that
\begin{align}\label{ll_diff}
\lambda^{A,C}(n) &:=  \log \frac{d\Pro_{A}}{d\Pro_{C}} (\MF_n ) 
= \sum_{j \in A \setminus C} \lambda^{j}(n) -
\sum_{j \in C \setminus A} \lambda^{j}(n).
\end{align}
In order to guarantee that the proposed multiple testing procedures terminate almost surely and satisfy the desired error control,  it will suffice to assume  that 
\begin{align}\label{sep_lr}
\Pro_1^j \left(\lim_{n \to \infty} \lambda^j(n) = \infty \right) 
= \Pro_0^{j} \left(\lim_{n \to \infty} \lambda^j(n) = -\infty \right) = 1 \quad \forall \; j \in [J].
\end{align}
In order to establish an asymptotic lower bound on the optimal ESS for each problem, we will need the stronger assumption that for each $j \in [J]$ there are positive numbers, $\mathcal{I}_1^j, \mathcal{I}_0^j$, such that  the following Strong Laws of Large Numbers (SLLN) hold: 
\begin{align} \label{LLN}
\Pro_1^j \left( \lim_{n \rightarrow \infty} \frac{\lambda^{j}(n) }{n} = \mathcal{I}_1^j\right)\; = \;   
\Pro_0^j \left( \lim_{n \rightarrow \infty} \frac{\lambda^{j}(n) }{n}  = -\mathcal{I}_0^j\right) \; = \; 1.
\end{align} 
When the LLR statistic in each stream  has \textit{independent and identically distributed (i.i.d.)  increments}, the SLLN \eqref{LLN}  will  also be sufficient for establishing the asymptotic optimality of the proposed procedures. When this is not the case,  we will need an assumption on the rate of convergence in  \eqref{LLN}.  Specifically, we will  assume that 
 for every $\epsilon>0$ and $j \in [J]$,
\begin{align} \label{CLLN}
\sum_{n=1}^{\infty}  \Pro_1^j \left( \Big| \frac{\lambda^{j}(n)}{n}  - \mathcal{I}_1^{j} \Big| > \epsilon \right)<\infty, \quad 
\sum_{n=1}^{\infty}  \Pro_0^j \left( \Big| \frac{\lambda^{j}(n)}{n}  + \mathcal{I}_0^{j} \Big| > \epsilon \right)<\infty.
\end{align}
Condition \eqref{CLLN} is known as \textit{complete} convergence \citep{hsu1947complete}, and is  a stronger assumption  than \eqref{LLN}, due to the Borel-Cantelli lemma. This condition is satisfied in various testing problems where the observations in each data stream are dependent,  such as autoregressive time-series models and state-space models. For more details, we refer to \cite[Chapter 3.4]{tartakovsky2014sequential}. 

To sum up, the only distributional assumption for our asymptotic optimality theory  is that the LLR statistic in  each stream 
\begin{equation}\label{cond_for_asym_opt}
\begin{split}
&\text{ either has i.i.d. increments and satisfies the SLLN \eqref{LLN},} \\
&\text{ or satisfies the SLLN with complete convergence \eqref{CLLN}.}
\end{split}
\end{equation}

\begin{remark}
If \eqref{LLN} (resp. \eqref{CLLN}) holds, 
 the normalized LLR,  $\lambda^{A,C}(n)/ n$, defined in \eqref{ll_diff},  converges almost surely (resp. completely) under $\Pro_A$ to 
\begin{align} \label{KLs}
 \mathcal{I}^{A,C} :=  \sum_{i \in A \setminus C}  \mathcal{I}_1^i + \sum_{j \in C \setminus A} \mathcal{I}_0^j. 
\end{align}
The numbers  $\mathcal{I}^{A,C}$ and    
 $\mathcal{I}^{C,A}$ will turn out to determine the inherent difficulty in distinguishing between $\Pro_A$ and $\Pro_C$ and will play an important role in characterizing the optimal performance under $\Pro_A$ and $\Pro_C$, respectively. 
\end{remark}

\subsection{The Intersection rule}\label{subsec:inter_rule_def}
To the best of our knowledge, Problem \ref{prob:GFAM}  has been solved only under the assumption of i.i.d. data streams and \textit{only in the case of classical error control, that is when  $k_1 = k_2 = 1$} \cite{song2017asymptotically}. An asymptotically optimal procedure  in this setup is the so-called ``\textit{Intersection}" rule, $\delta_I:=(T_I, D_I)$, proposed  in \cite{de2012sequential,de2012step}, where 
\begin{align}\label{intersection_rule}
\begin{split}
T_I &:= \inf \left\{n \geq 1: \lambda^j(n) \not \in (-a,b) \;\; \text{for every} \; j \in [J] \right\}, \\
D_I &:=\left\{ j \in [J]:\;  \lambda^j(T_I) > 0 \right\},
\end{split}
\end{align}
and $a,b$ are positive thresholds. This procedure  requires the local test statistic in \textit{every} stream to provide sufficiently strong evidence  for the sampling to be terminated.   The Intersection rule was also  shown in  \cite{De201588} to control \textit{generalized} familywise error rates, however its efficiency in this setup remains an open problem,  even in the case of i.i.d. data streams. Our asymptotic optimality theory in the next sections will reveal that  the Intersection rule  is asymptotically optimal with respect to Problems \ref{prob:GM} and \ref{prob:GFAM} only when the multiple testing problem satisfies  \textit{a very special structure}.  

\begin{definition}
We  say that the multiple testing problem \eqref{hypo} is 
\begin{enumerate}
\item[(i)] \textit{symmetric}, if for every $j \in [J]$ the distribution of $\lambda^j$ under $\Pro_0^j$ is the same as the distribution of $-\lambda^j$ under $\Pro_1^j$,
\item[(ii)] \textit{homogeneous}, if for every $j \in [J]$ the distribution of $\lambda^j$ under $\Pro_i^j$ does not depend on $j$, where $i \in \{0,1\}$.
\end{enumerate}
\end{definition}


It is clear that when the multiple testing problem is both \textit{symmetric and homogeneous}, we have 
\begin{align}\label{sym_hom_KM}
\MI_{0}^j=\MI_1^j=\MI \quad \text{for every} \;  j \in [J].
\end{align}
In the next sections we will  show that the Intersection rule is asymptotically optimal for Problem  \ref{prob:GM} when  \eqref{sym_hom_KM} holds,  whereas its asymptotic optimality with respect to  Problem \ref{prob:GFAM}  will \textit{additionally} require that  the user-specified parameters satisfy the following conditions:
\begin{align}\label{sym_hom_GF}
k_1 = k_2 \quad \text{and} \quad  \alpha = \beta.
\end{align}



\subsection{Fixed-sample size schemes}\label{subsec:fix_rules}
Let $\Delta_{fix}(n)$  denote the class of  procedures for which the decision rule depends on the data collected up to a \textit{deterministic} time $n$, i.e., 
\begin{align*}
\Delta_{fix}(n) := \{(n, D):\; D  \subset [J] \;\text{ is }  \MF_n\text{-measurable} \}.
\end{align*}
For any given integers $k, k_1, k_2 \geq 1$ with $k, k_1+k_2 <J$ and $\alpha, \beta \in (0,1)$, let 
\begin{align}\label{def:fix_ns}
\begin{split}
n^{*}(k,\alpha) &:= \inf\left\{n \in \bN:\; \Delta_{fix}(n) \bigcap \Delta_{k}(\alpha) \neq \emptyset\right\},\\
n^{*}(k_1,k_2,\alpha,\beta) &:= \inf\left\{n \in \bN:\; \Delta_{fix}(n) \bigcap \Delta_{k_1,k_2}(\alpha, \beta) \neq \emptyset\right\},
\end{split}
\end{align}
denote the minimum sample sizes required by \textit{any fixed-sample size scheme} under the two error metrics of interest.
In the case of i.i.d. observations in the data streams, we establish \textit{asymptotic lower bounds} for the above two quantities as the error probabilities go to 0.  To the best of our knowledge, there is no  fixed-sample size procedure that attains these  bounds. For this reason, we also study a specific procedure that runs a \textit{Neyman-Pearson test at each stream}. 
Formally, this procedure is defined as follows:
\begin{align}\label{NP_fixed_sample}
\delta_{NP}(n,h):= (n,D_{NP}(n,h)), \; \; D_{NP}(n,h):= \{j \in [J]:  \lambda^{j}(n)> n h_j \},
\end{align}
where $h=(h_1, \ldots, h_J) \in \bR^{J}$, $n \in \bN$, and we refer to it as  \textit{multiple Neyman-Pearson (MNP) rule}. In the case of Problem \ref{prob:GM} , we characterize the minimum sample size  required by this procedure,
\begin{align*}
n_{NP}(k,\alpha) := \inf\{n \in \bN: \; \exists \;  h \in \bR^{J}, \;\; \delta_{NP}(n,h) \in \Delta_{k}(\alpha)\},
\end{align*}
to a first-order approximation as $\alpha \rightarrow 0$.  In the case of Problem \ref{prob:GFAM}, for simplicity of presentation  we further restrict ourselves to \textit{homogeneous}, but not necessarily symmetric, multiple testing problems, and  characterize the asymptotic minimum sample size  required  by the MNP rule that utilizes the same threshold in each stream, i.e., 
\begin{align*}
\hat{n}_{NP}(k_1,k_2,\alpha,\beta) := \inf\{n \in \bN: \exists \,  h \in \bR,\;\; \delta_{NP}(n,h\mathbf{1}_{J}) \in \Delta_{k_1,k_2}(\alpha,\beta)\},
\end{align*}
where $\mathbf{1}_{J} \in  \bR^{J}$ is a $J$-dimensional vector of  ones.



\subsection{The i.i.d. case}\label{subsec:iid_streams}
As mentioned earlier, our asymptotic optimality theory will apply whenever condition  \eqref{cond_for_asym_opt}  holds, thus, beyond the case of i.i.d. data streams. However, our analysis of 
\textit{fixed-sample size} schemes will rely on  large deviation theory \citep{dembo1998large} and will be focused on  the i.i.d. case. Thus, it is useful to introduce some relevant notations for this setup. 

Specifically, when for each $j \in [J]$  the observations in the $j$-{th} stream  are independent with common density $f^j$ relative to a $\sigma$-finite measure $\nu^j$,   the hypothesis testing problem \eqref{hypo}  takes the form 
\begin{equation}\label{iid_case}
{\Hyp}_0^j\;: f^j = f_0^j \; \text{ versus }\; {\Hyp}_1^j\;: f^j = f_1^j,
\end{equation}
and $\MI_1^{j}, \MI_0^{j}$ correspond to the  \textit{Kullback-Leibler divergences} between  $f_1^j$ and $f_0^j$, i.e., 
\begin{equation}\label{KL_numbers}
\MI_1^{j} = \int \log\left( {f_1^j}/{f_0^j} \right) f_1^j \; d\nu^j,  \quad 
\MI_0^{j} = \int \log\left({f_0^j}/{f_1^j} \right) f_0^j \;  d\nu^j.
\end{equation}
In this case, each LLR statistic $\lambda^j$ has i.i.d. increments, and   \eqref{cond_for_asym_opt} is satisfied as long as  $\MI_1^{j}$ and $\MI_0^{j}$ are both positive and finite.  For each $j \in [J]$, we further introduce the convex conjugate of the cumulant generating function of $\lambda^{j}(1)$ 
\begin{equation}
\label{convex_conjugate}
z \in \bR \mapsto  \Phi^j(z) := \sup_{\theta \in \bR} \left\{z\theta   -  \Psi^j(\theta) \right\},  \; \text{where}   \; \Psi^j(\theta):=  \log \Exp_{0}^{j}\left[ e^{\theta \lambda^j(1)}  \right].
\end{equation}
The value of  $\Phi^j$  at zero is   the \textit{Chernoff information} \citep{dembo1998large} for the testing problem \eqref{iid_case}, and we will denote it as $\cC^j$, i.e., $\cC^j:=\Phi^{j}(0)$.

Finally, we will  illustrate our general results in the 
case of  testing normal means. Hereafter, $\mathcal{N}$ denotes the density of the normal distribution. 

\begin{example}  \label{Gaussian_mean_test}

If $f_0^j= \mathcal{N}(0, \sigma_j^2)$ and
$f_1^j = \mathcal{N}(\mu_j, \sigma_j^2)$ for all $j \in [J]$, then $$\lambda^j(1)= \theta_j^2\left( X^j(1)/\mu_j- 1/2 \right), \quad \text{where} \quad  \theta_j:=\mu_j/\sigma_j.
$$ 
Consequently the multiple testing problem is symmetric and
\begin{equation} \label{normal_related_quantity}
\cI^j := \cI_0^j = \cI_1^j = {\theta_j^2}/2,\quad
\Phi^j(z) = {(z+\cI^j)^2}/{(4\cI^j)} \text{ for any } z \in \bR.
\end{equation}
\end{example}

\subsection{Notation}

We collect here some notations that will be used extensively  throughout the rest of the paper:   $C_{k}^{J}$  denotes the binomial coefficient $\binom{J}{k}$, i.e., the number of subsets of size $k$ from a set of size  $J$;  
$a \vee b$ represents $\max\{a,b\}$; $x \sim y$ means that $\lim_{y} x/y = 1 $ and  $x(b) = o(1)$ that $\lim_{b} x(b)  = 0$, with $y,b \to 0$ or $\infty$.
Moreover,  we recall that $|\cdot|$  denotes set-cardinality, $\bN :=\{1,2, \ldots\}$, $[J] := \{1, \ldots, J\}$, and that    $A \;\triangle\; B$ is the symmetric difference, $(A \setminus  B) \cup (B \setminus A)$, of two sets $A$ and $B$.

\section{Generalized mis-classification rate} \label{sec:KM}
In this section we consider Problem \ref{prob:GM}
and  carry out  the following program: first, we propose a novel  procedure that  controls  the generalized mis-classification rate. Then, we establish an asymptotic lower bound on the optimal ESS and  show that it is attained  by the  proposed scheme. As a corollary, we show that  the Intersection rule is asymptotically optimal when condition \eqref{sym_hom_KM}  holds. 
Finally, we make a comparison with fixed-sample size procedures in the i.i.d. case \eqref{iid_case}.

\subsection{Sum-Intersection rule} \label{sum_int_def}
In order to implement the proposed   procedure, which we will denote $\delta_{S}(b) := (T_{S}(b), D_S(b))$, we need at each time $n \in \bN$ prior to stopping to  order the  \textit{absolute values} of the local test statistics,  $|\lambda^j(n)|,  j \in [J]$. If we denote the corresponding ordered values by  
 $$
 \widetilde{\lambda}^{1}(n) \leq    \ldots \leq \widetilde{\lambda}^{J}(n),
$$
 we can think of  $\widetilde{\lambda}^{1}(n)$ (resp.  $\widetilde{\lambda}^{J}(n)$)  as the least (resp. most) ``significant''  local test statistic at time $n$, in the sense that it provides the weakest (resp. strongest)  evidence in favor of either the null or the alternative. 
 Then, sampling is terminated  at the first time    the \textit{sum of the $k$ least  significant local LLRs} exceeds some  positive  threshold $b$, and  the null hypothesis is rejected  in every stream that has a positive LLR upon stopping, i.e., 
\begin{align*} 
T_S(b)&:= \inf\left\{n \geq 1 : \sum_{j=1}^{k} \widetilde{\lambda}^{j}(n) \geq b \right\}, \; 
D_S(b) := \left\{ j \in [J]:\;  \lambda^j(T_S(b)) > 0 \right\}.
\end{align*}
The threshold $b$ is selected to guarantee the desired error control.   When $k=1$, $\delta_{S}(b)$ coincides with the \procInt, $\delta_{I}(b,b)$, defined in \eqref{intersection_rule}. When $k>1$, the two  rules are different but  share a similar flavor, since  $\delta_{S}(b)$  stops the first time $n$  that \textit{all sums $\sum_{j \in B} |\lambda^j(n)|$ with $B \subset [J]$ and $|B|=k$} are simultaneously above $b$.  For this reason, we  refer to $\delta_S(b)$ as  \textit{\procKM}.  Hereafter,  we typically  suppress the  dependence of $\delta_S(b)$  on  threshold $b$  in order to lighten the notation. 

\subsection{Error control of the Sum-Intersection rule}\label{subsec:sum_intersection_error_control}
For any choice of  threshold $b$,  the \procKM~clearly terminates  almost surely,  under every signal configuration, as long as   condition \eqref{sep_lr} holds.  In the next theorem we show  how to select  $b$   to guarantee the desired error control. We stress that  no additional distributional assumptions are needed for this purpose. 

\begin{theorem} \label{KM_err_control}
Assume \eqref{sep_lr} holds. For any $\alpha \in (0,1)$ we have   $\delta_S(b_\alpha) \in \Delta_{k}(\alpha)$ when 
\begin{equation} \label{OS_threshold}
b_\alpha = |\log(\alpha)| + \log(C_{k}^{J}).
\end{equation}
\end{theorem}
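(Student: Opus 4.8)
We must show that with the threshold $b_\alpha = |\log\alpha| + \log C_k^J$, the Sum-Intersection rule $\delta_S(b_\alpha)$ satisfies $\max_{A\subset[J]}\Pro_A(|A\,\triangle\,D_S|\geq k)\leq\alpha$. The plan is to fix an arbitrary true signal set $A$ and bound $\Pro_A(|A\,\triangle\, D_S|\geq k)$ by a union bound over the relevant ``confusable'' configurations, then show each term is controlled by a change-of-measure argument driven by the stopping rule.

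**Key observation linking the stopping rule to errors.** First I would unpack what $|A\,\triangle\, D_S|\geq k$ means in terms of the local statistics. Since $D_S = \{j : \lambda^j(T_S)>0\}$, a stream $j$ is misclassified exactly when either $j\in A$ but $\lambda^j(T_S)\leq 0$, or $j\notin A$ but $\lambda^j(T_S)>0$; in both cases $\lambda^j(T_S)$ has the ``wrong sign'' relative to $A$, which means $\Pro_A$-evidence is being contradicted. The crucial structural fact, noted right after the definition of the rule, is that at the stopping time $\sum_{j=1}^k \widetilde\lambda^j(T_S)\geq b_\alpha$, equivalently every size-$k$ subset $B$ satisfies $\sum_{j\in B}|\lambda^j(T_S)|\geq b_\alpha$. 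So if at least $k$ streams are misclassified, I can select a subset $B$ of exactly $k$ misclassified streams, and on that subset $\sum_{j\in B}|\lambda^j(T_S)|\geq b_\alpha$ while every $j\in B$ has the wrong sign.

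**The change-of-measure step.** This is where the main work lies. For each candidate set $C$ of size $k$ (the set of streams that are ``flipped'' relative to $A$), I would define the alternative product measure $\Pro_{A\,\triangle\, C}$ and relate $\Pro_A$-probabilities to $\Pro_{A\,\triangle\, C}$-probabilities through the likelihood ratio $\lambda^{A,A\triangle C}$ from \eqref{ll_diff}. The decisive point is that on the event ``every $j\in C$ is misclassified and $\sum_{j\in C}|\lambda^j(T_S)|\geq b_\alpha$'', the LLR $\lambda^{A, A\triangle C}(T_S)=\sum_{j\in C}(\text{signed }\lambda^j)$ is $\geq b_\alpha$ because each signed term equals $|\lambda^j|$ in the direction favoring $A$ over $A\triangle C$. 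A standard Wald-type likelihood-ratio inequality then gives
\begin{equation*}
\Pro_A\!\left(\sum_{j\in C}|\lambda^j(T_S)|\geq b_\alpha,\ \text{flips}=C\right)\leq e^{-b_\alpha}.
\end{equation*}
Summing over the $C_k^J$ choices of $C$ and using $b_\alpha=|\log\alpha|+\log C_k^J$ yields the bound $C_k^J\, e^{-b_\alpha}=\alpha$.

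**Main obstacle.** The hard part is making the union bound and the change of measure rigorous simultaneously: when $|A\,\triangle\, D_S|\geq k$ there may be many misclassified streams, so I must commit to a canonical size-$k$ subset $C$ (say, the $k$ misclassified streams with smallest $|\lambda^j|$, or any measurable selection) and verify that the events indexed by $C$ cover $\{|A\,\triangle\, D_S|\geq k\}$ while each is disjoint from (or at least summably bounded against) the likelihood-ratio martingale inequality at the stopping time. I must check that $\lambda^{A,A\triangle C}(T_S)\geq b_\alpha$ genuinely holds on the selected event — this requires that the selected $C$ consists of misclassified streams whose $|\lambda^j(T_S)|$ sum to at least $b_\alpha$, which is guaranteed because the rule forces \emph{every} size-$k$ sum of the smallest absolute LLRs to exceed $b_\alpha$. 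The remaining care is that $T_S<\infty$ almost surely (from \eqref{sep_lr}) so the LLR inequality $\Exp_{A\triangle C}[e^{-\lambda^{A,A\triangle C}(T_S)}]\leq 1$ applies at a genuine stopping time, giving each term the clean bound $e^{-b_\alpha}$.
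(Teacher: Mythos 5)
Your proposal follows essentially the same route as the paper's proof: a union bound over the $C_k^J$ possible size-$k$ sets of misclassified streams, a change of measure from $\Pro_A$ to the corresponding ``flipped'' configuration $A \,\triangle\, C$, and the key observation that the stopping rule forces every size-$k$ sum of $|\lambda^j(T_S)|$ to be at least $b_\alpha$ while each selected stream's LLR carries the sign consistent with $A\,\triangle\,C$ rather than $A$. Two small corrections: on the selected event the data favor $A\,\triangle\,C$, so the correct inequality is $\lambda^{A,A\triangle C}(T_S)\le -b_\alpha$ (equivalently $\lambda^{A\triangle C,A}(T_S)\ge b_\alpha$), and the Wald identity you want is $\Pro_A(E)=\Exp_{A\triangle C}\bigl[e^{\lambda^{A,A\triangle C}(T_S)};E\bigr]\le e^{-b_\alpha}\,\Pro_{A\triangle C}(E)\le e^{-b_\alpha}$; also, Boole's inequality does not require the events indexed by $C$ to be disjoint, so the ``canonical selection'' concern in your last paragraph is unnecessary.
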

\begin{proof}
The proof can be found in Appendix \ref{proof_of_KM_err_control}.
\end{proof}

The choice of $b$ suggested by the previous theorem will be  sufficient for establishing the asymptotic optimality  of the \procKM, but  may be conservative for practical purposes. In the absence of more accurate approximations for the error probabilities, we recommend  finding the value of $b$ for which the target level is attained using  Monte Carlo simulation. This means simulating off-line, i.e., before the sampling process begins,  for every   $A \subset [J]$  the error probability $\Pro_A(|A \;\triangle \; D_S(b)| \geq k)$ for various values of $b$, and then selecting the value for which   the maximum of these probabilities over $A \subset [J]$ matches the nominal level $\alpha$. 
 
This simulation task is significantly facilitated when the multiple testing problem 
has a special structure. If the problem is \textit{symmetric}, for any given threshold $b$  the error probabilities of the \procKM~coincide for all $A \subset [J]$; thus, it suffices to simulate the error probability under a single measure, e.g.,  $\Pro_\emptyset$.  
If the problem is \textit{homogeneous}, the error probabilities depend only on the size of $A$, not the actual subset; thus, it suffices to simulate the above probabilities  for at most $J+1$ configurations.  Similar ideas apply  in the presence of block-wise homogeneity.

Moreover, it is worth pointing out that when $b$ is large,  importance sampling techniques can be applied to simulate the corresponding ``small'' error probabilities, similarly to \cite{song2016logarithmically}.



\subsection{Asymptotic lower bound  on the optimal performance} 

We now obtain an  asymptotic (as $\alpha \rightarrow 0$) lower bound on $N^*_A(k,\alpha)$, the optimal ESS  for Problem \ref{prob:GM} when the true subset of signals is $A$, for any given $k \ge 1$. When $k=1$, from \cite[Theorem 2.2]{tartakovsky1998asymptotic} it follows that when \eqref{LLN} holds, such a lower bound is  given  by $|\log(\alpha)|/  \min_{C \neq A}\MI^{A,C}$, where  $\cI^{A,C}$ is defined in \eqref{KLs}.  Thus, the asymptotic lower bound when $k=1$ is determined by the  ``wrong'' subset that is the most difficult to be distinguished from $A$, where the difficulty level is quantified by the information numbers defined in \eqref{KLs}. 

The techniques in  \cite{tartakovsky1998asymptotic}  require  that the probability of selecting the wrong subset  goes to 0; thus, they do not apply to the case of generalized error control ($k>1$).  Nevertheless, it is reasonable to conjecture that the corresponding asymptotic  lower bound when $k>1$ will still  be determined by the  wrong subset that is the most difficult to be distinguished from $A$,   with the difference that  a subset will now be ``wrong'' under $\Pro_A$ \textit{if it  differs from $A$ in at least $k$ components}, i.e., if it  does \textit{not} belong to
\begin{equation*}
\MU_k(A) := \{C \subset [J]: |A \; \triangle \; C| < k \}.
\end{equation*}
This conjecture is verified by the following theorem. 
\begin{theorem}\label{KM_lower_bound}
Fix $k\geq 1$. If  \eqref{LLN} holds,  then  for any $A \subset [J]$,   as  $\alpha \to 0$,
\begin{align} \label{ALB}
N_A^*(k,\alpha)  \geq  \frac{|\log(\alpha)|}{\mathcal{D}_A(k)} (1 - o(1)),\; \text{where} \quad 
\mathcal{D}_A(k) := \min_{C \not \in \MU_k(A)} \MI^{A,C}.
\end{align}
\end{theorem}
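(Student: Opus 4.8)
The plan is to adapt the classical change-of-measure lower bound for sequential tests and push it through the generalized error structure. Fix a procedure $(T,D)\in\Delta_k(\alpha)$ and let $C^*$ be a minimizer in the definition of $\mathcal{D}_A(k)$, so that $|A\,\triangle\,C^*|=k$ and $\MI^{A,C^*}=\mathcal{D}_A(k)$; write $S:=A\,\triangle\,C^*$. The engine is Wald's likelihood ratio identity, $\Pro_{C^*}(E)=\Exp_A[\mathbf{1}_E\,e^{-\lambda^{A,C^*}(T)}]$ for $E\in\MF_T$, together with the SLLN \eqref{LLN}, which gives $\lambda^{A,C^*}(n)/n\to\MI^{A,C^*}=\mathcal{D}_A(k)$ under $\Pro_A$ (see the Remark). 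The standard consequence is: if one can exhibit events $E=E_\alpha\in\MF_T$ with $\Pro_A(E)\to 1$ and $\Pro_{C^*}(E)\le\alpha$, then setting $t_\alpha:=(1-\delta)|\log\alpha|/\mathcal{D}_A(k)$ and intersecting with the SLLN event $\{\lambda^{A,C^*}(T)\le(1+\epsilon)\mathcal{D}_A(k)\,T\}$, the identity forces $\Pro_A(E\cap\{T\le t_\alpha\})\le\alpha^{1-(1+\epsilon)(1-\delta)}+o(1)\to 0$; hence $\Pro_A(T>t_\alpha)\to 1$, and Markov's inequality yields $\Exp_A[T]\ge t_\alpha(1-o(1))$, which is \eqref{ALB} after letting $\epsilon,\delta\to 0$.

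Everything therefore reduces to producing such an event. When $k=1$ this is immediate with $E=\{D=A\}$: error control under $\Pro_A$ gives $\Pro_A(E)\ge 1-\alpha$, while $\{D=A\}$ is an error under $C^*$ (as $C^*\ne A$), so $\Pro_{C^*}(E)\le\alpha$. This is precisely the argument behind the cited bound for $k=1$.

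The main obstacle is that this simple choice collapses for $k\ge 2$. The natural candidate $E=\{D\in\MU_k(A)\}$ still satisfies $\Pro_A(E)\ge 1-\alpha$, but it need not be an error under $C^*$: since $|A\,\triangle\,C^*|=k$, the tolerance balls $\MU_k(A)$ and $\MU_k(C^*)$ overlap, so a procedure may ``hedge'' by returning a $D$ within $k-1$ of both $A$ and $C^*$, making $\Pro_{C^*}(E)$ of order one. No event defined through $D$ alone separates $\Pro_A$ from $\Pro_{C^*}$ at the sharp rate $\mathcal{D}_A(k)$, while an event defined through $\lambda^{A,C^*}$ alone is not constrained by the error requirements. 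This is exactly the point at which the technique of the cited reference, which presumes the probability of exact recovery to vanish, breaks down.

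To handle it I would argue combinatorially over the finite family $\mathcal{C}:=\{C:|A\,\triangle\,C|=k\}$, of cardinality $C_k^J$. On $\{|A\,\triangle\,D|\le k-1\}$ the mistake set $M:=A\,\triangle\,D$ has $|M|\le k-1$, and for a $k$-subset $S_C=A\,\triangle\,C$ meeting $M$ in at most $|M|/2$ coordinates one has $|C\,\triangle\,D|=|S_C\,\triangle\,M|\ge k$, so such a $C\in\mathcal{C}$ makes the outcome an error; thus $\{|A\,\triangle\,D|\le k-1\}\subseteq\bigcup_{C\in\mathcal{C}}\{|C\,\triangle\,D|\ge k\}$. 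Applying the change-of-measure bound to each $C$ over this finite union, I would split outcomes by whether the realized mistakes avoid the ``cheap'' coordinates $S$: outcomes whose mistakes avoid $S$ are errors under the minimizer $C^*$ itself and deliver the sharp rate $\mathcal{D}_A(k)$, while the residual hedging outcomes, which misclassify the weak coordinates in $S$, must be shown directly, via the SLLN, to still force $T\gtrsim|\log\alpha|/\mathcal{D}_A(k)$. Recovering exactly the minimizing $\MI^{A,C^*}$ uniformly over all procedures in $\Delta_k(\alpha)$ — rather than the larger $\MI^{A,C}$ of whichever covering configuration a crude pigeonhole selects — is the delicate step I expect to consume most of the work.
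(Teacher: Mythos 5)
Your overall engine is the same as the paper's (Wald's likelihood-ratio identity, the SLLN to rule out $\{T\le t_\alpha,\ \lambda^{A,\cdot}(T)\ \text{large}\}$, then Markov), and you correctly diagnose the central obstruction for $k\ge 2$: a procedure can ``hedge'' by returning a $D$ within distance $k-1$ of both $A$ and $C^*$, so no single change of measure applied to the event $\{D\in\MU_k(A)\}$ can work. But the step that resolves this is missing, and your proposed substitute does not close it. The paper's resolution (Lemma \ref{exist_B_star}) is: for \emph{each} candidate decision $B\in\MU_k(A)$ separately, bound $\Pro_A(D=B)$ by changing measure to a \emph{$B$-dependent} alternative $B^*$ satisfying both $|B\,\triangle\,B^*|\ge k$ (so $\Pro_{B^*}(D=B)\le\alpha$) and $\MI^{A,B^*}\le\mathcal{D}_A(k)$. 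The construction is explicit: with $\Gamma:=A\,\triangle\,C^*$ the optimal $k$-set, take $B^*$ to disagree with $B$ everywhere on $\Gamma$ and to agree with $A$ off $\Gamma$. Then $B\,\triangle\,B^*\supseteq\Gamma$ gives the error property, while $A\,\triangle\,B^*=\Gamma\setminus(A\,\triangle\,B)\subseteq\Gamma$ gives $\MI^{A,B^*}\le\MI^{A,C^*}=\mathcal{D}_A(k)$ --- indeed the more the decision hedges into $\Gamma$, the \emph{cheaper} the change of measure becomes. Summing the resulting bounds over the finitely many $B\in\MU_k(A)$ finishes the proof.

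Your substitute has two concrete problems. First, the covering claim $\{|A\,\triangle\,D|\le k-1\}\subseteq\bigcup_{C:|A\triangle C|=k}\{|C\,\triangle\,D|\ge k\}$ via a $k$-set $S_C$ with $|S_C\cap M|\le |M|/2$ can fail outright when $k$ is large relative to $J$: e.g.\ for $J=5$, $k=4$, $|M|=3$, every $4$-subset of $[J]$ meets $M$ in at least $2>|M|/2$ points, so no such $C$ exists. (The paper's $B^*$ avoids this because it is not required to lie at distance exactly $k$ from $A$.) Second, and more importantly, your plan for the ``residual hedging outcomes'' --- those $D$ that misclassify coordinates inside $\Gamma$ --- is to handle them ``directly via the SLLN,'' but the SLLN alone says nothing about the decision rule and cannot force $T\gtrsim|\log\alpha|/\mathcal{D}_A(k)$ on those events; a change of measure is still required there, and identifying the right target measure (with information cost still bounded by $\mathcal{D}_A(k)$) is precisely the content of the missing lemma. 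You correctly flag this as ``the delicate step,'' but it is the heart of the theorem rather than routine bookkeeping.
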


The proof  in the case of the \textit{classical} mis-classification rate  ($k=1$) 
 is based on a change of measure from $\Pro_A$ to $\Pro_{A^*}$, where $A^*$ is chosen such that (i) $A$ is a ``wrong'' subset under $\Pro_{A^*}$, i.e., $A \neq A^*$ 
  and (ii) $A^*$ is ``close'' to $A$, in the sense that 
$\MI^{A, A^*} \leq  \MI^{A,C}$ for every  $C \neq A$ (see, e.g.,  \cite[Theorem 2.2]{tartakovsky1998asymptotic}).

When $k \geq 2$, there are more than one ``correct'' subsets under $\Pro_A$. The key  idea in our proof is that    for \textit{each}  ``correct" subset  $B \in \MU_{k}(A)$ 
we apply a different change of measure  $\Pro_A \to \Pro_{B^{*}}$, 
where $B^*$ is chosen such that (i) $B$ is a ``wrong'' subset under $\Pro_{B^*}$, i.e., $B \notin  \MU_{k}(B^*)$,  and (ii) $B^*$ is ``close'' to $A$, in the sense that  $I^{A,B^*} \leq  \MI^{A,C}$ for every 
$C \notin \MU_k(A)$.
The existence of such $B^*$ is established in Appendix~\ref{app:exist_B_star},
and the proof of Theorem~\ref{KM_lower_bound} is carried out in Appendix~\ref{app:proof_KM_lower_bound}.

\subsection{Asymptotic optimality}
We are now ready to establish the asymptotic optimality of the 
\procKM~by showing that it attains the asymptotic lower bound of Theorem \ref{KM_lower_bound} under every  signal configuration. 

\begin{theorem}\label{order_sum_first_opt}
Assume \eqref{cond_for_asym_opt} holds. Then, for any $A \subset [J]$ we have   as $b \to \infty$ that 
\begin{equation} \label{KM_upper_bound_equation}
\Exp_A[T_S(b)] \leq \frac{b}{\mathcal{D}_A(k)}  \, (1 + o(1)).
\end{equation}
When in particular $b$ is selected such that $\delta_S \in   \Delta_{k}(\alpha)$ and  $b\sim |\log(\alpha)|$, e.g. as in \eqref{OS_threshold},  then for every  $A \subset [J]$ we have as  $\alpha \to 0$ 
$$\Exp_A \left[ T_S \right]
\, \sim\,  \frac{|\log \alpha|}{\mathcal{D}_A(k)}\, \sim \,N_A^*(k,\alpha).
$$
\end{theorem}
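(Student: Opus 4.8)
The final statement has two parts, and the plan is to reduce the second to the first together with the already-proved lower bound. Theorem~\ref{KM_lower_bound} gives $N_A^*(k,\alpha)\ge |\log\alpha|/\mathcal{D}_A(k)\,(1-o(1))$, while $\delta_S\in\Delta_{k}(\alpha)$ forces $N_A^*(k,\alpha)\le \Exp_A[T_S]$ by definition of the infimum. Hence, once the upper bound $\Exp_A[T_S(b)]\le b/\mathcal{D}_A(k)\,(1+o(1))$ is in hand, substituting a threshold with $b\sim|\log\alpha|$ sandwiches $N_A^*(k,\alpha)$, $\Exp_A[T_S]$ and $|\log\alpha|/\mathcal{D}_A(k)$ between each other and yields the stated asymptotic equivalences. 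So the entire content is the inequality \eqref{KM_upper_bound_equation}.

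\textbf{Core structural step.} Write $g(n):=\sum_{j=1}^{k}\widetilde\lambda^{j}(n)$. The key observation is that the sum of the $k$ smallest absolute LLRs is a minimum of subset sums, $g(n)=\min_{B\subset[J],\,|B|=k}\sum_{j\in B}|\lambda^{j}(n)|$. Under $\Pro_A$ the SLLN \eqref{LLN} gives $|\lambda^{j}(n)|/n\to \MI_1^{j}$ for $j\in A$ and $|\lambda^{j}(n)|/n\to \MI_0^{j}$ for $j\notin A$; denoting this common positive limit by $m_j$ and using that $x\mapsto\min_{|B|=k}\sum_{j\in B}x_j$ is continuous (a minimum of finitely many linear maps), I get $g(n)/n\to\min_{|B|=k}\sum_{j\in B}m_j$ almost surely. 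This limit is exactly $\mathcal{D}_A(k)$: every size-$k$ set $E$ equals $A\triangle C$ for $C=A\triangle E$, and since $\MI^{A,C}=\sum_{j\in A\triangle C}m_j$ with all $m_j>0$, minimizing $\MI^{A,C}$ over $C\notin\MU_k(A)$ (equivalently $|A\triangle C|\ge k$) forces $|A\triangle C|=k$ and selects the $k$ smallest $m_j$. Thus $g(n)/n\to\mathcal{D}_A(k)$ a.s., and the standard first-passage bracketing $g(T_S(b)-1)<b\le g(T_S(b))$, divided by $T_S(b)$, upgrades this to $T_S(b)/b\to 1/\mathcal{D}_A(k)$ a.s.

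\textbf{From almost sure to $L^1$.} What remains is $\limsup_b \Exp_A[T_S(b)]/b\le 1/\mathcal{D}_A(k)$, which I would establish separately for the two regimes of \eqref{cond_for_asym_opt}. Fix $\epsilon\in(0,\mathcal{D}_A(k))$ and set $n_b:=\lceil b/(\mathcal{D}_A(k)-\epsilon)\rceil$. Since $\{T_S(b)>n\}\subseteq\{g(n)<b\}$, summing over $n$ gives $\Exp_A[T_S(b)]\le\sum_{n\ge0}\Pro_A(g(n)<b)\le n_b+\sum_{n>n_b}\Pro_A(g(n)<b)$, and for $n>n_b$ one has $b<n(\mathcal{D}_A(k)-\epsilon)$, so $\{g(n)<b\}\subseteq\{g(n)/n<\mathcal{D}_A(k)-\epsilon\}\subseteq\bigcup_{j\in[J]}\bigl\{\bigl|\,|\lambda^{j}(n)|/n-m_j\bigr|>\epsilon/k\bigr\}$. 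Under the complete-convergence branch \eqref{CLLN}, a union bound over the $J$ streams makes this tail sum finite and independent of $b$, so $\Exp_A[T_S(b)]\le b/(\mathcal{D}_A(k)-\epsilon)+O(1)$; dividing by $b$ and letting $\epsilon\to0$ closes this case. In the i.i.d. branch, where only the SLLN is assumed, the per-term probabilities need not be summable, so instead I would use that eventually each $\lambda^{j}$ keeps a fixed sign and the minimizing subset in $g(n)$ settles on size-$k$ sets of common drift $\mathcal{D}_A(k)$; past a finite (a.s.) random time $g$ then coincides with a minimum of random walks of drift $\mathcal{D}_A(k)$, and renewal theory / Wald's identity for the associated first-passage time deliver uniform integrability of $T_S(b)/b$ and hence $L^1$ convergence.

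\textbf{Main obstacle.} The crux is precisely this last upgrade in the i.i.d. regime. With only finite information numbers (finite first moments) and no complete convergence, the naive last-exit bound $T_S(b)\le L+n_b$ is useless, because for a finite-mean random walk the last time it lies below a line of slightly smaller slope can have \emph{infinite} expectation. Overcoming this requires a renewal-theoretic treatment of the overshoot together with control of the random time at which the minimizing subset stabilizes, rather than a crude summation of deviation probabilities. Everything else—the combinatorial identification of $\mathcal{D}_A(k)$ with the $k$ smallest information numbers, the almost-sure first-passage limit, and the final sandwich against Theorem~\ref{KM_lower_bound}—is routine once this step is secured.
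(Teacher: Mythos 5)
Your overall architecture is sound: the reduction of the theorem to the single inequality \eqref{KM_upper_bound_equation} via Theorem~\ref{KM_lower_bound}, the identification $g(n)=\min_{|B|=k}\sum_{j\in B}|\lambda^j(n)|$ with limit slope $\mathcal{D}_A(k)$ equal to the sum of the $k$ smallest information numbers, and the complete-convergence branch (summing the tail probabilities $\Pro_A(g(n)<b)$ over $n>n_b$ via a union bound over streams) all match what is needed and are essentially correct. The problem is the i.i.d.\ branch, which you yourself flag as the crux and then leave unresolved: the proposal that ``past a finite a.s.\ random time $g$ coincides with a minimum of random walks'' does not yield the bound, because that stabilization time is not controlled in expectation (for a finite-mean walk the last exit below a line of smaller slope can have infinite mean, as you note), and $g$ itself is not a process with i.i.d.\ increments, so Wald/renewal arguments and the uniform-integrability results for first-passage times of random walks do not apply to it directly. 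As written, this branch is a genuine gap.

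The paper closes it with a deterministic domination that you came within one observation of finding. Since $\sum_{j\in B}|\lambda^j(n)|\ \geq\ \sum_{j\in A\cap B}\lambda^j(n)-\sum_{j\in B\setminus A}\lambda^j(n)\ =\ \lambda^{A,\,A\triangle B}(n)$ for \emph{every} $n$ (not just eventually), and since $|B|=k$ forces $A\triangle B\notin\MU_k(A)$, one gets $T_S(b)\leq T^A(b):=\inf\{n:\ \lambda^{A,C}(n)\geq b\ \ \forall\,C\notin\MU_k(A)\}$ pathwise. Each $\lambda^{A,C}$ is a genuine random walk under $\Pro_A$ (i.i.d.\ increments, drift $\MI^{A,C}\geq\mathcal{D}_A(k)$), so the expected first passage time of this finite family is bounded by $b(1+o(1))/\mathcal{D}_A(k)$ by the paper's Lemma~\ref{multiple_cross}, whose i.i.d.\ case obtains uniform integrability of $T^A(b)/b$ from Farrell's theorem for simultaneous crossings by random walks. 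In short: replace ``wait for the signs to stabilize'' by the inequality $|x|\geq \pm x$ applied with the signs dictated by $A$, which converts your min over subset sums of absolute values into a first-passage problem for honest random walks valid from time $1$; the rest of your argument then goes through in both branches.
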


\begin{proof}
If \eqref{KM_upper_bound_equation} holds and $b$ is such that  $\delta_S \in   \Delta_{k}(\alpha)$ and  $b\sim |\log(\alpha)|$, then $\delta_S$ attains the asymptotic lower bound in Theorem \ref{KM_lower_bound}.   Thus, it suffices to prove \eqref{KM_upper_bound_equation}, which is done in the Appendix \ref{proof_of_KM_upper_bound}. 
\end{proof}

The asymptotic characterization of the optimal ESS, $N_A^*(k,\alpha)$,   illustrates the trade-off among the ESS, the number of mistakes to be tolerated, and the error tolerance level  $\alpha$.
Specifically, it suggests that, for ``small'' values of $\alpha$,    tolerating $k-1$ mistakes reduces the ESS by a factor of $\mathcal{D}_A(k)/\mathcal{D}_A(1)$, which is \textit{at least}  $k$ for every $A \subset [J]$. To justify the latter claim, note that if 
we denote the ordered  information numbers $\{\mathcal{I}_1^{j}, j \in A\} \cup \{\mathcal{I}_0^{j},  j \notin A\}$ by
$\widetilde{\mathcal{I}}^{(1)}(A) \leq \ldots  \leq \widetilde{\mathcal{I}}^{(J)}(A)$,
then 
$$
\mathcal{D}_A(k) \;=\; \sum_{j=1}^k \widetilde{\mathcal{I}}^{(j)}(A). 
$$


In the following corollary we show that
the Intersection rule is asymptotically optimal when
\eqref{sym_hom_KM} holds, which is the case for example when the multiple testing problem is \textit{both symmetric and homogeneous}.




\begin{corollary}\label{inter_sym}
(i) Assume \eqref{sep_lr} holds. For any $\alpha \in (0,1)$ 
we have  $\delta_I(b,b) \in \Delta_{k}(\alpha)$ when $b$ is equal to $b_\alpha/k$, where $b_\alpha$ is defined in \eqref{OS_threshold}. 

\noindent (ii) Suppose $b$ is selected such that  $\delta_I(b,b) \in \Delta_{k}(\alpha)$ and  $b \sim |\log \alpha|/k$, e.g., as in  (i). If \eqref{cond_for_asym_opt} holds, then 
$$
\Exp_A \left[ T_I \right] \leq   \frac{|\log \alpha|}{k  \cD_A(1)  } \; (1+o(1)).
$$
If also \eqref{sym_hom_KM} holds, then  for any $A \subset [J]$ we have as $\alpha \to 0$  that
$$
\Exp_A \left[ T_I \right] \sim  \frac{|\log \alpha|}{k \MI} \sim N_A^*(k,\alpha).
$$
\end{corollary}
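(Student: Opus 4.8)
The plan is to treat the two parts separately, reducing each to a statement already available for the Sum-Intersection rule. For part (i), the decisive observation is that at the stopping time of $\delta_I(b,b)$ every local statistic satisfies $|\lambda^j(T_I)|\ge b$, so that $\sum_{j\in B}|\lambda^j(T_I)|\ge kb$ for \emph{every} $B\subset[J]$ with $|B|=k$. This places us exactly in the situation exploited by Theorem~\ref{KM_err_control}, with the threshold $b$ there replaced by $kb$. Concretely, I would write $\{|A\triangle D_I|\ge k\}=\bigcup_{|B|=k}\{B\subseteq A\triangle D_I\}$ and, on each event $\{B\subseteq A\triangle D_I\}$, use \eqref{ll_diff} to identify the log-likelihood ratio of $\Pro_{A\triangle B}$ against $\Pro_A$ at time $T_I$ with $\sum_{j\in B}|\lambda^j(T_I)|$, since each misclassified coordinate contributes $|\lambda^j(T_I)|$ whether it is a false positive or a false negative; on this event the ratio is thus at least $kb$. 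A likelihood-ratio change of measure from $\Pro_A$ to $\Pro_{A\triangle B}$, justified by Wald's identity because $T_I<\infty$ almost surely under \eqref{sep_lr}, gives $\Pro_A(B\subseteq A\triangle D_I)\le e^{-kb}$, and a union bound over the $C_k^J$ subsets yields $\max_A\Pro_A(|A\triangle D_I|\ge k)\le C_k^J e^{-kb}$. Taking $b=b_\alpha/k$ makes the right-hand side equal to $\alpha$, which is the assertion.

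For the first inequality in part (ii), I would use the fact, already noted in Section~\ref{sum_int_def}, that $\delta_I(b,b)$ is the Sum-Intersection rule in the degenerate case $k=1$: then its stopping condition $\widetilde\lambda^1(n)\ge b$ is exactly the requirement that the smallest, hence every, absolute LLR exceed $b$, so that $T_S(b)=T_I(b,b)$. Since the proof of the upper bound in Theorem~\ref{order_sum_first_opt} does not depend on the particular value of the parameter, it applies with $k$ replaced by $1$ and gives $\Exp_A[T_I]\le (b/\cD_A(1))(1+o(1))$ as $b\to\infty$; substituting the assumed scaling $b\sim|\log\alpha|/k$ produces $\Exp_A[T_I]\le (|\log\alpha|/(k\,\cD_A(1)))(1+o(1))$. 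For the final equivalence I would invoke the identity $\cD_A(m)=\sum_{i=1}^m\widetilde{\MI}^{(i)}(A)$ recorded after Theorem~\ref{order_sum_first_opt}: under \eqref{sym_hom_KM} every information number equals $\MI$, so $\cD_A(1)=\MI$ and $\cD_A(k)=k\MI$, and the bound just obtained reads $\Exp_A[T_I]\le (|\log\alpha|/(k\MI))(1+o(1))$. Because part (i) shows $\delta_I(b,b)\in\Delta_k(\alpha)$, we also have $\Exp_A[T_I]\ge N_A^*(k,\alpha)$, while Theorem~\ref{KM_lower_bound} gives $N_A^*(k,\alpha)\ge (|\log\alpha|/\cD_A(k))(1-o(1))=(|\log\alpha|/(k\MI))(1-o(1))$. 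These bounds sandwich both $\Exp_A[T_I]$ and $N_A^*(k,\alpha)$ by quantities asymptotic to $|\log\alpha|/(k\MI)$, giving $\Exp_A[T_I]\sim|\log\alpha|/(k\MI)\sim N_A^*(k,\alpha)$.

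The genuinely substantive ingredient is the upper bound $\Exp_A[T_I]\le (b/\cD_A(1))(1+o(1))$, i.e. the statement that the expected first time at which all $J$ local log-likelihood ratios simultaneously leave $(-b,b)$ grows like $b$ divided by the smallest information number $\cD_A(1)=\widetilde{\MI}^{(1)}(A)$. This is where condition \eqref{cond_for_asym_opt} is essential, entering through first-passage/renewal arguments in the i.i.d.\ case and complete-convergence arguments otherwise. Since this is precisely the content of Theorem~\ref{order_sum_first_opt} specialized to $k=1$, the only work specific to the corollary is to confirm that the appendix proof of that theorem is uniform in the parameter $k$ and hence legitimately applies at $k=1$; granting that, the remainder is the bookkeeping above.
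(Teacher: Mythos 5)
Your proposal is correct and follows essentially the same route as the paper: part (i) reruns the change-of-measure argument of Theorem~\ref{KM_err_control}, with the only modification being that each misclassified stream contributes $|\lambda^j(T_I)|\ge b$ so the exponent becomes $-kb$, and part (ii) specializes the upper bound of Theorem~\ref{order_sum_first_opt} to $k=1$ (using $T_I(b,b)=T_S(b)$ in that case) and sandwiches against the lower bound of Theorem~\ref{KM_lower_bound} under \eqref{sym_hom_KM}. No gaps.
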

\begin{proof}
The proof can be found in Appendix \ref{proof_of_inter_sym}.
\end{proof}

\begin{remark}
When \eqref{sym_hom_KM} is violated, the \procInt~fails to be asymptotically optimal. This will be  illustrated with a simulation study  in Appendix \ref{sec:asym_case_KM}.
\end{remark}

\subsection{Fixed-sample size rules}\label{subsec: KM_fix_comp}
Finally, we  focus on the i.i.d. case \eqref{iid_case} and consider procedures that stop at a deterministic time, selected to control the generalized mis-classification rate. 
We recall that  $\cC^j$ is the Chernoff information in the $j^{th}$ testing problem, and we denote by $\cB(k)$ the sum of the smallest $k$ local 
Chernoff informations, i.e., 
\begin{equation*} 
\cB(k) := \sum_{j=1}^k  \cC^{(j)},
\end{equation*}
where $\cC^{(1)}\leq  \cC^{(2)} \leq \ldots \leq \cC^{(J)}$ are  the ordered values of the local Chernoff information numbers $\cC^{j}, j \in [J]$.

\begin{theorem}
\label{KM_comp_fix_sample_rule}
Consider the multiple testing problem with i.i.d. streams defined in \eqref{iid_case} and suppose that the Kullback-Leibler  numbers in  \eqref{KL_numbers} are positive and finite.  For any user-specified integer $1 \leq k \leq (J+1)/2$ and  $A \subset [J]$, we have as $\alpha \to 0$ 
\begin{align*}
\frac{\cD_A(k)}{\cB(2k-1)} \; (1-o(1)) \; \leq \;
\frac{n^{*}(k,\alpha)}{ N_A^*(k,\alpha)} \; \leq \; 
\frac{n_{NP}(k,\alpha)}{ N_A^*(k,\alpha)}  \; \sim \; 
 \frac{\cD_A(k)}{ \cB(k)}.
\end{align*}

\end{theorem}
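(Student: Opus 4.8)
The plan is to combine the sequential benchmark $N_A^*(k,\alpha)\sim|\log\alpha|/\cD_A(k)$ from Theorem~\ref{order_sum_first_opt} with sharp fixed-sample large-deviation estimates. The middle inequality is immediate: every $\delta_{NP}(n,h)$ belongs to $\Delta_{fix}(n)$, so by the definitions in \eqref{def:fix_ns} and \eqref{NP_fixed_sample} we have $n^{*}(k,\alpha)\le n_{NP}(k,\alpha)$, and dividing by the positive quantity $N_A^*(k,\alpha)$ preserves this. It then remains to prove the two outer relations, which after substituting $N_A^*(k,\alpha)\sim|\log\alpha|/\cD_A(k)$ reduce to (a) the upper bound $n_{NP}(k,\alpha)\sim|\log\alpha|/\cB(k)$ and (b) the lower bound $n^{*}(k,\alpha)\ge(|\log\alpha|/\cB(2k-1))(1-o(1))$.

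For (a) I would analyze the MNP rule $\delta_{NP}(n,h)$ stream by stream. Since the increments are i.i.d., Cram\'er's theorem \citep{dembo1998large} yields, for each $j$, that the false-positive probability $\Pro_0^j(\lambda^j(n)>nh_j)$ decays with exponent $\Phi^j(h_j)$ and the false-negative probability $\Pro_1^j(\lambda^j(n)\le nh_j)$ with exponent $\Phi^j(h_j)-h_j$, where $\Phi^j$ is the rate function from \eqref{convex_conjugate} (the second exponent follows from the exponential change of measure relating the cumulant generating functions under $\Pro_1^j$ and $\Pro_0^j$). By independence, $\{|A\,\triangle\,D|\ge k\}$ is the event that at least $k$ of these independent ``mistake'' events occur, whose probability is, up to the constant factor $C_{k}^{J}$, governed by the $k$ streams with the smallest mistake exponents. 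Maximizing over the configuration $A$, the least-favorable $A$ assigns to each stream the smaller of its two exponents, namely $g_j(h_j):=\Phi^j(h_j)-(h_j\vee 0)$, so the worst-case error exponent equals the sum of the $k$ smallest values of $g_j(h_j)$. A one-dimensional convexity argument (using that $\Phi^j$ is minimized at $-\MI_0^j<0$ and $\Phi^j(\cdot)-(\cdot)$ at $\MI_1^j>0$) shows each $g_j$ is maximized at $h_j=0$ with $g_j(0)=\Phi^j(0)=\cC^j$, and since ``sum of the $k$ smallest coordinates'' is monotone in each coordinate, the best worst-case exponent over all $h$ is $\cB(k)$, attained at $h=0$. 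Matching large-deviation upper and lower bounds then give $n_{NP}(k,\alpha)\sim|\log\alpha|/\cB(k)$.

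For (b), fix $A$ and choose $C$ so that $A\,\triangle\,C$ consists of the $2k-1$ streams with the smallest local Chernoff informations, which is possible precisely because $2k-1\le J$, i.e.\ $k\le(J+1)/2$. The combinatorial key is that when $|A\,\triangle\,C|=2k-1$ no decision $D$ can satisfy both $|A\,\triangle\,D|<k$ and $|C\,\triangle\,D|<k$, since otherwise $|A\,\triangle\,C|\le|A\,\triangle\,D|+|D\,\triangle\,C|\le 2k-2$; hence $\{|A\,\triangle\,D|\ge k\}\cup\{|C\,\triangle\,D|\ge k\}$ is the whole space. For any $(n,D)\in\Delta_{fix}(n)\cap\Delta_{k}(\alpha)$ this lets me build a binary test of $\Pro_A$ against $\Pro_C$: decide $C$ on $\{|A\,\triangle\,D|\ge k\}$ and $A$ on its complement; the type-I error is $\Pro_A(|A\,\triangle\,D|\ge k)\le\alpha$, while the type-II error is at most $\Pro_C(|C\,\triangle\,D|\ge k)\le\alpha$ because the complement of $\{|A\,\triangle\,D|\ge k\}$ lies in $\{|C\,\triangle\,D|\ge k\}$. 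The Chernoff lower bound for binary testing then forces $n\ge(|\log\alpha|/\cC^{A,C})(1-o(1))$, where $\cC^{A,C}$ is the Chernoff information between $\Pro_A$ and $\Pro_C$; by independence and the suboptimality of a common tilting parameter across coordinates, $\cC^{A,C}\le\sum_{j\in A\,\triangle\,C}\cC^j=\cB(2k-1)$, which yields the claim.

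The main obstacle I anticipate is the sharpness of step (a). Feasibility, $n_{NP}(k,\alpha)\lesssim|\log\alpha|/\cB(k)$, only needs the large-deviation upper bound at the single choice $h=0$, but the reverse inequality requires showing that \emph{no} threshold vector can beat the exponent $\cB(k)$. This rests on two points that must be handled with care --- the exact per-stream exponent $g_j(h_j)=\Phi^j(h_j)-(h_j\vee 0)$ together with its maximization at $h_j=0$, and the monotonicity of the ``sum of the $k$ smallest'' functional --- as well as a matching large-deviation \emph{lower} bound for $\Pro_A(|A\,\triangle\,D|\ge k)$ under the least-favorable configuration. By contrast, the combinatorial reduction in (b) is clean once the radius $2k-1$ is identified, and its only analytic input is the classical Chernoff bound together with the subadditivity $\cC^{A,C}\le\cB(2k-1)$.
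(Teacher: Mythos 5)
Your proposal is correct, and it reaches the paper's conclusions by a route that coincides with the paper's on the lower bound but differs on the MNP analysis. For the lower bound on $n^{*}(k,\alpha)$, your reduction is essentially the paper's: the paper takes the specific pair $(\emptyset, A)$ with $|A|=2k-1$ and the derived binary decision $\mathbf{1}\{|D^*|\ge k\}$, while you take a general pair $(A,C)$ with $|A\,\triangle\,C|=2k-1$ and invoke the triangle inequality for the symmetric difference; both then apply Chernoff's lemma and bound the joint Chernoff information by $\sum_{j\in A\triangle C}\cC^j$ via the suboptimality of a common tilting parameter. Where you genuinely diverge is the sharp asymptotics of $n_{NP}(k,\alpha)$. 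You compute the per-stream, threshold-dependent exponent $g_j(h_j)=\Phi^j(h_j)-(h_j\vee 0)$, identify the least-favorable configuration as the one selecting $\max\{p_j,q_j\}$ in each stream, and then maximize the sum of the $k$ smallest exponents over $h$, which forces you to treat the degenerate thresholds $h_j\notin(-\cI_0^j,\cI_1^j)$ (where one error probability does not decay and the formula $\Phi^j(h_j)-(h_j\vee 0)$ no longer represents the exponent) and to supply a matching large-deviation lower bound for the at-least-$k$-errors event. The paper sidesteps this optimization entirely: for any threshold vector it pairs the two configurations $A_1$ and $A_2$ with $A_1\cup A_2=A$, $|A|=k$, to get $\alpha\ge\prod_{j\in A}\max\{p_j,q_j\}\ge\prod_{j\in A}(p_j+q_j)/2$, and then applies Chernoff's lemma stream by stream, which bounds $p_j+q_j$ below by $e^{-n\cC^j(1+o(1))}$ \emph{uniformly over all tests}, hence over all thresholds. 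The paper's argument is shorter and immune to the boundary cases; yours yields the explicit exponent for every threshold vector, which is more information but requires the extra care you yourself flag. Both are valid proofs of the stated equivalence.
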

\begin{proof}
The proof can be found in Appendix \ref{proof_of_KM_comp_fix_sample_rule}.
\end{proof}

\begin{remark}
Since any fixed time is also a stopping time, the lower bound is  relevant only when 
$\cD_A(k) > \cB(2k-1)$ for some $A \subset [J]$.
\end{remark}

We now specialize the results of the  previous theorem to the \textit{testing of normal means}, introduced  in Example \ref{Gaussian_mean_test} (a Bernoulli example is presented  in Appendix \ref{app:binom_example}). In this case, $\cC^j = \cI^j/4$ for every $j \in [J]$,  which implies $\cD_A(k)= 4 \cB(k)$  for every $A \subset [J]$, and  by Theorem  \ref{KM_comp_fix_sample_rule} it follows that  
$$n_{NP}(k,\alpha) \sim 4\,  N_A^*(k,\alpha) \quad   \forall \;  A \subset [J].
$$ 
That is,  for any $k \in [1,(J+1)/2]$, 
 when  utilizing the MNP rule instead of the proposed asymptotically optimal Sum-Intersection rule,
the ESS increases  by roughly a  factor of $4$, for small values of $\alpha$, under every configuration. 
From Theorem  \ref{KM_comp_fix_sample_rule}  it also follows that 
       for any $A \subset [J]$ we have
\begin{align*}
\liminf_{\alpha \to 0} \;
\frac{n^{*}(k,\alpha)}{ N_A^*(k,\alpha)}  &\geq 
\frac{ 4\cB(k)}{\cB(2k-1)}.
\end{align*}
If in addition the hypotheses have identical information numbers, i.e., \eqref{sym_hom_KM} holds, this lower bound is  always larger than $2$, which means that 
 \textit{any} fixed-sample size scheme will require  at least twice as many observations as the Sum-Intersection rule, for small error probabilities. 




\section{Generalized familywise error rates \textit{of both kinds}}\label{sec:GF}
In this section we study Problem \ref{prob:GFAM}.
While we follow similar ideas and the results are of similar nature as in the previous section, the proposed procedure and the proof of its asymptotic optimality turn out to be much more complicated.

To describe the proposed multiple testing procedure, we first need to introduce some additional notations. Specifically, we denote by   $$ 0 <  \widehat{\lambda}^{1}(n) \leq  \ldots \leq \widehat{\lambda}^{p(n)}(n)
$$
the  order statistics of the \textit{positive} LLRs at  time $n$,  $\{\lambda^j(n): \lambda^{j}(n) > 0, \, j \in [J]\}$,
where $p(n)$ is the number of the strictly positive LLRs  at time $n$. Similarly,   we  denote by $$
 0 \leq  \widecheck{\lambda}^{1}(n) \leq  \ldots \leq \widecheck{\lambda}^{q(n)}(n) 
 $$ 
the  order statistics of the absolute values of the \textit{non-positive} LLRs at time $n$, i.e.,
$\{-\lambda^j(n): \lambda^j(n) \leq 0, \, j \in [J]\}$, 
where  $q(n):=J-p(n)$. 
We also  adopt the  following convention:
\begin{align} \label{exceed_convention}
\begin{split}
\widehat{\lambda}^j (n) = \infty \;  \text{ if } \;  j > p(n), \quad \text{ and } \quad
\widecheck{\lambda}^{j}(n) = \infty \;  \text{ if } \; j > q(n).
\end{split}
\end{align}
 Moreover, we  use the following notation 
\begin{align*}
\lambda^{ \widehat{i}_j(n)}(n) &:= \widehat{\lambda}^{j}(n),\quad \forall\; j \in \{1, \ldots, p(n)\},\\
\lambda^{\widecheck{i}_j(n)}(n) &:=- \widecheck{\lambda}^{j}(n), \quad\forall\; j \in \{1, \ldots, q(n)\},
\end{align*}
for the  indices of  streams with \textit{positive}  and  \textit{non-positive} LLRs at time $n$, respectively.
Thus,   stream $\widehat{i}_1(n)$ (resp.  $\widecheck{i}_1(n)$)  has the least  significant positive (resp. negative) LLR at time $n$.  

\subsection{Asymmetric \procKM}\label{subsec:asym_sum_int_def}
We start with a procedure that has the same decision rule as the  Sum-Intersection procedure (Subsection \ref{sum_int_def}), but a different  stopping rule that accounts for the asymmetry in the error metric that we consider in this section. 
Specifically, we consider  a procedure $\delta_0(a,b) \equiv  (\tau_0, D_0)$ that stops  as soon as  the following two conditions are satisfied simultaneously: (i)  the sum of the $k_1$ least   significant positive LLRs is larger than $b>0$, and (ii)  the  sum of the $k_2$ least  significant negative LLRs is smaller than  $-a<0$. 
%
 Formally, 
\begin{align}\label{tau_0_def}
\begin{split}
\tau_0 &:=\inf\left\{n \geq 1: \sum\limits_{j=1}^{k_1} \widehat{\lambda}^{j}(n) \geq b \; \text{ and } \;  \sum\limits_{j=1}^{k_2} \widecheck{\lambda}^{j}(n) \geq a \right\},\; \\
D_0 &:=\left\{ j \in [J]:\;  \lambda^j(\tau_0) > 0 \right\} =\left\{ \widehat{i}_{1}(\tau_0), \ldots, \widehat{i}_{p(\tau_0)}(\tau_0)\right\}.
\end{split}
\end{align}
We  refer to this procedure as \textit{asymmetric Sum-Intersection rule}. Note that similarly to the \procKM, this procedure 
does not require strong evidence from every individual stream in order to terminate sampling. Indeed, upon stopping  
there may be insufficient evidence for the hypotheses that correspond to the  $k_1-1$ least  significant positive statistics and the  $k_2-1$  least   significant negative statistics, turning them into the anticipated false positives and false negatives, respectively, which we are allowed to make.

We will see that while the asymmetric Sum-Intersection rule can control  generalized familywise error rates of both types, it is not in general  asymptotically optimal.  To understand  why this is the case,  let $A$ denote true  subset of streams with signals and suppose that there is a subset $B$  of   $\ell$ streams with \textit{noise}, i.e.,   $B \subset A^c$ with $|B|=\ell$, such that   $\ell < k_1$ and 
$$
\MI_1^{j} \; \gg \; \MI_0^{i_1}\; \gg\; \MI_0^{i_2}, \quad \forall\; j \in A,\quad  i_1 \in A^c\setminus B,\quad i_2 \in B, 
$$
i.e.,  the hypotheses in  streams with signal are much easier than in streams with noise, and the  hypotheses in  $B$ are much harder than in the other streams with noise.  In this case, the first stopping requirement in $\tau_0$ will be easily satisfied, but not the second one, since  the streams in $B$ will slow down the growth of the sum of the $k_2$ least significant negative LLRs. 



These observations suggest that  the performance of $\delta_{0}$ can be improved in the above scenario   if we essentially ``give up'' the testing problems in $B$, presuming that we will make $\ell$ of the $k_1-1$ false positives in these streams. This can be achieved by (i)  ignoring the $\ell$ least significant negative statistics in the second stopping requirement of $\tau_0$, and asking  the  sum of the \textit{next} $k_2$ least significant negative statistics to be small upon stopping, and (ii)
modifying the decision rule to  reject  the nulls not only in streams with positive LLR,   but also in the $\ell$ streams with the least significant \textit{negative} LLRs upon stopping.  However, if we modify the decision rule in this way,  we have spent from the beginning $\ell$ of the $k_1-1$   false positives we are allowed to make. This  implies that we need to also modify the first stopping requirement in $\tau_0$ and ask the sum of the $k_1-\ell$ least significant positive LLRs to  be  large  upon stopping. 
If we denote by $\widehat{\delta}_{\ell} :=(\widehat{\tau}_{\ell},\widehat{D}_{\ell})$ the procedure that incorporates the above modifications, then 
\begin{align*}
\begin{split}
\widehat{\tau}_\ell &:=\inf\left\{n \geq 1: \sum\limits_{j=1}^{k_1-\ell} \widehat{\lambda}^{j}(n) \geq b \;\text{ and }\; \sum\limits_{j=\ell+1}^{\ell+k_2} \widecheck{\lambda}^{j}(n) \geq a \right\}, \\
\widehat{D}_\ell &:=\{ \widehat{i}_{1}(\widehat{\tau}_\ell), \ldots, \widehat{i}_{p(\widehat{\tau}_\ell)}(\widehat{\tau}_\ell)\}\; \bigcup\; \{\widecheck{i}_1(\widehat{\tau}_\ell), \ldots, \widecheck{i}_{\ell}(\widehat{\tau}_\ell) \},
\end{split}
\end{align*}
where  we omit the dependence on $a,b$ in order to lighten the notation.

By the same token, if there are $\ell<k_2$ streams \textit{with signal}   in which the testing problems are much harder than in  other streams,  it is reasonable to expect that $\delta_0$ may be outperformed by  a procedure
$\widecheck{\delta}_{\ell} := (\widecheck{\tau}_{\ell}, \widecheck{D}_{\ell})$, where
\begin{align*}
\begin{split}
\widecheck{\tau}_\ell &:=\inf\left\{n \geq 1: \sum\limits_{i=\ell+1}^{\ell+k_1} \widehat{\lambda}^{i}(n) \geq b \;\text{ and }\; \sum\limits_{j=1}^{k_2-\ell} \widecheck{\lambda}^{j}(n) \geq a \right\} \\
\widecheck{D}_\ell &:=\{ \widehat{i}_{\ell+1}(\widecheck{\tau}_\ell), \;\ldots\;, \widehat{i}_{p(\widecheck{\tau}_\ell)}(\widecheck{\tau}_\ell)\}.
\end{split}
\end{align*}
Fig. \ref{fig:leap_proc_vis} provides a visualization of these stopping rules.

\begin{figure}[htbp]
\centering
\includegraphics[width=0.9\textwidth]{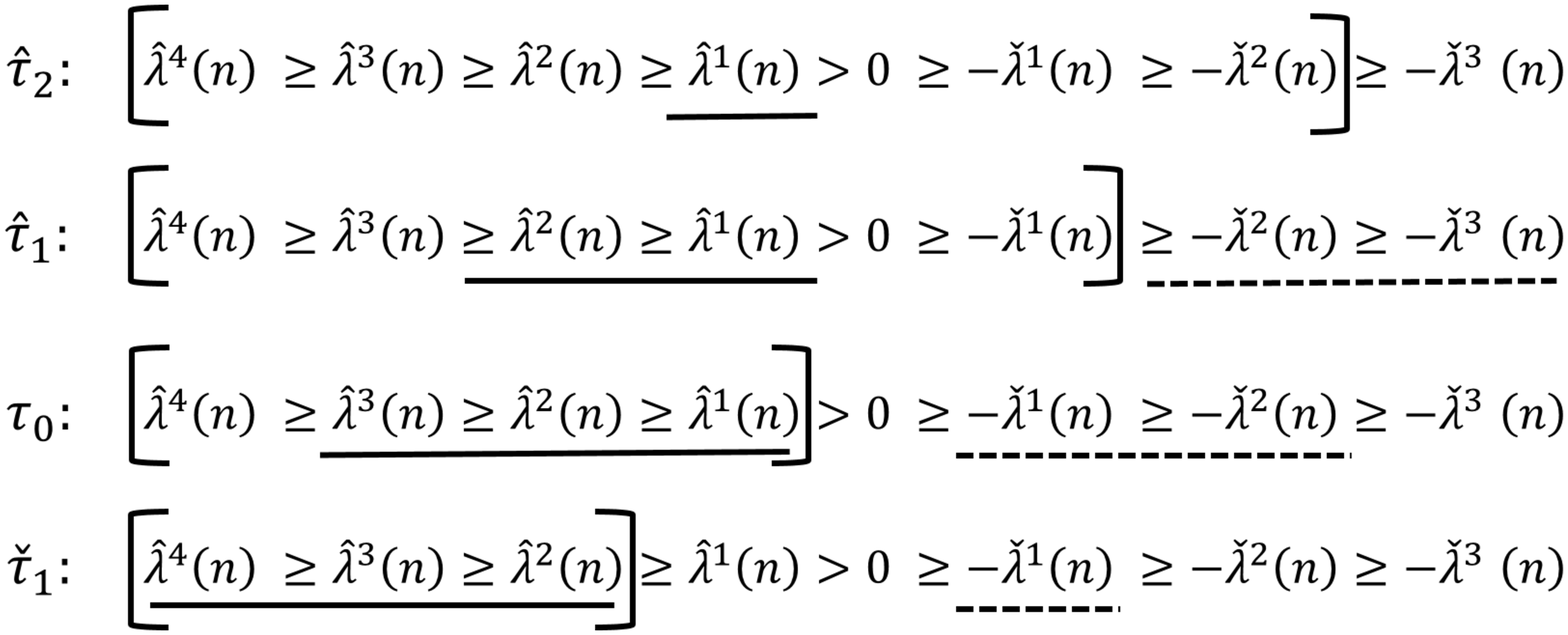}
\caption{ \small{Set $J=7$, $k_1 = 3$, $k_2 = 2$.  Suppose at time $n$, $p(n) = 4, q(n) = 3$. Each rule stops  when the sum of the terms with solid underline exceeds $b$, and at the same time the sum of the terms with dashed underline is below $-a$. Upon stopping, the null hypothesis for the streams in the bracket are rejected. Note that by convention~\eqref{exceed_convention}, $\check{\lambda}^{4}(n) = \infty$, which makes the stopping rule $\hat{\tau}_2$  have only one condition to satisfy.}}
\label{fig:leap_proc_vis}
\end{figure}


\subsection{The Leap rule}\label{subsec:leap_rule}
The previous discussion suggests that the asymmetric Sum-Intersection rule, defined in \eqref{tau_0_def},  may be significantly outperformed by some of the  procedures, $\{\widehat{\delta}_{\ell}, 0 \leq \ell <k_1\}$ and  $\{\widecheck{\delta}_{\ell}, 1 \leq \ell < k_2\}$, under some signal configurations, when the multiple testing problem is  \textit{asymmetric and/or inhomogeneous}. In this case, we 
propose combining the above procedures, i.e.,  stop as soon as any of them does so, and use the corresponding decision rule upon stopping. If multiple stopping criteria are satisfied at the same time, we then use the decision rule that rejects the most null hypotheses.  

 Formally,  the proposed procedure $\delta_L := (T_L, D_L)$ is defined as follows:
\begin{align}\label{leap_def}
\begin{split}
T_L &:= \min\left\{\min_{0 \leq \ell < k_1} \widehat{\tau}_{\ell},\;  \min_{1 \leq \ell < k_2} \widecheck{\tau}_{\ell} \right\}, \\
D_L &:= \left( \bigcup_{0 \leq \ell < k_1, \widehat{\tau}_{\ell} = T_{L}} \widehat{D}_{\ell} \right)\; \bigcup \;
\left( \bigcup_{1 \leq \ell < k_2, \widecheck{\tau}_{\ell} = T_{L}}  \widecheck{D}_{\ell} \right),
\end{split}
\end{align}
and we  refer to it  as  ``\procGF", because $\widehat{\delta}_{\ell}$ (resp. $\widecheck{\delta}_{\ell}$)   
``leaps" across the $\ell$ least significant  negative (resp. positive) LLRs.


\subsection{Error control of the Leap rule} \label{subsec:leap_error_control}
We now show that the  \procGF~can   control  generalized familywise error rates of both types.

\begin{theorem}\label{leap_error_control}
Assume \eqref{sep_lr} holds. For any $\alpha, \beta \in (0,1)$ we have that  $\delta_L \in \Delta_{k_1,k_2}(\alpha,\beta)$ when the thresholds are selected as follows:
 \begin{equation}\label{leap_thresholds}
  a = |\log(\beta)| + \log(2^{k_2} C^{J}_{k_2}), \quad b = |\log(\alpha)| + \log(2^{k_1} C^{J}_{k_1}).
 \end{equation}
\end{theorem}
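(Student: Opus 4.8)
The plan is to treat the two requirements in \eqref{gfam} separately and symmetrically: bound $\Pro_A(|D_L\setminus A|\ge k_1)$ by the positive threshold $b$ (yielding $\alpha$) and $\Pro_A(|A\setminus D_L|\ge k_2)$ by $a$ (yielding $\beta$), uniformly over $A\subseteq[J]$. I focus on the false-positive bound; the false-negative bound follows by the evident symmetry that interchanges positive and non-positive LLRs, the roles of $A$ and $A^c$, the hatted and checked sub-procedures, and the triples $(k_1,b,\alpha)\leftrightarrow(k_2,a,\beta)$.

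The engine is a likelihood-ratio change of measure. For $S\subseteq A^c$, the streams in $S$ carry noise under $\Pro_A$ but signal under $\Pro_{A\cup S}$, so \eqref{ll_diff} (with $C=A\cup S$) gives $d\Pro_A/d\Pro_{A\cup S}=\exp(-\sum_{j\in S}\lambda^j(n))$ on $\cF_n$. By the Wald likelihood-ratio identity (optional sampling for the nonnegative $\Pro_{A\cup S}$-martingale $\exp(-\sum_{j\in S}\lambda^j(n))$), for any $E\in\cF_{T_L}$ with $E\subseteq\{T_L<\infty\}$ we have $\Pro_A(E)=\Exp_{A\cup S}[\exp(-\sum_{j\in S}\lambda^j(T_L))\,\mathbf{1}_E]$. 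Hence, if $\sum_{j\in S}\lambda^j(T_L)\ge b$ on $E$, then $\Pro_A(E)\le e^{-b}$. Under \eqref{sep_lr} each $\lambda^j$ diverges to $\pm\infty$, so $T_L<\infty$ almost surely and $D_L$ is well defined.

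The combinatorial heart is to cover $\{|D_L\setminus A|\ge k_1\}$ by at most $2^{k_1}C^J_{k_1}$ such events. Two observations are used. First, by \eqref{leap_def} together with the tie-breaking rule ``reject the most nulls,'' on $\{T_L<\infty\}$ the set $D_L$ coincides with the decision of a single active sub-procedure (the active $\widehat\delta_\ell$ with the largest $\ell$, or, if none is active, the active $\widecheck\delta_{\ell'}$ with the smallest $\ell'$), whose stopping condition therefore holds at $T_L$. Second, each such stopping condition forces a block of the smallest positive LLRs to sum above $b$, and for any further selection one has the elementary ordering inequality that the sum of any $m$ of the positive LLRs exceeds the sum of the $m$ smallest. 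Now set $P:=\{j:\lambda^j(T_L)>0\}$. If $\widehat\delta_\ell$ is active, at most $\ell<k_1$ false positives can be ``leaped-over'' non-positive streams, so at least $k_1-\ell$ false positives lie in $P$; choosing a size-$k_1$ subset $B$ of $D_L\setminus A$ whose positive part $S:=B\cap P$ contains these, the stopping condition and the ordering inequality give $\sum_{j\in S}\lambda^j(T_L)\ge b$ (and $S\ne\emptyset$). If instead $\widecheck\delta_{\ell'}$ is active, all false positives lie in $P$, $|S|=k_1$, and the block ranked $\ell'+1,\dots,\ell'+k_1$ again yields the same conclusion. Thus the false-positive event is contained in $\bigcup_{B,S}E_{B,S}$, where $B$ ranges over the size-$k_1$ subsets of $A^c$, $S$ over the subsets of $B$, and $E_{B,S}:=\{T_L<\infty,\ B\subseteq D_L,\ S=B\cap P,\ \sum_{j\in S}\lambda^j(T_L)\ge b\}$. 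Each $\Pro_A(E_{B,S})\le e^{-b}$ by the change of measure applied with the set $S$, and there are at most $C^J_{k_1}\,2^{k_1}$ pairs $(B,S)$; with $b$ as in \eqref{leap_thresholds} this gives $\Pro_A(|D_L\setminus A|\ge k_1)\le 2^{k_1}C^J_{k_1}e^{-b}=\alpha$, uniformly in $A$.

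I expect the main obstacle to be exactly the reduction and bookkeeping above: since $D_L$ is defined as a \emph{union} of decision sets over all sub-procedures attaining the minimum, one must first argue that this union collapses to one sub-procedure's decision, and then track precisely which of the $k_1$ false positives are positive-LLR streams (the set $S$, controlled by the stopping rule) versus leaped-over non-positive streams (which are discarded by the change of measure). The factor $2^{k_1}$ is precisely the cost of enumerating this unknown split of $B$ into its two types. The one delicate point is checking that the ordering inequality survives the index shift in the ``leap'' (the block of ranks $\ell'+1,\dots,\ell'+k_1$ in $\widecheck\tau_{\ell'}$, and $1,\dots,k_1-\ell$ in $\widehat\tau_\ell$); beyond this, the argument is a union bound, and the false-negative half is its verbatim mirror image with threshold $a$ and constant $2^{k_2}C^J_{k_2}$.
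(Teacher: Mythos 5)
Your proof is correct and follows essentially the same route as the paper's: a union bound over the $C^J_{k_1}$ (resp.\ $C^J_{k_2}$) candidate subsets $B$, a further enumeration of which elements of $B$ carry positive versus non-positive LLRs upon stopping (the source of the $2^{k_1}$, resp.\ $2^{k_2}$, factor), the ordering inequality extracted from the active sub-procedure's stopping condition, and a likelihood-ratio change of measure $\Pro_A\to\Pro_{A\cup S}$ at $T_L$. The only difference is organizational: the paper routes the argument through a per-sub-procedure lemma and exploits the disjointness of the events $\{D_L=\widehat{D}_\ell\}$ for one of the two families, whereas you enumerate the split $S=B\cap P$ directly after observing that $D_L$ collapses to a single active sub-procedure's decision; both yield the same constants.
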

\begin{proof}
The proof can be found in Appendix \ref{proof:leap_error_control}.
\end{proof}

The above threshold values are sufficient for establishing the asymptotic optimality of the Leap rule, but may be conservative in practice. Thus, as in the previous section, we recommend  using simulation  to find the thresholds that attain the target error probabilities. This means simulating for every  $A \subset [J]$ the error probabilities of the Leap rule,
$\Pro_{A}(|D_L(a,b) \setminus A| \geq k_1)$ and
$\Pro_{A}(|A \setminus D_L(a,b)| \geq k_2),
$
for various pairs of thresholds, $a$ and $b$, and  selecting the values for which the maxima (with respect to $A$) of the above error probabilities  match the nominal levels, $\alpha$ and $\beta$, respectively. 

As in the previous section, this task is facilitated when the multiple testing problem has a special structure. Specifically, when  it is \textit{symmetric} 
and  the user-specified parameters are selected so that  $\alpha = \beta$ and $k_1 = k_2$, i.e., when condition \eqref{sym_hom_GF} holds,   we can select without  any loss of generality the thresholds to be equal ($a =b$). Moreover, if the  multiple testing problem is \textit{homogeneous}, the discussion following  Theorem \ref{KM_err_control} also applies here.



\subsection{Asymptotic optimality}  \label{subsec:asymp_opt_leap}

For any  $B \subset [J]$ and 
 $1\leq \ell \leq u \leq J$,
  we denote by
$$
\mathcal{I}_1^{(1)}(B) \leq  \ldots \leq \mathcal{I}_1^{(|B|)}(B)
$$
the  increasingly ordered sequence of $\MI_1^{j}, j \in B$,  and by 
$$
\mathcal{I}_0^{(1)}(B) \leq  \ldots \leq \mathcal{I}_0^{(|B|)}(B)
$$
the increasingly ordered sequence of $\MI_0^{j}, j \in B$, and  we set
\begin{align*}
\mathcal{D}_1 (B; \ell, u) &:= \sum_{j=\ell}^{u} \mathcal{I}_1^{(j)}(B), \quad   \text{where} \quad 
\mathcal{I}_1^{(j)}(B) = \infty\quad \text{for} \quad j > |B| ,\\
\mathcal{D}_0 (B; \ell, u) &:= \sum_{j=\ell}^{u} \mathcal{I}_0^{(j)}(B),  \quad   \text{where} \quad 
\mathcal{I}_0^{(j)}(B) = \infty\quad \text{for} \quad j > |B|.
\end{align*}

The following lemma provides an asymptotic upper bound on the expected sample size of the stopping times that compose the stopping time of the \procGF. 

\begin{lemma} \label{taus_upper_bound_leap}
Assume \eqref{cond_for_asym_opt} holds.
For any $A \subset [J]$ we have as $a, b \to \infty$ 
\begin{align*}
\Exp_A[\widehat{\tau}_\ell] &\leq \max\left\{ \frac{b(1 + o(1)) }{\mathcal{D}_1(A; 1, k_1 -\ell)} \;  , \;  \frac{a(1 + o(1))}{\mathcal{D}_0(A^c; \ell+1 ,\ell+ k_2)} \right\} ,
\;  0 \leq \ell <k_{1},  \\
\Exp_A[\widecheck{\tau}_\ell] &\leq  \max\left\{ \frac{b(1 + o(1))}{\mathcal{D}_1(A; \ell+1,\ell+k_1)} \,  , \,  \frac{a(1 + o(1))}{\mathcal{D}_0(A^c; 1 , k_2 - \ell ) } \right\}, \;  0 \leq \ell <k_{2}. 
\end{align*}
\end{lemma}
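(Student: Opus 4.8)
The plan is to reduce the two simultaneous stopping requirements in $\widehat{\tau}_\ell$ to the almost-sure linear growth rates of the ordered log-likelihood ratios, and then to upgrade the resulting almost-sure bound to a bound in expectation using condition \eqref{cond_for_asym_opt}. Throughout I work under $\Pro_A$, write $r_1 := \mathcal{D}_1(A; 1, k_1-\ell)$ and $r_0 := \mathcal{D}_0(A^c; \ell+1, \ell+k_2)$, and abbreviate $U_\ell(n) := \sum_{j=1}^{k_1-\ell}\widehat{\lambda}^{j}(n)$ and $V_\ell(n) := \sum_{j=\ell+1}^{\ell+k_2}\widecheck{\lambda}^{j}(n)$, so that $\widehat{\tau}_\ell = \inf\{n\ge 1: U_\ell(n)\ge b \text{ and } V_\ell(n)\ge a\}$.

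First I would record the elementary but crucial containment $\{\widehat{\tau}_\ell > n\}\subseteq\{U_\ell(n)<b\}\cup\{V_\ell(n)<a\}$, valid for every $n$: if both defining inequalities held at time $n$, then $\widehat{\tau}_\ell\le n$. Summing tail probabilities, $\Exp_A[\widehat{\tau}_\ell]=\sum_{n\ge 0}\Pro_A(\widehat{\tau}_\ell>n)\le n^{\ast}+\sum_{n\ge n^{\ast}}\big[\Pro_A(U_\ell(n)<b)+\Pro_A(V_\ell(n)<a)\big]$, where, for a fixed slack $\delta>0$ smaller than every $\MI_1^{j}$ and $\MI_0^{j}$, I set $n^{\ast}:=\max\{\lceil b/(r_1-(k_1-\ell)\delta)\rceil,\ \lceil a/(r_0-(2\ell+k_2)\delta)\rceil\}$ (with the conventions in \eqref{exceed_convention} turning $U_\ell$ or $V_\ell$ into $+\infty$, and the matching entries of $r_1,r_0$ into $+\infty$, whenever a requested subset exceeds the number of available streams). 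It then remains to show that the residual sum is $o(n^{\ast})$.

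The order-statistic step makes this precise. On the good event $\Omega_n(\delta):=\{|\lambda^{j}(n)/n-\MI_1^{j}|\le\delta,\ j\in A\}\cap\{|\lambda^{j}(n)/n+\MI_0^{j}|\le\delta,\ j\in A^c\}$ every signal LLR is positive and every noise LLR negative, so the positive LLRs are exactly $\{\lambda^{j}(n):j\in A\}$ and the ordering of absolute values matches that of the information numbers. Using that the sum of the $m$ smallest numbers is nondecreasing under termwise domination (it equals the minimum of the subset-sums over size-$m$ subsets, and a minimum preserves the inequality), I obtain $U_\ell(n)\ge n(r_1-(k_1-\ell)\delta)$; and writing the ``middle band'' as $V_\ell(n)=\big(\text{sum of the }\ell+k_2\text{ smallest}\big)-\big(\text{sum of the }\ell\text{ smallest}\big)$ and bounding the first by termwise lower and the second by termwise upper domination yields $V_\ell(n)\ge n(r_0-(2\ell+k_2)\delta)$. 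Hence for $n\ge n^{\ast}$, where $b\le n(r_1-(k_1-\ell)\delta)$ and $a\le n(r_0-(2\ell+k_2)\delta)$, one has $\{U_\ell(n)<b\}\cup\{V_\ell(n)<a\}\subseteq\Omega_n(\delta)^c$.

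Finally I would invoke \eqref{cond_for_asym_opt}. In the complete-convergence regime \eqref{CLLN}, a union bound gives $\Pro_A(\Omega_n(\delta)^c)\le\sum_{j\in A}\Pro_1^{j}(|\lambda^{j}(n)/n-\MI_1^{j}|>\delta)+\sum_{j\in A^c}\Pro_0^{j}(|\lambda^{j}(n)/n+\MI_0^{j}|>\delta)$, which is summable in $n$; thus the residual sum is $O(1)=o(n^{\ast})$ and $\Exp_A[\widehat{\tau}_\ell]\le n^{\ast}(1+o(1))$. Letting $a,b\to\infty$ and then $\delta\to 0$ replaces $r_1-(k_1-\ell)\delta$ by $r_1$ and $r_0-(2\ell+k_2)\delta$ by $r_0$, giving the stated bound; the bound for $\widecheck{\tau}_\ell$ follows verbatim after interchanging the roles of the positive and negative LLRs. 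I expect the main obstacle to be the i.i.d.\ branch of \eqref{cond_for_asym_opt}, where only the SLLN \eqref{LLN} is assumed (finite information numbers, no variance) and the probabilities $\Pro_A(\Omega_n(\delta)^c)$ need not be summable: there one must instead control the residual through renewal-theoretic first-passage expectations for the subset-sum random walks $\sum_{j\in S}\lambda^{j}(n)$, whose $L^1$ asymptotics $\sim b/\mu_S$ hold under a finite mean. Reconciling the order-statistic functionals with these passage times, together with the bookkeeping forced by convention \eqref{exceed_convention}, is the delicate part of the argument.
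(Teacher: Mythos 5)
Your argument for the complete-convergence branch of \eqref{cond_for_asym_opt} is correct, and it is in substance the computation the paper performs in that case: the tail-sum decomposition, the comparison of the order-statistic functionals $U_\ell,V_\ell$ with sums of ordered information numbers via ``the sum of the $m$ smallest equals the minimum over size-$m$ subset-sums, which preserves termwise domination,'' and the summability of $\Pro_A(\Omega_n(\delta)^c)$ are exactly the ingredients of case (i) of the paper's renewal-type Lemma~\ref{multiple_cross}. (One small slip: on $\Omega_n(\delta)$ the ordering of the $|\lambda^j(n)|$ need \emph{not} match that of the information numbers when two of them are within $2\delta$; but you never actually use that claim, only the subset-sum characterization, so nothing breaks.) The structural difference from the paper is that the paper does not argue on a good event: in Appendix~\ref{proof:taus_upper_bound} it first dominates $\widehat{\tau}_\ell$ \emph{pathwise} by a stopping time $\tau'$ requiring every signal LLR to be positive, every noise LLR negative, every subset-sum $\sum_{i\in B}\lambda^i(n)$ with $B\subset A$, $|B|=k_1-\ell$ to exceed $b$, and every subset-sum over the deterministic class $\mathcal{M}_0=\{B\subset A^c:|B|=k_2,\ \MI_0^i\ge\MI_0^{(\ell+1)}(A^c)\ \forall i\in B\}$ to be below $-a$; the combinatorial point is that deleting $\ell$ elements shifts each order statistic up by at most $\ell$ positions, so $V_\ell(n)\ge\min_{B\in\mathcal{M}_0}\sum_{j\in B}|\lambda^j(n)|$ whenever the signs are right. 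This converts $\widehat{\tau}_\ell$ into a simultaneous first-passage time of finitely many genuine LLR subset-sum processes, to which one renewal lemma applies under \emph{either} branch of \eqref{cond_for_asym_opt}.

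That reduction is exactly what your proposal is missing, and the omission is a genuine gap rather than bookkeeping: the hypothesis of the lemma is a disjunction, and in the i.i.d.\ branch only the SLLN \eqref{LLN} is assumed, so $\sum_n\Pro_A(\Omega_n(\delta)^c)$ need not converge and the tail-sum route collapses, as you yourself note. Two things are needed to close it. First, the pathwise domination above --- which you establish only on $\Omega_n(\delta)$ --- is what puts the ``subset-sum random walks'' into the picture at all, since $U_\ell$ and $V_\ell$ are order-statistic functionals whose constituent indices change with $n$. Second, even after that reduction one needs an $L^1$ statement for the \emph{maximum} of several first-passage times, not merely the almost-sure asymptotics $\nu(\vec{b})/N(\vec{b})\to1$; the paper obtains this from uniform integrability of $\{\nu'(c)/c\}$ via Farrell's theorem (case (ii) of Lemma~\ref{multiple_cross}). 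Neither step is supplied, so as written the proposal proves the lemma only under \eqref{CLLN}.
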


\begin{proof}
The  proof can be found in Appendix \ref{proof:taus_upper_bound}.
\end{proof}

If thresholds are selected  according to \eqref{leap_thresholds}, then the upper bounds in the previous lemma are equal (to a first-order asymptotic approximation) to
\begin{align*} 
\widehat{L}_A (\ell; \alpha, \beta) &:=  \max \left\{ 
\frac{|\log \alpha| }{\mathcal{D}_1(A; 1, k_1 -\ell)} \; , \;  \frac{|\log \beta|}{\mathcal{D}_0(A^c; \ell+1,  \ell+ k_2)} \right\} \; \text{ for }  \ell < k_1   , \\
\widecheck{L}_A (\ell; \alpha, \beta) &:=\max \left\{ 
\frac{|\log \alpha|}{\mathcal{D}_1(A;\ell+1, \ell+k_1)}  \; , \; \frac{|\log \beta|}{\mathcal{D}_0(A^c;1 ,k_2 - \ell )}\right\} \; \text{ for }  \ell < k_2,
\end{align*}
and from the definition of Leap rule in \eqref{leap_def}  it follows that  as $\alpha, \beta \rightarrow 0$ we have 
$
\Exp_A[T_L]  \leq  L_A(k_1,k_2,\alpha, \beta) \, (1 + o(1)), 
$ 
where 
\begin{align}\label{fund_GF_limit}
L_A(k_1,k_2,\alpha, \beta):= \min \left\{ \min_{0 \leq \ell <k_1}  \widehat{L}_A (\ell; \alpha, \beta)  \; ,  \; \min_{0 \leq \ell <k_2} \widecheck{L}_A (\ell; \alpha, \beta) \right\}.
\end{align}
In the next theorem we show that it is not possible to achieve a smaller ESS, to a first-order asymptotic approximation as $\alpha, \beta \rightarrow 0$, proving in this way the  asymptotic optimality of the Leap rule.

\begin{theorem} \label{main}
Assume \eqref{cond_for_asym_opt} holds and that
the thresholds in the \procGF~are selected such that $\delta_L \in \Delta_{k_1,k_2}(\alpha,\beta)$ and $a \sim |\log(\beta)|, b \sim |\log(\alpha)|$, e.g. according to \eqref{leap_thresholds}.  Then, for any $A \subset [J]$  we have as  $\alpha, \beta \to 0$,
\begin{align*}
\Exp_A \left[ T_L \right]
   \;\sim \; L_A(k_1,k_2,\alpha, \beta) \;\sim\; N_A^*(k_1,k_2,\alpha,\beta).
\end{align*}

\end{theorem}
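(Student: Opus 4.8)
I organize the argument around the fact that the upper bound is already in hand. Since $\delta_L \in \Delta_{k_1,k_2}(\alpha,\beta)$ we have $N_A^*(k_1,k_2,\alpha,\beta) \leq \Exp_A[T_L]$, and Lemma~\ref{taus_upper_bound_leap} together with the definition \eqref{leap_def} of the \procGF~gives $\Exp_A[T_L] \leq L_A(k_1,k_2,\alpha,\beta)\,(1+o(1))$. Thus the whole content of the theorem reduces to the matching lower bound
$$
N_A^*(k_1,k_2,\alpha,\beta) \;\geq\; L_A(k_1,k_2,\alpha,\beta)\,(1-o(1)), \qquad \alpha,\beta \to 0,
$$
after which the chain $L_A(1-o(1)) \leq N_A^* \leq \Exp_A[T_L] \leq L_A(1+o(1))$ forces all three quantities to be asymptotically equivalent.

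For the lower bound I would first record a change-of-measure inequality of the kind underlying Theorem~\ref{KM_lower_bound}: for any $C \subset [J]$ and any $E \in \MF_{T}$ with $\Pro_C(E) \leq \gamma$ and $\liminf \Pro_A(E) > 0$, condition \eqref{cond_for_asym_opt} yields $\Exp_A[T] \geq |\log\gamma|/\MI^{A,C}\,(1-o(1))$ as $\gamma\to 0$ (one controls $\sup_{n\leq t}\lambda^{A,C}(n)\leq \MI^{A,C}t(1+o(1))$ on a high-probability set, exactly as in the upper-bound arguments). The combinatorial core is to knock out every ``acceptable'' decision. Writing $\mathcal{V} := \{D_0 : |D_0\setminus A| < k_1,\ |A\setminus D_0| < k_2\}$, the error control gives $\Pro_A(D\in\mathcal{V}) \geq 1-\alpha-\beta \to 1$. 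Fix $D_0\in\mathcal{V}$ and put $f:=|D_0\setminus A|<k_1$, $g:=|A\setminus D_0|<k_2$. To force at least $k_1$ false positives I take $C=A\setminus S_1$ with $S_1$ any $k_1-f$ streams of $A\cap D_0$; then $E=\{|D\setminus C|\geq k_1\}$ satisfies $\Pro_C(E)\leq\alpha$ while $\{D=D_0\}\subseteq E$. Symmetrically, to force at least $k_2$ false negatives I take $C=A\cup S_0$ with $S_0$ any $k_2-g$ streams of $A^c\setminus D_0$, so $E=\{|C\setminus D|\geq k_2\}$ has $\Pro_C(E)\leq\beta$. Choosing $S_1$ on the smallest-$\MI_1$ and $S_0$ on the smallest-$\MI_0$ streams bounds the two rates by $\mathcal{D}_1(A;g+1,g+k_1-f)$ and $\mathcal{D}_0(A^c;f+1,f+k_2-g)$, so every $D_0\in\mathcal{V}$ forces
$$
\Exp_A[T] \;\geq\; \max\!\left\{ \frac{|\log\alpha|}{\mathcal{D}_1(A; g+1, g+k_1-f)},\; \frac{|\log\beta|}{\mathcal{D}_0(A^c; f+1, f+k_2-g)} \right\}(1-o(1)).
$$

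The decisive step is to show that the minimum of this expression over the grid $0\leq f<k_1$, $0\leq g<k_2$ equals $L_A$, i.e.\ that a ``mixed'' give-up strategy with $f,g\geq 1$ never helps. I would prove this by a diagonal monotonicity argument: replacing $(f,g)$ by $(f-1,g-1)$ enlarges $\mathcal{D}_1(A;g+1,g+k_1-f)$ (it gains the rank-$g$ signal) and $\mathcal{D}_0(A^c;f+1,f+k_2-g)$ (it gains the rank-$f$ noise stream), hence decreases both fractions and the maximum; iterating drives any interior point to the boundary $\{f=0\}\cup\{g=0\}$, on which the expression becomes exactly $\widecheck{L}_A(g)$ and $\widehat{L}_A(f)$. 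This is precisely why the \procGF~needs only the two one-parameter families $\{\widehat{\delta}_\ell\}$ and $\{\widecheck{\delta}_\ell\}$. Consequently $L_A=\min_{D_0\in\mathcal{V}}(\cdot)$, so \emph{every} admissible decision forces expected time at least $L_A(1-o(1))$; aggregating the per-$D_0$ change of measure over the finitely many alternatives $C$ on the high-probability event $\{D\in\mathcal{V}\}\cap\{T\leq L_A(1-\epsilon)\}$, as in the proof of Theorem~\ref{KM_lower_bound}, converts this into $\Exp_A[T]\geq L_A(1-o(1))$ for every $\delta\in\Delta_{k_1,k_2}(\alpha,\beta)$.

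I expect the main obstacle to be the aggregation across decisions in the presence of \emph{two} error budgets: unlike the single-error setting of Theorem~\ref{KM_lower_bound}, each acceptable $D_0$ must be eliminated by either an $\alpha$-type or a $\beta$-type alternative, and the bound must stay uniform while $\alpha$ and $\beta$ tend to $0$ at arbitrary relative rates. Handling this cleanly requires partitioning $\mathcal{V}$ according to which error type attains the outer maximum, running the change of measure on each part with the correct tolerance, and verifying that the $o(1)$ terms—arising from the fluctuations of $\lambda^{A,C}(n)$ about $\MI^{A,C}n$ under \eqref{cond_for_asym_opt}—can be chosen uniformly over the finitely many $C$ involved. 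The genuinely new combinatorial content, however, is the diagonal reduction to the boundary, which is what pins the lower bound to $L_A$ rather than to the a priori smaller interior minimum.
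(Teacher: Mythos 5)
Your proposal is correct and follows essentially the same route as the paper: the upper bound via Lemma~\ref{taus_upper_bound_leap}, and the lower bound via a per-decision change of measure over the acceptable set $\MU_{k_1,k_2}(A)$ with exactly the paper's construction of the two alternatives ($C=A\setminus S_1$ and $C=A\cup S_0$ with extremal information numbers), your ``diagonal monotonicity'' reduction to the boundary $\{f=0\}\cup\{g=0\}$ being an iterated version of the paper's Lemma~\ref{2D_max_lemma}, and the uniform-in-$T$ control of the fluctuation terms matching the paper's two-process renewal Lemma~\ref{T_small_ST_larger_lemma}. The only detail to add is the degenerate cases $|D_0|<k_1$ or $|D_0^c|<k_2$, where one of the two alternatives cannot be built and the corresponding $\mathcal{D}$-term is infinite by convention, so the remaining single fraction must be checked to exceed $L_A$ on its own (the paper's Cases 2--3 of Lemma~\ref{Two_B_lemma}).
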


\begin{proof}
In view of the discussion prior to the theorem, it suffices to show that for any $A \subset [J]$  we have 
as $\alpha, \beta \to 0$ that 
$$
N_A^*(k_1,k_2,\alpha,\beta) \geq  L_A(k_1,k_2,\alpha, \beta) \, (1 - o(1)).
$$ 
For the proof of this asymptotic lower bound 
we employ  similar ideas as in the proof of Theorem \ref{KM_lower_bound} in the previous section. The  change-of-measure argument is more complicated now, due to the interplay of the two kinds of error. We carry out the proof in Appendix \ref{proof_of_GF_lower_bound}.
\end{proof}


\begin{remark}
 When $k_1=k_2=1$, the  asymptotic optimality of the  \procInt~was established in \cite{song2017asymptotically} only in the i.i.d. case. Since the Leap rule coincides with the Intersection rule when $k_1=k_2=1$, 
 Theorem \ref{main} generalizes this result in \cite{song2017asymptotically} beyond the i.i.d. case.
\end{remark}

We motivated the  Leap rule  by the inadequacy of the asymmetric Sum-Intersection rule, $\delta_0$, in the case of \textit{asymmetric and/or inhomogeneous} testing problems. In the following corollary we show that $\delta_0$ is  asymptotically optimal  when
(i)  condition \eqref{sym_hom_KM} holds, which is the case when the multiple testing problem is symmetric and homogeneous,  and also (ii)  the user-specified parameters are   selected in  a symmetric way,  i.e., when  \eqref{sym_hom_GF} holds.  In the same setup we establish the asymptotic optimality of the \procInt, $\delta_I$, defined in \eqref{intersection_rule}.

\begin{corollary} \label{delta_0_cor}
Suppose \eqref{cond_for_asym_opt} and  \eqref{sym_hom_KM}--\eqref{sym_hom_GF} hold and consider the  asymmetric Sum-Intersection rule  $\delta_0(b,b)$ with $b=b_\alpha$ and the Intersection rule $\delta_I(b,b)$ with $b=b_\alpha/k_1$, where $b_a$ is defined in \eqref{OS_threshold} with $k=k_1$. 
Then  $\delta_0, \delta_I  \in  \Delta_{k_1,k_1}(\alpha, \alpha)$, and 
for any $A \subset [J]$ we have  as $\alpha  \to 0$ that 
\begin{align*}
\Exp_A \left[ \tau_0 \right] \sim 
\Exp_A \left[ T_I \right] \sim
\frac{|\log(\alpha)| }{k_1 \MI}    \sim N_A^*(k_1,k_1,\alpha,\alpha).
\end{align*}
\end{corollary}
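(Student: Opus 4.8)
The plan is to establish the single value $|\log\alpha|/(k_1\MI)$ as an asymptotic lower bound for $N_A^*(k_1,k_1,\alpha,\alpha)$ and as an asymptotic upper bound for \emph{both} $\Exp_A[\tau_0]$ and $\Exp_A[T_I]$, and then to squeeze. Once $\delta_0$ and $\delta_I$ are shown to lie in $\Delta_{k_1,k_1}(\alpha,\alpha)$, membership forces $N_A^*(k_1,k_1,\alpha,\alpha)\leq\min\{\Exp_A[\tau_0],\Exp_A[T_I]\}$, so the two matching bounds pin every quantity in the display to $|\log\alpha|/(k_1\MI)$. It therefore suffices to prove (a) the two error-control statements, (b) the matching upper bounds, and (c) the matching lower bound, using \eqref{sym_hom_KM}--\eqref{sym_hom_GF} to collapse the general information numbers to the common value $\MI$.

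For the error control I would argue for $\delta_0$ directly, mimicking the proof of Theorem \ref{KM_err_control}. On $\{|D_0\setminus A|\geq k_1\}$ there is a $k_1$-subset $B\subset A^c$ whose streams all have positive LLR at $\tau_0$; taking $C=A\cup B$ in \eqref{ll_diff} gives $\lambda^{A,C}(\tau_0)=-\sum_{j\in B}\lambda^j(\tau_0)\leq-\sum_{j=1}^{k_1}\widehat\lambda^j(\tau_0)\leq-b$, since any $k_1$ positive statistics dominate the $k_1$ smallest ones. A change of measure $\Pro_A=e^{\lambda^{A,C}}\Pro_C$ on $\MF_{\tau_0}$ and a union bound over the at most $C_{k_1}^J$ choices of $B$ then yield $\Pro_A(|D_0\setminus A|\geq k_1)\leq C_{k_1}^J e^{-b}=\alpha$ at $b=b_\alpha$; the symmetric computation with $C=A\setminus B$ controls the false-negative probability by $\alpha$ at $a=b_\alpha$. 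For $\delta_I$ I would instead invoke Corollary \ref{inter_sym}(i) with $k=k_1$, which gives $\delta_I(b_\alpha/k_1,b_\alpha/k_1)\in\Delta_{k_1}(\alpha)$, and transfer this control via the inclusions $\{|D_I\setminus A|\geq k_1\}\cup\{|A\setminus D_I|\geq k_1\}\subset\{|A\triangle D_I|\geq k_1\}$.

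For the upper bounds, since $\tau_0=\widehat\tau_0$ is the $\ell=0$ member of the Leap family, Lemma \ref{taus_upper_bound_leap} gives $\Exp_A[\tau_0]\leq\max\{b/\mathcal{D}_1(A;1,k_1),\,a/\mathcal{D}_0(A^c;1,k_2)\}(1+o(1))$. Under \eqref{sym_hom_KM} with $k_1=k_2$ and $a=b=b_\alpha$, each \emph{finite} $\mathcal{D}$ equals $k_1\MI$, and the two terms cannot both be infinite, because $|A|<k_1$ together with $|A^c|<k_2$ would force $J<k_1+k_2$; hence the maximum is $(b_\alpha/(k_1\MI))(1+o(1))\sim|\log\alpha|/(k_1\MI)$. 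For $\delta_I$, Corollary \ref{inter_sym}(ii) with $k=k_1$ yields $\Exp_A[T_I]\leq(|\log\alpha|/(k_1\mathcal{D}_A(1)))(1+o(1))$, and under \eqref{sym_hom_KM} one has $\mathcal{D}_A(1)=\MI$, giving the identical bound.

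The lower bound comes from specializing $L_A(k_1,k_1,\alpha,\alpha)$ in \eqref{fund_GF_limit}, which Theorem \ref{main} identifies as a first-order lower bound for $N_A^*$. Under \eqref{sym_hom_KM} each $\widehat L_A(\ell)$ reduces to $\max\{|\log\alpha|/((k_1-\ell)\MI),\,|\log\alpha|/(k_2\MI)\}=|\log\alpha|/((k_1-\ell)\MI)$ whenever $\mathcal{D}_1(A;1,k_1-\ell)$ is finite, and to $|\log\alpha|/(k_1\MI)$ otherwise, with the analogous statement for $\widecheck L_A(\ell)$. I expect the main (though modest) obstacle to be checking that the minimization over $\ell$ never falls below $|\log\alpha|/(k_1\MI)$: the same counting argument shows $\mathcal{D}_1(A;1,k_1-\ell)$ and $\mathcal{D}_0(A^c;\ell+1,\ell+k_2)$ cannot be simultaneously infinite for any admissible $\ell$, so no term vanishes, and one concludes $L_A(k_1,k_1,\alpha,\alpha)=|\log\alpha|/(k_1\MI)$, attained at $\ell=0$. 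Combining this lower bound with the two upper bounds and the error-control memberships completes the squeeze and proves the displayed equivalences.
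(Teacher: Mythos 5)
Your proposal is correct and follows essentially the same route as the paper: error control for $\delta_0$ via the $\ell=0$ change-of-measure argument (the content of Lemma \ref{taus_error_control}), upper bounds from Lemma \ref{taus_upper_bound_leap} and Corollary \ref{inter_sym}(ii), and the squeeze against $L_A(k_1,k_1,\alpha,\alpha)=|\log\alpha|/(k_1\MI)$ supplied by Theorem \ref{main}. The only (harmless) variation is that you obtain the error control of $\delta_I$ by transferring from Corollary \ref{inter_sym}(i) through the inclusion $\{|D_I\setminus A|\geq k_1\}\cup\{|A\setminus D_I|\geq k_1\}\subset\{|A\,\triangle\,D_I|\geq k_1\}$, where the paper instead modifies the proof of Lemma \ref{taus_error_control}; your explicit check of the infinite-denominator cases in $L_A$ is a detail the paper leaves implicit.
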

\begin{proof}
The proof can be found in Appendix \ref{proof_of_delta_0_cor}.
\end{proof}

\begin{remark}
In Section \ref{sec:asym_case} we will  illustrate numerically that when condition \eqref{sym_hom_KM} is violated,  both $\delta_0$ and $\delta_I$ fail to be asymptotically optimal.
\end{remark}


\subsection{Fixed-sample size rules}\label{subsec:GF_fix_comp}
We now  focus on the i.i.d. case \eqref{iid_case} and consider procedures that stop at a \textit{deterministic} time, which is selected to control the generalized familywise error rates. 

For simplicity of presentation, we restrict ourselves to  \textit{homogeneous} testing problems, i.e.,  there are densities $f_0$ and $f_1$ such that 
\begin{equation}\label{homo_iid_case}
f_0^j = f_0, \quad f_1^j = f_1
\; \text{ for every } j \in [J].
\end{equation}
This assumption allows us to omit the dependence on the stream index $j$ and write $\cI_0 := \cI_0^j$, $\cI_1 := \cI_1^j$ and $\Phi:= \Phi^j$, where $\Phi^j$ is defined in \eqref{convex_conjugate}. Moreover,   without loss of generality,
we  apply the  MNP rule \eqref{NP_fixed_sample} with the same  threshold for each stream. 

We further assume that  user-specified parameters  are selected as follows
\begin{equation} \label{partial_sym}
k_1 = k_2, \;\;\; \alpha = \beta^d\; \text{ for some } d > 0,
\end{equation}
and  that  for each $d>0$  there exists some $h_d \in (-\cI_0, \cI_1)$ such that
\begin{equation} \label{h_hat_d}
\Phi(h_d)/ d = \Phi(h_d) - h_d  .
\end{equation}
When $d=1$, condition \eqref{partial_sym} reduces to \eqref{sym_hom_GF} and $h_d$ is equal to $0$.
However, when $d\neq 1$, we allow for an asymmetric treatment of the two kinds of error. 

\begin{theorem}\label{GF_comp_fix_sample_rule}
Consider the multiple testing problem \eqref{homo_iid_case} and assume that the Kullback-Leibler  numbers in \eqref{KL_numbers} are positive and finite. 
Further, assume that \eqref{partial_sym} and \eqref{h_hat_d} hold. Then as $\beta \to 0$,
\begin{align*}
\frac{d\, (1-o(1)) }{(2k_1-1) \Phi({h}_d)}&\leq 
\frac{n^{*}(k_1,k_1,\beta^d,\beta)}{|\log(\beta)|} \leq
\frac{\widehat{n}_{NP}(k_1,k_1,\beta^d,\beta)}{|\log(\beta)|} \sim 
\frac{d}{k_1 \Phi({h}_d)}.
\end{align*}
\end{theorem}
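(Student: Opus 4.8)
The plan is to establish the three relations separately. The middle inequality $n^{*}(k_1,k_1,\beta^d,\beta) \le \widehat{n}_{NP}(k_1,k_1,\beta^d,\beta)$ is immediate from the definitions in \eqref{def:fix_ns} and \eqref{NP_fixed_sample}: the MNP rule with a common threshold is one particular fixed-sample scheme, so the smallest feasible sample size for it cannot be below the smallest feasible sample size over \emph{all} fixed-sample schemes. It then remains to (i) compute the first-order asymptotics of $\widehat{n}_{NP}$, and (ii) prove the left-hand lower bound on $n^{*}$, which must hold for every fixed-sample procedure.

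For (i), I would analyze $\delta_{NP}(n,h\mathbf{1}_{J})$ at a fixed common threshold $h \in (-\cI_0,\cI_1)$. Under $\Pro_A$ the per-stream false-positive probability is $p_0(n,h):=\Pro_0(\lambda(n)>nh)$ and the per-stream false-negative probability is $p_1(n,h):=\Pro_1(\lambda(n)\le nh)$. Using Cram\'er's theorem together with the tilting identity $\Psi_1(\theta)=\Psi(\theta+1)$, where $\Psi$ is as in \eqref{convex_conjugate}, so that the rate function under $\Pro_1$ equals $z\mapsto \Phi(z)-z$, these obey $-\tfrac{1}{n}\log p_0(n,h)\to\Phi(h)$ and $-\tfrac1n\log p_1(n,h)\to\Phi(h)-h$. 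Since the streams are independent and homogeneous, the worst configuration for false positives is $A=\emptyset$ and for false negatives is $A=[J]$, and the relevant error probabilities $\Pro_{\emptyset}(|D_{NP}|\ge k_1)$ and $\Pro_{[J]}(|[J]\setminus D_{NP}|\ge k_1)$ are binomial tails, each asymptotically equal to $C_{k_1}^{J}$ times the $k_1$-th power of the corresponding per-stream probability, hence with logarithmic rates $k_1\Phi(h)$ and $k_1(\Phi(h)-h)$. Imposing the two constraints $\le\beta^d$ and $\le\beta$ forces the minimal feasible $n$ for a given $h$ to be $\tfrac{|\log\beta|}{k_1}\max\{d/\Phi(h),\,1/(\Phi(h)-h)\}\,(1+o(1))$; minimizing over $h$ balances the two terms, the balance equation being precisely \eqref{h_hat_d}, attained at $h=h_d$, where both equal $d/\Phi(h_d)$. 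This yields $\widehat{n}_{NP}\sim d\,|\log\beta|/(k_1\Phi(h_d))$.

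For (ii), I would reduce matters to a binary test between two configurations differing in exactly $2k_1-1$ streams, which is admissible since $2k_1=k_1+k_2\le J$. Take $A=\emptyset$ and $C=\{1,\dots,2k_1-1\}$, and for an arbitrary $(n,D)\in\Delta_{k_1,k_1}(\beta^d,\beta)$ set $E:=\{|D|\le k_1-1\}$. The false-positive control under $\Pro_{\emptyset}$ gives $\Pro_{\emptyset}(E^{c})\le\beta^d$, while the false-negative control under $\Pro_C$ gives $\Pro_C(|C\cap D|\ge k_1)\ge 1-\beta$; as $\{|C\cap D|\ge k_1\}\subseteq E^{c}$, this forces $\Pro_C(E)\le\beta$. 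Thus the $\MF_n$-measurable event $E$ distinguishes $\Pro_{\emptyset}$ from $\Pro_C$ with type-I error $\le\beta^d$ and type-II error $\le\beta$. By the Neyman--Pearson lemma, the smallest sample size admitting such a test equals that of the likelihood-ratio test on $\lambda^{C,\emptyset}(n)=\sum_{j=1}^{2k_1-1}\lambda^{j}(n)$, as in \eqref{ll_diff}, whose two error exponents, computed via Cram\'er's theorem and the homogeneity scaling $\sup_{\theta}\{(2k_1-1)\eta\,\theta-(2k_1-1)\Psi(\theta)\}=(2k_1-1)\Phi(\eta)$, are $(2k_1-1)\Phi(\eta)$ and $(2k_1-1)(\Phi(\eta)-\eta)$ at threshold $\gamma=(2k_1-1)\eta$. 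Balancing these against $\beta^d$ and $\beta$ again produces \eqref{h_hat_d} and $\eta=h_d$, giving $n\ge d\,|\log\beta|/((2k_1-1)\Phi(h_d))\,(1-o(1))$, which is the claimed bound.

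I expect the main obstacle to be the converse large-deviation estimate in step (ii): converting the mere existence of the distinguishing test $E$ into a genuine lower bound on $n$ requires the matching (lower) Cram\'er bounds for the tail probabilities together with the Neyman--Pearson reduction, and careful bookkeeping of the $o(n)$ corrections and of the $\log C_{k_1}^{J}$ contributions from the binomial tails, so that the balancing argument delivers the exact constant $(2k_1-1)\Phi(h_d)$. This part parallels, and can reuse the machinery of, the proof of the generalized mis-classification bound in Theorem \ref{KM_comp_fix_sample_rule}, to which it specializes when $d=1$ and $h_d=0$, where $\Phi(0)=\cC$ is the Chernoff information.
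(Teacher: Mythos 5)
Your proposal is correct and follows essentially the same route as the paper: the middle inequality is definitional, the lower bound on $n^{*}$ is obtained by collapsing any feasible decision rule to a binary test of $\Pro_{\emptyset}$ versus a configuration with $2k_1-1$ signals via the event $\{|D|\ge k_1\}$, and both asymptotics are then extracted from the asymmetric error-exponent balance $\Phi(h_d)/d=\Phi(h_d)-h_d$ together with the product-measure scaling $\Phi^{A}(z)=|A|\,\Phi(z/|A|)$. The Neyman--Pearson reduction plus matching Cram\'er bounds that you flag as the main obstacle is exactly the content of the paper's generalized Chernoff's lemma (Lemma \ref{generalized_chernoff_lemma}), which it proves once and invokes in both places.
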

\begin{proof}
The proof is similar to that of Theorem \ref{KM_comp_fix_sample_rule}, but   requires a \textit{generalization} of Chernoff's lemma \cite[Corollary 3.4.6]{dembo1998large} to account for the asymmetry of the two kinds of error.  This generalization is presented in  Lemma \ref{generalized_chernoff_lemma} and more  details can be found in Appendix \ref{proof_of_GF_comp_fix_sample_rule}. 
\end{proof}

Theorem \ref{GF_comp_fix_sample_rule}, in conjunction with Theorem \ref{main}, allows us to quantify  the  performance loss that is induced by stopping at a deterministic time.   Specifically,  in the case of testing  normal means (Example \ref{Gaussian_mean_test}),
by \eqref{normal_related_quantity}   we have 
 $\cI=\cI_1=\cI_0$ and  for any $d \geq 1$ 
$$
{h}_d = \frac{\sqrt{d}-1}{\sqrt{d}+1} \, \cI, \quad
\Phi({h}_d) = \frac{d}{(1+\sqrt{d})^2} \, \cI.
$$
Thus,  by Theorem \ref{main} it follows  that as $\beta \to 0$,
$$
N_A^*(k_1,k_1, \beta^d,\beta)  \; \leq \; \widehat{L}_A(0;
\beta^d,\beta)
\; = \;  
\begin{cases}
 \frac{|\log(\beta)|}{k_1 \cI} ,  \text{ if } |A| < k_1 \\[5pt]
 \frac{d|\log(\beta)|}{k_1 \cI} ,  \text{ if } |A| \geq k_1
\end{cases}.
$$
When in particular  $d = 1$, i.e., $\alpha=\beta$, for any $A \subset [J]$ we have
\begin{align*}
 2 \, N_A^*(k_1,k_1,\beta,\beta) (1-o(1))  &\leq 
n^{*}(k_1,k_1,\beta,\beta)  \\
&\leq  \widehat{n}_{NP}(k_1,k_1,\beta,\beta) \sim  4 \, N_A^*(k_1,k_1,\beta,\beta),
\end{align*}
which agrees with the corresponding findings in Subsection \ref{subsec: KM_fix_comp}.


\section{Simulations for generalized familywise error rates} \label{sec:simulation_GF}
In this section we present two simulation studies that complement our asymptotic optimality theory
in Section \ref{sec:GF} for procedures that control generalized familywise error rates.  In the first study we compare the  Leap rule \eqref{leap_def},   the Intersection rule \eqref{intersection_rule} and the asymmetric Sum-Intersection rule  \eqref{tau_0_def},   in a \textit{symmetric and homogeneous} setup where conditions \eqref{sym_hom_KM} and \eqref{sym_hom_GF} hold and  all three procedures are asymptotically optimal.   
In the second  study we compare the same procedures when condition \eqref{sym_hom_KM} is  slightly violated, and only the Leap rule enjoys the asymptotic optimality property. 

In both studies  we  consider the  testing of normal means  (Example \ref{Gaussian_mean_test}), with $\sigma_j=1$ for every $j \in [J]$. This is a  \textit{symmetric} multiple testing problem,  where the Kullback-Leibler information in the $j$-th testing problem is  $\cI^j= \mu_j^2/2$. Moreover, we assume that condition \eqref{sym_hom_GF} holds, i.e.,  $\alpha = \beta$ and $k_1 = k_2$. This implies that  we can set the thresholds  in each \textit{sequential} procedure to be equal, i.e.,  $a = b$, and as a result the two types of generalized familywise  error rates will be the same.
Finally, in both studies we include the performance of the fixed-sample size  \textit{multiple Neyman-Pearson} (MNP) rule \eqref{NP_fixed_sample}, for which the  choice of thresholds depends crucially on whether the problem is homogeneous or not. 

In what follows, the ``error probability (Err)'' is the  generalized familywise  error rate of false positives~\eqref{gfam}, i.e., the \textit{maximum} probability of  $k_1$ false positives, with the maximum  taken over all signal configurations. Thus, Err \textit{does not} depend on the true subset of signals $A \subset [J]$.

\subsection{Homogeneous case}
In the first simulation study we set $\mu_j = 0.25$ for each  $j \in [J]$. In this homogeneous setup,   the expected sample size (ESS)  of all procedures under consideration depend only on the \textit{number} of signals, and we can set the thresholds in the
 MNP rule, defined in \eqref{NP_fixed_sample}, to be equal to 0. Moreover, it suffices to study the performance when the number of signals is no more than $J/2$.
We consider $J = 100$ in Fig. \ref{fig:complete_sym_iid} and $J = 20$ in Fig. \ref{fig:complete_sym_iid_J20}.



In Fig. \ref{fig:sym_a}, we fix $k_1 = 4$ and evaluate the  ESS of  the \procGF~for four different cases regarding the number of signals. We see that, for any given Err, the smallest possible  ESS  is achieved in the boundary case of no signals ($|A| = 0$). This is because some components in the \procGF~only have one condition to be satisfied in the boundary cases (e.g. $\widehat{\tau}_2$ in Fig. \ref{fig:leap_proc_vis}).

In Fig. \ref{fig:sym_b}, we fix the number of signals to be $|A|= 50$ and evaluate the \procGF~for different values of $k_1$. We observe that there are significant savings in the ESS as $k_1$ increases and more mistakes are tolerated.

In Fig. \ref{fig:sym_c} and \ref{fig:sym_d}, we fix $k_1 = 4$ and compare the four rules for  $|A| = 0$ and $50$, respectively. In this \textit{symmetric and homogeneous} setup, where \eqref{sym_hom_KM} and \eqref{sym_hom_GF} both hold, we have shown that all three sequential procedures are asymptotically optimal. Our simulations suggest that in practice the \procGF~works better when the number of signals,  $|A|$, is close to $0$ or $J$, but may perform slightly worse than the asymmetric Sum-Intersection rule, $\delta_0$,  when $|A|$ is close to $J/2$.

In Fig. \ref{fig:sym_c}, \ref{fig:sym_d} and \ref{fig:sym_a_J20}, we also compare the performance of
the Leap rule with the MNP rule. Further, in Fig. \ref{fig:sym_e}, \ref{fig:sym_f}, \ref{fig:sym_b_J20} and \ref{fig:sym_c_J20}, we show the histogram of the stopping time of the Leap rule at particular error levels. From these figures we can see that the best-case scenario for  the MNP is when both the number of hypotheses, $J$, and the error probabilities,  $\text{Err}$, are large.
Note that this does not contradict our asymptotic analysis,  where  $J$ is fixed and we let $\text{Err}$ go to 0. 

\begin{figure}[htbp!]
\centering
    \makebox[\linewidth]{
\subfloat[\procGF: $k_1=4$]{\label{fig:sym_a}
\includegraphics[width=0.35\linewidth]{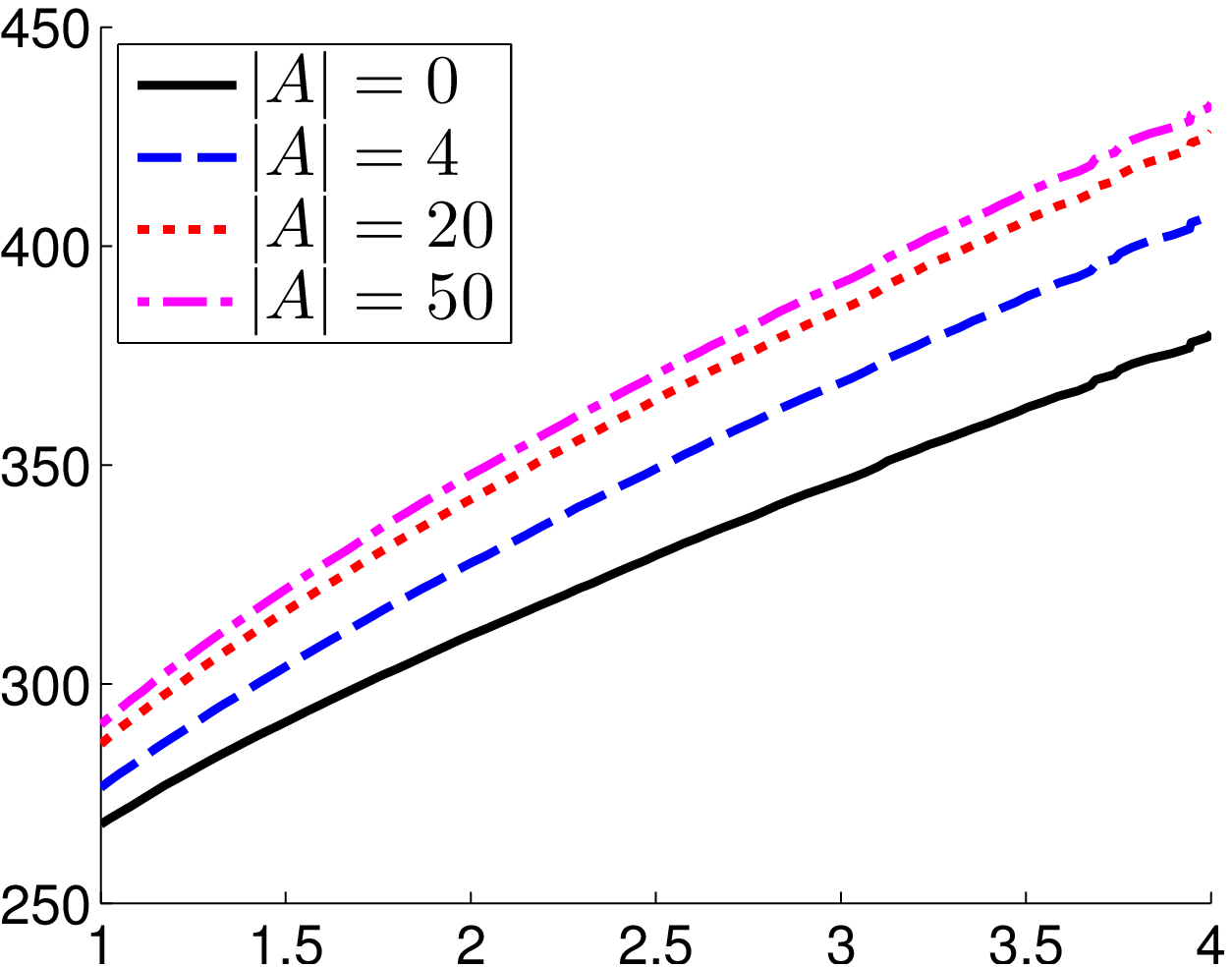}  
}
\subfloat[\procGF: $|A|=50$]{\label{fig:sym_b}
\includegraphics[width=0.35\linewidth]{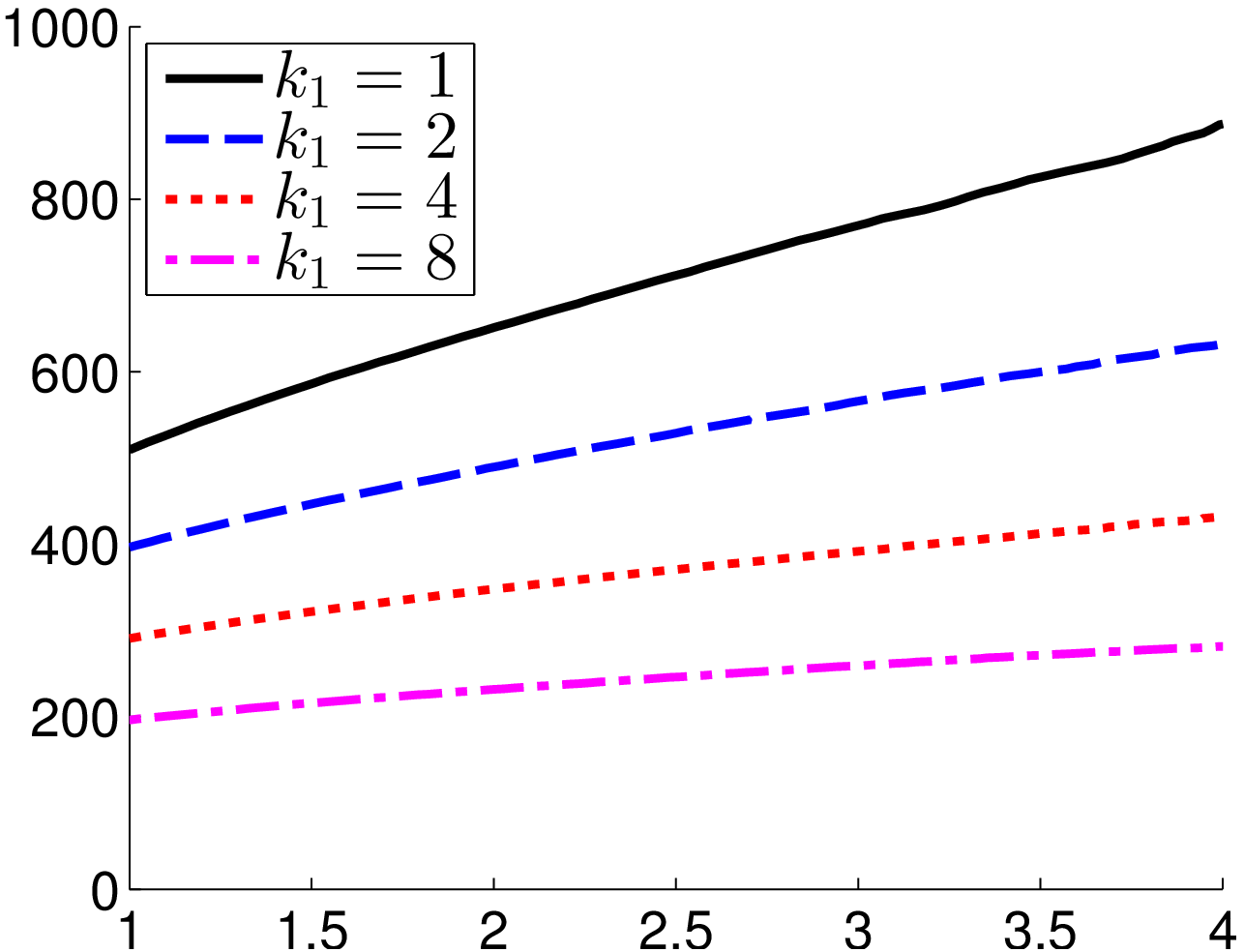}  
}
\subfloat[$k_1 = 4, |A| = 0$]{\label{fig:sym_c}
\includegraphics[width=0.35\linewidth]{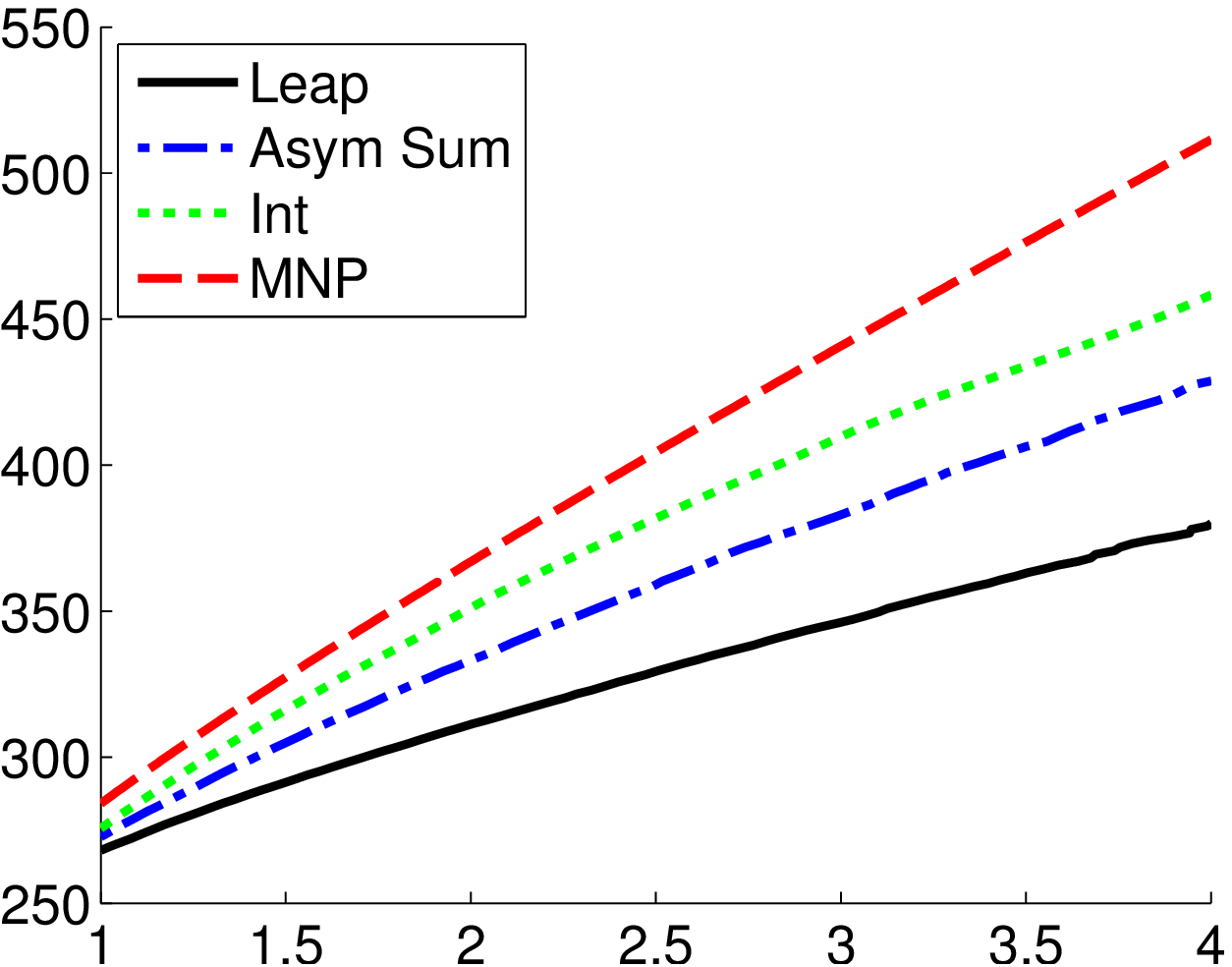}  
}
}\\
    \makebox[\linewidth]{
\subfloat[$k_1 = 4, |A| = 50$]{\label{fig:sym_d}
\includegraphics[width=0.35\linewidth]{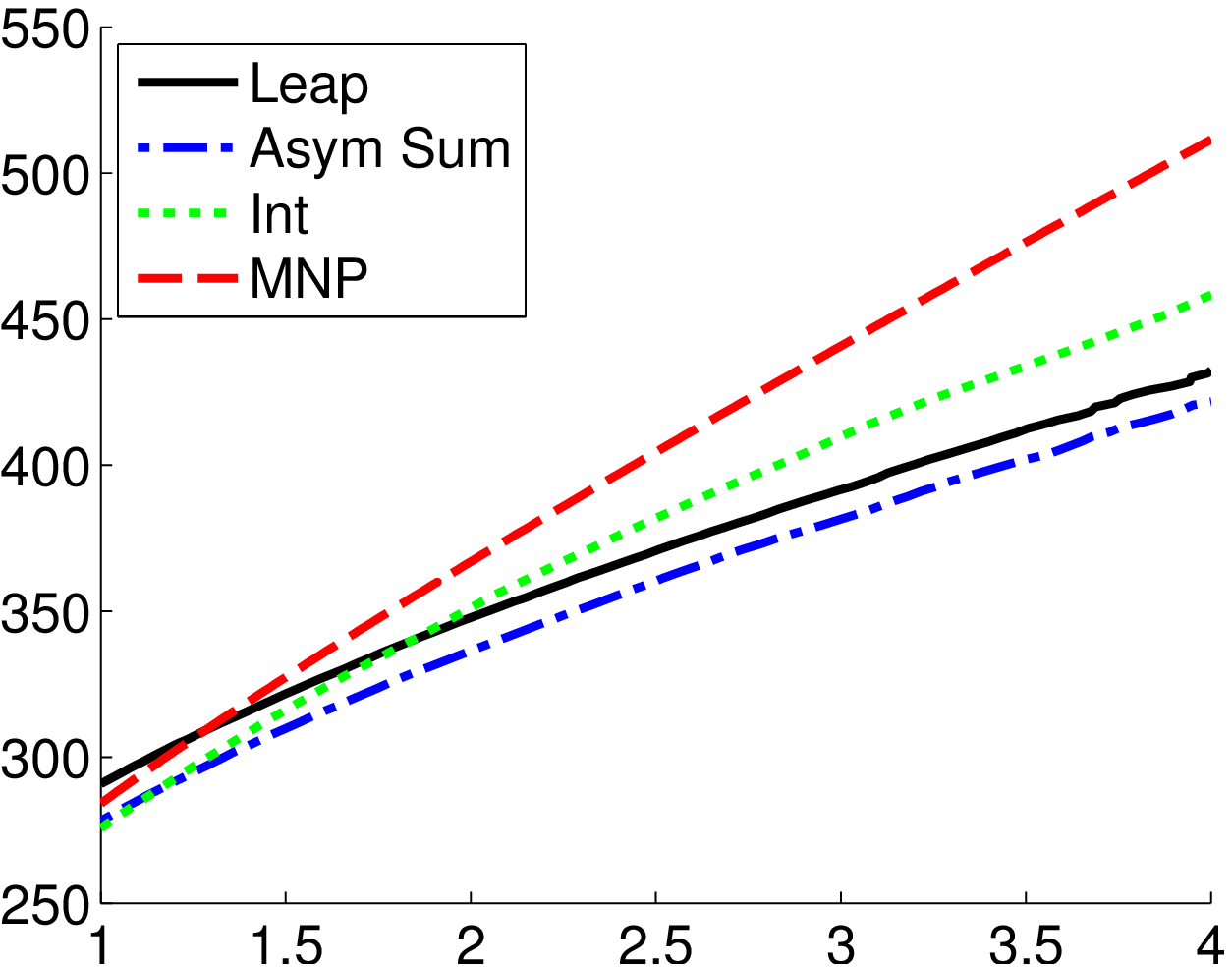}  
}
\subfloat[$k_1 = 4, |A| = 0, \text{Err} = 5\%$]{\label{fig:sym_e}
\includegraphics[width=0.35\linewidth]{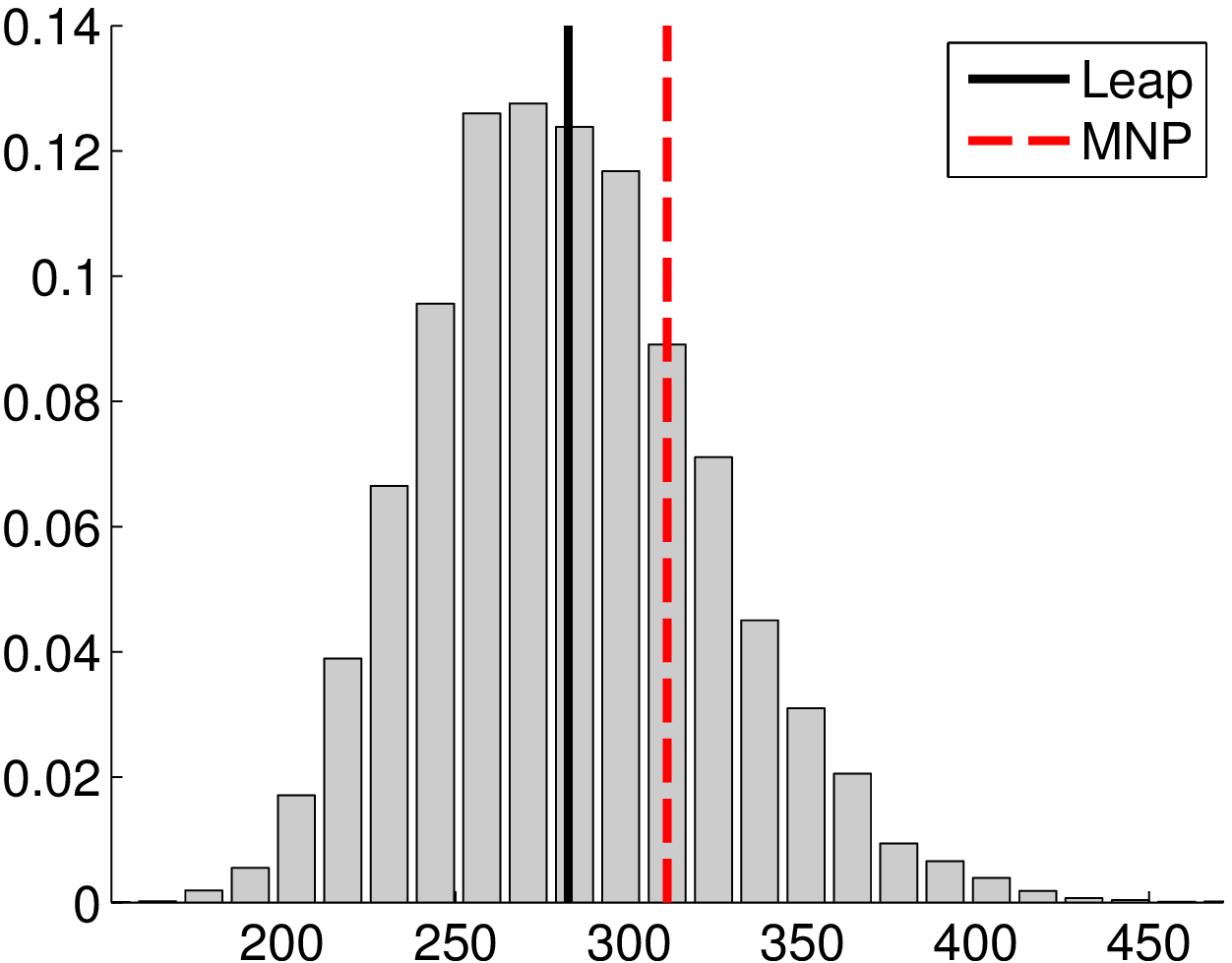}  
}
\subfloat[$k_1 = 4, |A| = 50, \text{Err} = 5\%$]{\label{fig:sym_f}
\includegraphics[width=0.35\linewidth]{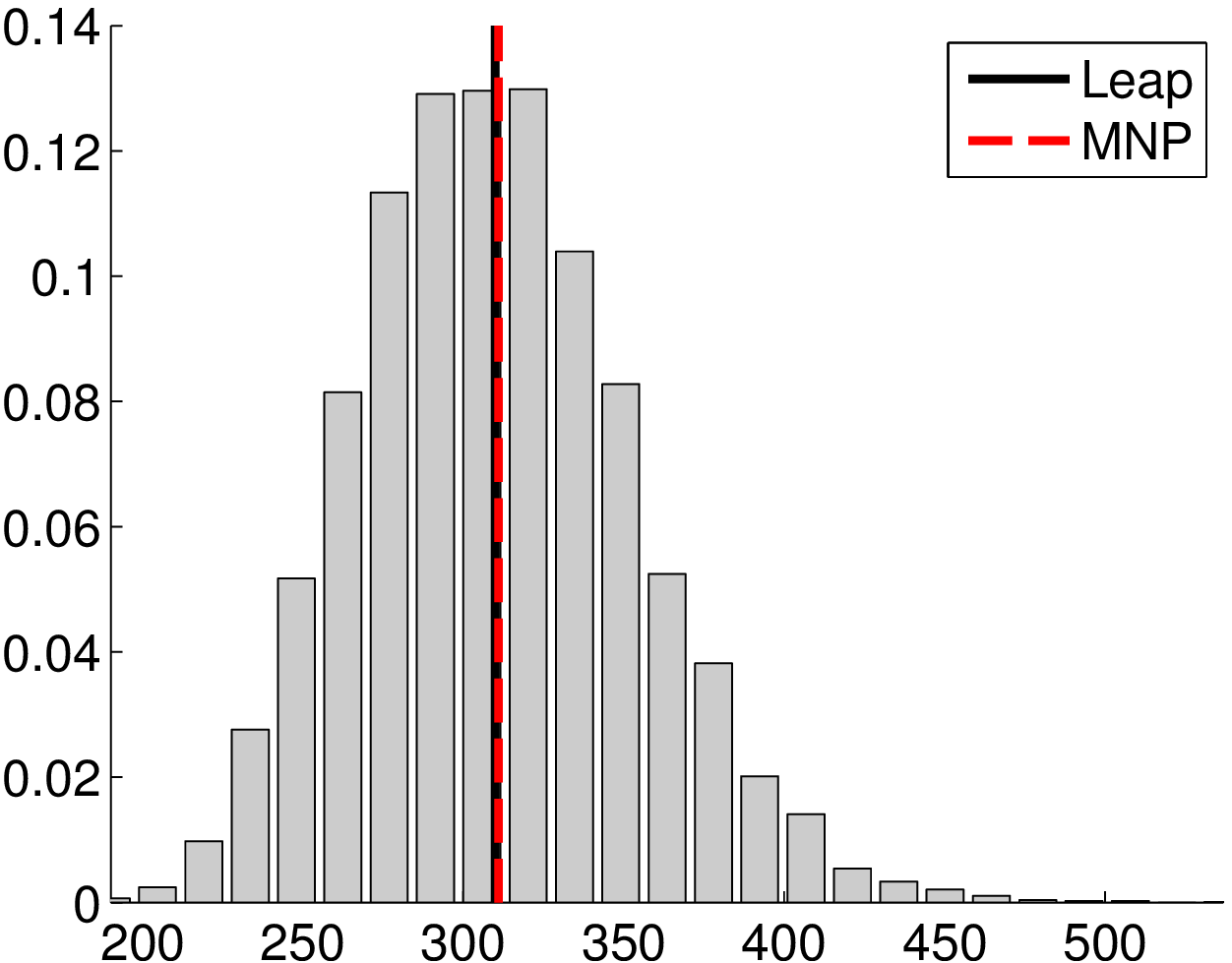}  
}
}
\caption{Homogeneous case: $J = 100, k_1 = k_2$. 
In (a)-(d), the x-axis  is  $|\log_{10}(\text{Err})|$ and the y-axis is the ESS under $\Pro_A$. In (e) and (f) are the histogram of the 
stopping time of the Leap rule with $\text{Err} = 5\%$.
}
\label{fig:complete_sym_iid}
\vspace{-0.3cm}
\end{figure}

\begin{figure}[htbp!]
\centering
    \makebox[\linewidth]{
\subfloat[$k_1 = 2$]{\label{fig:sym_a_J20}
\includegraphics[width=0.35\linewidth]{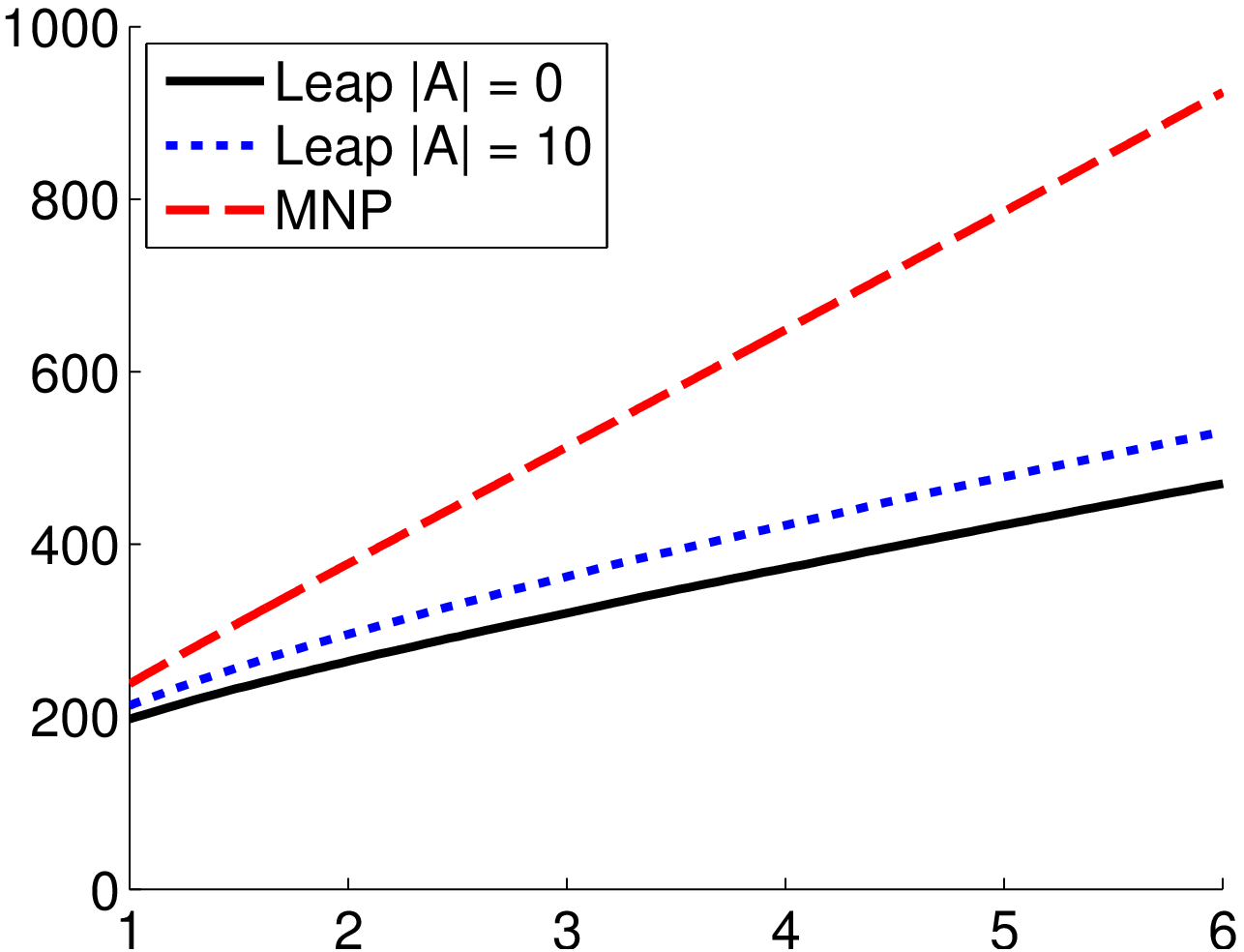}  
}
\subfloat[$k_1 = 2, |A| = 10, \text{Err} = 5\%$]{\label{fig:sym_b_J20}
\includegraphics[width=0.35\linewidth]{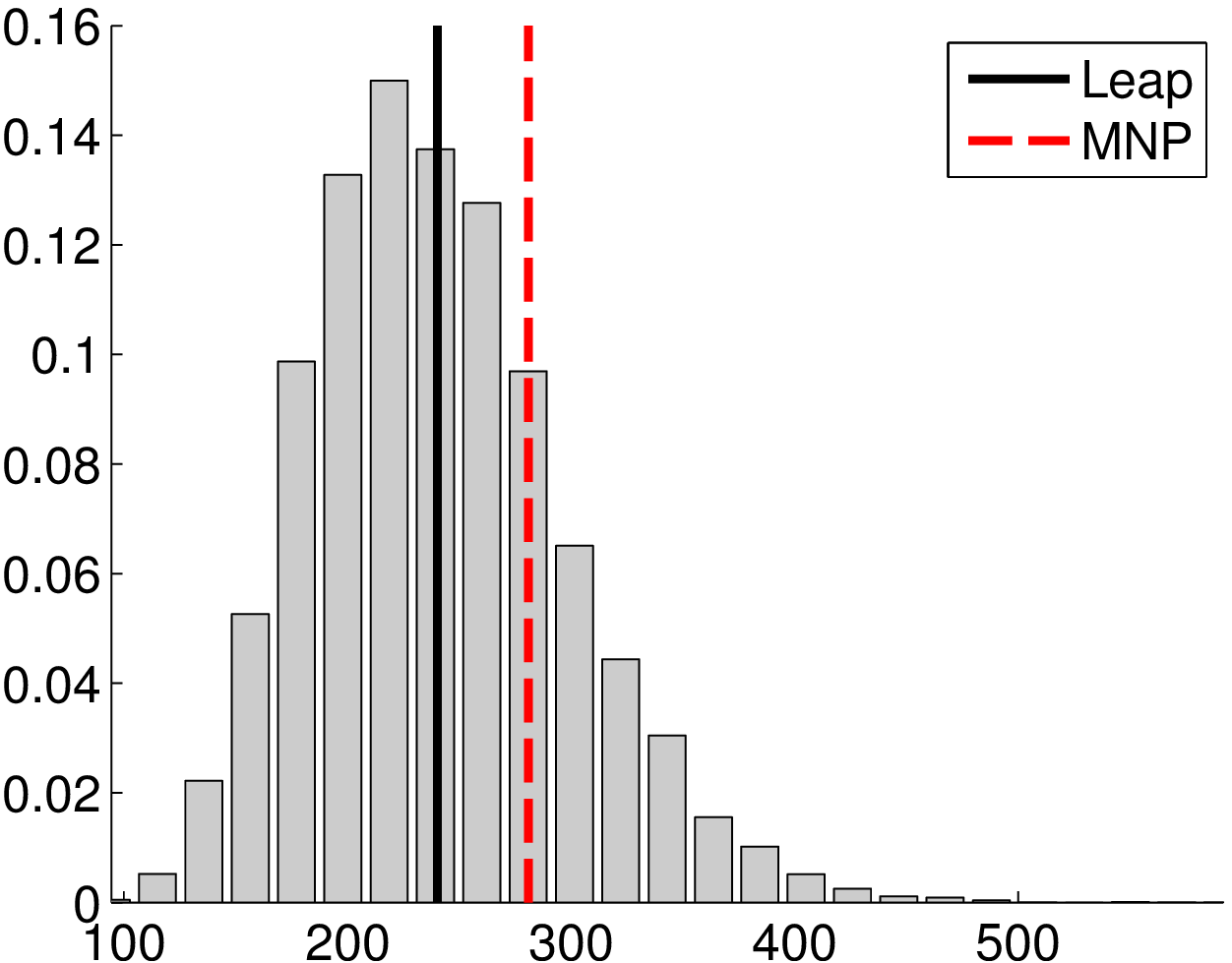}  
}
\subfloat[$k_1 = 2, |A| = 10, \text{Err} = 1\%$]{\label{fig:sym_c_J20}
\includegraphics[width=0.35\linewidth]{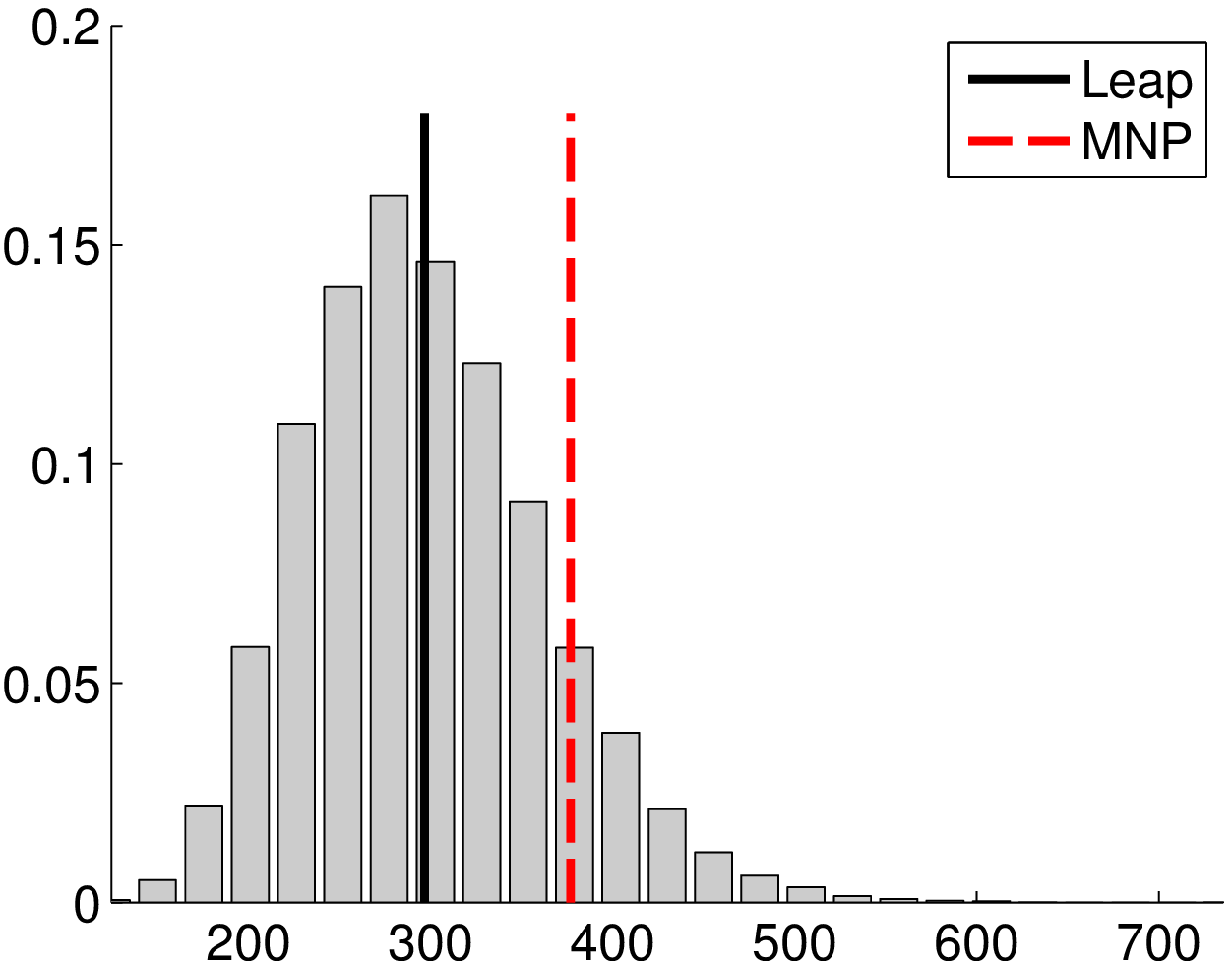}  
}
}
\caption{Homogeneous case: $J = 20, k_1 = 2$. 
In (a), the x-axis  is  $|\log_{10}(\text{Err})|$ and the y-axis is the ESS under $\Pro_A$. In (b) and (c) are the histogram  of the 
stopping time of the Leap rule with $\text{Err} = 5\%$ and $1\%$.
}
\label{fig:complete_sym_iid_J20}
\vspace{-0.3cm}
\end{figure}

\subsection{Non-homogenous case}\label{sec:asym_case}
In the second simulation study we set $J =10$, 
 $\mu_j = 1/6$, $j=1,2$, $\mu_j = 1/2$, $j\geq 3$, so that  the first two hypotheses are much harder than others. Specifically, 
$\MI^j = 1/72$ for $j=1,2$, and $\MI^j = 1/8$ for $j \geq 3$. 


When the true subset of signals is $A^* = \{6,\cdots,10\}$, the optimal asymptotic performance, \eqref{fund_GF_limit},  
is equal to  $8|\log(\text{Err})|$.
In Fig. \ref{fig:asym_a}, we plot the ESS against $|\log_{10}(\text{Err})|$, and  the ratio of ESS over $8|\log(\text{Err})|$ in Fig. \ref{fig:asym_b}. For the (asymptotically optimal) \procGF, this  ratio tends to 1 as $\alpha \to 0$. In contrast, the other rules have a different ``slope'' from the \procGF~in Fig. \ref{fig:asym_a}, which indicates that they fail to be asymptotically optimal in this context. 

Finally, we note that in such a non-homogeneous setup, the choice of  thresholds for the MNP rule \eqref{NP_fixed_sample} is not obvious. We found that instead of setting  $h_j = 0$ for every $j \in [J]$, it is much more efficient to take advantage of the flexibility of generalized familywise error rates, as we did in the  construction of the  Leap rule in Subsection \ref{subsec:leap_rule}, and set 
$h_1 = -\infty$, $h_2 = \infty$ and $h_j = 0$ for $j \geq 3$.  
This choice ``gives up'' the first two ``difficult'' streams by always rejecting the null in the first one and accepting it in the second.  The error constraints can then still be met as long as we do not make any mistakes in the remaining ``easy" streams. In fact, we see that while the MNP rule behaves significantly worse than the asymptotically optimal Leap rule, it performs better than the Intersection rule,  which  requires strong  evidence from each individual stream in order to stop. 

\begin{figure}[htbp!]
\subfloat[ESS under $\Pro_{A^*}$]{\label{fig:asym_a}
\includegraphics[width=0.4\linewidth]{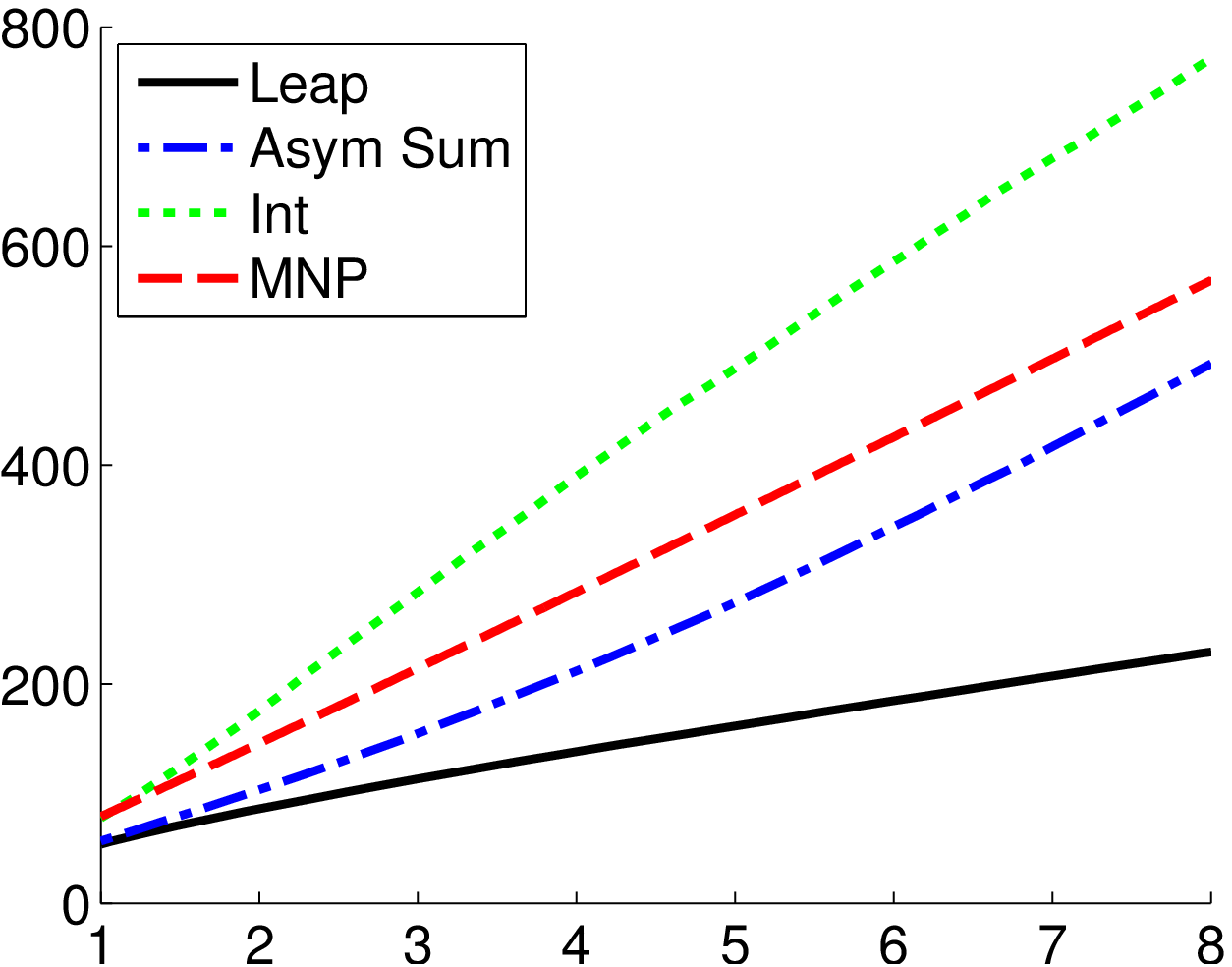} 
}\hspace{1cm}
\subfloat[Normalized by $8|\log(\text{Err})|$]{\label{fig:asym_b}
\includegraphics[width=0.4\linewidth]{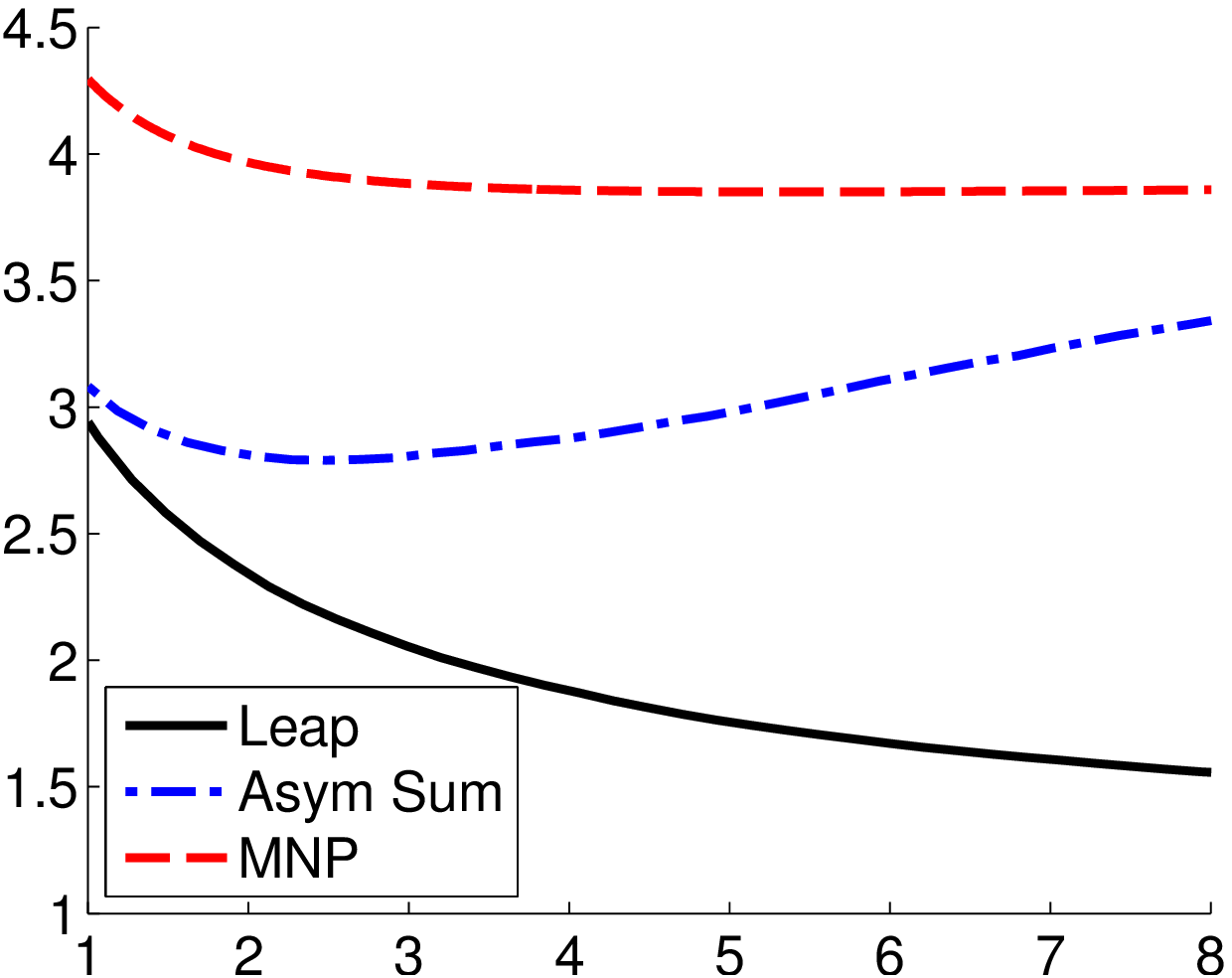} 
} 
\caption{Non-homogeneous case: $J = 10, k_1 = k_2 =2 ,A^* = \{6,\cdots,10\}$. 
The x-axis in both graphs  is  $|\log_{10}(\text{Err})|$. 
The y-axis in (a) is the ESS under $\Pro_{A^*}$, and in (b)
is the ratio of the ESS over $8|\log(\text{Err})|$.
}
\label{fig:asym_iid}
\vspace{-0.3cm}
\end{figure}

\section{Extension to composite hypotheses} \label{sec:composite}
We now extend the setup  introduced in Section \ref{sec:formulation},  allowing both the null and the alternative hypothesis  in each local testing problem  to  be composite.  Thus,  for each $j \in [J]$, the distribution of $X^j$,  the sequence of  observations in the $j$-{th} stream,  is now parametrized by   $\theta^j \in \Theta^j$, where  $\Theta^j$  is a subset of some Euclidean space, and the hypothesis testing problem in the $j$-th stream becomes
\begin{align*}
{\Hyp}_0^j\;: \theta^j \in \Theta_0^{j} \;\; \; \text{ versus } \; \;\; {\Hyp}_1^j\;: \theta^j \in \Theta_1^{j},
\end{align*}
where  $\Theta^j_0$ and $\Theta^j_1$ are two disjoint subsets of $\Theta^j$.  
When  $A \subset [J]$ is  the subset of streams in which the alternative is correct,   we  denote by $\boldsymbol{\Theta}_A$ the subset of  the parameter space $\boldsymbol{\Theta}:= \Theta^1 \times \ldots \times \Theta^J$ that is  compatible with $A$, i.e., 
$$ \boldsymbol{\Theta}_A := \{  (\theta^1, \ldots, \theta^J) \in \boldsymbol{\Theta}:  \; 
\theta^i \in \Theta_0^i, \;\; \theta^j \in \Theta_1^j \;\;\; \forall \;i \not\in A,  j \in A
\}.
$$
We denote by  $\Pro^{j}_{\theta^{j}}$ the distribution of the $j$-th stream when the value of its local parameter is $\theta^{j}$. Moreover,  we denote  by  $\Pro_{A, \butheta}$  the  underlying  probability measure  when the subset of signals is $A$ and the parameter is $\butheta= (\theta^1, \ldots, \theta^J) \in \boldsymbol{\Theta}_A$, and by $\Exp_{A,\butheta}$ the corresponding expectation.  Due  to the independence across streams, we have 
$\Pro_{A,\butheta}=  \Pro^{1}_{\theta^{1}} \otimes \ldots \otimes \Pro^{J}_{\theta^{J}}$.

Our presentation in the case of composite hypotheses will focus on the control of generalized \textit{familywise error rates}; the corresponding treatment of the generalized mis-classification rate will be  similar. Thus, given $k_1,k_2 \geq 1$ and $\alpha, \beta \in (0,1)$, the class of 
procedures of interest now is:
\begin{align*} 
\begin{split}
\Delta_{k_1, k_2}^{comp}(\alpha,\beta) :=\{(T,D) : & \; \max_{A,\; \butheta} \Pro_{A, \butheta}(|D\setminus A| \geq k_1) \leq \alpha \quad \text{and} \quad\\    & \; \max_{A,\; \butheta}\Pro_{A, \butheta} (|A \setminus D| \geq k_2) \leq \beta \},
\end{split}
\end{align*}
and  the goal is the same as the one in Problem~\ref{prob:GFAM} with $N^*_{A}(k_1,k_2,\alpha,\beta)$ being replaced by
\begin{align*} 
\begin{split}
N^*_{A,\butheta}(k_1,k_2,\alpha,\beta) &:=\inf_{(T,D) \in \Delta_{k_1,k_2}^{comp}(\alpha, \beta)} \Exp_{A,\butheta}[T],
\end{split}
\end{align*}
and the asymptotic optimality being achieved for every $A\subset [J]$ and  $\butheta \in \boldsymbol{\Theta}_{A}$.

\subsection{Leap rule with adaptive log-likelihood ratios}
The proposed procedure in this setup is  the Leap rule~\eqref{leap_def}, with the only difference that the local LLR statistics are  replaced by  statistics that account for the composite nature of the two hypotheses.  To be more specific, for every  $j \in [J]$ and $n \in \bN$ we denote  by  
$\ell^j(n, \theta^j)$  the  log-likelihood function  (with respect to some $\sigma$-finite measure $\nu^j_n$)  
in the $j$-th stream based on the first $n$ observations, i.e.,
\begin{align*} 
\ell^j(n, \theta^j) &:= \ell^j(n-1, \theta^j)+ 
\log\left( p^{j}_{\theta^j}(X^j(n) \, \vert \cF^j_{n-1}) \right); \quad  \ell^j(0, \theta^j):=0,
\end{align*}
where $p^{j}_{\theta^j}(X^j(n) \, \vert \cF^j_{n-1})$ is the conditional density of $X^j(n)$ given the previous $n-1$ observations in the j-th stream. 
Moreover, for  every  stream $j \in [J]$ and time $n \in \bN$ we denote  by $\ell^j_{i}(n)$ the corresponding \textit{generalized} log-likelihood  under $\Hyp_i^j$, i.e.,  
\begin{align*}
{\ell}^j_{i}(n) := \; \sup\left\{ \ell^j(n, \theta^j)\;:\;\theta^j \in \Theta^j_i \right\}, \quad i=0,1.
\end{align*}
Further, at each $n \in \bN$, we select  an  $\cF_n$-measurable estimator of $\butheta$, 
$\widehat{\butheta}_n=(\widehat{\theta}_n^1, \ldots, \widehat{\theta}_n^J) \in \boldsymbol{\Theta}$, 
and  define the \textit{adaptive  log-likelihood} statistic for the $j$-th stream as follows:
\begin{equation}
\label{adaptive_LL}
{\ell}^j_{*}(n) := {\ell}^j_{*}(n-1) + 
\log\left(
p^{j}_{\widehat{\theta}_{n-1}^j}(X^j(n) \, \vert  \cF^j_{n-1}) \right); \quad \ell^{j}_{*}(0) = 0,
\end{equation}
where  $\widehat{\butheta}_0:=(\widehat{\theta}_0^1, \ldots, \widehat{\theta}_0^J) \in \boldsymbol{\Theta}$ is some deterministic initialization. 
The proposed procedure in this context is the Leap rule~\eqref{leap_def}, where  each LLR statistic $\lambda^j(n)$ is replaced by  the following  \textit{adaptive} log-likelihood ratio:
\begin{align}\label{adaptive_stat}
\lambda_*^j(n) := 
\begin{cases}
\quad {\ell}_{*}^j(n)  - {\ell}_0^j(n),  \;
\;\;\; \text{ if } {\ell}_0^j(n) < {\ell}_1^j(n)  \;\; \text{and} \; \; {\ell}_0^j(n) < {\ell}_{*}^j(n) \\
-({\ell}_{*}^j(n)  - {\ell}_1^j(n)), \; 
\; \text{ if } {\ell}_1^j(n) < {\ell}_0^j(n) \;  \; \text{and} \;\;  {\ell}_1^j(n) < {\ell}_{*}^j(n) \\
\quad \text{undefined}, \;\;\quad\quad \text{ otherwise },
\end{cases}
\end{align}
with the understanding that there is no stopping at time $n$  if  $\lambda_*^j(n)$ is   undefined  for some $j$.  Clearly,  large positive values of $\lambda^j_{*}$ support $\Hyp_1^j$, whereas  large negative values of  $\lambda^j_{*}$ support $\Hyp_0^j$.   We denote this modified version of  the Leap rule by  $\delta_L^*(a,b) = (T_L^*, D_L^*)$. 

In the next  subsection we  establish the asymptotic optimality of $\delta_L^*$  under general conditions. 
In Appendix~\ref{comp_discussion} we discuss in more detail the above adaptive statistics, as well as other choices for the local statistics. In Appendix~\ref{app:simulation_composite} we demonstrate with a simulation  study that  if 
 we replace the LLR $\lambda^{j}$ by the  adaptive statistic $\lambda^{j}_{*}$~\eqref{adaptive_stat} in the 
 \textit{Intersection rule} \eqref{intersection_rule} and the \textit{asymmetric Sum-Intersection rule} \eqref{tau_0_def}, then these  procedures fail to be  asymptotically optimal \textit{even in the presence of special structures}.  
 Finally, we should point out that the gains over  fixed-sample size procedures are also   larger compared to the case of simple hypotheses,  as sequential methods  are more adaptive to the unknown parameter.



%

\subsection{Asymptotic optimality}
 First of all,  for each $j \in [J]$  we generalize  condition~\eqref{CLLN} and assume that for any  distinct $\theta^j,\tilde{\theta}^j \in \Theta^j$  there exists a positive number $I^j(\theta^j, \tilde{\theta}^j)$ such that
\begin{align}\label{composite_complete_convergence}
\frac{1}{n}\left(\ell^j(n,\theta^j) - \ell^j(n, \tilde{\theta}^j) \right) \;
\xrightarrow[n \to \infty]{\Pro^j_{\theta^j} \text{ completely } }
 \; I^j(\theta^j, \tilde{\theta}^j).
\end{align}

Second,  we  require that the null and alternative hypotheses in each stream are separated, in the sense that  if  for each $j \in [J]$  and $\theta^j \in \Theta^j$ we define
\begin{align}
\label{comp_info_discrete}
\cI_0^j(\theta^j) := \inf_{\tilde{\theta}^j \in \Theta_1^j} I^j(\theta^j, \tilde{\theta}^j) \qquad \text{and}  \qquad
\cI_1^j(\theta^j) := \inf_{\tilde{\theta}^j \in \Theta_0^j} I^j(\theta^j, \tilde{\theta}^j),
\end{align}
then 
\begin{align}
\label{comp_separable}
\cI_0^j(\theta^j) >0 \;\; \forall \; \theta^j \in \Theta_0^j \qquad \text{and}  \qquad
\cI_1^j(\theta^j) >0 \;\;\forall\; \theta^j \in \Theta_1^j.
\end{align}

Finally, we assume that for each $j \in [J]$ and $\epsilon > 0$,
\begin{align} \label{composite_uniform_convergence}
\begin{split}
\sum_{n=1}^{\infty}\Pro^j_{\theta^j} \left( \frac{{\ell}_{*}^j(n) - {\ell}_1^j(n)}{n}
- \cI_0^j(\theta^j) < -\epsilon\right) < \infty
 \text{ for every  } \theta^j \in \Theta_0^j, \\
\sum_{n=1}^{\infty}\Pro^j_{\theta^j} \left( \frac{{\ell}_{*}^j(n) - {\ell}_0^j(n)}{n}
- \cI_1^j(\theta^j) < -\epsilon\right) < \infty
 \text{ for every } \theta^j \in \Theta_1^j.
\end{split}
\end{align}

We  now state the  main result of this section,   the asymptotic optimality of $\delta_L^{*}$ under the above conditions. The proof is presented in   Appendix~\ref{sec:app_composte_case}.

\begin{theorem}\label{thm:comp_main}
Assume \eqref{composite_complete_convergence}, \eqref{comp_separable} and~\eqref{composite_uniform_convergence} hold. 
Further, assume the thresholds in the \procGF~are selected such that $\delta_L^{*}(a,b) \in \Delta^{comp}_{k_1,k_2}(\alpha,\beta)$ and $a \sim |\log(\beta)|, b \sim |\log(\alpha)|$, e.g. according to \eqref{leap_thresholds}.  Then, for any $A \subset [J]$ and $\butheta\in \boldsymbol{\Theta}_A$,  we have as  $\alpha, \beta \to 0$,
\begin{align*}
\Exp_{A,\butheta} \left[ T_L \right]
   \;\sim \; L_{A,\butheta}(k_1,k_2,\alpha, \beta) \;\sim\; N_{A,\butheta}^*(k_1,k_2,\alpha,\beta),
\end{align*}
where $L_{A,\butheta}(k_1,k_2,\alpha, \beta)$ is a quantity defined in Appendix~\ref{subsec:app_lower}
that characterizes the asymptotic optimal performance.
\end{theorem}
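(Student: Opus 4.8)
The plan is to prove Theorem~\ref{thm:comp_main} by sandwiching $\Exp_{A,\butheta}[T_L^*]$ between matching asymptotic upper and lower bounds, following the template of the proof of Theorem~\ref{main} but with the local log-likelihood ratios $\lambda^j$ replaced throughout by the adaptive statistics $\lambda_*^j$ of \eqref{adaptive_stat} and the information numbers $\cI_1^j, \cI_0^j$ replaced by their composite analogues $\cI_1^j(\theta^j), \cI_0^j(\theta^j)$ of \eqref{comp_info_discrete}. Correspondingly, $L_{A,\butheta}$ is the quantity obtained from \eqref{fund_GF_limit} by building $\mathcal{D}_1, \mathcal{D}_0$ from the increasingly ordered values of $\{\cI_1^j(\theta^j): j\in A\}$ and $\{\cI_0^j(\theta^j): j\notin A\}$. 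Given the discussion preceding the theorem, the real content is to show $N^*_{A,\butheta}(k_1,k_2,\alpha,\beta) \geq L_{A,\butheta}(1-o(1))$, but I would establish the upper bound in parallel to keep the two halves symmetric.

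For the upper bound I would first prove the composite analogue of Lemma~\ref{taus_upper_bound_leap}. The key observation is that $\lambda_*^j$ is engineered so that, under $\Pro_{A,\butheta}$, the normalized statistic $\lambda_*^j(n)/n$ converges completely to $\cI_1^j(\theta^j)$ when $j\in A$ and to $-\cI_0^j(\theta^j)$ when $j\notin A$; this is precisely what assumption~\eqref{composite_uniform_convergence} delivers, since it controls the probability that $(\ell_*^j(n)-\ell_1^j(n))/n$ or $(\ell_*^j(n)-\ell_0^j(n))/n$ dips below the correct information number, while \eqref{comp_separable} guarantees the limiting sign is strict so that $\lambda_*^j$ is eventually defined and does not sit in the ``otherwise'' branch of \eqref{adaptive_stat}. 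Once this complete convergence is in hand, the linear-growth argument for each partial sum of the $k_1-\ell$ least significant positive (resp. $k_2$ least significant negative) adaptive statistics transcribes the simple-case computation, yielding $\Exp_{A,\butheta}[\widehat{\tau}_\ell]$ and $\Exp_{A,\butheta}[\widecheck{\tau}_\ell]$ bounds of the form $b/\mathcal{D}_1 \vee a/\mathcal{D}_0$; taking the minimum over the constituent rules as in \eqref{leap_def} and substituting $a\sim|\log\beta|$, $b\sim|\log\alpha|$ gives $\Exp_{A,\butheta}[T_L^*]\leq L_{A,\butheta}(1+o(1))$.

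For the lower bound I would adapt the change-of-measure argument of Theorems~\ref{KM_lower_bound} and~\ref{main} to the composite setting. For each index $\ell$ and the associated ``wrong'' configuration, the idea is to change measure from $\Pro_{A,\butheta}$ to some $\Pro_{B,\tilde\butheta}$ under which a collection of streams that the procedure is obliged to classify correctly (with high probability under $\Pro_{A,\butheta}$, by the constraints defining $\Delta^{comp}_{k_1,k_2}(\alpha,\beta)$) turns into a guaranteed set of false positives or false negatives under $\Pro_{B,\tilde\butheta}$. Combining Wald's likelihood-ratio identity with those error constraints and the complete convergence~\eqref{composite_complete_convergence} lower-bounds $\Exp_{A,\butheta}[T]$ by $|\log\alpha|$ or $|\log\beta|$ divided by the total information $\sum_j I^j(\theta^j,\tilde\theta^j)$ accumulated over the flipped streams. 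Driving each $\tilde\theta^j$ toward the boundary of the opposite hypothesis region so that $I^j(\theta^j,\tilde\theta^j)$ approaches $\cI_1^j(\theta^j)$ or $\cI_0^j(\theta^j)$, and then optimizing over which $\ell$ streams to sacrifice, reproduces exactly the minimum in \eqref{fund_GF_limit}.

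The main obstacle is this final step of the lower bound. In contrast to the simple case, the information numbers $\cI_i^j(\theta^j)$ are \emph{infima} over the composite parameter regions~\eqref{comp_info_discrete} and need not be attained, so the target measure $\Pro_{B,\tilde\butheta}$ cannot in general realize the infimum exactly. One is therefore forced to work along a sequence $\tilde\butheta_m$ approaching the infimum and to interchange this limit carefully with the $\alpha,\beta\to 0$ limit, checking that the complete-convergence control~\eqref{composite_complete_convergence} remains adequate along the approximating sequence. Managing this double limiting argument, together with the combinatorial bookkeeping of which streams are given up as false positives versus false negatives, is where the composite case genuinely exceeds the simple one; the upper bound, by comparison, is a mechanical re-derivation of Lemma~\ref{taus_upper_bound_leap} once~\eqref{composite_uniform_convergence} is invoked.
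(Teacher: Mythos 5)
Your upper bound is exactly the paper's: a composite analogue of Lemma~\ref{taus_upper_bound_leap} driven by the one-sided complete convergence condition~\eqref{composite_uniform_convergence}, with $L_{A,\butheta}$ built from the ordered $\cI_1^j(\theta^j)$, $\cI_0^j(\theta^j)$ precisely as in~\eqref{comp_quantities_def}. Your lower bound, however, takes a genuinely different route. You propose to re-run the change-of-measure argument of Theorems~\ref{KM_lower_bound} and~\ref{main} from scratch in the composite setting, changing measure from $\Pro_{A,\butheta}$ to $\Pro_{B,\tilde{\butheta}}$ and then managing a delicate double limit as $\tilde{\butheta}_m$ approaches the (possibly unattained) infima in~\eqref{comp_info_discrete} while $\alpha,\beta\to 0$. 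The paper instead observes that any $(T,D)\in\Delta^{comp}_{k_1,k_2}(\alpha,\beta)$ automatically controls the error rates for the \emph{embedded simple} multiple testing problem $\Hyp_0^j:\gamma^j=\theta^j$ vs.\ $\Hyp_1^j:\gamma^j=\tilde{\theta}^j_\epsilon$ (and reversed on $A$), where $\tilde{\butheta}_\epsilon\in\boldsymbol{\Theta}_{A^c}$ is chosen so that $I^j(\theta^j,\tilde{\theta}^j_\epsilon)\leq(1+\epsilon)\cI_i^j(\theta^j)$; it then invokes Theorem~\ref{main} wholesale to get $\liminf N^*_{A,\butheta}/L_{A,\butheta}\geq 1/(1+\epsilon)$, and lets $\epsilon\to 0$ afterwards. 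This reduction buys two things you would have to pay for: no part of the change-of-measure machinery is re-derived, and the ``main obstacle'' you identify --- interchanging the approximating-sequence limit with the $\alpha,\beta\to 0$ limit --- disappears, because the liminf is taken for each fixed $\epsilon$ separately and $\epsilon$ is only sent to zero at the very end. Your direct approach would also work under~\eqref{composite_complete_convergence} (which supplies the SLLN needed for the analogue of Lemma~\ref{T_small_ST_larger_lemma}), but it is strictly more labor for the same conclusion.
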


While  conditions~\eqref{composite_complete_convergence} and~\eqref{comp_separable} are easily satisfied and simple to check, the one-sided complete convergence condition~\eqref{composite_uniform_convergence} is not as apparent. It is known~\cite[p. 278-280]{tartakovsky2014sequential} that when  $\widehat{\theta}_n^j$ is selected to be the Maximum Likelihood estimator (MLE) of  $\theta^j$, condition~\eqref{composite_uniform_convergence} is  satisfied when  testing a normal mean with unknown variance, as well as when testing the coefficient of a first-order autoregressive model.  In Appendix~\ref{sub:exp_family} we further show that  condition~\eqref{composite_uniform_convergence} is satisfied when (i) the data in each stream are i.i.d.  with some \textit{multi-parameter exponential family} distribution, and  (ii) the null and the alternative parameter spaces are  compact.



\section{Conclusion}\label{sec:conclusion}
In this paper we have considered the sequential multiple testing problem under two error metrics.
In the first one, the goal is to control the probability of  at least $k$ mistakes, of any kind. 
In the second one, the goal is to 
control simultaneously  the probabilities of  at least $k_1$ false positives and at least $k_2$ false negatives.
Assuming that the data for the various hypotheses are obtained sequentially in independent streams,    we  characterized the optimal performance to a first-order asymptotic approximation as the error probabilities vanish, and  proposed the first asymptotically optimal procedure for each of  the two problems.  Procedures that are asymptotically optimal under classical error control ($k=1$,  $k_1=k_2=1$) were found to be  suboptimal  under \textit{generalized} error metrics. Moreover, in the case of  i.i.d. data streams we quantified the asymptotic savings in the expected sample size relative to  fixed-sample size procedures.


There are certain questions that remain open.
First, we conducted a first-order asymptotic analysis, ignoring higher-order terms in the approximation to the optimal performance. The latter however appears to be non-negligible in practice(see Fig. \ref{fig:asym_b}). Thus, it is an open problem  to  obtain a more precise characterization of the optimal performance, 
as well as to examine whether the proposed rules enjoy a stronger optimality property. 
Second, the number of streams is treated as constant in our asymptotic analysis, but can be very large in practice. It is interesting to consider an enhanced asymptotic regime, where the number of streams also goes to infinity as the error probabilities vanish. Third, 
although simulation techniques can be used to determine  threshold values that guarantee the error control,  it is desirable to have closed-form expressions for less conservative threshold values. 
 

Finally, there are several interesting generalizations in various directions. One direction is to relax the assumption that the streams corresponding to the different testing problems are  independent.  Another direction is to allow for early stopping in some streams, in which case the goal may be to minimize the total number of observations in all streams. Finally, it is interesting to study the corresponding problems with FDR-type error control.



\appendix

\section{Simulations for generalized mis-classification rate} \label{sec:simulation_KM}
In this section we present two simulation studies that complement our asymptotic optimality theory for procedures that control the generalized mis-classification rate (Section \ref{sec:KM}). Specifically, our goal is to compare the proposed Sum-Intersection rule (Subsection \ref{sum_int_def}) and the Intersection rule  \eqref{intersection_rule} in two setups. The first one is a \textit{symmetric and homogeneous} setup, in which  \eqref{sym_hom_KM} holds and both rules are asymptotically optimal. The second one is a non-homogeneous setup, where  condition \eqref{sym_hom_KM} is (slightly) violated and the \procInt~fails to be asymptotically optimal. 
In each setup, we also include the performance of the multiple Neyman-Pearson rule (MNP) \eqref{NP_fixed_sample}, which is a fixed-sample size procedure.

For these comparisons, we  consider the  testing of normal means, introduced in Example \ref{Gaussian_mean_test}. As discussed in Example \ref{Gaussian_mean_test}, this problem is \textit{symmetric}. As a result,
we set $h = 0$ in the MNP rule \eqref{NP_fixed_sample}, and further
the performance of each rule under consideration is the same for any subset of signals. 
Thus we do not need to specify the actual subset of signals.



\subsection{Homogeneous case} 
In the first study we set $\mu_j=0.25, \sigma_j = 1$ for every $j \in [J]$.  We consider $J = 100$ in Fig. \ref{fig:complete_sym_iid_KM} and $J =20$ in Fig. \ref{fig:complete_sym_iid_KM_J20}.

In Fig. \ref{fig:sym_a_KM}, we study the performance of the  Sum-Intersection rule for different values of $k$. We observe that there are significant savings in the ESS as $k$ increases and more mistakes are tolerated. In Fig. \ref{fig:sym_b_KM}, we compare the three rules  for $k =4$. Although both sequential  rules enjoy the asymptotic optimality property in this setup, we observe that the Sum-Intersection rule clearly outperforms the Intersection rule.

In Fig. \ref{fig:sym_b_KM} and \ref{fig:sym_a_KM_J20}, we also compare the Sum-Intersection rule with the MNP rule. Further, in Fig. \ref{fig:sym_c_KM}, \ref{fig:sym_b_KM_J20} and \ref{fig:sym_c_KM_J20},
we show the  histogram of the Sum-Intersection at particular error levels. 
From these figures we observe that the advantage of sequential procedures over the MNP rule increases as $\text{Err}$ decreases and decreases as $J$ increases. 


\begin{figure}[htbp]
\centering
    \makebox[\linewidth]{
\subfloat[Sum rule: vary $k$]{\label{fig:sym_a_KM}
\includegraphics[width=0.35\linewidth]{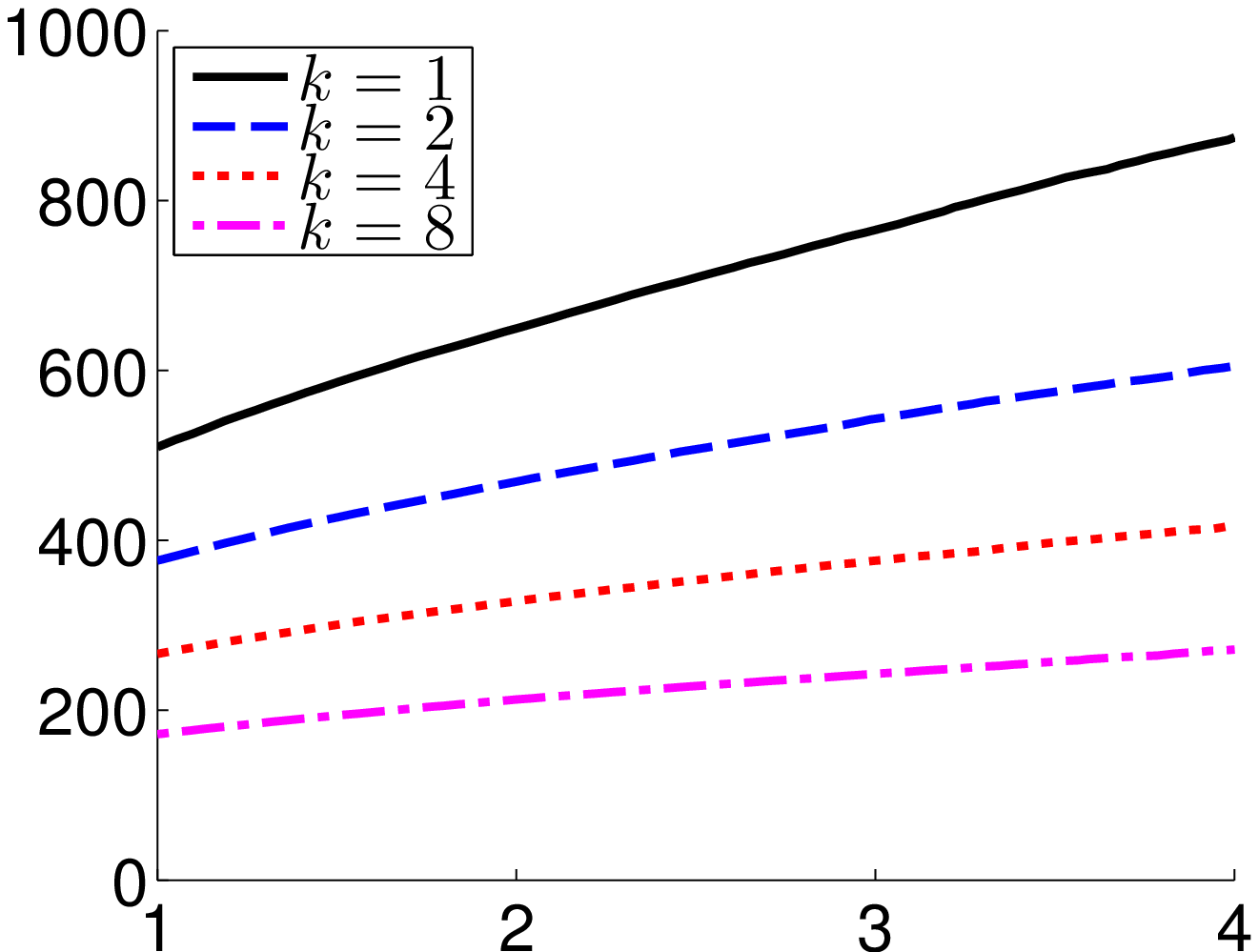}  
}
\subfloat[$k = 4$, vary rules]{\label{fig:sym_b_KM}
\includegraphics[width=0.35\linewidth]{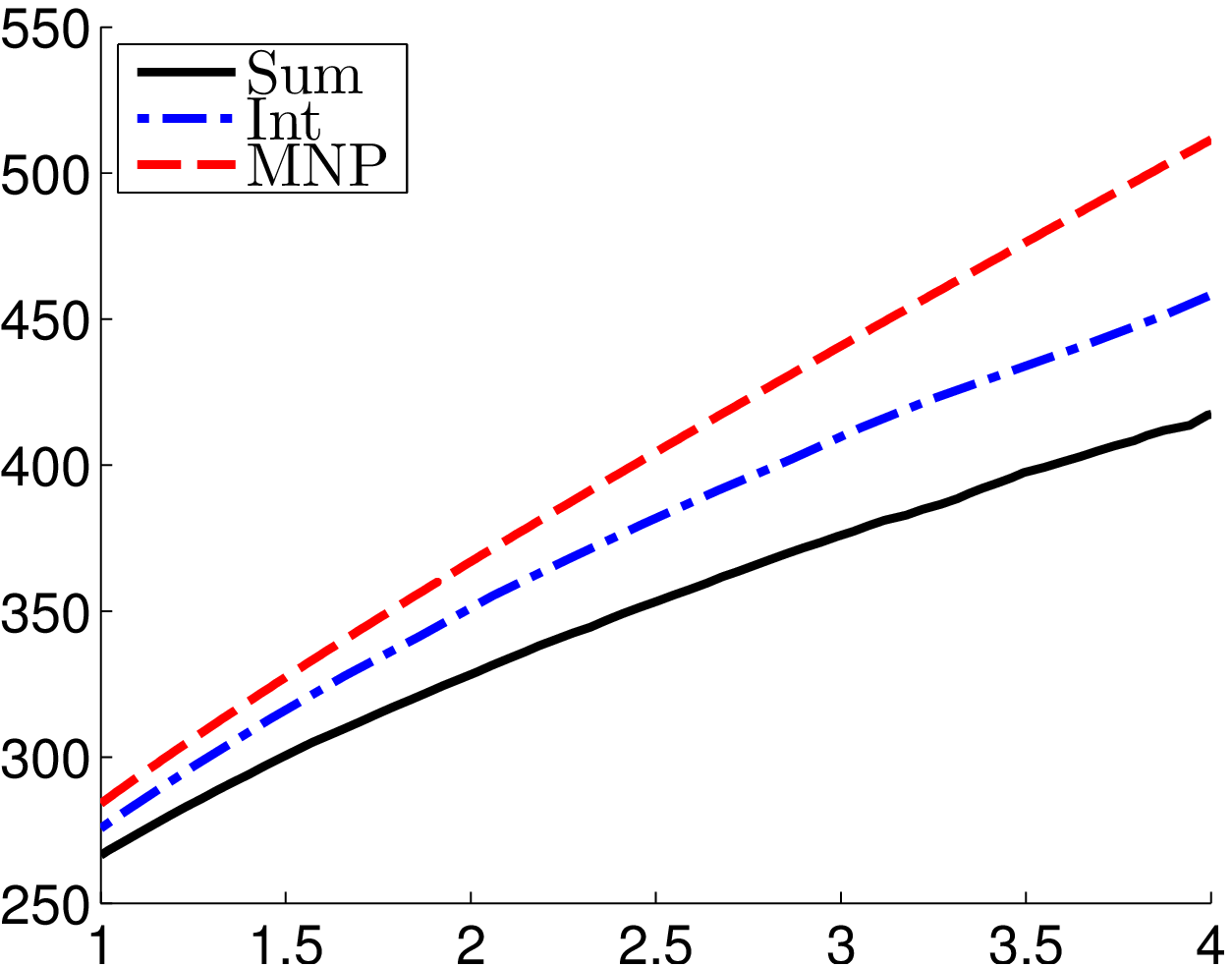}  
}

\subfloat[$k = 4, \text{Err} = 5\%$]{ \label{fig:sym_c_KM}
\includegraphics[width=0.35\linewidth]{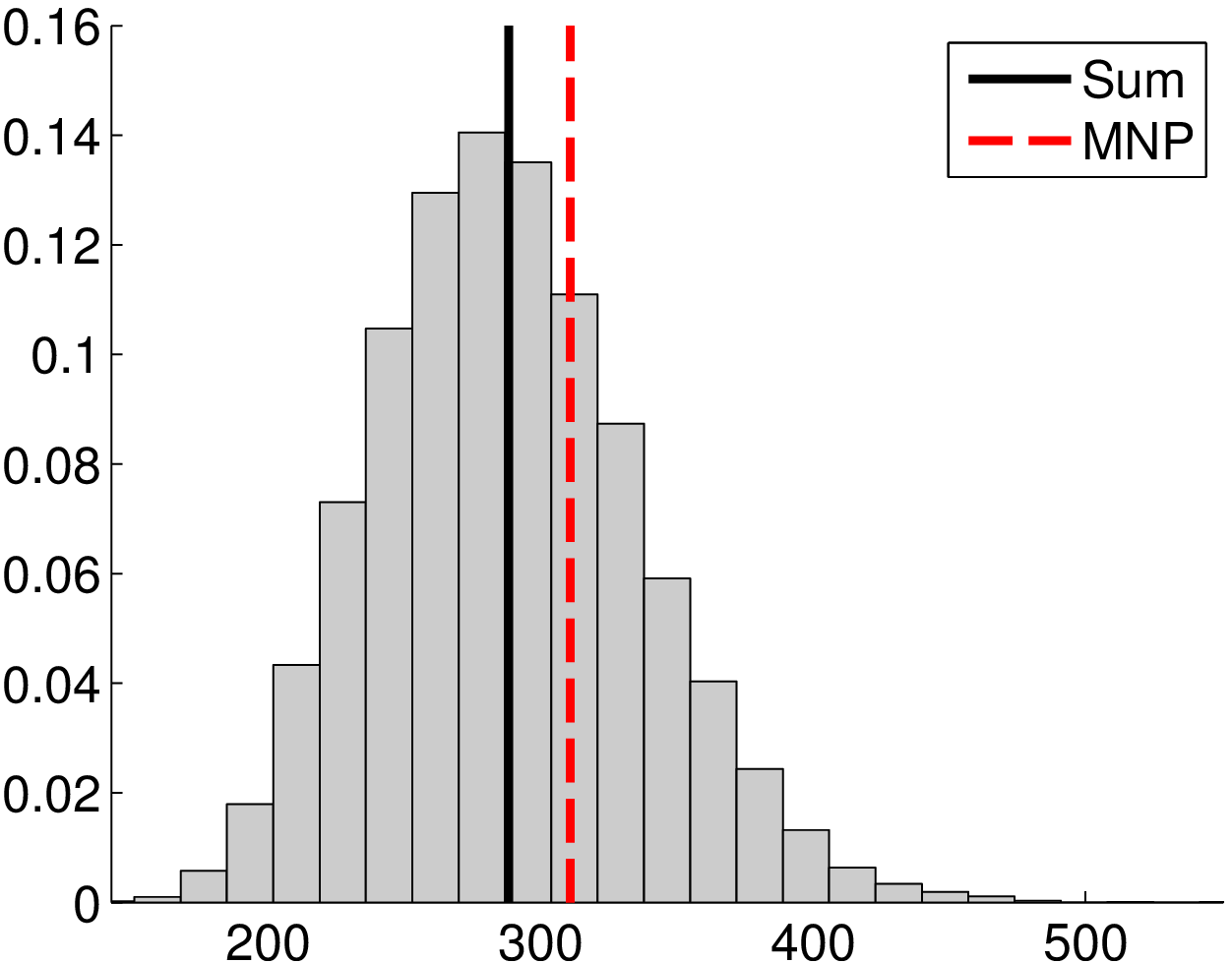}  
}
}
\caption{Homogeneous case: $J = 100$. 
In (a) and (b), the x-axis  is  $|\log_{10}(\text{Err})|$ and the y-axis represents the ESS.
In (c),  we plot the  histogram of the 
stopping time of the Sum-Intersection rule with $\text{Err} = 5\%$.
}
\label{fig:complete_sym_iid_KM}
\end{figure}

\begin{figure}[htbp]
\centering
    \makebox[\linewidth]{
\subfloat[$k = 2$]{\label{fig:sym_a_KM_J20}
\includegraphics[width=0.35\linewidth]{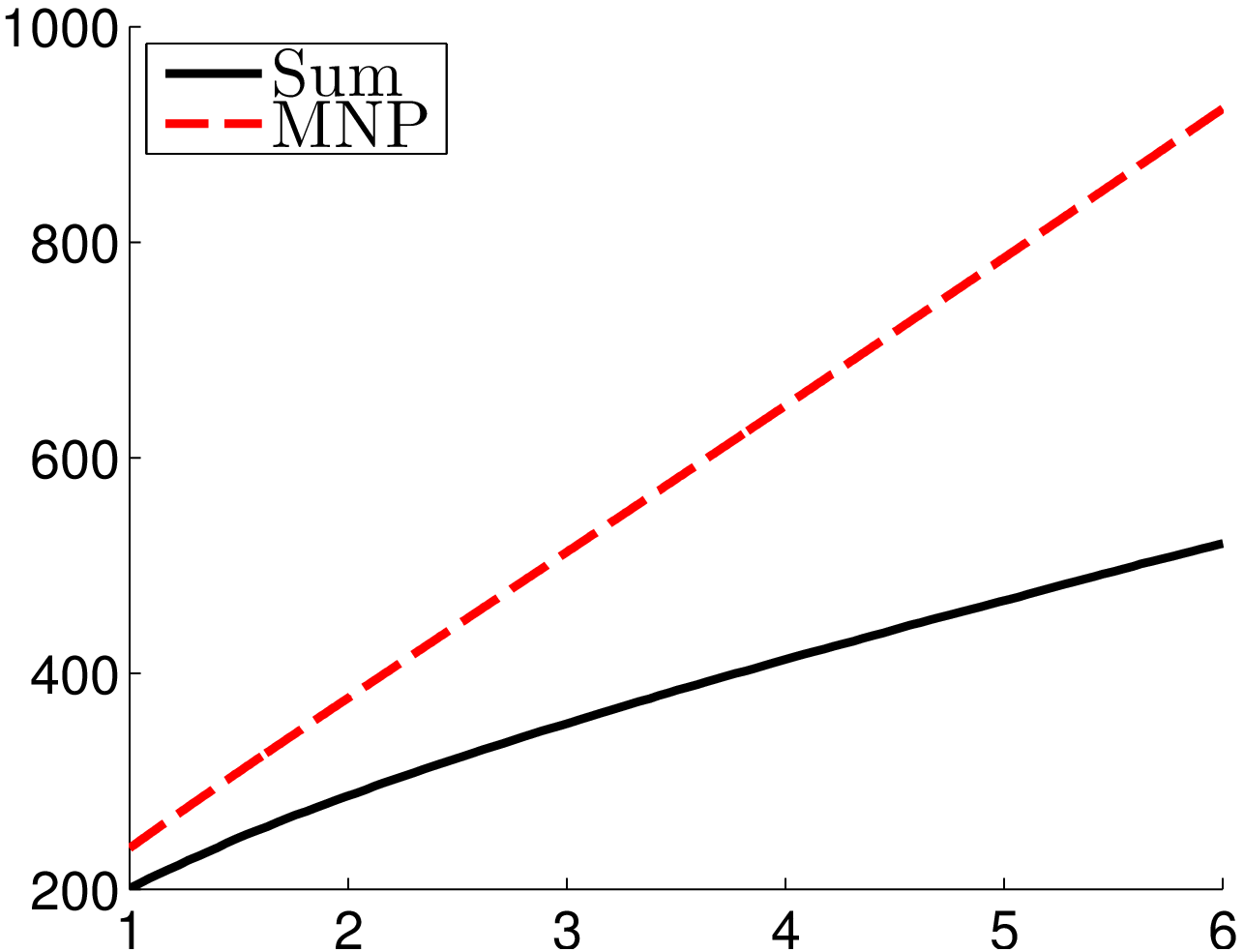}  
}
\subfloat[$k = 2, \text{Err} = 5\%$]{\label{fig:sym_b_KM_J20}
\includegraphics[width=0.35\linewidth]{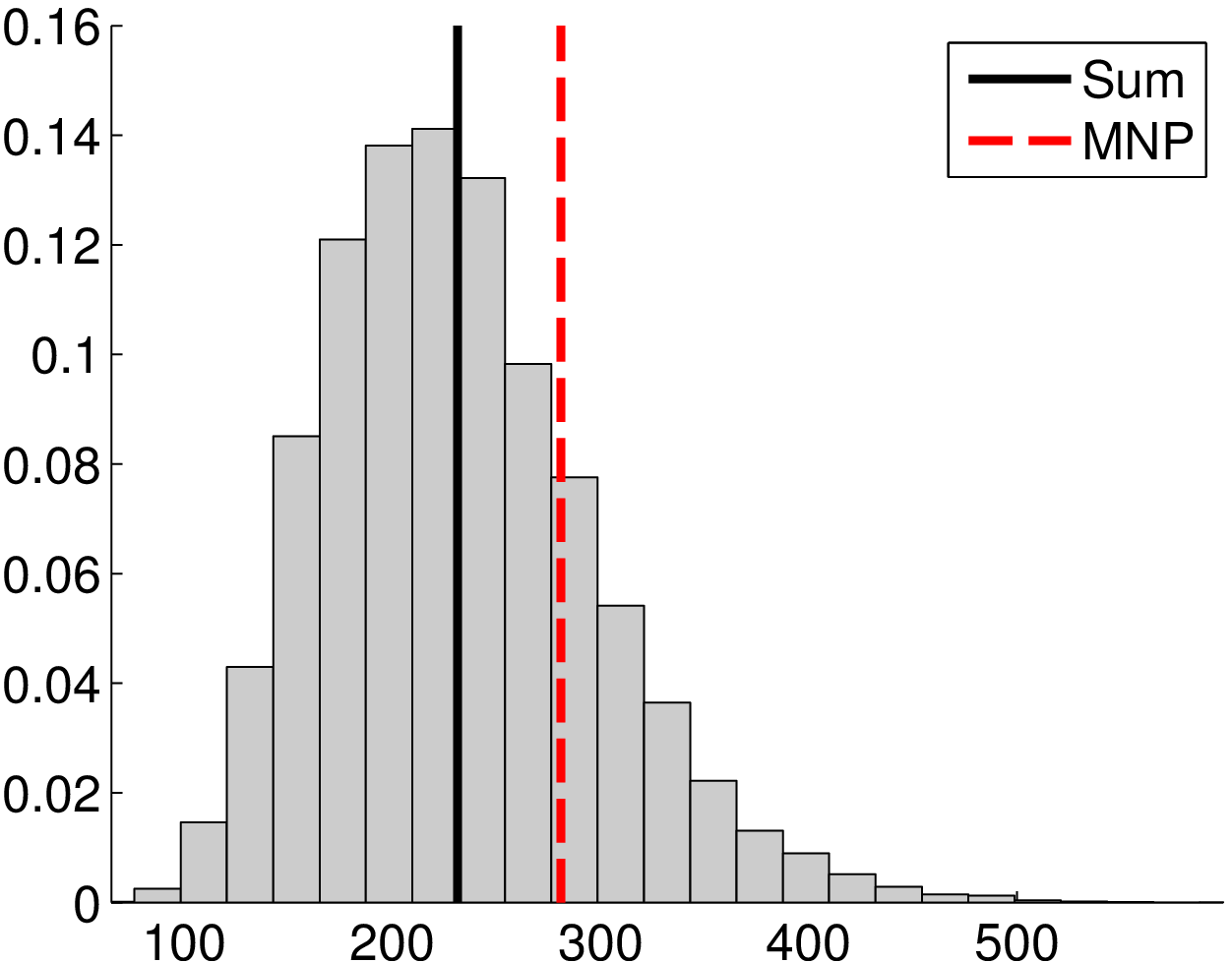}  
}

\subfloat[$k = 2, \text{Err} = 1\%$]{ \label{fig:sym_c_KM_J20}
\includegraphics[width=0.35\linewidth]{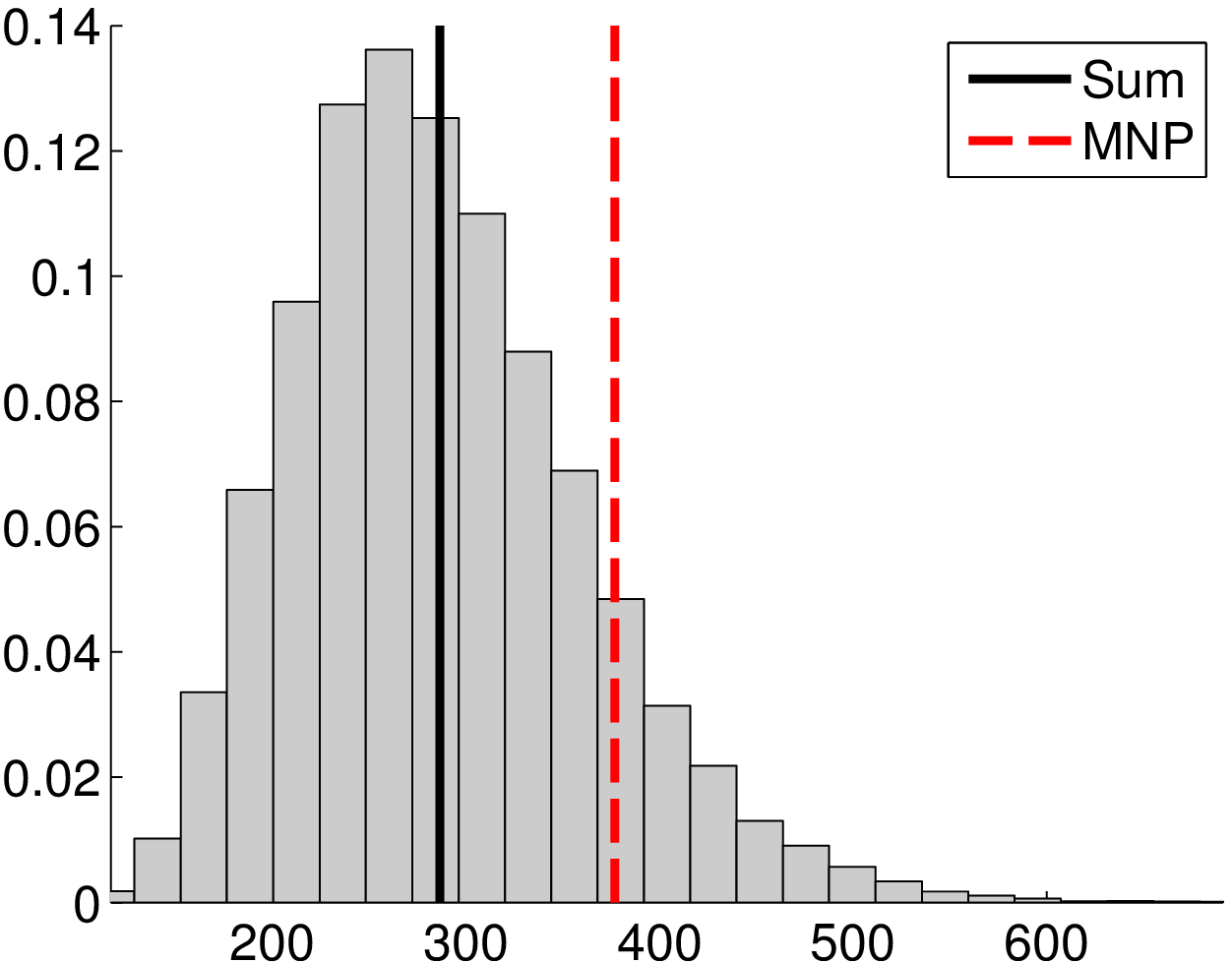}  
}
}
\caption{Homogeneous case: $J = 20$.
In (a), the x-axis  is  $|\log_{10}(\text{Err})|$ and the y-axis represents the ESS.
In (b) and (c),  we plot  the histogram of the 
stopping time of the Sum-Intersection rule with $\text{Err} = 5\%$ and $1\%$.
}
\label{fig:complete_sym_iid_KM_J20}
\end{figure}

\subsection{Non-homogeneous case}\label{sec:asym_case_KM}
In the second study  we have injected a slight violation of homogeneity. Specifically,  we set $J =10$, $k = 2$ and 
\begin{equation*}
f_0^j = \mathcal{N}(0,1) \;\;\forall\; j \in [J], \quad
f_1^j = \begin{cases}
\mathcal{N}(1/6,1) \;\text{ if } \; j = 1 \\
\mathcal{N}(1/2,1) \;\text{ if } \; j \geq 2 
\end{cases}.
\end{equation*}
Thus, all testing problems are identical apart from the first one, which is much harder.  Indeed,  $\MI_0^j = \MI_1^j = \MI^j$ and 
$\MI^j = 1/72$ for $j=1$ and $\MI^j = 1/8$ for $j \geq 2$. Since $k=2$, the optimal asymptotic performance in this problem is determined by the two most difficult hypotheses and is equal to 
$7.2|\log(\text{Err})|$.
In Fig. \ref{fig:asym_a_KM} we  plot the expected sample size (ESS)  against $|\log_{10}(\text{Err})|$ and in Fig. \ref{fig:asym_b_KM}  we plot  the ratio of ESS over $7.2|\log(\text{Err})|$.  We observe that this ratio tends to 1 for the asymptotically optimal  \procKM, whereas this is not the case for the  other two rules.
In particular, as predicted by Theorem \ref{KM_comp_fix_sample_rule},
the ratio for the MNP rule tends to $4$ as  $\text{Err} \to 0$.
\begin{figure}[htbp]
\subfloat[ESS vs $|\log_{10}(\text{Err})|$]{\label{fig:asym_a_KM}
\includegraphics[width=0.4\linewidth]{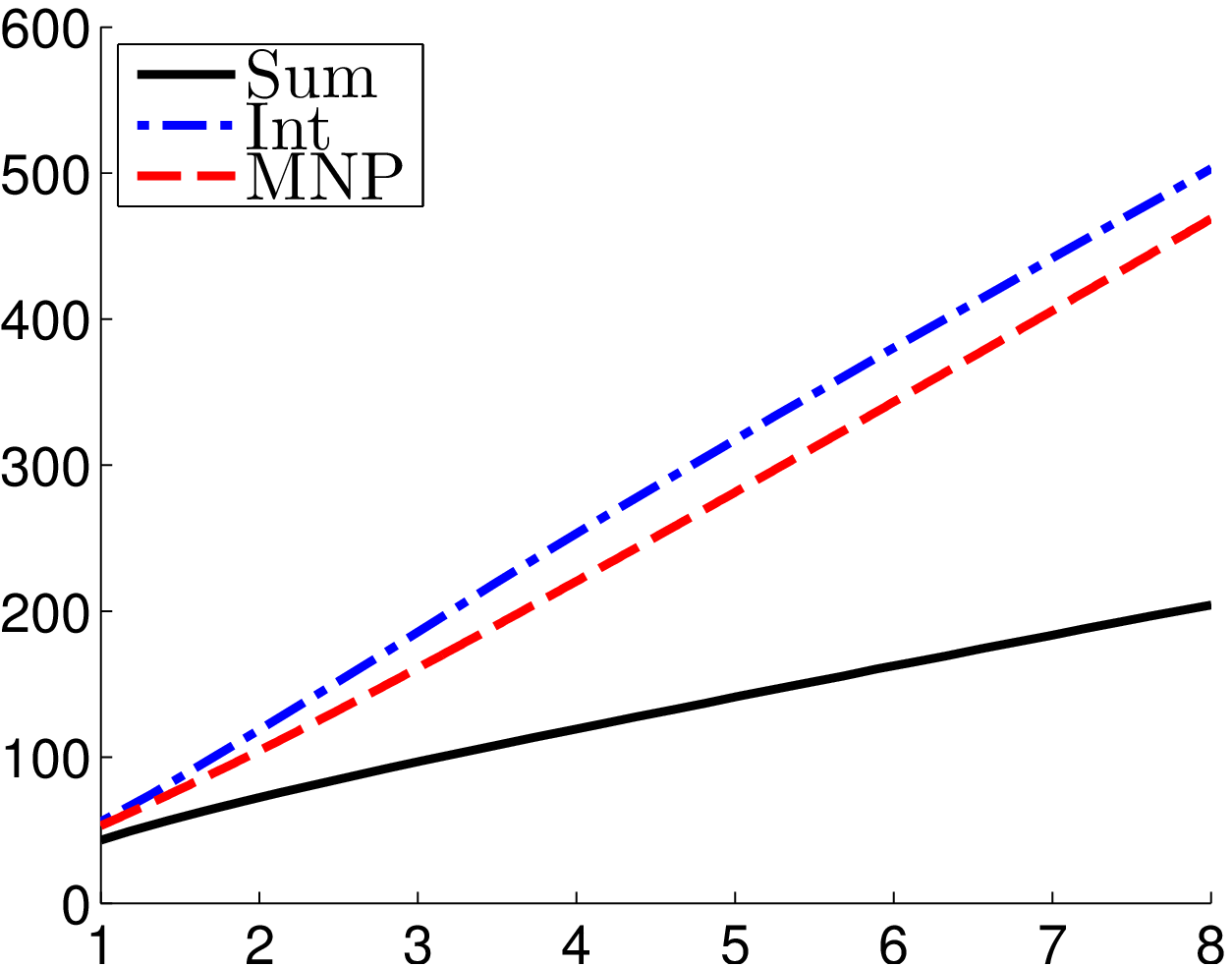} 
}\hspace{1cm}
\subfloat[Normalized version]{\label{fig:asym_b_KM}
\includegraphics[width=0.4\linewidth]{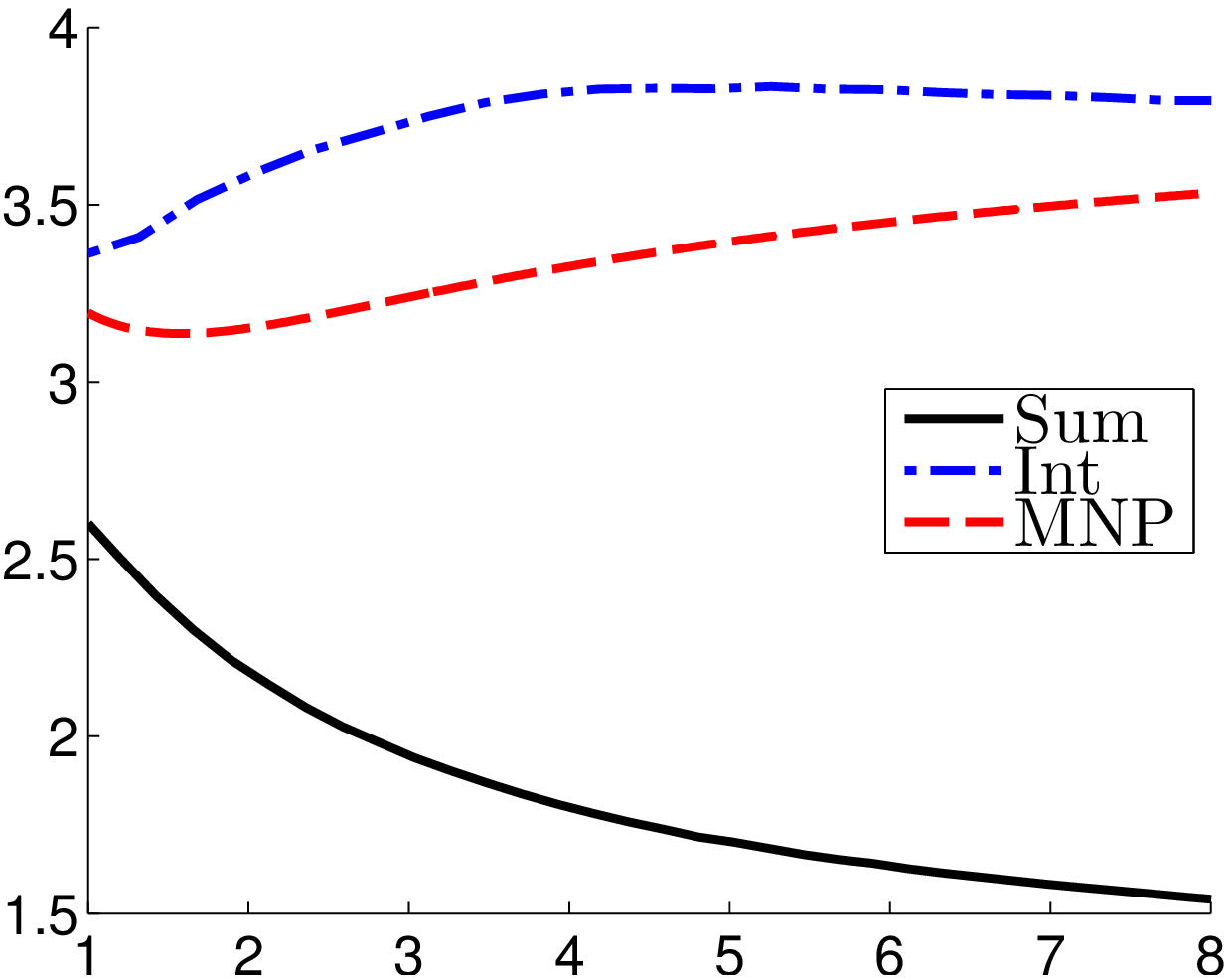} 
} 
\caption{Non-homogeneous case: $J = 10, k = 2$. 
The x-axis in both graphs  is  $|\log_{10}(\text{Err})|$. 
The y-axis  is the corresponding ESS in (a),  and is the ratio  of the ESS over $7.2|\log(\text{Err})|$ in (b).
}
\label{fig:asym_iid_KM}
\end{figure}

\section{Proofs regarding the generalized mis-classification rate}
\subsection{Proofs of Theorem \ref{KM_err_control}}\label{proof_of_KM_err_control}
\begin{proof} 
It suffices to show that  for any $b > 0$ and $A \subset [J]$ we have
\begin{align*}
\Pro_A(|A \;\triangle \; D_S(b)| \geq k) \leq C_{k}^{J} \, e^{-b}.
\end{align*}
Fix $A \subset [J]$ and   $b>0$. Observe that the event 
$\{|A \; \triangle \; D_S| \geq k\}$ occurs  if and only if there exist  $B_1 \subset A$ and $B_2 \subset A^c$ such that $|B_1| + |B_2| = k$ and the following event occurs:
\begin{equation*}
\Gamma(B_1,B_2) :=
\left\{D_S^{i} = 0, \, D_S^{j}= 1, \; \forall \; i \in B_1, j \in B_2 \right\}.
\end{equation*}
Since there are $C_{k}^{J}$ such pairs,  due to  Boole's inequality  it suffices to show that the probability of each of these events is bounded by $e^{-b}$. 
To this end,  fix  $B_1 \subset A, B_2 \subset A^c$ such that $|B_1| + |B_2| = k$ and consider the set   $C = (A \setminus B_1) \cup B_2$. Then, with the  change  of measure $\Pro_{A} \rightarrow \Pro_{C}$, we have
\begin{align}\label{eq:KM_change}
\Pro_{A} (\Gamma(B_1,B_2)) &= \Exp_{C}\left[ \exp\left\{ \lambda^{A,C}(T_S) \right\} ; \Gamma(B_1,B_2) \right].
\end{align}
For $i \in B_1$ we have $D_S^{i} = 0$, which implies $\lambda^{i}(T_S) \leq 0$, and  for $j \in B_2$ we have    $D_S^{j} = 1$, which implies $\lambda^{j}(T_S) >  0$. 
Thus, on the event $\Gamma(B_1,B_2)$, 
\begin{align}\label{KM_error_proof_eq}
\begin{split}
 \lambda^{A,C}(T_S) &= \sum_{i \in B_1} \lambda^{i}(T_S) - \sum_{j \in B_2} \lambda^j(T_S)  \\
 &=
- \sum_{i \in B_1} |\lambda^{i}(T_S)| - \sum_{j \in B_2} |\lambda^j(T_S)|  \leq   - \sum_{ i=1}^{k}  \widetilde{\lambda}^{i}(T_S) \leq  -b,
\end{split}
\end{align}
where the first equality is due to \eqref{ll_diff},  the first inequality follows from the definition of $\widetilde{\lambda}^i$'s, and the second from the definition of the stopping time $T_S$. Thus, the proof is complete in view of \eqref{eq:KM_change}.
\end{proof}

\subsection{An important Lemma}\label{app:exist_B_star}
The following lemma is crucial in establish Theorem~\ref{KM_lower_bound}.
\begin{lemma} \label{exist_B_star}
Let $A, B \subset [J]$. Then there exists $B^* \subset [J]$ such that
\begin{equation*}
(i)\quad  B \notin  \MU_{k}(B^*), \;\qquad (ii)\quad I^{A,B^*} \leq \mathcal{D}_A(k).
\end{equation*}
\end{lemma}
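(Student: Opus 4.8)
The plan is to reduce the existence of $B^*$ to a purely combinatorial statement about the cost of ``flipping'' individual streams. For any $C \subset [J]$ write $C = A \triangle S$, where $S := A \triangle C$ records the streams in which $C$ disagrees with $A$. By \eqref{KLs} we have $\MI^{A,C} = \sum_{j \in S} c_j$, where the \emph{flip cost} of stream $j$ is $c_j := \MI_1^j$ if $j \in A$ and $c_j := \MI_0^j$ if $j \notin A$; thus $\{c_j : j \in [J]\}$ is precisely the collection whose increasingly ordered values are $\widetilde{\MI}^{(1)}(A) \le \ldots \le \widetilde{\MI}^{(J)}(A)$. Writing $E := A \triangle B$ and $B^* = A \triangle S$, associativity of the symmetric difference gives $B \triangle B^* = E \triangle S$, so condition (i), namely $B \notin \MU_k(B^*)$, is equivalent to $|E \triangle S| \ge k$. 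Hence it suffices to exhibit $S \subset [J]$ with $|E \triangle S| \ge k$ and $\sum_{j \in S} c_j \le \mathcal{D}_A(k)$. Since all flip costs are positive under \eqref{cond_for_asym_opt}, minimizing $\sum_{j \in T} c_j$ over $|T| \ge k$ is attained by the $k$ cheapest streams, so $\mathcal{D}_A(k) = \sum_{i=1}^k \widetilde{\MI}^{(i)}(A)$, the sum of the $k$ smallest flip costs.

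Next I would split into two cases according to $m := |E| = |A \triangle B|$. If $m \ge k$, I take $S = \emptyset$, i.e.\ $B^* = A$; then $|E \triangle S| = m \ge k$ and $\MI^{A,B^*} = 0 \le \mathcal{D}_A(k)$, so both requirements hold trivially. If $m < k$, I choose $S$ to be \emph{disjoint} from $E$ and to consist of the $k - m$ streams of smallest flip cost among those outside $E$ (there are $J - m \ge k - m$ such streams, using $k \le J$). Disjointness gives $|E \triangle S| = m + (k - m) = k$, so condition (i) holds, and it remains only to verify the cost bound (ii).

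The crux of the argument is this cost bound in the case $m < k$. Let $d_1 \le \ldots \le d_{k-m}$ denote the chosen (smallest) flip costs outside $E$. The key observation is the rank inequality $d_i \le \widetilde{\MI}^{(m+i)}(A)$ for each $1 \le i \le k-m$: among the $m+i$ globally cheapest streams at most $m$ can lie in $E$, hence at least $i$ lie outside $E$, and each of those has cost $\le \widetilde{\MI}^{(m+i)}(A)$; since $d_i$ is the $i$-th cheapest cost outside $E$, it is among these, giving $d_i \le \widetilde{\MI}^{(m+i)}(A)$. Summing and using positivity of the costs then yields
\begin{align*}
\MI^{A,B^*} = \sum_{i=1}^{k-m} d_i \le \sum_{i=1}^{k-m} \widetilde{\MI}^{(m+i)}(A) = \sum_{i=m+1}^{k} \widetilde{\MI}^{(i)}(A) \le \sum_{i=1}^{k} \widetilde{\MI}^{(i)}(A) = \mathcal{D}_A(k),
\end{align*}
which is exactly (ii). I expect this rank inequality to be the only delicate point; everything else is bookkeeping with symmetric differences. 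Two boundary facts worth flagging are that the construction needs $k \le J$ (guaranteed since $k < J$ in Problem~\ref{prob:GM}) and that the positivity of the information numbers is what makes the ``$k$ cheapest'' characterization of $\mathcal{D}_A(k)$ legitimate.
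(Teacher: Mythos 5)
Your proof is correct, but it is executed differently from the paper's. The paper first proves an abstract set lemma (for any $A,B,\Gamma\subset[J]$ there is a $B^*$ with $A\,\triangle\,B^*\subset\Gamma\subset B\,\triangle\,B^*$), applies it with $\Gamma=A\,\triangle\,C^*$ where $C^*$ attains $\mathcal{D}_A(k)$, and then gets (ii) for free from the monotonicity $A\,\triangle\,B^*\subset A\,\triangle\,C^*\Rightarrow \MI^{A,B^*}\le\MI^{A,C^*}$; the resulting set is $A\,\triangle\,B^*=\Gamma\setminus(A\,\triangle\,B)$, which in general differs from your choice (e.g.\ when $|A\,\triangle\,B|\ge k$ the paper's $B^*$ need not equal $A$, and when $|A\,\triangle\,B|<k$ your $S$ may include streams outside the globally $k$ cheapest). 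Your route instead works directly in ``flip coordinates,'' constructs $S$ as the $k-m$ cheapest streams outside $E=A\,\triangle\,B$, and verifies (ii) by the rank inequality $d_i\le\widetilde{\MI}^{(m+i)}(A)$, which is sound (at most $m$ of the $m+i$ globally cheapest streams can lie in $E$) and in fact yields the slightly sharper bound $\MI^{A,B^*}\le\sum_{i=m+1}^{k}\widetilde{\MI}^{(i)}(A)$. What the paper's approach buys is that the cost comparison is an immediate consequence of set inclusion, with no counting; what yours buys is an explicit, self-contained construction of $B^*$ and a quantitatively tighter estimate. The two boundary points you flag ($k\le J$ and positivity of the information numbers, the latter coming from \eqref{LLN}) are indeed the only hypotheses used, so there is no gap.
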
 

To show Lemma~\ref{exist_B_star}, we start with a lemma about sets.
\begin{lemma}\label{set_lemma}
Let $A,B, \Gamma \subset [J]$. There exists $B^* \subset [J]$ such that
$$
A \;\triangle \; B^* \quad \subset\quad \Gamma  \quad\subset\; B \;\triangle \; B^*
$$
\end{lemma}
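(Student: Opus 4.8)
The plan is to construct $B^*$ explicitly, specifying its membership separately on $\Gamma$ and on the complement $[J]\setminus\Gamma$, exploiting the fact that the two desired inclusions impose constraints on these two disjoint regions and therefore cannot conflict.

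First I would read off what each inclusion demands pointwise. The inclusion $A \;\triangle\; B^* \subset \Gamma$ is equivalent to saying that \emph{off} $\Gamma$ the sets $A$ and $B^*$ must agree (no symmetric difference is allowed outside $\Gamma$). The inclusion $\Gamma \subset B \;\triangle\; B^*$ is equivalent to saying that \emph{on} $\Gamma$ the sets $B$ and $B^*$ must disagree at every point. Since these two requirements constrain $B^*$ on the disjoint regions $[J]\setminus\Gamma$ and $\Gamma$ respectively, they can be satisfied simultaneously, and this decoupling is the only real idea in the proof.

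Guided by this observation, I would set
\begin{equation*}
B^* := (A \setminus \Gamma) \;\cup\; (\Gamma \setminus B).
\end{equation*}
By construction, for every $x \notin \Gamma$ one has $x \in B^*$ if and only if $x \in A$, while for every $x \in \Gamma$ one has $x \in B^*$ if and only if $x \notin B$.

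It then remains to verify the two inclusions, which are routine case checks. For the first, if $x \in A \;\triangle\; B^*$ then $x$ cannot lie in $[J]\setminus\Gamma$, since $A$ and $B^*$ agree there; hence $x \in \Gamma$, giving $A \;\triangle\; B^* \subset \Gamma$. For the second, if $x \in \Gamma$ then $x \in B^*$ if and only if $x \notin B$, so exactly one of the memberships $x \in B$, $x \in B^*$ holds, whence $x \in B \;\triangle\; B^*$; this gives $\Gamma \subset B \;\triangle\; B^*$. There is no genuine obstacle here: once the constraints are seen to live on complementary parts of $[J]$, the explicit formula for $B^*$ and its verification are immediate.
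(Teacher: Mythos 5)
Your construction $B^* = (A\setminus\Gamma)\cup(\Gamma\setminus B)$ is exactly the paper's choice $B^* = B_2\cup A_1$ with $B_2 = B^c\cap\Gamma$ and $A_1 = A\cap\Gamma^c$, and your pointwise verification (agree with $A$ off $\Gamma$, disagree with $B$ on $\Gamma$) is the same case check the paper performs. The proof is correct and essentially identical to the paper's.
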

\begin{proof}
Define the following disjoint sets: 
\begin{align*}
B_1 = B \cap \Gamma, \quad B_2 = B^c \cap \Gamma, \quad
A_1 = A \cap \Gamma^c, \quad A_2 = A^c \cap \Gamma^c 
\end{align*}
Clearly, $\Gamma = B_1 \cup B_2$, and $\Gamma^c = A_1 \cup A_2$. Let $B^* = B_2 \cup A_1$.  

On one hand, if $j \in B_1$, then $j \in B$ and $j \not \in B^*$; if $j \in B_2$, then $j \not \in B$ and $j \in B^*$. It implies  
$\Gamma = B_1 \cup B_2 \subset B \; \triangle \; B^*$.

On the other, if $j \in A_1$, then $j \in A$ and $j \in B^*$; if $j \in A_2$, then $j \not \in A$ and $j \not \in B^*$. Thus $\Gamma^c = A_1 \cup A_2 \subset (A \; \triangle \; B^*)^c$, which implies
$A \; \triangle \; B^* \subset \Gamma$.
\end{proof}
Now we are ready to prove Lemma \ref{exist_B_star}.
\begin{proof}
Let $C^* \not \in \MU_k(A)$ such that $
\mathcal{D}_{A}( k) = \mathcal{I}^{A,C^*}$ and set  $\Gamma = A \;\triangle \; C^*$. Then, clearly 
$|\Gamma| \geq k$. By  Lemma \ref{set_lemma},  there exists a set $B^* \subset [J]$ such that
$$
A \;\triangle \; B^* \quad  \subset \quad  \Gamma = A \;\triangle \; C^* \quad \subset \quad \; B \;\triangle \; B^*.
$$
From the second inclusion it follows that $|B \;\triangle \; B^*| \geq |\Gamma| \geq k$, which proves (i).  From the first inclusion it follows that $A \setminus B^* \subset A\setminus C^*$ and $B^* \setminus A \subset C^* \setminus A$, therefore from \eqref{ll_diff} we conclude that
\begin{align*}
\MI^{A,B^*} = \sum_{i \in A\setminus B^*} \MI_1^{i} + \sum_{j \in B^* \setminus A} \MI_0^{j} \leq
\sum_{i \in A\setminus C^*} \MI_1^{i} + \sum_{j \in C^* \setminus A} \MI_0^{j} = \MI^{A,C^*}, 
\end{align*}
which proves (ii). 
\end{proof}

\subsection{Proof of Theorem \ref{KM_lower_bound}}\label{app:proof_KM_lower_bound}
\begin{proof}
Fix $A \subset [J]$, $k \in [J]$, and   set 
$$
\ell_{\alpha} := |\log(\alpha)|/\mathcal{D}_{A}(k) , \quad  \alpha \in (0,1).
$$
By Markov's inequality, for any stopping time $T$, $\alpha \in (0,1)$ and $q >0$,
$$
 \Exp_A[T] \geq q \ell_{\alpha} \, \Pro_A(T \geq q \ell_{\alpha}).
$$
Thus,  it suffices to show for every $q \in (0,1)$ 
we have
\begin{equation}
\label{KM_sufficient}
\liminf_{\alpha \to 0}  \inf_{(T,D) \in \Delta_{k}(\alpha)} \Pro_A(T \geq q \ell_{\alpha}) \geq 1,
\end{equation}
as this will imply
$\liminf_{\alpha \to 0} N^*_A(k,\alpha)/ \ell_{\alpha} \geq q,
$ 
and the desired result will follow  by letting $q \to 1$. 

In order to prove \eqref{KM_sufficient}, let us start by fixing arbitrary $\alpha, q \in (0,1)$ and  $(T,D) \in \Delta_{k}(\alpha)$.  Then,
\begin{align}  \label{exceed_prob_in_nb}
1 - \alpha &\leq  \Pro_A(D \in \MU_k(A)) = \sum_{B \in \MU_k(A)} \Pro_A(D = B). 
\end{align}
Now, consider an arbitrary  $B \in \MU_k(A)$, and  let $B^* \subset [J]$ be  a set that satisfies the two conditions in Lemma \ref{exist_B_star}. Then, $|B^* \; \triangle \; B| \geq k$, and  consequently 
\begin{equation} \label{bound}
\Pro_{B^*}(D = B) \leq \alpha.
\end{equation}
We can now decompose  the  probability  $\Pro_A(D = B)$ as follows:
\begin{align*}
 \Pro_A\left(\lambda^{A,B^*}(T) < \log\left(\frac{\eta}{\alpha}\right); D = B \right)  + \Pro_A\left(\lambda^{A,B^*}(T)  \geq \log\left(\frac{\eta}{\alpha}\right) ; D = B\right),
\end{align*}
where  $\eta$ is  an arbitrary constant in $(0,1)$.
We denote the first term by $\text{I}$ and second by $\text{II}$. For the first term, by a  change of measure $\Pro_A \rightarrow \Pro_{B^{*}}$ we have 
\begin{align*}
\text{I} &=\Exp_{B^*} \left[ \exp \{ \lambda^{A,B^*}(T)  \}  \;  ;  \; \lambda^{A,B^*}(T) <  \log\left(\frac{\eta}{\alpha}\right), D = B \right]   \\
 &\leq \frac{\eta}{\alpha} \Pro_{B^*}(D = B) \leq \eta,
\end{align*}  
where the second inequality follows from \eqref{bound}.
For the second term, we have
\begin{align*}
\text{II} &\leq \Pro_A \left( T \leq q \frac{|\log \alpha |}{\mathcal{D}_{A}(k)}, \,  \lambda^{A,B^*}(T)  \geq  \log\left(\frac{\eta}{\alpha}\right) \right) + \Pro_A(T \geq q \ell_\alpha, D = B).
\end{align*}  
By construction, $B^{*}$ satisfies $\MI^{A,B^*} \leq \mathcal{D}_{A}(k)$; thus the first term in the right-hand side is  bounded above by 
$$\epsilon_{\alpha, B^*}(T) :=  \Pro_A \left( T \leq q\frac{|\log \alpha |}{I^{A,B^*}}, \quad  \lambda^{A,B^*}(T)  \geq |\log \alpha | + \log(\eta) \right).$$
Due to the  SLLN \eqref{LLN},  we have 
\begin{align*} 
\Pro_A \left( \lim_{n \rightarrow \infty} \frac{\lambda^{A,B^*}(n)}{n} =\mathcal{I}^{A,B^*}\right)=1.
\end{align*} 
Therefore, by Lemma \ref{T_small_ST_larger_lemma}, it follows that  $\epsilon_{\alpha, B^*}(T) \to 0$ as $\alpha \to 0$  uniformly in $T$.


 Putting everything together we have 
\begin{align*}
\Pro_A(D = B)  \leq \eta +  \epsilon_{\alpha, B^*}(T)+ \Pro_A(T \geq q \ell_\alpha, D = B),
\end{align*}
and summing over $B \in \MU_k(A)$ we obtain
\begin{align*}
\Pro_A(D \in \MU_k(A))  &\leq |\MU_k(A)| \eta + \epsilon_{\alpha}(T) + \Pro_A(T \geq q \ell_\alpha, D \in \MU_k(A)) \\
&\leq |\MU_k(A)| \eta + \epsilon_{\alpha}(T) + \Pro_A(T \geq q \ell_\alpha),
\end{align*}
where $\epsilon_{\alpha}(T) := \sum_{B \in \MU_k(A)}  \epsilon_{\alpha, B^*}(T) \to 0$  as $\alpha \to 0$  uniformly in $T$.  
Due to \eqref{exceed_prob_in_nb}, we have
\begin{align*}
\Pro_A(T \geq q \ell_\alpha) \geq\; 1 - \alpha - \epsilon_{\alpha}(T) - |\MU_k(A)| \eta.
\end{align*}
Since  $(T,D) \in \Delta_{k}(\alpha)$  is  arbitrary  and $\alpha \in (0,1)$ also arbitrary,  taking the infimum over $(T,D)$ and letting $\alpha \to 0$  we obtain
\begin{equation*}
\liminf_{\alpha \to 0}\inf_{(T,D) \in \Delta_{k}(\alpha)}\Pro_A (T \geq q \ell_\alpha) \geq 1  -  |\MU_k(A)| \eta.
\end{equation*}
Finally, letting $\eta \to 0$ we obtain \eqref{KM_sufficient}, which completes the proof.
\end{proof}

\subsection{Proof of Theorem \ref{order_sum_first_opt}} \label{proof_of_KM_upper_bound}
The following fact about set operations will be needed:
\begin{equation}\label{set_symm}
\text{Let }\; A, B \subset [J] \;\text{ and }\; C =A\; \triangle\; B\;. \text{ Then }    A \; \triangle \; C = B.
\end{equation}

\begin{proof} 
Fix $A \subset [J]$ and consider the  stopping time
\begin{align*}
T^A(b) := \inf\left\{n \geq 1:  \lambda^{A,C}(n) \geq b  \quad  \forall \,  C \notin \MU_k(A) \right\} .
\end{align*}
Under  the conditions of the lemma, from Lemma \ref{multiple_cross} in the Appendix it follows that  as  $b \rightarrow \infty$ we have
$$
\Exp_A[T^A(b)] \leq \frac{b\, ( 1+ o(1)) }{\mathcal{D}_{A}(k)}.
$$
Thus,  it suffices to show that $T_S(b) \leq T^A(b)$ for any given $b>0$. In what follows, we fix $b>0$ and suppress the dependence on $b$. By the definition of the \procKM,  it suffices to show that
\begin{equation}\label{TP_dom_TS_suff}
 \sum_{i \in B} |\lambda^{i}(T^A)| \geq b, \quad \forall \;B \subset [J]: \;  |B|=k. 
\end{equation}
To this end,  fix  $B \subset [J]$ with $|B| = k$ and  set $C = A\; \triangle \; B$.  Then, from \eqref{set_symm} we have that $B= A \; \triangle \; C $.  Since $|B| \geq k$,  it follows that  $C \not\in \MU_k(A)$,   and  by the definition of $T^A$ we have $\lambda^{A,C}(T^A) \geq b$. As a result, 
\begin{align*}
b \leq \lambda^{A,C}(T^A) &=  \sum_{i \in A \setminus C} \lambda^{i}(T^A) - \sum_{j \in C \setminus A} \lambda^{j}(T^A) \\
&\leq \sum_{i \in A\; \triangle \; C} |\lambda^{i}(T^A)| = 
\sum_{i \in B} |\lambda^{i}(T^A)|.
\end{align*}
The proof is complete in view of  \eqref{TP_dom_TS_suff}. 
\end{proof}

\subsection{Proof of Corollary \ref{inter_sym}}\label{proof_of_inter_sym}
\begin{proof} 
 Fix $A \subset [J]$. For (i) it suffices to show that 
for any $b > 0$ 
\begin{align*}
\Pro_A(|A \;\triangle \; D_I(b,b)|) \leq C_{k}^{J} \; e^{-kb}.
\end{align*} 
The proof is  identical to that of Theorem \ref{KM_err_control} as long as we replace the inequalities in \eqref{KM_error_proof_eq} by
\begin{equation*}
- \sum_{i \in B_1} |\lambda^{i}(T_I)| - \sum_{j \in B_2} |\lambda^j(T_I)|  \leq - kb.
\end{equation*}

In order to prove (ii), setting  $k = 1$ in  Theorem \ref{order_sum_first_opt} we have  as $b \to \infty$ 
\begin{equation} \label{eq:int_KM_ESS}
 \Exp_A[T_I(b,b)] \leq \frac{b \, (1 + o(1))}{ \min_{C \neq A} \cI^{A,C}}.
\end{equation}
If condition \eqref{sym_hom_KM} is satisfied, then 
 $\min_{C \neq A} \cI^{A,C} = \MI$. Therefore, if 
$b \sim |\log \alpha|/k$, from
 \eqref{eq:int_KM_ESS} we have that   as $\alpha \to 0$
$$ \Exp_A \left[ T_I \right] \leq  \frac{|\log \alpha|}{k \MI} (1+o(1)).
$$
Further, this asymptotic upper bound agrees with the asymptotic lower bound in \eqref{ALB}, since    $\mathcal{D}_{A}(k)=k \MI$ when condition \eqref{sym_hom_KM} holds.
Thus, the proof is complete.
\end{proof}

\subsection{Proof of Theorem \ref{KM_comp_fix_sample_rule}}\label{proof_of_KM_comp_fix_sample_rule}
\begin{proof}
Since $k \leq (J+1)/2$ is fixed, we write $n^*(\alpha)$ (resp. $n_{NP}(\alpha)$) for $n^*(k,\alpha)$ (resp. $n_{NP}(k,\alpha)$) for simplicity. By Theorem \ref{order_sum_first_opt}, for any $A \subset [J]$ we have
\begin{align*}
N_A^*(k,\alpha)\, \sim\,  \frac{|\log \alpha|}{\mathcal{D}_A(k)} \;  \text{ as } \;  \alpha \to 0.
\end{align*}
Thus, it suffices to show that 
\begin{align} \label{aux_si_opt}
\liminf_{\alpha \to 0} \frac{n^*(\alpha)}{\vert \log(\alpha)\vert} \geq 
\frac{1}{\sum_{j=1}^{2k-1} \cC^{(j)}} \qquad \text{and} \qquad 
n_{NP}(\alpha) \sim \frac{|\log(\alpha)|}{\sum_{j=1}^{k} \cC^{(j)}}.
\end{align}

\noindent (i) Let us first focus on $n^*(\alpha)$. By its definition \eqref{def:fix_ns}, there exists 
$$D^*(\alpha) \in \Delta_{fix}(n^*(\alpha)) \cap \Delta_{k}(\alpha).$$

Denote $\Pro$ the probability measure for data in all streams.
For any $A \subset [J]$ with $|A| = 2k-1$, we consider the following simple versus simple problem:
\begin{align}\label{aux_simple_vs_simple}
{\Hyp_0'}: \Pro = \Pro_{\emptyset} \;\; \text{ vs. } \;\;
{\Hyp_1'}: \Pro = \Pro_{A},
\end{align}
where $\Pro_A$ is defined in \eqref{product}. Consider the following procedure for \eqref{aux_simple_vs_simple}:
\begin{align*}
\bar{D}^*(\alpha) = 
\begin{cases}
0 \;\; \text{ if } |D^*(\alpha)| < k \\ 
1 \;\; \text{ if } |D^*(\alpha)| \geq k 
\end{cases}.
\end{align*}
Then by definition of $D^*(\alpha)$, we have
\begin{align*}
&\Pro_{\emptyset}(\bar{D}^*(\alpha) = 1) = 
\Pro_{\emptyset}(|D^*(\alpha)| \geq k) \leq \alpha, \\
&\Pro_{A}(\bar{D}^*(\alpha) = 0) = 
\Pro_{A}(|D^*(\alpha)| < k) \leq \alpha,
\end{align*}
where the second inequality uses the fact that $|A| = 2k-1$. Thus 
\begin{align*}
\frac{1}{n^*(\alpha)} \log(\alpha) \geq
\frac{1}{n^*(\alpha)} \log\left(\frac{1}{2}\Pro_{\emptyset}(\bar{D}^*(\alpha) = 1)  + \frac{1}{2} \Pro_{A}(\bar{D}^*(\alpha) = 0) \right). 
\end{align*}
By Chernoff's lemma \ref{generalized_chernoff_lemma}, 
$$
\liminf_{\alpha \to 0}\frac{1}{n^*(\alpha)} \log\left(\frac{1}{2}\Pro_{\emptyset}(\bar{D}^*(\alpha) = 1)  + \frac{1}{2} \Pro_{A}(\bar{D}^*(\alpha) = 0) \right) 
 \geq - \Phi^{A}(0)
$$
where $\Phi^{A}(0) := \sup_{\theta \in \bR}\left\{-\log \left(\Exp_{\emptyset}\left[
e^{\theta \lambda^{A,\emptyset}(1)}
\right]
\right)\right\}$. Due to independence,
\begin{align*}
\Phi^{A}(0) =  \sup_{\theta \in \bR} \left\{\sum_{j \in A} -\log \left(\Exp_{0}^{j}\left[
e^{\theta \lambda^j(1)} 
\right]
\right)\right\}
\leq \sum_{j \in A} \Phi^{j}(0) \equiv \sum_{j \in A} \cC_{j},
\end{align*}
As a result, 
$$
\liminf_{\alpha \to 0}\frac{1}{n^*(\alpha)} \log(\alpha) \geq  -\sum_{j \in A} \cC_{j}.
$$
Maximizing the lower bound over $A \subset [J]$ with $|A| = 2k-1$, we obtain the inequality in  \eqref{aux_si_opt}.

\noindent (ii) We now focus on $n_{NP}(\alpha)$. By definition, there exists some $\tilde{h} \in \bR^{J}$ such that
$$
(n_{NP}(\alpha), \tilde{D}(\alpha)) \in \Delta_{k}(\alpha),
\text{ where }
\tilde{D}(\alpha) := D_{NP}(n_{NP}(\alpha), \tilde{h}).
$$
Denote
\begin{align*}
p_{j} := \Pro_0^{j}(\tilde{D}^{j}(\alpha) = 1) = 
\Pro_0^{j}\left(\lambda^j(n_{NP}(\alpha)) > \tilde{h}_{j}  \; n_{NP}(\alpha)\right) \\
q_{j} := \Pro_1^{j}(\tilde{D}^{j}(\alpha) = 0) = 
\Pro_1^{j}\left(\lambda^j(n_{NP}(\alpha)) \leq \tilde{h}_{j}  \; n_{NP}(\alpha)\right)
\end{align*}

For any $A_1, A_2 \subset [J]$ such that $A_1 \cap A_2 = \emptyset$ and $|A_1 \cup A_2| = k$,
\begin{align*}
\alpha \geq \Pro_{A_1}\left( \cap_{j \in A_1}\{\tilde{D}^{j}(\alpha) = 0\}
\bigcap \cap_{i \in A_2}\{\tilde{D}^{i}(\alpha) = 1\}
\right) = \prod_{j \in A_1} q_j \prod_{i \in A_2} p_i,\\
\alpha \geq \Pro_{A_2}\left( \cap_{j \in A_1}\{\tilde{D}^{j}(\alpha) = 1\}
\bigcap \cap_{i \in A_2}\{\tilde{D}^{i}(\alpha)= 0\}
\right) = \prod_{j \in A_1} p_j \prod_{i \in A_2} q_i.
\end{align*}
Since $A_1, A_2$ are arbitrary, we have for any $A \subset [J]$ with $|A| = k$
\begin{align*}
\alpha \geq \prod_{j \in A} \max\{p_j, q_j\},
\end{align*}
which implies that
\begin{align*}
\log(\alpha) \geq \sum_{j \in A} \max\{\log(p_j), \log(q_j)\} \geq 
\sum_{j \in A} \log( p_j/2 +  q_j/2).
\end{align*}
Thus, again by Chernoff's Lemma \ref{generalized_chernoff_lemma},
\begin{align*}
\liminf_{\alpha \to 0} \frac{1}{n_{NP}(\alpha)} \log(\alpha) \geq -\sum_{j \in A} \Phi^j(0).
\end{align*}
Maximizing the lower bound over $A \subset [J]$ with $|A| = k$, we have
\begin{align*}
\liminf_{\alpha \to 0} \frac{n_{NP}(\alpha)}{|\log(\alpha)|} \geq \frac{1}{\sum_{j=1}^{k} \cC^{(j)}}.
\end{align*}
The lower bound is achieved when $\tilde{h} = 0$, and this proves the equivalence in  \eqref{aux_si_opt}.
%
%
\end{proof}

\subsection{Bernoulli example under the generalized mis-classification rate}
\label{app:binom_example}
Suppose that for each $j \in [J]$, $\{X^{j}(n): n \in \bN\}$ are i.i.d. Bernoulli random variables and that there is a  constant  $p \in (0,1/2)$ such that 
$$
{\Hyp}_0^j\;: \Pro_0^{j}(X^j(1) = 1) = p \; \text{ versus }\; 
{\Hyp}_1^j\;: \Pro_1^{j}(X^j(1) = 1) = 1 - p := q.
$$ In this case, $\cI_0^{j} = \cI_1^{j} = H(p)$, where 
$$H(x) := x\log\left(\frac{x}{1-x} \right) + (1-x) \log\left(\frac{1-x}{x} \right).
$$
 Further, 
$$
\Phi(0) = \sup_{\theta \in \bR}
\left\{
-\log(p^{\theta}q^{1-\theta} + p^{1-\theta} q^{\theta})
\right\} = \log\frac{1}{2\sqrt{p(1-p)}}.
$$
By Theorem \ref{KM_comp_fix_sample_rule}, for any $A \subset [J]$ we have 
\begin{align*}
&\liminf_{\alpha \to 0} \;
\frac{n^{*}(k,\alpha)}{ N_A^*(k,\alpha)} \geq 
\frac{k H(p)}{(2k-1)\Phi(0)}, \qquad 
\lim_{\alpha \to 0}\;
\frac{n_{NP}(k,\alpha)}{N_A^*(k,\alpha)} = 
 \frac{H(p)}{\Phi(0)}.
\end{align*}
In Figure \ref{fig:binomial_example}, we plot ${H(p)}/{\Phi(0)}$ as a function of $p$.

\begin{figure}[htbp!]
\includegraphics[width = 0.5\textwidth]{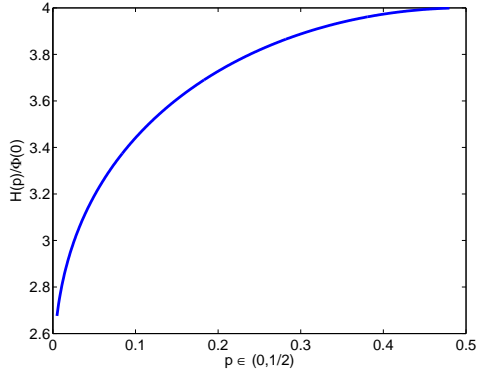}
\caption{The plot for ${H(p)}/{\Phi(0)}$ as a function of $p$ }
\label{fig:binomial_example}
\end{figure}

\section{Proofs regarding the generalized familywise error rates}

%

\subsection{Proof of Theorem \ref{leap_error_control}} \label{proof:leap_error_control}

The goal in this subsection is to show that 
for any $a,b > 0$ and $A \subset [J]$ we have  
\begin{equation*} 
\Pro_{A}(|D_L \setminus A| \geq k_1)  \leq Q(k_1) \,  e^{-b}, \quad \Pro_{A}(|A \setminus D_L| \geq k_2)  \leq Q(k_2)\,  e^{-a},
\end{equation*}
where $Q(k) := 2^{k} C^{J}_{k}$.
We start with a lemma that shows how to select the thresholds  for  procedures $\widehat{\delta}_\ell$, $0\leq  \ell <k_1$ and $\widecheck{\delta}_\ell$, $ 0 \leq  \ell <k_2$.

\begin{lemma} \label{taus_error_control}
Assume that \eqref{sep_lr} holds. Fix  $A \subset [J]$. Let $B_1 \subset A^c$ with $|B_1| = k_1$, and $B_2 \subset A$ with $|B_2| = k_2$.
\begin{enumerate}[label={(\roman*)}]
\item Fix any $0 \leq \ell < k_1$. For any event $\Gamma \in \MF_{\widehat{\tau}_\ell}$,  we have
\begin{equation*}
\Pro_A(B_1 \subset \widehat{D}_{\ell}) \leq  C_{\ell}^{k_1} e^{-b},\quad
\Pro_A(B_2 \subset \widehat{D}_{\ell}^c,\, \Gamma) \leq  e^{-a} \Pro_{A\setminus B_2} (\Gamma).
\end{equation*}
\item Fix any $0 \leq \ell < k_2$. For any event $\Gamma \in \MF_{\widecheck{\tau}_\ell}$,   we have
\begin{equation*}
\Pro_A(B_1 \subset \widecheck{D}_\ell,\, \Gamma) \leq  e^{-b}\Pro_{A\cup B_1} (\Gamma) ,\quad
\Pro_A(B_2 \subset \widecheck{D}_\ell^c) \leq  C_{\ell}^{k_2} e^{-a}.
\end{equation*}
\end{enumerate}
\end{lemma}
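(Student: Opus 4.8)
The plan is to derive all four inequalities from the change-of-measure (Wald likelihood-ratio) identity applied at the relevant stopping time, exactly as in the proof of Theorem~\ref{KM_err_control}, but now tracking the asymmetric ``leap'' structure of the decision rules. Throughout I would use that under \eqref{sep_lr} each procedure terminates almost surely, so that for a stopping time $T$, an event $E \in \MF_T$ and any $C \subset [J]$, the identity $\Pro_A(E) = \Exp_C[\exp\{\lambda^{A,C}(T)\}; E]$ holds, with $\lambda^{A,C}$ given by \eqref{ll_diff}.

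For the two ``matching'' bounds --- the second inequality of (i) and the first of (ii) --- I would argue directly. Take the second bound of (i): on $\{B_2 \subset \widehat{D}_\ell^c\}$ every stream of $B_2$ is a non-positive stream whose rank among the negative statistics is at least $\ell+1$, since $\widehat{D}_\ell$ already contains the $\ell$ least significant negative streams. Listing these ranks as $r_1 < \cdots < r_{k_2}$ with $r_m \geq \ell+m$ and using monotonicity of the order statistics gives $\sum_{i\in B_2}|\lambda^i(\widehat\tau_\ell)| = \sum_{m=1}^{k_2}\widecheck\lambda^{r_m}(\widehat\tau_\ell) \geq \sum_{j=\ell+1}^{\ell+k_2}\widecheck\lambda^{j}(\widehat\tau_\ell)\geq a$ by the stopping rule. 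Since $B_2\subset A$, the change of measure to $C=A\setminus B_2$ makes $\lambda^{A,C}(\widehat\tau_\ell)=\sum_{i\in B_2}\lambda^i(\widehat\tau_\ell)=-\sum_{i\in B_2}|\lambda^i(\widehat\tau_\ell)|\leq -a$, so the likelihood ratio is bounded by $e^{-a}$ on the event, and intersecting with $\Gamma\in\MF_{\widehat\tau_\ell}$ yields $\Pro_A(B_2\subset\widehat D_\ell^c,\Gamma)\leq e^{-a}\Pro_{A\setminus B_2}(\Gamma)$. The first bound of (ii) is the mirror image: $B_1\subset\widecheck D_\ell$ forces the $k_1$ streams of $B_1$ to be positive with ranks $\geq \ell+1$, hence $\sum_{i\in B_1}\lambda^i(\widecheck\tau_\ell)\geq\sum_{i=\ell+1}^{\ell+k_1}\widehat\lambda^i(\widecheck\tau_\ell)\geq b$, and changing measure to $C=A\cup B_1$ produces the factor $e^{-b}$ and the measure $\Pro_{A\cup B_1}$.

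For the two ``combinatorial'' bounds --- the first of (i) and the second of (ii) --- the extra ingredient is a pigeonhole step. For the first bound of (i): $\widehat D_\ell$ consists of all positive streams together with only $\ell$ negative streams, and $|B_1|=k_1>\ell$, so on $\{B_1\subset\widehat D_\ell\}$ at least $k_1-\ell$ streams of $B_1$ must be positive; thus there is some $B_1'\subset B_1$ with $|B_1'|=k_1-\ell$ consisting of positive streams, for which $\sum_{i\in B_1'}\lambda^i(\widehat\tau_\ell)\geq\sum_{j=1}^{k_1-\ell}\widehat\lambda^j(\widehat\tau_\ell)\geq b$. Taking a union bound over the $C_\ell^{k_1}$ choices of $B_1'\subset B_1$ with $|B_1'|=k_1-\ell$, and for each one changing measure to $C=A\cup B_1'$ so that $\lambda^{A,C}(\widehat\tau_\ell)=-\sum_{i\in B_1'}\lambda^i(\widehat\tau_\ell)\leq -b$, bounds each term by $e^{-b}$ and gives $\Pro_A(B_1\subset\widehat D_\ell)\leq C_\ell^{k_1}e^{-b}$. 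The second bound of (ii) is again symmetric, with the roles of signs, thresholds, and $a$ versus $b$ interchanged.

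The main obstacle I anticipate is bookkeeping rather than any deep difficulty: one must correctly translate each ``all of $B_1$/$B_2$ ends up on the wrong side'' event into the precise statement about which partial sum of ordered LLRs exceeds its threshold, and in particular verify the rank inequality $r_m\geq \ell+m$ that lets the leap-shifted threshold sums $\sum_{j=\ell+1}^{\ell+k_2}$ and $\sum_{i=\ell+1}^{\ell+k_1}$ be dominated. This is exactly where the asymmetric ``leap'' structure interacts with the ordering of the statistics, and it is what distinguishes these estimates from the simpler Sum-Intersection bound of Theorem~\ref{KM_err_control}. I would also check the convention \eqref{exceed_convention} (that $\widehat\lambda^j=\infty$ or $\widecheck\lambda^j=\infty$ beyond the available counts), which guarantees the dominating-sum inequalities remain valid when there are fewer than the required number of positive or negative streams at $\widehat\tau_\ell$ or $\widecheck\tau_\ell$.
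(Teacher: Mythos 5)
Your proposal is correct and follows essentially the same route as the paper's proof: the two combinatorial bounds are obtained by covering the event with the $C_{\ell}^{k_1}$ (resp.\ $C_{\ell}^{k_2}$) events indexed by subsets $M$ of size $k_1-\ell$ (resp.\ $k_2-\ell$) that must lie on the ``wrong'' side, applying a change of measure to $\Pro_{A\cup M}$ (resp.\ $\Pro_{A\setminus M}$) for each, and using Boole's inequality; the two matching bounds follow from a single change of measure to $\Pro_{A\setminus B_2}$ (resp.\ $\Pro_{A\cup B_1}$) combined with the rank/order-statistic domination that forces the relevant partial sum past its threshold. Your explicit verification of the rank inequality $r_m \geq \ell+m$ and of the convention \eqref{exceed_convention} only makes explicit what the paper leaves implicit.
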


\begin{proof}
We will only prove (i), since (ii) can be shown in a similar way. Fix $0 \leq \ell < k_1$. 
By definition, $\widehat{D}_{\ell}$ rejects the nulls in the $\ell$ streams with the least significant non-positive LLR, in addition to the nulls in the streams with positive LLR. Thus, 
$$
\{B_1 \subset \widehat{D}_{\ell}\} \subset \bigcup_{M \subset B_1, |M| = k_1 - \ell} \Pi_{M} \;, \quad \text{ where }\;
\Pi_{M} := \{\lambda^j(\widehat{\tau}_\ell) > 0 \;\; \forall j \; \in M\}.
$$
With a change of measure from $\Pro_A \rightarrow \Pro_C$, where $C = A \cup M$, we have
$$
\Pro_A(\Pi_{M}) = \Exp_C\left[\exp\{\lambda^{A,C}(\widehat{\tau}_\ell)\}; \Pi_{M} \right] 
= \Exp_C\left[\exp\left\{-\sum_{j \in M}\lambda^{j}(\widehat{\tau}_\ell)\right\}; \Pi_{M} \right]. 
$$ 
By the definition of $\widehat{\tau}_\ell$, on the event $\Pi_{M}$ we have $\sum_{j \in M}\lambda^{j}(\widehat{\tau}_\ell) \geq b$. Thus $\Pro_A(\Pi_M) \leq e^{-b}$. Since the number of such $M$ is no more than $C_{\ell}^{k_1}$, the first inequality in (i) follows from Boole's inequality.

On the other hand, we observe that on the event $\{B_2 \subset \widehat{D}_{\ell}^c\}$ we have
$$
\sum_{j \in B_2} \lambda^{j}(\widehat{\tau}_\ell) \leq -a.
$$
Thus, with a change of measure from $\Pro_A \rightarrow \Pro_{A\setminus B_2}$  we have
$$
\Pro_A(B_2 \subset \widehat{D}_{\ell}^c,\, \Gamma) \leq \Exp_{A\setminus B_2} \left[ \exp\left\{\sum_{j \in B_2} \lambda^{j}(\widehat{\tau}_\ell)\right\};  \Gamma \right] \leq e^{-a} \Pro_{ A\setminus B_2} (\Gamma),
$$
which completes the proof.
\end{proof}

\begin{proof}[Proof of Theorem \ref{leap_error_control}]
We will only establish the upper bound for 
$\Pro_{A}(|A \setminus D_L| \geq k_2)$, since the other inequality can be established similarly.  Observe that
 \begin{align*}
\{ |A \setminus D_L| \geq k_2 \} \;\subset\; 
\bigcup_{B \subset A: |B| = k_2} \{B \subset D_L^c\}.
\end{align*}
Since the union consists of at most $C^{J}_{k_2}$ events, by Boole's inequality it suffices to show that the probability of each event is upper bounded by $2^{k_2} e^{-a}$. Fix an \textit{arbitrary}
$B \subset A$ with $|B| = k_2$. Further observe that
\begin{align*}
&\{B \subset D_L^c\} \subset  \cup_{\ell=0}^{k_1-1}\; \hat{\Gamma}_{B, \ell} \;\; \bigcup \;\; \cup_{\ell=1}^{k_2-1}\; \check{\Gamma}_{B, \ell},\; \text{ where }\;\\
&\widehat{\Gamma}_{B,\ell} :=  \{ B \subset \widehat{D}_\ell^c \} \cap  \{D_L =\widehat{D}_\ell\}, \quad \widecheck{\Gamma}_{B,\ell} := \{ B \subset \widecheck{D}_{\ell}^c \}.
\end{align*}
By Boole's inequality it follows that $\Pro_A(B \subset D_L^c)$ is upper bounded by
\begin{align*}
\sum_{\ell=0}^{k_1-1} \Pro_A(\hat{\Gamma}_{B,\ell}) + \sum_{\ell=1}^{k_2-1} \Pro_A(\check{\Gamma}_{B,\ell})
&\leq \sum_{\ell=0}^{k_1-1} e^{-a} \Pro_{A\setminus B} (D_L =\hat{D}_\ell) 
+  \sum_{\ell=1}^{k_2-1} C^{k_2}_{\ell} e^{-a} \\
&\leq  e^{-a} +  e^{-a} \left( \sum_{\ell=1}^{k_2-1} C^{k_2}_\ell \right) \leq 2^{k_2} e^{-a},
\end{align*} 
where the first inequality follows from Lemma \ref{taus_error_control}, and the second from the fact that 
$\{D_L =\hat{D}_\ell\}$ are disjoint events. Thus, the proof is complete.
\end{proof}

\subsection{Proof of Lemma \ref{taus_upper_bound_leap}}\label{proof:taus_upper_bound} 
\begin{proof}
We will only prove the inequality for $\widehat{\tau}_{\ell}$, as the proof of the inequality for $\widecheck{\tau}_{\ell}$ is similar. Fix $A$ and $0 \leq \ell <k_{1}$. We introduce the following classes of subsets
\begin{align*}
\mathcal{M}_1 &= \{B \subset A: |B| = k_1 - \ell\}, \\ 
\mathcal{M}_0 &= \left\{B \subset A^c: |B| = k_2, \;  \mathcal{I}_0^{i} \geq  \mathcal{I}_0^{(\ell+1)}(A^c) \; \forall \; i \in B \right\}.
\end{align*}
Clearly, we have $\widehat{\tau}_\ell \leq \tau'$, where 
\begin{align*}
\tau':=\inf \{ n \geq 1 &:  \min_{B \in  \mathcal{M}_1} \sum_{i \in B} \lambda^{i}(n) \geq b \; \text{ and } \; \min_{B \in  \mathcal{M}_0} \sum_{j \in B} \lambda^{j}(n) \leq -a,  \\
&\min_{i \in A} \lambda^{i}(n) > 0 \; \text{ and } \; \max_{j \notin A} \lambda^{j}(n) < 0\}.
\end{align*}
Thus, by an application of Lemma \ref{multiple_cross},  we have
$$
\Exp_A[\tau'] \leq  \max\left\{ \frac{b}{ \min_{B \in  \mathcal{M}_1}\sum_{j \in B} I_1^j}  \; , \; \frac{a}{\min_{B \in  \mathcal{M}_0}  \sum_{j \in B} I_0^j } \right\} (1+o(1)) . 
$$
By definition, for any $B_1 \in \mathcal{M}_1$ and $B_0 \in \mathcal{M}_0$  we have  
\begin{align*}
\sum_{j \in B} I_1^j &\geq  \mathcal{D}_1(A;1, k_1 - \ell), \quad
\sum_{j \in B} I_0^j  \geq \mathcal{D}_0(A^c; 1 + \ell, k_2 + \ell) ,
\end{align*}
therefore  we conclude that 
$$
\Exp_A[\tau'] \leq \max \left\{ \frac{b}{\mathcal{D}_1(A; 1,k_1 -\ell)}  \;  , \; \frac{a}{\mathcal{D}_0(A^c; 1 + \ell,k_2 + \ell )} \right\} (1+o(1))  ,
$$
which proves the inequality for $\widehat{\tau}_{\ell}$. 
\end{proof}

\subsection{An important lemma}\label{app:sub_important_lemma}
In this subsection we establish a lemma that is critical in establishing 
the lower bound in Theorem \ref{main}. To state the result, let us denote by 
\begin{align}  \label{class}
\MU_{k_1,k_2}(A) = \{C \subset [J]: |C \setminus A| < k_1 \;\text{ and }\; |A \setminus C| < k_2\},
\end{align}
the  collection of sets that are ``close" to $A$, according to the generalized familywise error rates.
Since $k_1, k_2$ are fixed integers, in order to lighten the notation,   in this subsection we write
$$
L(A;\alpha,\beta) \;\; \text{ for }\;\; L_A(k_1,k_2,\alpha,\beta)
.
$$

\begin{lemma} \label{Two_B_lemma}
Let $A \subset [J]$, $B \in \MU_{k_1,k_2}(A)$,  $\alpha,\beta \in (0,1)$.
\begin{enumerate}
\item If $|B| \geq k_1$ and $|B^c| \geq k_2$, then there exists $B_1^*, B_2^* \subset [J]$ such that 
\begin{align*}
(i)\; |B \setminus B_1^*| = k_1,\; |B_2^* \setminus B| = k_2,\;\quad (ii)\;\frac{|\log(\alpha)|}{\mathcal{I}^{A,B_1^*}} \bigvee \frac{|\log(\beta)|}{\mathcal{I}^{A,B_2^*}} \geq L(A; \alpha, \beta)
\end{align*}
\item If $|B| < k_1$, then there exists $B_2^* \subset [J]$ such that
\begin{align*}
(i)\; |B_2^* \setminus B| = k_2, \; \quad (ii)\;\frac{|\log(\beta)|}{\mathcal{I}^{A,B_2^*}} \geq L(A; \alpha, \beta).
\end{align*}
\item If $|B^c| < k_2 $,  there exists $B_1^* \subset [J]$ such that
\begin{align*}
(i)\; |B \setminus B_1^*| = k_1, \; \quad (ii)\;\frac{|\log(\alpha)|}{\mathcal{I}^{A,B_1^*}}  \geq L(A; \alpha, \beta).
\end{align*}
\end{enumerate}
\end{lemma}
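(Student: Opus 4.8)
The plan is to prove each of the three cases by exhibiting \emph{explicit} witness sets and then controlling $\mathcal{I}^{A,B_1^*}$ and $\mathcal{I}^{A,B_2^*}$ through the ordered partial sums $\mathcal{D}_1(A;\cdot,\cdot)$ and $\mathcal{D}_0(A^c;\cdot,\cdot)$ that enter $L(A;\alpha,\beta)$ via \eqref{fund_GF_limit}. The decisive structural observation is that $L(A;\alpha,\beta)$ is a \emph{minimum} over $\ell$ of the quantities $\widehat{L}_A(\ell;\alpha,\beta)$ and $\widecheck{L}_A(\ell;\alpha,\beta)$. Hence, to verify (ii) it is enough to dominate a \emph{single}, judiciously chosen $\widehat{L}_A(\ell_0;\alpha,\beta)$ or $\widecheck{L}_A(\ell_0;\alpha,\beta)$; and since each of these is itself a maximum of an $\alpha$-ratio and a $\beta$-ratio, it suffices to dominate its two ratios separately. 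Throughout I write $\ell_1:=|B\setminus A|$ and $\ell_2:=|A\setminus B|$, so that $\ell_1<k_1$ and $\ell_2<k_2$ because $B\in\MU_{k_1,k_2}(A)$.

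First I would pin down the optimal witnesses. For $B_1^*$, taking $B_1^*=A\setminus R$, with $R$ the $k_1-\ell_1$ streams of $A\cap B$ carrying the smallest $\MI_1$ values, yields $|B\setminus B_1^*|=k_1$ and, by \eqref{ll_diff}, makes $\mathcal{I}^{A,B_1^*}$ the sum of those $k_1-\ell_1$ smallest $\MI_1$ values over $A\cap B$; keeping $B_1^*\subseteq A$ is optimal, since any stream of $B_1^*\setminus A$ would add a strictly positive $\MI_0$ term. Dually, $B_2^*=A\cup S$, with $S$ the $k_2-\ell_2$ streams of $A^c\cap B^c$ carrying the smallest $\MI_0$ values, satisfies $|B_2^*\setminus B|=k_2$ and makes $\mathcal{I}^{A,B_2^*}$ the sum of those $k_2-\ell_2$ smallest $\MI_0$ values over $A^c\cap B^c$. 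The combinatorial engine is then a rank-comparison estimate: passing from $A$ to $A\cap B$ deletes $\ell_2$ elements, so the $i$-th smallest $\MI_1$ over $A\cap B$ is at most the $(i+\ell_2)$-th smallest over $A$, and symmetrically for $A^c\cap B^c$ versus $A^c$ with $\ell_1$ deletions. Summing (and, when convenient, enlarging the window to start at rank $1$) gives $\mathcal{I}^{A,B_1^*}\le\mathcal{D}_1(A;\ell_2+1,\ell_2+k_1-\ell_1)\le\mathcal{D}_1(A;1,k_1-\ell_1+\ell_2)$ and $\mathcal{I}^{A,B_2^*}\le\mathcal{D}_0(A^c;\ell_1+1,\ell_1+k_2-\ell_2)\le\mathcal{D}_0(A^c;1,k_2-\ell_2+\ell_1)$.

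The case selection then amounts to choosing $\ell_0$. In Case~1, where $|B|\ge k_1$ and $|B^c|\ge k_2$ render both witnesses feasible, I would assume $\ell_1\ge\ell_2$ and take $\ell_0=\ell_1-\ell_2\in\{0,\dots,k_1-1\}$ (the reverse $\ell_2\ge\ell_1$ being symmetric, with $\ell_0=\ell_2-\ell_1$ and $\widecheck{L}_A(\ell_0;\alpha,\beta)$). Then $\mathcal{I}^{A,B_1^*}\le\mathcal{D}_1(A;1,k_1-\ell_0)$ matches the $\alpha$-ratio of $\widehat{L}_A(\ell_0;\alpha,\beta)$, while $\mathcal{I}^{A,B_2^*}\le\mathcal{D}_0(A^c;\ell_1+1,\ell_1+k_2-\ell_2)\le\mathcal{D}_0(A^c;\ell_0+1,\ell_0+k_2)$ matches its $\beta$-ratio; hence the maximum of the two ratios dominates $\widehat{L}_A(\ell_0;\alpha,\beta)\ge L(A;\alpha,\beta)$. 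In Cases~2 and 3 one witness is \emph{infeasible} — e.g. $|B|<k_1$ precludes $|B\setminus B_1^*|=k_1$ — so only the $\beta$-witness (resp. $\alpha$-witness) is available and its single ratio must carry the entire bound. Here I would exploit the convention $\MI_1^{(j)}(A)=\infty$ for $j>|A|$: since $|B|<k_1$ forces $|A\cap B|<k_1$ and hence (with $\ell_2<k_2$) $|A|\le k_1+k_2-2$, one can select $\ell_0<k_2$ — using $\widecheck{L}$ with $\ell_0=|A|-k_1+1$ when $|A|\ge k_1$, and $\widehat{L}$ with $\ell_0=k_1-|A|-1$ when $|A|<k_1$ — that pushes the signal-side window $\mathcal{D}_1$ past rank $|A|$, making it infinite and collapsing the relevant maximum to its $\mathcal{D}_0$-term, which $B_2^*$ dominates by the same rank-comparison. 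Case~3 is the mirror image, driven by $|B^c|<k_2$.

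The main obstacle I anticipate is the combinatorial matching in Case~1 together with the boundary bookkeeping in Cases~2--3: one must check that the windows produced by the rank-comparison estimate land inside the \emph{precise} ranges $[1,k_1-\ell_0]$ and $[\ell_0+1,\ell_0+k_2]$ demanded by $\widehat{L}_A(\ell_0;\alpha,\beta)$, and that the feasibility of the minimal witness constructions is exactly governed by the finiteness of the corresponding partial sums, so that the $\infty$-conventions render the degenerate sub-cases (too few streams in $A^c\cap B^c$, or an infinite $\mathcal{D}_1$) consistent rather than problematic. Verifying these window inclusions and the interplay of $\ell_1,\ell_2,|A|$ with the thresholds $k_1,k_2$ is elementary but delicate, and is where the bulk of the argument resides.
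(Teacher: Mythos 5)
Your proposal is correct and follows essentially the same route as the paper: you construct the identical witnesses $B_1^*=A\setminus\Gamma_1$ and $B_2^*=A\cup\Gamma_2$ from the smallest information numbers in $A\cap B$ and $A^c\cap B^c$, use the same rank-comparison bound (the paper's Lemma \ref{G_A_F_lemma}) to dominate $\mathcal{I}^{A,B_1^*}$ and $\mathcal{I}^{A,B_2^*}$ by $\mathcal{D}_1(A;\ell_2+1,\ell_2+k_1-\ell_1)$ and $\mathcal{D}_0(A^c;\ell_1+1,\ell_1+k_2-\ell_2)$, and then dominate a single $\widehat{L}_A$ or $\widecheck{L}_A$ with $\ell_0=|\ell_1-\ell_2|$ exactly as in the paper's Lemma \ref{2D_max_lemma}. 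The only (cosmetic) divergence is in Cases 2--3, where the paper keeps the same $\ell_0$ and simply observes that the $\mathcal{D}_1$-term of the intermediate maximum is infinite, while you re-select $\ell_0$ as a function of $|A|$; both exploit the same $\infty$-convention and your window checks go through.
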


The proof relies on the following two lemmas.

\begin{lemma} \label{G_A_F_lemma}
Let $G \subset A \subset F \subset [J]$. Denote $s_1 = |A \setminus G|$ and 
$s_2 = |F^c|$. Then, for any positive integer $n$ we have
\begin{align*}
D_1(G, 1, n) &\;\leq\; D_1(A, 1 + s_1, n + s_1), \\
D_0(F \setminus A, 1, n) &\;\leq\; D_0(A^c, 1 + s_2, n + s_2) 
\end{align*}
\end{lemma}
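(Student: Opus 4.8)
The plan is to reduce both displayed inequalities to a single abstract monotonicity fact about order statistics. Given positive weights $\{w^j\}_{j \in [J]}$ and a subset $B$, write $w^{(1)}(B) \le \cdots \le w^{(|B|)}(B)$ for the increasing rearrangement of $\{w^j : j \in B\}$, with the convention $w^{(i)}(B) = \infty$ for $i > |B|$. The abstract claim is: whenever $H \subset K \subset [J]$ and $s := |K \setminus H|$, then for every positive integer $n$,
\[
\sum_{i=1}^{n} w^{(i)}(H) \;\le\; \sum_{i=1}^{n} w^{(s+i)}(K).
\]
The first inequality of the lemma is exactly this with $(H,K)=(G,A)$, weights $w^j = \MI_1^j$, and $s = s_1 = |A\setminus G|$. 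The second is the same statement with $(H,K) = (F\setminus A,\, A^c)$ and $w^j = \MI_0^j$; to match it I would first record the set identity $A^c = (F\setminus A)\cup F^c$ (a disjoint union), which holds because $A\subset F$ forces $F^c \subset A^c$, so that $F\setminus A \subset A^c$ and $|A^c \setminus (F\setminus A)| = |F^c| = s_2$, placing $s_2$ in the role of $s$ above.

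For the abstract claim, the heart of the argument is the pointwise bound
\[
w^{(i)}(H) \;\le\; w^{(s+i)}(K), \qquad i = 1, \ldots, n,
\]
after which summation over $i$ finishes the proof. I would establish the pointwise bound by a counting estimate on how many elements of $K$ carry weight strictly below $w^{(i)}(H)$: at most $i-1$ such elements can lie in $H$, since $w^{(i)}(H)$ is the $i$-th smallest weight in $H$, and at most $s = |K\setminus H|$ of them lie in the complement $K\setminus H$. Hence strictly fewer than $s+i$ elements of $K$ have weight below $w^{(i)}(H)$, so the element of $K$ occupying rank $s+i$ cannot lie strictly below $w^{(i)}(H)$, giving $w^{(s+i)}(K) \ge w^{(i)}(H)$.

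The only point requiring care is the $\infty$ convention when $n$ exceeds $|H|$. For indices $i > |H|$ both sides of the pointwise bound equal $\infty$ (the right side because $s+i > |H|+s = |K|$), so the inequality holds vacuously; for $i \le |H|$ the left side is finite and the counting argument applies verbatim. I do not expect a genuine obstacle here — the entire mathematical content is the interlacing inequality $w^{(i)}(H) \le w^{(s+i)}(K)$, and once it is in hand the two claimed inequalities follow by summation, together with the elementary set bookkeeping $|A^c\setminus(F\setminus A)| = |F^c| = s_2$ used to cast the second inequality into the abstract form.
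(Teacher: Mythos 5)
Your proposal is correct and follows essentially the same route as the paper: both reduce each displayed inequality to the pointwise interlacing bound that the $i$-th smallest weight in a subset is at most the $(s+i)$-th smallest weight in the superset (with $s$ the size of the set difference), then sum over $i$; the paper asserts this interlacing as obvious while you supply the counting argument, and likewise makes the substitution $G \mapsto F\setminus A$, $A \mapsto A^c$ for the second inequality that you justify via $|A^c \setminus (F\setminus A)| = |F^c| = s_2$. No gaps.
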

\begin{proof}  We start with the first inequality. We can assume $n \leq |G|$, since otherwise both sides are equal to $\infty$.

Fix some $1 \leq i \leq n$. Then clearly the $i^{th}$ smallest element in $\{\MI_1^{j}: j \in G\}$ is not larger than the $(i + |A\setminus G|)^{th}$ element in $\{\MI_1^{j}: j \in A\}$. Thus, the first inequality follows from the definition of the $D_1$ function.

For the second inequality, it follows from the previous argument by replacing $G$ by $F \setminus A$, $A$ by $A^c$, and $\MI_1^{j}$ by $\MI_0^{j}$.
\end{proof}

\begin{lemma}\label{2D_max_lemma}
Let $\ell_1, \ell_2$ be two non-negative integers such that $\ell_1 < k_1$ and $\ell_2 < k_2$.
Then for any $A \subset [K]$, and $\alpha,\beta > 0$, we have
$$
\frac{|\log(\alpha)|}{\mathcal{D}_1(A , 1 + \ell_2 , k_1  - \ell_1 + \ell_2)} \bigvee \frac{|\log(\beta)|}{\mathcal{D}_0(A^c , 1 + \ell_1, k_2  - \ell_2 + \ell_1)} 
\geq L(A; \alpha, \beta).
$$
\end{lemma}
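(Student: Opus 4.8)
The plan is to reduce the claim to exhibiting a single member of the family defining $L(A;\alpha,\beta)$ that is dominated by the left-hand side. Abbreviate the two terms on the left as $X := |\log(\alpha)|/\mathcal{D}_1(A;1+\ell_2,k_1-\ell_1+\ell_2)$ and $Y := |\log(\beta)|/\mathcal{D}_0(A^c;1+\ell_1,k_2-\ell_2+\ell_1)$, and recall that $L(A;\alpha,\beta)=\min\{\min_{0\le\ell<k_1}\widehat{L}_A(\ell;\alpha,\beta),\,\min_{0\le\ell<k_2}\widecheck{L}_A(\ell;\alpha,\beta)\}$. Since $L$ is a minimum, it suffices to produce one index $\ell$ for which $\widehat{L}_A(\ell;\alpha,\beta)\le\max\{X,Y\}$ or $\widecheck{L}_A(\ell;\alpha,\beta)\le\max\{X,Y\}$. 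I would first invoke a symmetry reduction: the whole statement is invariant under the swap that simultaneously exchanges $\alpha\leftrightarrow\beta$, $k_1\leftrightarrow k_2$, $\ell_1\leftrightarrow\ell_2$ and $(A,\mathcal{I}_1)\leftrightarrow(A^c,\mathcal{I}_0)$, because this swap interchanges $X$ and $Y$ and maps $\widehat{L}_A(\ell;\alpha,\beta)$ to $\widecheck{L}_A(\ell;\alpha,\beta)$, leaving $L(A;\alpha,\beta)$ unchanged. Hence I may assume $\ell_2\ge\ell_1$, and the natural candidate to test is the index $\ell:=\ell_2-\ell_1$, which is a valid $\widecheck{}$-index since $0\le\ell_2-\ell_1\le\ell_2\le k_2-1$.

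The crux, and the step I expect to require the most care, is a pair of ordered-sum comparisons showing that the two denominators appearing in $\widecheck{L}_A(\ell;\alpha,\beta)$ dominate the denominators of $X$ and of $Y$ \emph{simultaneously}. On the $A$ side, $\mathcal{D}_1(A;\ell+1,\ell+k_1)$ and $\mathcal{D}_1(A;\ell_2+1,\ell_2+k_1-\ell_1)$ are sums of the nondecreasing ordered values $\mathcal{I}_1^{(j)}(A)$ over two rank-windows that share the same top rank $\ell_2+k_1-\ell_1$, the former extending further toward the smaller ranks; since all summands are nonnegative, $\mathcal{D}_1(A;\ell+1,\ell+k_1)\ge\mathcal{D}_1(A;\ell_2+1,\ell_2+k_1-\ell_1)$, so the first term of $\widecheck{L}_A(\ell;\alpha,\beta)$ is at most $X$. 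An identical comparison on the $A^c$ side, where $\mathcal{D}_0(A^c;1,k_2-\ell)$ and $\mathcal{D}_0(A^c;\ell_1+1,\ell_1+k_2-\ell_2)$ again share the top rank $\ell_1+k_2-\ell_2$, shows the second term of $\widecheck{L}_A(\ell;\alpha,\beta)$ is at most $Y$. The key (and initially surprising) point is that the single choice $\ell=\ell_2-\ell_1$ makes the top rank of both windows match exactly, so no case split on whether $X\ge Y$ is needed; verifying this exact top-rank matching, and checking that the inequalities persist in the degenerate cases governed by the convention $\mathcal{I}^{(j)}=\infty$ for $j$ exceeding the set cardinality (where a window exceeding the set size only enlarges the relevant sum and sends the corresponding ratio to $0$), is where I would be most careful.

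Combining the two comparisons gives $\widecheck{L}_A(\ell;\alpha,\beta)=\max\{\text{first term},\text{second term}\}\le\max\{X,Y\}$, and since $L(A;\alpha,\beta)\le\widecheck{L}_A(\ell;\alpha,\beta)$ by definition of the minimum, the claimed inequality $\max\{X,Y\}\ge L(A;\alpha,\beta)$ follows at once. Both window comparisons are instances of the monotonicity of ordered partial sums already isolated in Lemma~\ref{G_A_F_lemma}, so I would cite that lemma to keep the computation short rather than re-deriving the rank-window inequalities by hand.
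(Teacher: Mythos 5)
Your proof is correct and is essentially the paper's argument in mirror image: the paper treats the case $\ell_1\ge\ell_2$ explicitly by comparing with $\widehat{L}_A(\ell_1-\ell_2;\alpha,\beta)$ and declares the other case symmetric, whereas you treat $\ell_2\ge\ell_1$ via $\widecheck{L}_A(\ell_2-\ell_1;\alpha,\beta)$ and formalize the symmetry; in both versions the key point is that the chosen index makes each pair of rank-windows share the same top rank, so each denominator only shrinks. One cosmetic note: the window comparison you need follows from nonnegativity of the ordered summands alone (as you correctly state), not from Lemma~\ref{G_A_F_lemma}, which concerns shifted windows over \emph{nested sets}, so that citation should be dropped.
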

\begin{proof} Let's consider the case that $\ell_1 \geq \ell_2$. When $\ell_1 \leq \ell_2$, the result can be proved in a similar way. Thus, denote $\ell = \ell_1 - \ell_2$. Then
\begin{align*}
&\frac{|\log(\alpha)|}{\mathcal{D}_1(A , 1 + \ell_2 , k_1  - \ell_1 + \ell_2)} \bigvee \frac{|\log(\beta)|}{\mathcal{D}_0(A^c , 1 + \ell_1, k_2  - \ell_2 + \ell_1)} \\
= \;\;& \frac{|\log(\alpha)|}{\mathcal{D}_1(A , 1 + \ell_2, k_1  - l)} \bigvee \frac{|\log(\beta)|}{\mathcal{D}_0(A^c , 1 + \ell + \ell_2, k_2  + \ell)} \\
\geq\;\; &\frac{|\log(\alpha)|}{\mathcal{D}_1(A ,  1, k_1  - \ell)} \bigvee \frac{|\log(\beta)|}{\mathcal{D}_0(A^c ,  1+\ell, k_2  + \ell)} = \widehat{L}_{A}(\ell;\alpha, \beta) \;\; \geq \;\; L(A; \alpha, \beta)
\end{align*}
where the last line used the definition of $\widehat{L}_{A}$ and $L$.
\end{proof}

With above two lemmas, we are ready to present the proof of Lemma \ref{Two_B_lemma}. We illustrate
 the intuition of this proof in Figure \ref{fig:set_vis}.

\begin{proof}  Fix $A$ and $B \in \MU_{k_1,k_2}(A)$.   By definition of the class $\MU_{k_1,k_2}(A)$,
$$ \ell_1 := |B \setminus A| < k_1, \quad \ell_2 := |A \setminus B| < k_2.
$$

First, consider the case that  $|B| \geq k_1$, which implies $|A \cap B| \geq k_1 - \ell_1$. Thus, we can find  
$\Gamma_1 \subset A \cap B$ such that
\begin{align*}
\Gamma_1 = k_1 - \ell_1, \quad
\sum_{i \in \Gamma_1} \MI_1^{i} =  \mathcal{D}_1(A \cap B, 1, k_1 - \ell_1)
\end{align*}
Set $B_1^{*} := A \setminus \Gamma_1$. It is easy to see that
$$
A \setminus B_1^{*} = \Gamma_1, \quad B \setminus B_1^{*} = \Gamma_1 \cup (B\setminus A).
$$
Thus, $|B \setminus B_1^{*}| = k_1$; further, viewing $A \cap B$ as $G$ in the Lemma \ref{G_A_F_lemma},
and since $\ell_2 = |A\setminus B| $, we have
\begin{align*}
\MI^{A,B_1^{*}} = \sum_{i \in \Gamma_1} \MI_1^{i} = \mathcal{D}_1(A \cap B, 1, k_1 - \ell_1) 
\leq  \mathcal{D}_1(A , 1 + \ell_2, k_1 - \ell_1 + \ell_2).
\end{align*}

%
%

Second, consider the case that $|B^c| \geq k_2$, which implies $|A^c \cap B^c| \geq k_2 - \ell_2$. Thus, there exists
$\Gamma_2 \subset A^c \cap B^c$ such that 
\begin{equation*}
\Gamma_2 = k_2 - \ell_2, \quad
\sum_{j \in \Gamma_2} I_0^{j} =  \mathcal{D}_0(A^c \cap B^c, 1, k_2 - \ell_2)
\end{equation*}
Set $B_2^{*} := A \cup \Gamma_2$.  It is easy to see
$$
B_2^{*} \setminus A = \Gamma_2,\quad B_2^{*} \setminus B = \Gamma_2 \cup (A \setminus B).
$$
Then $|B_2^{*} \setminus B| = k_2$. Further, viewing $A \cup (A^c \cap B^c)$ as $F$ in the Lemma \ref{G_A_F_lemma},
and since $\ell_1 = |B\setminus A| = |F^c|$, we have
\begin{align*}
\MI^{A,B_2^*} =  \sum_{j \in \Gamma_2} \MI_0^{j} = \mathcal{D}_0(A^c \cap B^c, 1, k_2 - \ell_2)
\leq \mathcal{D}_0(A^c, 1 + \ell_1, k_2 - \ell_2 + \ell_1)
\end{align*}

It remains to show  $B_1^*$ and $B_2^*$ satisfy the property $(ii)$ in each case.

\noindent\textbf{Case 1}: $|B| \geq k_1$ and $|B^c| \geq k_2$. By the  construction of $B_1^*$ and $B_2^*$ we have
\begin{align*}
&\frac{|\log(\alpha)|}{\mathcal{I}^{A,B_1^*}} \bigvee \frac{|\log(\beta)|}{I^{A,B_2^*}}  \\ 
\geq \;\;& \frac{|\log(\alpha)|}{\mathcal{D}_1(A , \ell_2 + 1, \ell_2 +k_1  - \ell_1)} \bigvee \frac{|\log(\beta)|}{\mathcal{D}_0(A^c , \ell_1 + 1, \ell_1 +k_2  - \ell_2)} \\
\geq \;\; & L(A; \alpha, \beta)
\end{align*}
where the last inequality is due to Lemma \ref{2D_max_lemma}.

%

\noindent\textbf{Case 2}: $|B| < k_1$, which implies the following:
\begin{align*}
|A| = |A\setminus B| + |A \cap B| = \ell_2 + |B| - \ell_1 < \ell_2 + k_1 - \ell_1,
\end{align*}
thus,  $D_1(A, \ell_2 + 1, \ell_2 + k_1 - \ell_1) = \infty$. As a result,
\begin{align*}
 \frac{|\log(\beta)|}{I^{A,B_2^*}}  
\geq & \;\frac{|\log(\alpha)|}{\mathcal{D}_1(A , \ell_2 + 1, \ell_2 +k_1  - \ell_1)} \bigvee \frac{|\log(\beta)|}{\mathcal{D}_0(A^c , \ell_1 + 1, \ell_1 +k_2  - \ell_2)} \\
\geq & \;L(A; \alpha ,\beta)
\end{align*}
where the last inequality is again due to Lemma \ref{2D_max_lemma}.

\noindent\textbf{Case 3}: $|B^c| < k_2$. It can be proved in the same way as in case 2.
%
\end{proof}

\begin{figure}
\centering
\includegraphics[width=0.75\textwidth]{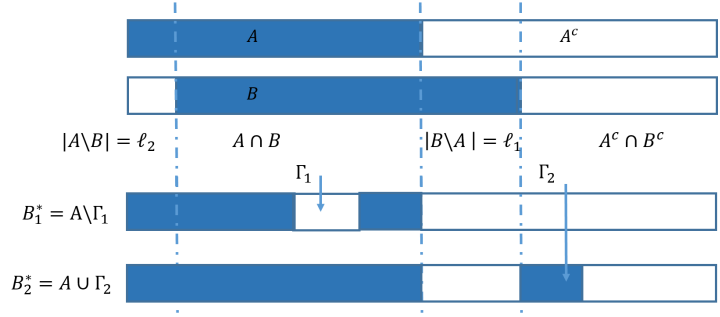}
\caption{The solid area are the streams with signal. The whole set $[J]$ is partitioned into four disjoint sets: $A\setminus B$, $A \cap B$, $B\setminus A$, $A^c \cap B^c$. If $B \in \MU_{k_1,k_2}(A)$, then $\ell_1 < k_1$ and $\ell_2 < k_2$. }
 \label{fig:set_vis}
\end{figure}

\subsection{Proof of Theorem \ref{main}}\label{proof_of_GF_lower_bound}
As explained in the discussion following Theorem \ref{main}, it suffices to show that for any $A \subset [J]$, as $\alpha, \beta \to 0$,
$$
N_A^*(k_1,k_2,\alpha,\beta) \geq  L_A(k_1,k_2,\alpha, \beta) \, (1 - o(1)).
$$ 
Since $k_1, k_2$ are fixed integers, in order  to simplify the   notation in   this subsection we write 
$$
L(A;\alpha,\beta) \;\; \text{ for }\;\; L_A(k_1,k_2,\alpha,\beta).
$$

\begin{proof}
Fix $A \subset [J]$.  By the same argument as in the proof of Theorem \ref{KM_lower_bound}, it suffices to show
 for every  $ q \in (0,1)$  we have:
\begin{align*}
\liminf_{\alpha, \beta \to 0}  \inf_{(T,D) \in \Delta_{k_1, k_2}(\alpha,\beta)} \Pro_A \left(T \geq q L(A; \alpha, \beta) \right) \geq 1.
\end{align*} 

Fix $ q \in (0,1)$ and  let  $(T,D)$ be any procedure in $\Delta_{k_1,k_2}(\alpha,\beta)$. Then, by the  definition of the class $\MU_{k_1,k_2}(A)$  in \eqref{class} we have 
$$
 1 - (\alpha + \beta) \leq \Pro_{A} \left(D \in \MU_{k_1,k_2}(\alpha,\beta) \right) = \sum_{B \in \MU_{k_1,k_2}(\alpha,\beta)}  \Pro_{A}(D=B).
$$
Fix  $B \in \MU_{k_1,k_2}(\alpha,\beta)$, and let $\eta > 0$.
First, we assume that $|B| \geq k_1$ and $|B^c| \geq k_2$. Then  $\Pro_A(D = B)$ is upper bounded by $\text{I}+\text{II}$, where 
\begin{align*} 
&\text{I} = \Pro_A\left( \lambda^{A,B_1^*}(T) < \log(\frac{\eta}{\alpha}), D = B \right)  + \Pro_A\left( \lambda^{A,B_2^*}(T) < \log(\frac{\eta}{\beta}), D = B \right) \\
&\text{II} = \Pro_A\left( \lambda^{A,B_1^*}(T) \geq \log(\frac{\eta}{\alpha} ), \lambda^{A,B_2^*}(T) \geq \log(\frac{\eta}{\beta}),  D = B \right),
\end{align*}
where the sets $B_1^*$ and $B_2^*$ are selected to  satisfy the conditions in Case 1 of Lemma \ref{Two_B_lemma}. Then,  $|B\setminus B_1^*| \geq k_1$ and $|B_2^* \setminus B| \geq k_2$, and consequently 
\begin{align*}
\Pro_{B_1^*}(D = B) \leq \alpha \; \text{ and } \; \Pro_{B_2^*}(D = B) \leq \beta.
\end{align*}
Thus, by change of measure $\Pro_A \rightarrow \Pro_{B_1^*}$ and $\Pro_A \rightarrow \Pro_{B_2^*}$, we have
\begin{align*}
\Pro_A\left( \lambda^{A,B_i^*}(T) < \log\left(\frac{\eta}{\alpha} \right), D = B \right) &\leq \eta, \; \text{ for }\; i = 1,2
\end{align*}
which  shows that $\text{I} \leq 2\eta$.  Moreover,  it is obvious that
\begin{align*}
&\text{II} \leq   \epsilon^{B}_{\alpha,\beta}(T) +   \Pro_A(T \geq q L(A; \alpha, \beta),\; D = B), \quad \text{where} 
\epsilon^{B}_{\alpha,\beta}(T) \; := \\
&\Pro_A \left(  T < q L(A; \alpha, \beta), \;  \lambda^{A,B_1^*}(T) \geq \log\left(\frac{\eta}{\alpha} \right),  \lambda^{A,B_2^*}(T) \geq \log\left(\frac{\eta}{\beta} \right) \right).
\end{align*}
But by the construction of  $B_1^*$ and $B_2^*$ we have
$$ L(A; \alpha, \beta) \leq \ell_{\alpha,\beta} :=  \frac{|\log(\alpha)|}{\mathcal{I}^{A,B_1^*}} \bigvee \frac{|\log(\beta)|}{\mathcal{I}^{A,B_2^*}},
$$
consequently
\begin{align*}
\epsilon^{B}_{\alpha,\beta}(T) \leq \Pro_A \left(T <  q\ell_{\alpha,\beta} ,\; \;  \lambda^{A,B_1^*}(T) \geq \log\left(\frac{\eta}{\alpha} \right),  \lambda^{A,B_2^*}(T) \geq \log\left(\frac{\eta}{\beta} \right) \right),
\end{align*}
and from  Lemma \ref{T_small_ST_larger_lemma} it follows that $\epsilon^{B}_{\alpha,\beta}(T) \to 0$  as $\alpha,\beta \to 0$ uniformly in $T$.   

Putting everything together, we have
\begin{equation}\label{FWER_D_B}
\Pro_A(D = B) \leq 2\eta +\epsilon^{B}_{\alpha,\beta}(T) + \Pro_A(T \geq q L(A; \alpha, \beta), D = B).
\end{equation}
In a similar way we can show that equation \eqref{FWER_D_B} remains valid when   $|B| < k_1$ or $|B^c| < k_2 $. 
Thus summing over $B \in \MU_{k_1,k_2}(A)$ we have
\begin{align*}
\Pro_A(D \in \MU_{k_1,k_2}(A)) &\leq 2Q \eta + \epsilon_{\alpha,\beta}(T) + \Pro_A(T \geq q L(A; \alpha, \beta) ,D \in \MU_{k_1,k_2}(A)),
\end{align*} 
where $Q = |\MU_{k_1,k_2}(A)|$ is a constant, and
$\epsilon_{\alpha,\beta}(T) = \sum_{B \in \MU_{k_1,k_2}(A)} \epsilon^{B}_{\alpha,\beta}(T).$
 Since each summand goes to $0$,  we have $\epsilon_{\alpha,\beta}(T) \to 0$ as $\alpha,\beta \to 0$  uniformly in $T$.   Therefore,
\begin{align*}
\Pro_A(T \geq q L(A; \alpha, \beta)) \geq 1 - (\alpha + \beta) - 2Q\eta - \epsilon_{\alpha,\beta}(T)
\end{align*}
The proof is complete after taking the infimum over the class $\Delta_{k_1,k_2}(\alpha,\beta)$, letting $\alpha,\beta \to 0$ and letting $\eta \to 0$.
\end{proof}

\subsection{Proof of Corollary \ref{delta_0_cor}} \label{proof_of_delta_0_cor}
\begin{proof}
The error control for $\delta_0$ follows by setting 
$\ell = 0$ in  Lemma \ref{taus_error_control}. 
The error control for the \procInt~$\delta_I$ can be established by a simple modification of the proof of Lemma \ref{taus_error_control}. If assumptions \eqref{sym_hom_KM} and \eqref{sym_hom_GF} hold, then from \eqref{fund_GF_limit} it follows that for every $A \subset [J]$ we have
\begin{align*}		
L_A(k_1,k_1, \alpha, \alpha) = \frac{|\log(\alpha)| }{k_1 \MI}.
\end{align*}
Further, setting $\ell = 0$ for $\tau_0$, and $k = 1$ for $T_I$ in the first inequality of Lemma \ref{taus_upper_bound_leap}, we have as $b \to \infty$
\begin{align*}
\Exp_A \left[ \tau_0(b,b) \right]  \;
\leq \frac{b }{k_1 \MI} (1 + o(1)), \quad \Exp_A \left[ \tau_I(b,b) \right]  \;
\leq \frac{b }{\MI} (1 + o(1)).
\end{align*}
Thus, if $b$ is selected as in the statement of  the corollary, then the quantity $L_A(k_1,k_1, \alpha, \alpha)$ provides an asymptotic power bound for both $\Exp_A \left[ \tau_0 \right]$  and  $\Exp_A \left[ \tau_I \right]$.  
Thus, the proof is complete.
\end{proof}

\subsection{Proof of Theorem \ref{GF_comp_fix_sample_rule}} \label{proof_of_GF_comp_fix_sample_rule}
\begin{proof}
Since $k_1, d$ are fixed,  for simplicity we write $n^*(\beta)$ and $\widehat{n}(\beta)$ for $n^*(k_1,k_1,\beta^d,\beta)$  and $\widehat{n}_{NP}(k_1,k_1,\beta^d,\beta)$, respectively.

\noindent (i) Let us first focus on $n^*(\beta)$. By its definition \eqref{def:fix_ns}, there exists some 
$$D^*(\beta) \in \Delta_{fix}(n^*(\beta)) \cap \Delta_{k_1,k_1}(\beta^d,\beta).$$

Fix  any $A \subset [J]$ such that $|A| = 2k_1-1$.
Denote $\Pro$ the probability measure for data in all streams, and consider the  simple versus simple testing problem \eqref{aux_simple_vs_simple} and the following procedure
$\widetilde{D}^*(\beta) := 
\begin{cases}
0 \;\; \text{ if } |D^*(\beta)| < k_1 \\ 
1 \;\; \text{ if } |D^*(\beta)| \geq k_1. 
\end{cases}
$
Then,  by definition of $D^*(\beta)$ we have
\begin{align*}
&\Pro_{\emptyset}(\widetilde{D}^*(\beta) = 1) = 
\Pro_{\emptyset}(|D^*(\beta)| \geq k_1) \leq \alpha = \beta^d, \\
&\Pro_{A}(\widetilde{D}^*(\beta) = 0) = 
\Pro_{A}(|D^*(\beta)| < k_1) \leq \beta,
\end{align*}
and by  the generalized Chernoff's Lemma \ref{generalized_chernoff_lemma},
\begin{align*}
\liminf_{\beta \to 0}\frac{\log(\beta)}{n^*(\beta)} &\geq 
\liminf_{\beta \to 0}\frac{1}{n^*(\beta)}
\log\left(\frac{1}{2} \Pro_{\emptyset}^{1/d}(\widetilde{D}^*(\beta) = 1) 
+\frac{1}{2}\Pro_{A}(\widetilde{D}^*(\beta) = 0)
\right) \\
&\geq - \frac{\Phi^{A}(\widetilde{h}_d^{A})}{d}.
\end{align*}
where $\widetilde{h}_d^{A}$ is a solution to $\Phi^{A}(z)/d = \Phi^{A}(z) - z$, and for any $z \in \bR$
\begin{align*}
\Phi^{A}(z) &:= \sup_{\theta \in \bR} 
 \left\{z\theta - \sum_{j \in A} \log \left(\Exp_{0}^{j}
 \left[ e^{\theta \lambda^j(1)} \right]
 \right)\right\} \\
&= \sup_{\theta \in \bR} \left\{z\theta - |A| \log \left(\Exp_{0}^{1}\left[
 e^{\theta \lambda^1(1)} \right]
\right)\right\} = |A| \, \Phi(z/|A|).
\end{align*}
Here, the second equality is due to homogeneity \eqref{homo_iid_case}.
By definition \eqref{h_hat_d},  ${\Phi({h}_d)}/{d} = \Phi({h}_d) - {h}_d$, which implies  
$${\Phi^{A}(|A| {h}_d)}/{d} = \Phi^{A}(|A| {h}_d) - (|A| {h}_d).$$
Thus,  $\widetilde{h}_d^{A} = |A| {h}_d$, and
$$
{\Phi^{A}(\widetilde{h}_d^{A})}/{d} = |A| \Phi({h}_d)/{d} = \frac{2k_1-1}{d} \Phi({h}_d),
$$
which completes the proof of (i).

\noindent (ii) We now focus on $\widehat{n}(\beta)$. 
By definition, there exists  $h_{\beta} \in \bR$ such that
$$
(\widehat{n}(\beta), \widehat{D}(\beta)) \in \Delta_{k_1,k_1}(\beta^d, \beta),
\text{ where }
 \widehat{D}(\beta) :=  D_{NP}(\widehat{n}(\beta), h_{\beta} \mathbf{1}_{J} ),
$$
where $\mathbf{1}_{J} \in  \bR^{J}$ is a vector of all ones.
Due to homogeneity \eqref{homo_iid_case}, set
\begin{align*}
p_{\beta} := \Pro_0^{1}(\widehat{D}^{1}(\beta) = 1) = 
\Pro_0^{1}\left( \lambda^1(\widehat{n}(\beta)) > h_{\beta} \,  \widehat{n}(\beta) \right) , \\
q_{\beta} := \Pro_1^{1}(\widehat{D}^{1}(\beta) = 0) = 
\Pro_1^{1}\left( \lambda^1(\widehat{n}(\beta)) \leq h_{\beta} \,  \widehat{n}(\beta) \right).
\end{align*}

For any $A \subset [J]$ such that $|A| = k_1 (= k_2)$,
\begin{align*}
\beta^d &\geq \Pro_{\emptyset}\left( \bigcap_{j \in A}\{\tilde{D}(\alpha)^{j} = 1\} \right)
= (p_{\beta})^{k_1},\;\; \\
 \beta &\geq \Pro_{[J]} \left( \bigcap_{j \in A}\{\tilde{D}(\alpha)^{j} = 0\} \right)
= (q_{\beta})^{k_1},
\end{align*}
which implies that 
\begin{align*}
\frac{1}{\widehat{n}(\beta)}\frac{\log(\beta)}{ k_1} 
\geq 
\frac{1}{\widehat{n}(\beta)}
\log\left(\frac{1}{2}
p_{\beta}^{1/d} + \frac{1}{2} q_{\beta} \right). 
\end{align*}
Then, again by the generalized Chernoff's lemma \ref{generalized_chernoff_lemma} we have
\begin{align*}
\lim\inf_{\beta \to 0}
\frac{\widehat{n}(\beta)}{|\log(\beta)|}
= \frac{d}{k_1 \Phi({h}_d)}.
\end{align*}
Further, the same argument shows that the equality is obtained with $h = {h}_d$, which completes the proof of (ii).
\end{proof}

\section{Sequential multiple testing with composite hypotheses}\label{sec:app_composte_case}
In this section, we prove  Theorem~\ref{thm:comp_main} in Section~\ref{sec:composite}. We  first establish a universal asymptotic lower bound on the expected sample size of  procedures that control  generalized familywise error rates under composite hypotheses (Subsec.~\ref{subsec:app_lower}).  Then, we show that this lower bound is achieved by the Leap rule with the adaptive log-likelihood statistics in \eqref{adaptive_stat} (Subsec.~\ref{Error control in composite case} and~\ref{app_composite_AO}). Further, we demonstrate numerically that  the Intersection rule \eqref{intersection_rule} and the asymmetric Sum-Intersection rule  \eqref{tau_0_def} with the adaptive statistics fail to achieve  asymptotic optimality in the composite case (Subsec.~\ref{app:simulation_composite}). We conclude this section with a discussion on the adaptive statistics and alternative local test statistics (Subsec.~\ref{comp_discussion}).

\subsection{Lower bound on the expected sample size}\label{subsec:app_lower}
Fix any $A \subset [J]$ and $\butheta = (\theta^1, \ldots, \theta^J)\in \boldsymbol{\Theta}_A$.

\textbf{Case 1:} Assume \textit{for now} that  the infima in~\eqref{comp_info_discrete} are attained, i.e.,  there exists $\widetilde{\butheta} = (\widetilde{\theta}^1, \ldots, \widetilde{\theta}^J) \in \boldsymbol{\Theta}_{A^c}$ such that
\begin{align*}
&\cI_0^j(\theta^j) = I^j(\theta^j, \widetilde{\theta}^j) \text{ for every } j \in A^c,\\
&\cI_1^j(\theta^j) = I^j(\theta^j, \widetilde{\theta}^j) \text{ for every } j \in A.
\end{align*}
Any procedure $(T,D)\in \Delta_{k_1,k_2}^{comp}(\alpha, \beta)$ controls the generalized familywise error rates 
below $\alpha$ and $\beta$ when applied to the multiple testing problem with the following \textit{simple} hypotheses for each stream:
\begin{align*}
&{\Hyp}_0^{j} \;: \gamma^{j} = \theta^{j} \;\; \text{ versus }\;\; {\Hyp}_1^{j}\;: \gamma^{j} = \widetilde{\theta}^{j}, \quad j \in A^c, \\
&{\Hyp}_0^{j} \;: \gamma^{j} = \widetilde{\theta}^{j} \;\; \text{ versus }\;\; {\Hyp}_1^{j}\;: \gamma^{j} = \theta^{j}, \quad j \in A, 
\end{align*}
where we write $\gamma^{j}$ for the generic local parameter in j-th stream to distinguish it from the j-th component of $\butheta$.

Then, under assumptions~\eqref{composite_complete_convergence} and~\eqref{comp_separable},  by Theorem~\ref{main} we have
\begin{align}\label{app_low_discrete}
\liminf_{\alpha, \beta \to 0} \; N^*_{A,\butheta}(k_1,k_2,\alpha,\beta)/L_{A,\butheta}(k_1,k_2,\alpha,\beta) \geq 1, 
\end{align}
where  
\begin{align}\label{comp_quantities_def}
\begin{split}
&L_{A,\butheta}(k_1,k_2,\alpha,\beta):= \min \left\{ \min_{0 \leq \ell <k_1}  \widehat{L}_{A,\butheta}(\ell; \alpha,\beta)   \; ,  \; \min_{0 \leq \ell <k_2} \widecheck{L}_{A,\butheta}(\ell;\alpha,\beta) \right\},\\
&\widehat{L}_{A,\butheta}(\ell; \alpha,\beta) := \max \left\{ 
\frac{|\log(\alpha)|}{\mathcal{D}_{1}(A,\butheta; 1, k_1 -\ell)} \; , \;  \frac{|\log(\beta)|}{\mathcal{D}_{0}(A^c,\butheta; \ell+1,  \ell+ k_2)} \right\}, \\
&\widecheck{L}_{A,\butheta}(\ell;\alpha,\beta) :=  \max \left\{ 
\frac{|\log(\alpha)|}{\mathcal{D}_1(A,\butheta;\ell+1, \ell+k_1)}  \; , \; \frac{|\log(\beta)|}{\mathcal{D}_0(A^c,\butheta;1 ,k_2 - \ell )}\right\},\\
&\mathcal{D}_{1}(A,\butheta;\ell, u) 
= \sum_{j=\ell}^{u} \cI^{(j)}_{1}(A,\butheta),\quad
\mathcal{D}_{0}(A^c,\butheta;\ell, u) 
= \sum_{j=\ell}^{u} \cI^{(j)}_{0}(A^c,\butheta),
\end{split}
\end{align}
and 
$$\cI^{(1)}_1(A,\butheta) \leq \ldots \leq  \cI^{(|A|)}_1(A,\butheta)$$
is  the  increasingly ordered sequence of $\{\cI_1^{j}(\theta^j), j \in A\}$, and
$$\cI^{(1)}_0(A^c,\butheta) \leq \ldots \leq  \cI^{(|A^c|)}_0(A^c,\butheta)$$
is  the  increasingly ordered sequence of $\{\cI_0^{j}(\theta^j), j \in A^c\}$.
As before, the convention is that
$$
\cI^{(j)}_1(A,\butheta) = \infty \; \text{ if } j > |A|,\quad
\cI^{(j)}_0(A^c,\butheta) = \infty \; \text{ if } j > |A^c|.
$$

\textbf{Case 2: } In general, the infima in~\eqref{comp_info_discrete} are not attained. However, under the separability assumption~\eqref{comp_separable}, for any $\epsilon > 0$ there exists $\widetilde{\butheta}_{\epsilon} 
= (\widetilde{\theta}^{1}_{\epsilon}, \ldots, \widetilde{\theta}^{J}_{\epsilon})
\in \boldsymbol{\Theta}_{A^c}$ such that
\begin{align*}
&I^j(\theta^j, \widetilde{\theta}_{\epsilon}^j) \leq (1+\epsilon) \,  \cI_0^j(\theta^j) \text{ for any } j \in A^c,\\
&I^j(\theta^j, \widetilde{\theta}_{\epsilon}^j) \leq (1+\epsilon) \, \cI_1^j(\theta^j)\text{ for any } j \in A.
\end{align*}
Applying again Theorem~\ref{main} to the following multiple testing problem with simple hypotheses:
\begin{align*}
&{\Hyp}_0^{j} \;: \gamma^{j} = \theta^{j} \;\; \text{ versus }\;\; {\Hyp}_1^{j}\;: \gamma^{j} = \widetilde{\theta}_{\epsilon}^{j}, \quad j \in A^c, \\
&{\Hyp}_0^{j} \;: \gamma^{j} = \widetilde{\theta}_{\epsilon}^{j} \;\; \text{ versus }\;\; {\Hyp}_1^{j}\;: \gamma^{j} = \theta^{j}, \quad j \in A, 
\end{align*}
we have
$$
\liminf \; N^*_{A,\butheta}(k_1,k_2,\alpha,\beta)/L_{A,\butheta}(k_1,k_2,\alpha,\beta) \geq 1/(1+\epsilon).
$$
Since $\epsilon$ is arbitrary,~\eqref{app_low_discrete} still holds.

Above discussions leads to the following theorem.
\begin{theorem} \label{comp_lower_bound}
If \eqref{composite_complete_convergence} and \eqref{comp_separable} hold,  then \eqref{app_low_discrete} holds  for every $A \subset [J]$ and $\butheta \in \boldsymbol{\Theta}_A$.  
\end{theorem}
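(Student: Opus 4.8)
The plan is to reduce the composite problem to the simple-hypothesis setting and then invoke the lower bound already established in Theorem~\ref{main}. First I would fix $A \subset [J]$ and $\butheta \in \boldsymbol{\Theta}_A$ and, for each stream, select a single competing parameter value: for $j \in A^c$ a value $\widetilde{\theta}^j \in \Theta_1^j$ and for $j \in A$ a value $\widetilde{\theta}^j \in \Theta_0^j$, collected into a point $\widetilde{\butheta} \in \boldsymbol{\Theta}_{A^c}$, chosen so that $I^j(\theta^j, \widetilde{\theta}^j)$ equals the information number $\cI_0^j(\theta^j)$ (for $j \in A^c$) or $\cI_1^j(\theta^j)$ (for $j \in A$) from~\eqref{comp_info_discrete}. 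This produces an embedded \emph{simple} multiple testing problem in which, for each stream, the true parameter $\theta^j$ is pitted against $\widetilde{\theta}^j$, and in which $A$ is the true signal set.

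The key observation is the inclusion $\Delta^{comp}_{k_1,k_2}(\alpha,\beta) \subseteq \Delta_{k_1,k_2}(\alpha,\beta)$, where the right-hand class refers to the embedded simple problem. Indeed, every measure arising in the simple problem is a particular member $\Pro_{C,\boldsymbol{\gamma}}$ of the composite family, so the maximum of the error probabilities over the simple configurations is dominated by the maximum over all composite configurations; hence any composite procedure automatically controls the simple familywise error rates. Taking the infimum over the smaller class yields $N^*_{A,\butheta}(k_1,k_2,\alpha,\beta) \geq N^{*,\mathrm{simple}}_{A,\butheta}(k_1,k_2,\alpha,\beta)$. It then remains to verify that the embedded simple problem satisfies~\eqref{cond_for_asym_opt}: by~\eqref{composite_complete_convergence} the normalized LLR $\frac{1}{n}(\ell^j(n,\widetilde{\theta}^j)-\ell^j(n,\theta^j))$ converges completely under $\Pro^j_{\theta^j}$, and by~\eqref{comp_separable} the limits are strictly positive, so the complete-convergence branch of~\eqref{cond_for_asym_opt} holds. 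Moreover, by the choice of $\widetilde{\theta}^j$ the information numbers of the simple problem coincide with $\cI_1^j(\theta^j)$ (for $j \in A$) and $\cI_0^j(\theta^j)$ (for $j \in A^c$), so the quantity $L_A$ of Theorem~\ref{main} equals exactly $L_{A,\butheta}$ defined in~\eqref{comp_quantities_def}. Applying the lower-bound half of Theorem~\ref{main} then gives~\eqref{app_low_discrete}.

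The main obstacle is that the infima in~\eqref{comp_info_discrete} need not be attained, so a minimizing $\widetilde{\theta}^j$ may fail to exist. I would handle this by an $\epsilon$-relaxation: by the definition of the infima, for any $\epsilon>0$ there is $\widetilde{\butheta}_\epsilon \in \boldsymbol{\Theta}_{A^c}$ with $I^j(\theta^j,\widetilde{\theta}^j_\epsilon) \leq (1+\epsilon)\,\cI_0^j(\theta^j)$ for $j \in A^c$ and $I^j(\theta^j,\widetilde{\theta}^j_\epsilon) \leq (1+\epsilon)\,\cI_1^j(\theta^j)$ for $j \in A$. Running the same reduction with $\widetilde{\butheta}_\epsilon$ produces an embedded simple problem whose information numbers are within a factor $(1+\epsilon)$ of the target ones, so Theorem~\ref{main} gives $\liminf N^*_{A,\butheta}/L_{A,\butheta} \geq 1/(1+\epsilon)$; letting $\epsilon \to 0$ recovers~\eqref{app_low_discrete}. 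The only point requiring care here is that inflating each information number by at most $(1+\epsilon)$ inflates $L_{A,\butheta}$ by at most the same factor, which is immediate since $L_{A,\butheta}$ is a minimum of maxima of ratios of the form $|\log(\cdot)|/\mathcal{D}$ and each $\mathcal{D}$ is a sum of those information numbers.
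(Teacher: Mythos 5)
Your proposal is correct and follows essentially the same route as the paper: reduce to an embedded simple-versus-simple problem by selecting competing parameters that (nearly) attain the infima in \eqref{comp_info_discrete}, note that any procedure in $\Delta^{comp}_{k_1,k_2}(\alpha,\beta)$ controls the errors of that simple problem, invoke the lower-bound half of Theorem~\ref{main}, and dispose of the non-attainment issue with the same $(1+\epsilon)$-relaxation followed by $\epsilon\to 0$. No gaps to report.
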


\subsection{Error control of the Leap rule with adaptive log-likelihood ratios} \label{Error control in composite case}

We start with the following observation.
\begin{lemma}\label{comp_martingale}
Fix $A \subset [J]$, $\butheta=(\theta^1, \ldots, \theta^J) \in \Theta_{A}$. For each $j \in [J]$,
$$L_n^j := \exp \left( {\ell}^j_{*}(n) - \ell^j(n, \theta^j) \right), \;\; n \in \bN
$$
is an $\{\cF_n\}$-martingale under $\Pro_{A, \butheta}$ with expectation $1$.

\begin{proof}
By definition, 
\begin{align*}
L_n^j= L^j_{n-1} \cdot  \frac{p^j_{\widehat{\theta}_{n-1}^{j}}\left(X^j(n) \vert \cF^{j}_{n-1} \right)}
{p^j_{\theta^{j}}\left( X^j(n) \vert \cF^{j}_{n-1} \right)} \;\;.
\end{align*}
Clearly, $L_n^{j} \in \cF_n$ for any $n \in \bN$.
Further, since $\widehat{\theta}_{n-1}^{j} \in \cF_{n-1}$, 
\begin{align*}
\Exp_{A,\butheta} \left[ \left.
\frac{p^j_{\widehat{\theta}_{n-1}^{j}} \left(X^j(n) \vert \cF^{j}_{n-1} \right)}
{p^j_{\theta^{j}}\left(X^j(n) \vert  \cF^{j}_{n-1}  \right)} \right\vert \cF_{n-1}
\right] 
= \int \, \frac{p^j_{\widehat{\theta}_{n-1}^{j}} \left(z \vert \cF^{j}_{n-1} \right)}
{p^j_{\theta^{j}}\left(z \vert  \cF^{j}_{n-1}  \right)} 
p^j_{\theta^{j}}\left(z \vert  \cF^{j}_{n-1}  \right)
= 1,
\end{align*}
which implies $\Exp_{A,\butheta}[L^j_n \vert \cF_{n-1}] = L^j_{n-1}$. Further, since $\widehat{\butheta}_0$ is deterministic, $\Exp_{A,\butheta}[L^j_1] = 1$, which completes the proof.
\end{proof}

By Lemma~\ref{comp_martingale} and due to independence across streams, 
for any subset $M \subset [J]$, 
there exists a probability measure $Q_{A,\butheta,M}$ such that for any $n \in \bN$,
\begin{equation}
\label{composite_change_of_measure}
\frac{d Q_{A,\butheta,M}}{d \Pro_{A,\butheta}}(\cF_n)
\;=\;
\prod_{j \in M} \exp\left({\ell}^j_{*}(n) - \ell^j(n,\theta^j) \right).
\end{equation}

Next, we establish the error control of the Leap rule with adaptive log-likelihood ratios. The proof is 
almost identical to  Theorem \ref{leap_error_control}.

\begin{theorem}\label{comp_error_control}
Assume \eqref{comp_info_discrete} and \eqref{comp_separable} hold. For any $\alpha, \beta \in (0,1)$ we have that  the Leap rule $\delta_L^{*}(a,b) \in \Delta_{k_1,k_2}^{comp}(\alpha,\beta)$ when the thresholds are selected as follows:
 \begin{equation*}
  a = |\log(\beta)| + \log(2^{k_2} C^{J}_{k_2}), \quad b = |\log(\alpha)| + \log(2^{k_1} C^{J}_{k_1}).
 \end{equation*}
\end{theorem}
\begin{proof}
Just as  Theorem \ref{leap_error_control} follows from Lemma~\ref{taus_error_control} (see the proof in Appendix~\ref{proof:leap_error_control}), in the same way  Theorem  \ref{comp_error_control} follows by the next Lemma.
\end{proof}

\begin{lemma} \label{app_comp_taus_error_control}
Assume \eqref{comp_info_discrete}, \eqref{comp_separable} hold. 
Fix  $A \subset [J]$, $\butheta = (\theta^1,\ldots, \theta^J) \in \boldsymbol{\Theta}_A$. Let $B_1 \subset A^c$ with $|B_1| = k_1$, and $B_2 \subset A$ with $|B_2| = k_2$.
\begin{enumerate}[label={(\roman*)}]
\item Fix any $0 \leq \ell < k_1$. For any event $\Gamma \in \MF_{\widehat{\tau}_\ell}$,  we have
\begin{equation*}
\Pro_{A,\butheta}(B_1 \subset \widehat{D}^{*}_{\ell}) \leq  C_{\ell}^{k_1} e^{-b},\quad
\Pro_{A,\butheta}(B_2 \subset (\widehat{D}^{*}_{\ell})^c,\, \Gamma) \leq  e^{-a} Q_{A,\butheta, B_2} (\Gamma).
\end{equation*}
\item Fix any $0 \leq \ell < k_2$. For any event $\Gamma \in \MF_{\widecheck{\tau}_\ell}$,   we have
\begin{equation*}
\Pro_{A,\butheta}(B_1 \subset \widecheck{D}^{*}_\ell,\, \Gamma) \leq  e^{-b}Q_{A,\butheta, B_1} (\Gamma) ,\quad
\Pro_{A,\butheta}(B_2 \subset (\widecheck{D}^{*}_\ell)^c) \leq  C_{\ell}^{k_2} e^{-a}.
\end{equation*}
\end{enumerate}
\end{lemma}

\begin{proof}
The proof is similar to that of Lemma~\ref{taus_error_control}. We only indicate the differences by working out the first inequality in (i).

As in the proof of Lemma~\ref{taus_error_control}, by definition, $\widehat{D}^{*}_{\ell}$ rejects the nulls in the $\ell$ streams with the least significant non-positive LLR, in addition to the nulls in the streams with positive LLR. Thus, 
$$
\{B_1 \subset \widehat{D}^{*}_{\ell}\} \subset \bigcup_{M \subset B_1, |M| = k_1 - \ell} \Pi_{M} \;, \quad \text{ where }\;
\Pi_{M} := \{\lambda^j_{*}(\widehat{\tau}_\ell) > 0 \;\; \forall j \; \in M\},
$$
and by Boole's inequality it suffices to show that $\Pro_{A,\butheta}(\Pi_{M}) \leq e^{-b}$ for every
$M \subset B_1$ with $|M| = k_1 - \ell$.

By definition, for any $j \in M \subset B_1 \subset A^c$,  since $\theta^j \in \Theta^j_0$,
$$
{\ell}^j_0(n) \geq \ell^j(n, \theta^j) \;\; \text{ for any } n \in \bN.
$$
Then,  by the definition of the adaptive log-likelihood ratio statistics~\eqref{adaptive_stat}, we have
$$
\Pi_{M} \; \subset \; \left\{ \sum_{j \in M} \left({\ell}^j_{*}(\widehat{\tau}_\ell) - {\ell}^j_0(\widehat{\tau}_\ell)\right) \geq b \right\}
\; \subset \;
\left\{ \sum_{j \in M} \left({\ell}^j_{*}(\widehat{\tau}_\ell) - \ell^j(\widehat{\tau}_\ell,\theta^j) \right) \geq b \right\}.
$$

By the above observation, the definition of $Q_{A,\butheta,M}$~\eqref{composite_change_of_measure}, and likelihood ratio identity, on the event $\Pi_M$,
$$
\frac{d Q_{A,\butheta,M}}{d \Pro_{A,\butheta}}(\cF_{\widehat{\tau}_{\ell}}) \geq e^{b},
$$
and the proof is complete by changing the measure from $\Pro_{A,\butheta}$ to 
$Q_{A,\butheta,M}$.
\end{proof}
\end{lemma}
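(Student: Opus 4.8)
The plan is to prove Lemma~\ref{app_comp_taus_error_control} as the composite counterpart of Lemma~\ref{taus_error_control}, following the same template but replacing the simple-hypothesis likelihood-ratio change of measure by the martingale change of measure $Q_{A,\butheta,M}$ supplied by Lemma~\ref{comp_martingale} and~\eqref{composite_change_of_measure}. By the symmetry between the positive and non-positive branches, it suffices to treat the two inequalities labelled (i); the inequalities in (ii) then follow verbatim after interchanging the roles of $k_1$ and $k_2$, of the thresholds $b$ and $a$, and of the sets $B_1$ and $B_2$, using $Q_{A,\butheta,B_1}$ in place of $Q_{A,\butheta,B_2}$.

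For the first bound, $\Pro_{A,\butheta}(B_1 \subset \widehat{D}^{*}_{\ell}) \leq C_{\ell}^{k_1} e^{-b}$, I would first observe that since $|B_1| = k_1$ while $\widehat{D}^{*}_{\ell}$ rejects only $\ell < k_1$ streams through the ``leap'' mechanism, at least $k_1-\ell$ members of $B_1$ must carry a strictly positive adaptive statistic upon stopping. Hence $\{B_1 \subset \widehat{D}^{*}_{\ell}\}$ is contained in the union, over the $C_{\ell}^{k_1}$ subsets $M \subset B_1$ with $|M| = k_1-\ell$, of the events $\Pi_M := \{\lambda^j_*(\widehat{\tau}_\ell) > 0 \; \forall j \in M\}$, and by Boole's inequality it is enough to show $\Pro_{A,\butheta}(\Pi_M) \leq e^{-b}$. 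On $\Pi_M$ the first clause of the stopping rule forces $\sum_{j \in M}\lambda^j_*(\widehat{\tau}_\ell) \geq b$, because these $k_1-\ell$ positive statistics dominate the sum of the $k_1-\ell$ smallest positive statistics. The crucial point is that for $j \in M \subset B_1 \subset A^c$ we have $\theta^j \in \Theta^j_0$, so $\ell^j_0(n) \geq \ell^j(n,\theta^j)$ by definition of the generalized log-likelihood as a supremum over $\Theta^j_0 \ni \theta^j$; thus $\lambda^j_* = \ell^j_* - \ell^j_0 \leq \ell^j_* - \ell^j(\cdot,\theta^j)$ on the positive branch. Summing over $M$ and invoking~\eqref{composite_change_of_measure} shows that on $\Pi_M$ the density $dQ_{A,\butheta,M}/d\Pro_{A,\butheta}$ at $\cF_{\widehat{\tau}_\ell}$ is at least $e^{b}$, and the likelihood-ratio identity gives $\Pro_{A,\butheta}(\Pi_M) \leq e^{-b} Q_{A,\butheta,M}(\Pi_M) \leq e^{-b}$.

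For the second bound, $\Pro_{A,\butheta}(B_2 \subset (\widehat{D}^{*}_{\ell})^c, \Gamma) \leq e^{-a} Q_{A,\butheta,B_2}(\Gamma)$, I would argue that on $\{B_2 \subset (\widehat{D}^{*}_{\ell})^c\}$ every member of $B_2$ has a non-positive statistic that is not among the $\ell$ least significant negatives, so the $k_2$ absolute values $|\lambda^j_*|$, $j \in B_2$, occupy ordered positions $\geq \ell+1$ and therefore satisfy $\sum_{j \in B_2}|\lambda^j_*(\widehat{\tau}_\ell)| \geq \sum_{j=\ell+1}^{\ell+k_2}\widecheck{\lambda}^j(\widehat{\tau}_\ell) \geq a$ by the second clause of the stopping rule. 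Dually to before, for $j \in B_2 \subset A$ we have $\theta^j \in \Theta^j_1$, whence $\ell^j_1(n) \geq \ell^j(n,\theta^j)$ and $-\lambda^j_* = \ell^j_* - \ell^j_1 \leq \ell^j_* - \ell^j(\cdot,\theta^j)$ on the negative branch; summing yields $\sum_{j \in B_2}\bigl(\ell^j_*(\widehat{\tau}_\ell) - \ell^j(\widehat{\tau}_\ell,\theta^j)\bigr) \geq a$, so $dQ_{A,\butheta,B_2}/d\Pro_{A,\butheta} \geq e^{a}$ on the event in question. Changing measure to $Q_{A,\butheta,B_2}$ and bounding the indicator of $\{B_2 \subset (\widehat{D}^{*}_{\ell})^c\}$ by $1$ under $Q_{A,\butheta,B_2}$ produces the claimed inequality.

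The step I expect to be the main obstacle is establishing the two domination inequalities $\lambda^j_* \leq \ell^j_* - \ell^j(\cdot,\theta^j)$ (positive branch, $j \in A^c$) and $-\lambda^j_* \leq \ell^j_* - \ell^j(\cdot,\theta^j)$ (negative branch, $j \in A$) with the correct directions, since, unlike in the simple case, the adaptive statistic $\lambda^j_*$ is \emph{not} a log-likelihood ratio against any fixed measure: its numerator is the running adaptive likelihood $\ell^j_*$ while its denominator is the \emph{generalized} (supremum) log-likelihood $\ell^j_0$ or $\ell^j_1$. Bounding that generalized log-likelihood below by $\ell^j(\cdot,\theta^j)$ — valid precisely because the true $\theta^j$ lies in the relevant parameter subspace, by~\eqref{comp_info_discrete} and the definition of $\ell^j_i$ — is exactly what realigns the adaptive statistic with the martingale $L^j_n$ of Lemma~\ref{comp_martingale}, so that its exponent reproduces the Radon--Nikodym derivative $dQ_{A,\butheta,M}/d\Pro_{A,\butheta}$ and the threshold $b$ (or $a$) passes intact into the error bound. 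Once these two inequalities are in hand, the rest is the same Boole's-inequality and change-of-measure bookkeeping as in the proof of Lemma~\ref{taus_error_control}.
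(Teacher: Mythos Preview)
Your proposal is correct and follows essentially the same approach as the paper: the paper only works out the first inequality in (i), using exactly your covering argument $\{B_1\subset\widehat{D}^*_\ell\}\subset\bigcup_M\Pi_M$, the key domination $\ell^j_0(n)\geq\ell^j(n,\theta^j)$ for $j\in A^c$, and the change of measure to $Q_{A,\butheta,M}$. Your sketch of the second inequality in (i), which the paper omits, is also right and follows the expected dual argument.
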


\subsection{Asymptotic optimality of the Leap rule with adaptive log-likelihood ratios}
\label{app_composite_AO}
The asymptotic optimality follows after we establish an asymptotic upper bound on the expected sample size of the Leap rule. The following result is similar to Lemma~\ref{taus_upper_bound_leap}.

\begin{lemma}\label{comp_leap_upper_bound}
Assume~\eqref{comp_separable} and~\eqref{composite_uniform_convergence} hold. For any $A \subset [J]$ and 
$\butheta \in \Theta_A$, as $a,b \to \infty$,
\begin{align*}
\Exp_{A,\butheta}[\widehat{\tau}_\ell] &\leq \max\left\{ \frac{b(1 + o(1)) }{\mathcal{D}_1(A,\butheta; 1, k_1 -\ell)} \;  , \;  \frac{a(1 + o(1))}{\mathcal{D}_0(A^c, \butheta; \ell+1 ,\ell+ k_2)} \right\} ,
  0 \leq \ell <k_{1},  \\
\Exp_{A,\butheta}[\widecheck{\tau}_\ell] &\leq  \max\left\{ \frac{b(1 + o(1))}{\mathcal{D}_1(A, \butheta; \ell+1,\ell+k_1)} \,  , \,  \frac{a(1 + o(1))}{\mathcal{D}_0(A^c, \butheta; 1 , k_2 - \ell ) } \right\},  0 \leq \ell <k_{2},
\end{align*}
where the denominators are defined in~\eqref{comp_quantities_def}.
\end{lemma}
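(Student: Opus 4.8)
The plan is to follow closely the proof of Lemma~\ref{taus_upper_bound_leap} in the simple-hypothesis case (Appendix~\ref{proof:taus_upper_bound}), replacing each local LLR $\lambda^j$ by the adaptive statistic $\lambda_*^j$ of~\eqref{adaptive_stat} and replacing the complete-convergence condition~\eqref{CLLN} by its composite analogue~\eqref{composite_uniform_convergence}. As in that proof, I would treat only $\widehat{\tau}_\ell$, since the bound for $\widecheck{\tau}_\ell$ follows by the symmetric argument. So fix $A\subset[J]$, $\butheta\in\boldsymbol{\Theta}_A$, and $0\le \ell<k_1$.

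The first step is to convert~\eqref{composite_uniform_convergence} into a statement about the \emph{signed} adaptive statistics: I would show that, completely under $\Pro_{A,\butheta}$, for every small $\epsilon>0$ and all large $n$ one has $\lambda_*^i(n)/n \ge \cI_1^i(\theta^i)-\epsilon$ for each $i\in A$, and $-\lambda_*^j(n)/n \ge \cI_0^j(\theta^j)-\epsilon$ for each $j\in A^c$. This has two ingredients. The growth rates are delivered directly by the two lines of~\eqref{composite_uniform_convergence}, which bound $(\ell_*^i(n)-\ell_0^i(n))/n$ and $(\ell_*^j(n)-\ell_1^j(n))/n$ from below. What remains is to verify that the correct branch of the piecewise definition~\eqref{adaptive_stat} is eventually active; here I would use the separation~\eqref{comp_separable} together with the convergence of the normalized generalized-likelihood gaps $\tfrac1n(\ell^j(n,\theta^j)-\ell_i^j(n))\to\cI_i^j(\theta^j)>0$ (available in this setting via~\eqref{composite_complete_convergence}) to guarantee that $\ell_0^i<\ell_1^i$ for $i\in A$ and $\ell_1^j<\ell_0^j$ for $j\in A^c$ for all large $n$, so that indeed $\lambda_*^i>0$ and $\lambda_*^j<0$ with the claimed rates.

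Given these signed rates, I would introduce the same two families of subsets as in the simple case, $\mathcal{M}_1=\{B\subset A:|B|=k_1-\ell\}$ and $\mathcal{M}_0=\{B\subset A^c:|B|=k_2,\ \cI_0^i(\theta^i)\ge \cI_0^{(\ell+1)}(A^c,\butheta)\ \forall i\in B\}$, and dominate $\widehat{\tau}_\ell$ by the stopping time $\tau'$ that additionally forces all signs to be correct ($\lambda_*^i(n)>0$ for $i\in A$, $\lambda_*^j(n)<0$ for $j\in A^c$) and requires $\min_{B\in\mathcal{M}_1}\sum_{i\in B}\lambda_*^i(n)\ge b$ and $\min_{B\in\mathcal{M}_0}\sum_{j\in B}(-\lambda_*^j(n))\ge a$. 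Since on the correct-sign event the ordered positive (resp.\ negative) adaptive statistics are exactly the ordered $\{\lambda_*^i:i\in A\}$ (resp.\ $\{-\lambda_*^j:j\in A^c\}$), one checks $\widehat{\tau}_\ell\le\tau'$ exactly as before. An application of the multiple-crossing Lemma~\ref{multiple_cross}, in its complete-convergence form (the only applicable one, since the adaptive increments are not i.i.d.), then yields $\Exp_{A,\butheta}[\tau']\le \max\{\,b/\min_{B\in\mathcal{M}_1}\sum_{i\in B}\cI_1^i(\theta^i),\ a/\min_{B\in\mathcal{M}_0}\sum_{j\in B}\cI_0^j(\theta^j)\,\}\,(1+o(1))$, and identifying the two minima with $\mathcal{D}_1(A,\butheta;1,k_1-\ell)$ and $\mathcal{D}_0(A^c,\butheta;\ell+1,\ell+k_2)$ via~\eqref{comp_quantities_def} gives the asserted bound.

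The main obstacle I anticipate is precisely the branch-selection step: unlike the simple case, where $\lambda^j$ is an honest LLR with a two-sided SLLN, the statistic $\lambda_*^j$ is defined piecewise and its sign is not controlled a priori. The delicate point is to establish, \emph{completely} rather than merely almost surely, that the intended branch of~\eqref{adaptive_stat} holds for all large $n$, using the one-sided control in~\eqref{composite_uniform_convergence} supplemented by~\eqref{comp_separable} and the convergence of the likelihood gaps. Once the correct signed rates are secured, the combinatorial domination and the invocation of Lemma~\ref{multiple_cross} are routine, and the one-sidedness of the convergence is harmless, because only lower bounds on the growth are needed for an \emph{upper} bound on $\Exp_{A,\butheta}[\widehat{\tau}_\ell]$.
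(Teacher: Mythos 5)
Your proposal is correct and follows essentially the same route as the paper, whose own proof of this lemma is a one-line deferral to the argument for Lemma~\ref{taus_upper_bound_leap}: dominate $\widehat{\tau}_\ell$ by the stopping time $\tau'$ built from the classes $\mathcal{M}_1,\mathcal{M}_0$ and apply Lemma~\ref{multiple_cross} under the one-sided complete-convergence condition~\eqref{composite_uniform_convergence}. The branch-selection issue you flag is real and is left implicit by the paper; note only that the eventual correctness of the sign of $\lambda_*^j$ is most cleanly obtained not from pointwise convergence of the generalized-likelihood gaps (which is not uniform over $\Theta_i^j$) but from~\eqref{composite_uniform_convergence} combined with the bound $\ell_1^j(n)\ge \ell^j(n,\theta^j)$ and the martingale property of $\exp(\ell_*^j(n)-\ell^j(n,\theta^j))$ from Lemma~\ref{comp_martingale}, which controls $\ell_*^j(n)-\ell^j(n,\theta^j)$ from above completely.
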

\begin{proof}
Under assumption~\eqref{composite_uniform_convergence}, the proof uses the same argument
as in that  for Lemma~\ref{taus_upper_bound_leap} in Subsection~\ref{proof:taus_upper_bound}.
\end{proof}

Now Theorem~\ref{thm:comp_main} follows from  Theorem \ref{comp_lower_bound}, Lemma \ref{comp_error_control} and Lemma \ref{comp_leap_upper_bound}.

\subsection{Simulations for composite case}\label{app:simulation_composite}
Here we consider a ``homogeneous" multiple testing problem on the normal means with known variance.
Specifically, we assume that for each $j \in [J]$, the observations in the $j$-th stream, $\{X^j(n): n\in \bN\}$,  are i.i.d. with common distribution  $\mathcal{N}(\theta^j, 1)$, and for a given constant $\mu > 0$, that does not depend on $j$, we want to test
\begin{align}\label{composite_normal_mean}
{\Hyp}_0^j: \;  \theta^j \leq 0 \;\; \text{ versus }\;\; {\Hyp}_1^j: \; \theta^j \geq \mu.
\end{align}
Using  $\mathcal{N}(0,1)$ as our reference measure,   for each $j \in [J]$ we have
\begin{equation}
\label{normal_mean_loglikelihood}
\ell^{j}(n, \theta^j) = n\left( \theta^{j} \, \overline{X}^{j}(n)  - \frac{1}{2}(\theta^{j})^2 \right), \quad
\text{ where } \overline{X}^{j}(n) := \frac{1}{n} \sum_{i=1}^{n} X^{j}(i).
\end{equation}
Further, for any $\theta^{j}, \widetilde{\theta}^{j}$, we have $I^{j}(\theta^j, \widetilde{\theta}^j) = \frac{1}{2}(\theta^j - \widetilde{\theta}^{j})^2$, and 
$$
I^j_0(\theta^j) = \frac{1}{2}(\theta^{j} - \mu)^2 \text{ for } \theta^j \leq 0, \qquad
I^j_1(\theta^j) = \frac{1}{2}(\theta^{j})^2 \text{ for } \theta^j \geq \mu.
$$
Clearly, the null and the alternative hypotheses are separated in the sense of~\eqref{comp_separable}. Further,  condition~\eqref{composite_complete_convergence} is satisfied due to~\cite{hsu1947complete}.

The adaptive log-likelihood process~\eqref{adaptive_LL} for the j-th stream in this context takes the following form:
$\ell^j_{0} = 0$, and for $n \geq 1$,
\begin{equation}
\label{normal_mean_adaptive}
\ell^{j}_{*}(n) = \sum_{i=1}^{n}\left({X}^{j}(i) \, \widehat{\theta}_{i-1}^{j} - \frac{1}{2}(\widehat{\theta}_{i-1}^{j})^2 \right).
\end{equation}
If we choose to use the maximum likelihood estimators $\{\widehat{\butheta}_n\}$ in above definition, i.e., $\widehat{\theta}^{j}_n = \overline{X}^{j}(n)$, the one-sided complete convergence condition~\eqref{composite_uniform_convergence} is established in~\cite{tartakovsky2014sequential} (Page 278-279). Thus, by Theorem~\ref{thm:comp_main}, the Leap rule is asymptotically optimal in this setup.

To distinguish from the simulations in the simple versus simple setup, we refer to the Leap rule with adaptive statistics as ``Leap*" rule. We will compare the Leap* rule with the following procedures:
\begin{enumerate}
\item \textit{Asymmetric Sum-Intersection* rule}: replace the log-likelihood ratio statistics $\lambda^{j}(n)$, in the definition of the asymmetric Sum-Intersection rule~\eqref{tau_0_def}, by the adaptive version $\lambda_*^{j}(n)$~\eqref{adaptive_LL}.

\item \textit{Intersection* rule}: replace the log-likelihood ratio statistics $\lambda^{j}(n)$, in the definition of the Intersection rule~\eqref{intersection_rule}, by the adaptive version $\lambda_{*}^{j}(n)$~\eqref{adaptive_LL}.

\item \textit{MNP rule}: for a fixed-sample size $n$, in each stream, we run the Neyman-Pearson rule with the same threshold $h >0$, which is the most powerful test for each stream due to the monotone likelihood ratio property. Formally,
\begin{align*}
\delta_{NP}(n,h):= (n,D_{NP}(n,h)), \; \; D_{NP}(n,h):= \{j \in [J]:  \overline{X}^{j}(n)>  h \},
\end{align*}

\end{enumerate}

For simulation purposes, we assume that the tolerance on the two types of mistakes is the same, in the sense that~\eqref{sym_hom_GF} holds. As in Section~\ref{sec:composite}, we denote the true parameter as $(A, \butheta)$, where
$\butheta =(\theta^1, \ldots, \theta^J) \in \boldsymbol{\Theta}_A$.

\subsubsection{Thresholds selection via simulation}
For each $j \in [J]$ and $\theta^j \leq 0$
the distribution of $\{\lambda^{j}_{*}(n): n \in \bN\}$ 
under $\Pro^{j}_{\mu - \theta^j}$ is the same as the distribution of $\{-\lambda^{j}_{*}(n): n \in \bN\}$ 
under $\Pro^{j}_{\theta^j}$.  Since~\eqref{sym_hom_GF} holds, we should equate the thresholds $a$ and $b$ 
in the Leap* rule. Further, we only need to focus on the generalized familywise error rate of Type I.

For a fixed parameter $a$ ($= b$),  we  use simulation to find out the maximal probability of the Leap* rule committing $k_1$ false positive mistakes, i.e.
$$
\max_{(A, \butheta): A \subset [J], \butheta \in \boldsymbol{\Theta}_A} \Pro_{A,\butheta}(\vert D_L^{*} \setminus A \vert \geq k_1).
$$
Then we try different values for $a$ and  select the one for which  the  above quantity is equal to $\alpha$.  Note that the maximum is over $\butheta \in \boldsymbol{\Theta}$. However, for $\theta^j \leq \widetilde{\theta^{j}}$,
$\{\lambda^{j}_{*}(n): n \in \bN\}$ under $\Pro_{\widetilde{\theta}^j}$ is stochastically larger than
$\{\lambda^{j}_{*}(n): n \in \bN\}$ under $\Pro_{{\theta}^j}$, in the sense that for any $n \in \bN$ and $x \in \bR$,
$$
\Pro_{{\theta}^j}^{j}(\lambda^{j}_{*}(n) \leq x) \; \geq \; 
\Pro^{j}_{\widetilde{\theta}^j}(\lambda^{j}_{*}(n) \leq x).
$$
As a result, the maximal probability is achieved by the boundary cases, i.e., $\butheta \in \{0,\mu\}^{J}$. 

The same discussion applies to the other two sequential procedures. For the MNP rule,~\eqref{sym_hom_GF} implies that 
$h = \frac{1}{2}\mu$, and for a fixed $n$, the maximal probability of making $k_1$ false positives is also achieved by 
$\butheta \in \{0,\mu\}^{J}$. 

\subsubsection{Practical considerations}
The first few estimators of $\butheta$ will typically be
quite noisy, since they are estimated based on only a few observations.  However,
from~\eqref{adaptive_LL} or \eqref{normal_mean_adaptive}  we observe that their effect will persist. Thus, in practice it is preferable to take an initial sample of fixed size, say $n_0$, and use these observations \textit{only} to obtain good initial estimates of  the unknown parameter.

Specifically,  we assume that for each $j \in [J]$, $X^{j}(-n_0),\ldots, X^{j}(-1)$ are i.i.d. with distribution $\mathcal{N}(\theta^j,1)$, and  we define for $n \geq 0$  the following maximum likelihood estimator 
$$
\widehat{\theta}^j_n := \frac{\sum_{i=-n_0}^{-1}X^{j}(i) + \sum_{i=1}^{n} X^{j}(i)}{n_0 + n},
$$
which includes the initial samples. The definitions of the log-likelihood process~\eqref{normal_mean_loglikelihood} and the adaptive log-likehood process~\eqref{normal_mean_adaptive} remain unchanged. By taking an initial sample of fixed size, the asymptotic expected sample size of the Leap* rule is not affected. Further, if we enlarge the $\sigma$-field by including the initial samples,  i.e.,
$$
\widetilde{\cF}_n :=  \cF_n \; \vee \; \sigma\left(X^{j}(i): j \in [J], i \in \{-n_0,\ldots, -1 \} \right),
$$
then the key Lemma~\ref{comp_martingale}, used to establish the error control of Leap* rule, still holds. Thus,
taking an initial sample does not affect the asymptotic optimality of the Leap* rule.

\subsubsection{Simulation results}
We consider the problem~\eqref{composite_normal_mean} with $J =20$,  $\mu = 0.2$, $k_1 =k_2 = 2$ and the initial sample size $n_0 = 10$. Based on the previous discussion, we set $a = b$ for the sequential methods.
For a fixed threshold $a$, we use simulation to find out the maximal probability (over $\butheta \in \boldsymbol{\Theta}$) of committing $k_1$ false positives  (Err), and the expected sample size (ESS) under a particular $\Pro_{A, \butheta}$, where 
$A = \{1,\ldots, 10\}$ and
\begin{equation}
\label{simulation choice}
\butheta = (\theta^1,\ldots, \theta^J),\;\;
\theta^{j} = \begin{cases}
0.7 \;\;\;\;\; \text{ if } j = 1,\ldots,10 \\
-0.3 \;\; \text{ if } j = 11,\ldots, 19 \\
0 \;\;\;\;\;\;\;\; \text{ if } j = 20.
\end{cases}
\end{equation}

For the MNP rule, we set $h = \frac{1}{2}\mu$, and use simulation to find out the maximal probability of committing $k_1$ false positives for each fixed $n \in \bN$. The results are shown in Figure~\ref{fig:composite}.

From Figure~\ref{fig:composite}, we observe that the other procedures have a different ``slope'' compared to the  asymptotically optimal Leap* rule, which indicates that they  fail to be asymptotically optimal. 
Further, since sequential methods are adaptive to the true $\butheta$, the gains over fixed-sample size procedures increase  as $\butheta$  is farther from the boundary cases.

\begin{figure}[htbp]
\subfloat{
\includegraphics[width=0.42\linewidth]{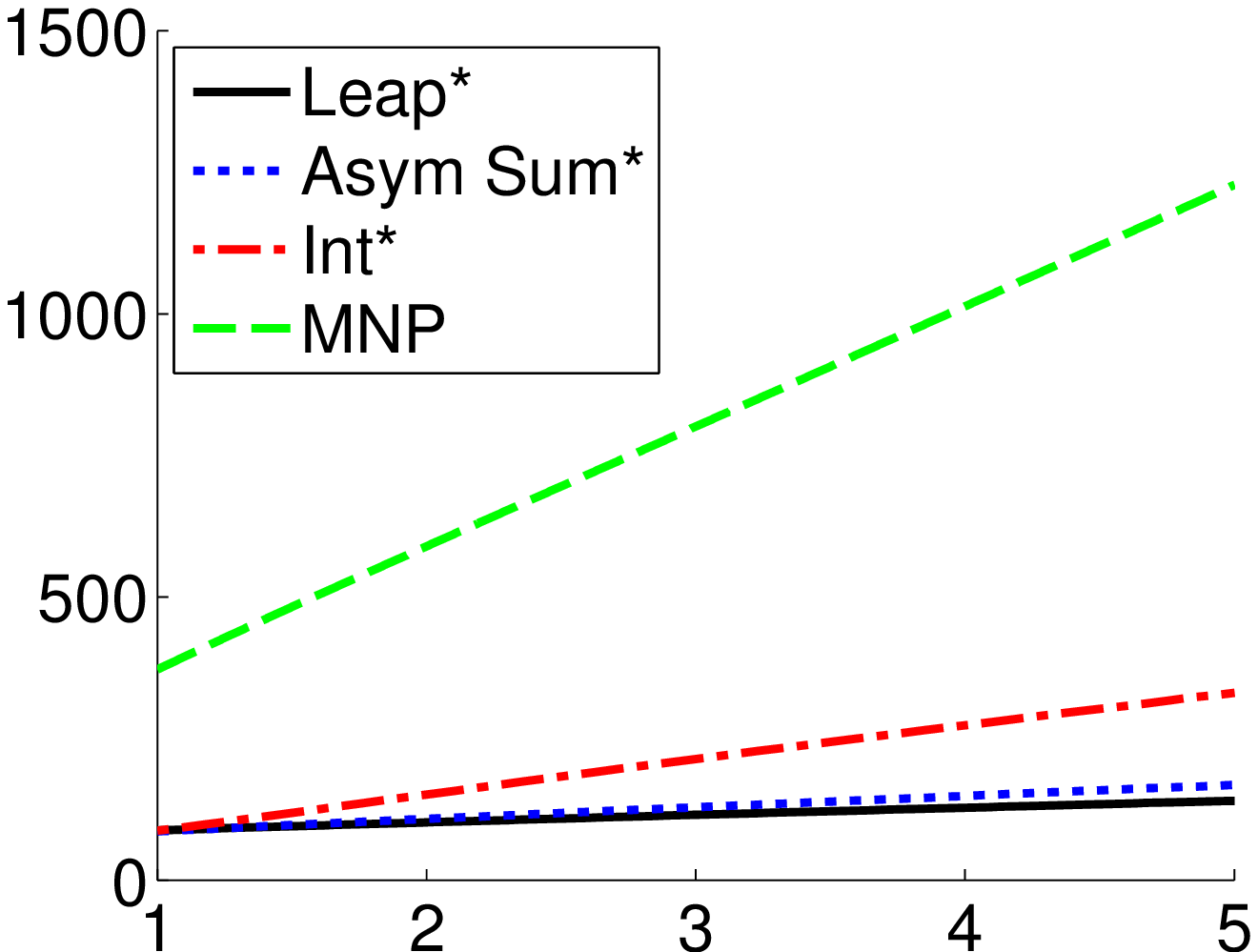} 
}\hspace{1cm}
\subfloat{
\includegraphics[width=0.42\linewidth]{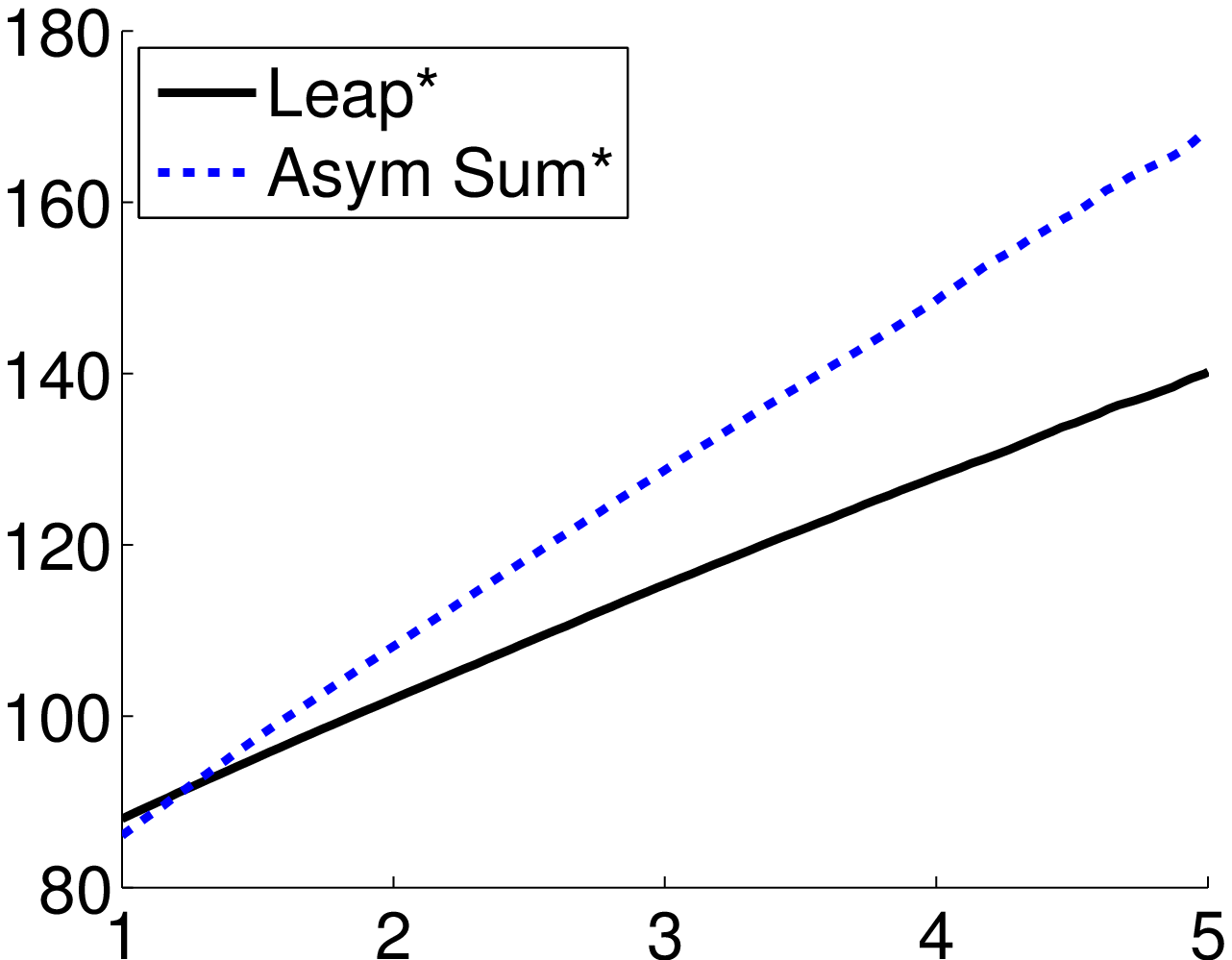} 
} 
\caption{The testing problem~\eqref{composite_normal_mean} with  $J = 20, \mu = 0.2, k_1=k_2 = 2$ and the initial sample size $n_0 = 10$. 
The x-axis in both graphs  is  $|\log_{10}(\text{Err})|$. 
The y-axis  is the corresponding ESS under $\butheta$ given by~\eqref{simulation choice}.
The second figure plots  two of the lines in the first figure. Note that for the sequential procedures, the initial sample size $n_0$ is added to the ESS.
}
\label{fig:composite}
\end{figure}

\subsection{Discussion on the local test statistics}\label{comp_discussion}

When there is only one stream (i.e. $J = 1$), the adaptive log-likelihood ratio statistic~\eqref{adaptive_stat} was first proposed in~\cite{robbins1974} in the context of power one tests, and later extended by~\cite{pavlov1991sequential} to sequential multi-hypothesis testing. There are two other popular choices for the local test statistics in  the case of composite hypotheses. 

The first one is to follow the approach suggested by Wald~\cite{wald1945sequential}  and replace $\lambda^j(n)$ in the Leap rule~\eqref{leap_def} by
the following \textit{mixture}  log-likelihood ratio statistic:
$$
\log\left(
\frac{\int_{\Theta^j_1}\, \exp\left(\ell(n, \theta^j)\right) \; \omega_1^{j}(d\theta^j)}
{\int_{\Theta^j_0}\, \exp\left(\ell(n, \theta^j)\right) \;  \omega_0^{j}(d\theta^j)}
\right),
$$
where $\omega^{j}_0, \omega^{j}_1$ are two probability measures on $\Theta^j_0$ and $\Theta^j_1$ respectively.  The second is to replace $\lambda^j(n)$ in the Leap rule~\eqref{leap_def} by the  \textit{generalized} log-likelihood ratio (GLR) statistic
${\ell}^j_{1}(n) - {\ell}^j_{0}(n)$.
When there is only one stream (i.e. $J = 1$), the corresponding sequential test  has been studied in \cite{lorden1973} for one-parameter exponential family, in~\cite{chan2000asymptotic} for multi-parameter exponential family, and in~\cite{li2014generalized} for separate families of hypotheses.  

We have chosen  the adaptive log-likelihood ratio statistics~\eqref{adaptive_stat} in this paper mainly because they allow for explicit and universal error control. Indeed, with this choice of statistics, the upper bounds on the error probabilities rely on a change-of-measure argument, in view of  Lemma~\ref{comp_martingale},
whereas this argument breaks down when we  use GLR or mixture  statistics.

\section{Sequential testing of two composite hypotheses in exponential family} \label{sub:exp_family}

In this section, we show that~\eqref{composite_uniform_convergence} holds if each stream has i.i.d. observations from an exponential family distribution, both the null and alternative parameter spaces are compact,
and the maximal likelihood estimator is used in the adaptive log-likelihood statistics \eqref{adaptive_stat}.
Note that~\eqref{composite_uniform_convergence} is a condition on each \textit{individual stream}, thus in this section we  drop the superscript $j$.

Let $\{X_n: n \in \bN\}$ be a sequence of i.i.d. random vectors in $\bR^d$ with common density
$$
p_{\theta}(x) = \exp\left(\theta^T x - b(\theta) \right)
$$ 
with respect to some measure $\nu$,  where superscript $T$ means transpose. We assume  that the natural parameter space 
$$
\Theta := \{\theta \in \bR^d: \int p_{\theta}(x) \nu(dx) < \infty \}
$$
is an open subset of $\bR^d$. For any $\theta, \widetilde{\theta} \in \Theta$,  the Kullback-Leibler divergence between 
$p_{\theta}$ and $p_{\widetilde{\theta}}$ is denoted by
$$
I(\theta, \widetilde{\theta}) := \Exp_{\theta} \left[ \log \frac{p_{\theta}(X_1)}{p_{\widetilde{\theta}}(X_1)}\right] = (\theta - \widetilde{\theta})^{T} \nabla b(\theta) - (b(\theta) - b(\widetilde{\theta})),
$$
where $\nabla$ stands for the gradient. We denote by $\{\ell(n,\theta): n \in \bN\}$ the log-likelihood process:
$$
\ell(n,\theta) := \sum_{i=1}^{n} \log p_{\theta}(X_i) 
= \sum_{i=1}^{n} (\theta^{T}X_i - b(\theta)) \quad\text{ for } n \in \bN.
$$
We assume that $\Theta_0, \Theta_1$ are two \textit{disjoint, compact} subsets of $\Theta$,
and denote  by
$$\widehat{\theta}_n := \arg\max_{\theta \in \Theta_0 \cup \Theta_1} \ell(n,\theta)$$  the maximum likelihood estimator based on the data up to time $n$ over the set $\Theta_0 \cup \Theta_1$. 
Picking any deterministic $\widehat{\theta}_0 \in \Theta$,  we define
$$
\ell_*(n) := \sum_{i=1}^{n} \log p_{\widehat{\theta}_{i-1}}(X_i) 
= \sum_{i=1}^{n} (\widehat{\theta}_{i-1}^{T}X_i - b(\widehat{\theta}_{i-1})) \quad\text{ for } n \in \bN.
$$

The main result of this subsection is summarized in the following theorem.

\begin{theorem}\label{exp_fam_complete_convergence}
Let $\theta \in \Theta_1$ and set $I(\theta) := \inf_{\theta_0 \in  \Theta_0} I(\theta, \theta_0)$.
Then, for any $\epsilon > 0$,
$$\sum_{n=1}^{\infty}\Pro_{\theta}\left( 
\frac{\ell_{*}(n) - \ell_{0}(n)}{n} - I(\theta) < \epsilon \right) < \infty,
$$
where $\ell_{0}(n) := \sup_{\theta_0 \in \Theta_0} \ell(n,\theta_0)$.
\end{theorem}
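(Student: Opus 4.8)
The plan is to write $\ell_*(n)-\ell_0(n) = \big(\ell_*(n)-\ell(n,\theta)\big) + \big(\ell(n,\theta)-\ell_0(n)\big)$ and to show that, with an exceptional probability summable in $n$, each normalized piece is bounded below by its limit minus $\epsilon/2$; here I read the displayed inequality as the lower-deviation event $\{(\ell_*(n)-\ell_0(n))/n < I(\theta)-\epsilon\}$, which is the form required by \eqref{composite_uniform_convergence}. The engine behind every tail bound is the exponential family structure: under $\Pro_\theta$ the centered variables $W_i := X_i - \nabla b(\theta)$ are i.i.d.\ with cumulant generating function $\Lambda(s) = b(\theta+s)-b(\theta)-s^{T}\nabla b(\theta)$, which is finite and smooth near the origin because $\Theta$ is open. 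This yields both a Cramér-type bound $\Pro_\theta(|\bar X_n - \nabla b(\theta)| \ge \eta) \le C e^{-cn}$ for every $\eta>0$, where $\bar X_n := n^{-1}\sum_{i=1}^n X_i$, and a local sub-Gaussian estimate $\Lambda(s)\le \tfrac{K}{2}|s|^2$ for $|s|\le s_0$, both of which I invoke repeatedly.

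For the second piece I would use $\ell(n,\theta)-\ell_0(n) = \inf_{\theta_0\in\Theta_0}\big(\ell(n,\theta)-\ell(n,\theta_0)\big)$ together with $n^{-1}(\ell(n,\theta)-\ell(n,\theta_0)) = g_n(\theta_0)$, where $g_n(\theta_0) := (\theta-\theta_0)^{T}\bar X_n - (b(\theta)-b(\theta_0))$ is the empirical version of $I(\theta,\theta_0)$ and satisfies $g_n(\theta_0)-I(\theta,\theta_0) = (\theta-\theta_0)^{T}(\bar X_n-\nabla b(\theta))$. Since $\Theta_0$ is compact, $M := \sup_{\theta_0\in\Theta_0}|\theta-\theta_0|<\infty$, and the elementary inequality $|\inf f - \inf g|\le\sup|f-g|$ gives $|n^{-1}(\ell(n,\theta)-\ell_0(n)) - I(\theta)| \le M\,|\bar X_n-\nabla b(\theta)|$. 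Thus the event that this piece falls below $I(\theta)-\epsilon/2$ is contained in $\{|\bar X_n-\nabla b(\theta)|> \epsilon/(2M)\}$, whose probability is summable by the Cramér bound.

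The delicate piece is $\ell_*(n)-\ell(n,\theta) = \sum_{i=1}^n\big(\log p_{\widehat\theta_{i-1}}(X_i)-\log p_\theta(X_i)\big)$. Because $\widehat\theta_{i-1}$ is $\cF_{i-1}$-measurable and $X_i\perp\cF_{i-1}$, the $i$-th increment has conditional mean $-I(\theta,\widehat\theta_{i-1})$, so I write $\ell_*(n)-\ell(n,\theta) = -M_n - \sum_{i=1}^n I(\theta,\widehat\theta_{i-1})$, where $M_n := \sum_{i=1}^n(\theta-\widehat\theta_{i-1})^{T}(X_i-\nabla b(\theta))$ is a martingale and the second term is a nonnegative compensator. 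For the compensator, an M-estimation argument gives the key pointwise bound $I(\theta,\widehat\theta_{i-1}) \le 2R\,|\bar X_{i-1}-\nabla b(\theta)|$ with $R := \sup_{\vartheta\in\Theta_0\cup\Theta_1}|\vartheta|$: indeed $\theta$ is the unique maximizer over the compact set $\Theta_0\cup\Theta_1$ of $F(\vartheta) := \vartheta^{T}\nabla b(\theta)-b(\vartheta)$, with $F(\theta)-F(\vartheta)=I(\theta,\vartheta)$, while $\sup_\vartheta|\tfrac1n\ell(n,\vartheta)-F(\vartheta)| \le R|\bar X_n-\nabla b(\theta)|$, so the usual argmax inequality bounds $I(\theta,\widehat\theta_{i-1}) = F(\theta)-F(\widehat\theta_{i-1})$ by twice the uniform error. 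Consequently, for any $\delta>0$, $\{I(\theta,\widehat\theta_{i-1})\ge\delta\}\subseteq\{|\bar X_{i-1}-\nabla b(\theta)|\ge \delta/(2R)\}$, hence $\tfrac1n\sum_{i\le n}I(\theta,\widehat\theta_{i-1}) \le \delta + \tfrac{C_{\max}}{n}N_n$, where $C_{\max} := \sup_{\vartheta}I(\theta,\vartheta)$ and $N_n$ counts the indices $i\le n$ with $|\bar X_{i-1}-\nabla b(\theta)|\ge\delta/(2R)$. The Cramér bound forces $\Exp_\theta[N_\infty]<\infty$, and the elementary inequality $\sum_{n}\Pro_\theta(N_\infty > cn) \le \Exp_\theta[N_\infty]/c$ makes the compensator exceedances summable.

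It remains to control $M_n$: its increments have $\cF_{i-1}$-measurable coefficients bounded by $R+|\theta|$, so conditioning on $\cF_{i-1}$ and using $\Lambda(s)\le\tfrac K2|s|^2$ gives $\Exp_\theta[e^{tM_n}]\le e^{Knt^2}$ for small $|t|$, and a Chernoff optimization yields $\Pro_\theta(M_n>n\gamma)\le e^{-c'n}$ for every $\gamma>0$. Choosing $\delta,\gamma$ with $\delta+\gamma\le\epsilon/2$ then gives $n^{-1}(\ell_*(n)-\ell(n,\theta))\ge -\epsilon/2$ outside a summable set, and combining with the previous step finishes the argument, since the lower-deviation event is contained in the union of the three summable exceptional events. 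I expect the compensator to be the main obstacle: upgrading the almost-sure consistency $\widehat\theta_{i-1}\to\theta$ to a \emph{complete}-convergence statement for the Cesàro average $n^{-1}\sum_i I(\theta,\widehat\theta_{i-1})$ is precisely where the finite-expected-exceedances trick and the compactness of $\Theta_0\cup\Theta_1$ are essential, whereas the martingale and M-estimation bounds, though technical, are routine given the exponential moments.
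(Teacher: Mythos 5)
Your proof is correct and rests on the same skeleton as the paper's: the decomposition $\ell_*(n)-\ell_0(n) = \bigl(\ell_*(n)-\ell(n,\theta)\bigr) + \inf_{\theta_0\in\Theta_0}\bigl(\ell(n,\theta)-\ell(n,\theta_0)\bigr)$, with the first piece split into the martingale $\sum_{i}(\widehat\theta_{i-1}-\theta)^{T}(X_i-\nabla b(\theta))$ plus the compensator $\sum_i I(\theta,\widehat\theta_{i-1})$, and the second piece controlled uniformly over the compact $\Theta_0$ by a constant times $|\bar X_n-\nabla b(\theta)|$ (you also correctly read the displayed event as the lower deviation $\{\cdot<-\epsilon\}$, fixing the sign typo in the statement). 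Where you genuinely diverge is in the tools. The paper handles the martingale with a Baum--Katz-type complete convergence theorem for martingales with $L^p$-bounded differences ($p>2$), handles the compensator by citing Pavlov's exponential bound on the last exit time of the MLE from a $\delta$-neighborhood of $\theta$ together with the quick-convergence-implies-complete-convergence lemma, and handles the uniform piece with Hsu--Robbins, which needs only second moments. You instead exploit the local finiteness of the moment generating function of $X_1$ under $\Pro_\theta$ (available since $\Theta$ is open) to get Cram\'er/Chernoff exponential tails throughout: a conditional sub-Gaussian bound for the martingale, the explicit argmax inequality $I(\theta,\widehat\theta_n)\le 2R\,|\bar X_n-\nabla b(\theta)|$ in place of Pavlov's theorem, and an exceedance-counting step ($\sum_n \Pro_\theta(N_\infty>cn)\le \Exp_\theta[N_\infty]/c$) in place of the quick-convergence machinery. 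Your route is more self-contained and gives exponential rather than merely summable tails for two of the three pieces; the paper's route is stated so that the martingale and Hsu--Robbins steps would survive weaker moment assumptions, which is moot here since the exponential family supplies all local exponential moments. The only loose end in your write-up is the $i=1$ term $I(\theta,\widehat\theta_0)$, which involves the deterministic initialization rather than an MLE; it is a single bounded summand contributing $O(1/n)$, so nothing breaks.
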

\begin{proof}
Observe that for any $\theta_0 \in \Theta_0$,
$$
\ell_{*}(n) - \ell(n,\theta_0) =
\ell_{*}(n) - \ell(n,\theta) + \ell(n,\theta) - \ell(n,\theta_0) - nI(\theta, \theta_0)
+ nI(\theta, \theta_0),
$$
which implies that
\begin{align*}
&\ell_{*}(n) - \ell_{0}(n) \\
=\;& 
\ell_{*}(n) - \ell(n,\theta) + \inf_{\theta_0 \in \Theta_0}\left(  \ell(n,\theta) - \ell(n,\theta_0) - nI(\theta, \theta_0) + nI(\theta, \theta_0)\right) \\
\geq \;&\ell_{*}(n) - \ell(n,\theta) + \inf_{\theta_0 \in \Theta_0}\left( \ell(n,\theta) - \ell(n,\theta_0) - nI(\theta, \theta_0)\right) + nI(\theta).
\end{align*}
As a result, it suffices to show that
\begin{align}
&\frac{1}{n}(\ell_{*}(n) - \ell(n,\theta)) 
\xrightarrow[n \to \infty]{\Pro_{\theta} \text{ completely } } 0,
\label{expfamily_to_show_1}
\\
&\frac{1}{n}\inf_{\theta_0 \in \Theta_0}\left( \ell(n,\theta) - \ell(n,\theta_0) - nI(\theta, \theta_0)\right)
\xrightarrow[n \to \infty]{\Pro_{\theta} \text{ completely } } 0,
\label{expfamily_to_show_2}
\end{align}
which are the content of the next two lemmas.
\end{proof}
\begin{remark}
The sequence in~\eqref{expfamily_to_show_1} concerns the behavior of the maximal likelihood estimator for the exponential family distribution, while the sequence in~\eqref{expfamily_to_show_2} concerns the uniform behavior over $\Theta_0$.
\end{remark}

\begin{lemma}
For any $\theta \in \Theta$, as $n \to \infty$, $\frac{1}{n}(\ell_{*}(n) - \ell(n,\theta))$ converges completely to zero under $\Pro_{\theta}$.
\end{lemma}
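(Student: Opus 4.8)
The plan is to reduce the claim to the complete (strong) consistency of the constrained maximum likelihood estimator, and then to an elementary summability argument. Write $m := \nabla b(\theta) = \Exp_{\theta}[X_1]$ and $c_i := \widehat{\theta}_{i-1} - \theta$; note that $\widehat{\theta}_{i-1}$ is $\cF_{i-1}$-measurable, hence independent of $X_i$, and that $|c_i| \le R$ for a deterministic $R$ because $\Theta_0 \cup \Theta_1$ is compact. Using the identity $I(\theta,\phi) = (\theta-\phi)^{T}\nabla b(\theta) - (b(\theta)-b(\phi))$, I would first expand
\[
\frac{1}{n}\bigl(\ell_{*}(n)-\ell(n,\theta)\bigr) \;=\; \frac{1}{n}\sum_{i=1}^{n} c_i^{T}(X_i-m)\;-\;\frac{1}{n}\sum_{i=1}^{n} I(\theta,\widehat{\theta}_{i-1}) \;=:\; M_n - D_n,
\]
so that it suffices to prove that $M_n \to 0$ and $D_n \to 0$ completely under $\Pro_{\theta}$.

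The common engine is the complete consistency $\widehat{\theta}_n \to \theta$. Since $\theta$ lies in the open set $\Theta$, all exponential moments of $X_1$ are finite, so the Hsu--Robbins theorem yields complete convergence $\bar{X}_n \to m$. Because $\tfrac{1}{n}\ell(n,\phi) - \bigl(\phi^{T}m - b(\phi)\bigr) = \phi^{T}(\bar{X}_n - m)$, the normalized log-likelihood approaches its limit $g(\phi) := \phi^{T}m - b(\phi)$ uniformly over the compact set $\Theta_0\cup\Theta_1$, at the rate $R'\,|\bar{X}_n - m|$ with $R' := \max_{\phi \in \Theta_0\cup\Theta_1}|\phi|$. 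As $g$ is maximized over $\Theta_0\cup\Theta_1$ uniquely at $\theta$ (in the intended application $\theta \in \Theta_1 \subset \Theta_0\cup\Theta_1$, so the true parameter lies in the constraint set), a standard argmax argument gives, for each $\delta>0$, some $\eta>0$ with $\Pro_{\theta}(|\widehat{\theta}_n-\theta|>\delta) \le \Pro_{\theta}(|\bar{X}_n - m|\ge \eta/(2R'))$; summing over $n$ shows $\sum_n \Pro_{\theta}(|\widehat{\theta}_n-\theta|>\delta)<\infty$.

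The term $D_n$ is then immediate. Since $I(\theta,\cdot)$ is continuous on the compact set with $I(\theta,\theta)=0$, given $\epsilon>0$ I would choose $\delta$ so that $I(\theta,\phi)\le \epsilon/2$ whenever $|\phi-\theta|\le\delta$, and bound $I(\theta,\widehat{\theta}_{i-1}) \le \epsilon/2 + M\,\mathbf{1}\{|\widehat{\theta}_{i-1}-\theta|>\delta\}$, where $M := \max_{\phi} I(\theta,\phi)<\infty$. Hence $D_n>\epsilon$ forces $N_\delta := \#\{i : |\widehat{\theta}_{i-1}-\theta|>\delta\}$ to exceed $n\epsilon/(2M)$; since $\Exp_{\theta}[N_\delta]=\sum_i \Pro_{\theta}(|\widehat{\theta}_{i-1}-\theta|>\delta)<\infty$ and $\sum_n \mathbf{1}\{N_\delta>cn\}\le N_\delta/c$, we obtain $\sum_n \Pro_{\theta}(D_n>\epsilon)\le (2M/\epsilon)\,\Exp_{\theta}[N_\delta]<\infty$.

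The main obstacle is the weighted martingale term $M_n$, where the usual almost-sure martingale law must be upgraded to complete convergence even though the increments $c_i^{T}(X_i-m)$ are unbounded. The key is to exploit the predictability of the weight and split on its size: $|M_n| \le \tfrac{\delta}{n}\sum_{i\le n}|X_i-m| + \tfrac{R}{n}\sum_{i\le n}|X_i-m|\,\mathbf{1}\{|c_i|>\delta\}$. Choosing $\delta$ small enough that $\delta\,\Exp_{\theta}|X_1-m| < \epsilon/4$, the first term can exceed $\epsilon/2$ only on a large-deviation event for $\tfrac1n\sum_{i\le n}|X_i-m|$, whose probabilities are summable by the complete LLN. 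For the second term, independence of $c_i$ and $X_i$ gives $\Exp_{\theta}\bigl[|X_i-m|\,\mathbf{1}\{|c_i|>\delta\}\bigr] = \Exp_{\theta}|X_1-m|\,\Pro_{\theta}(|c_i|>\delta)$, so that $S_\delta := \sum_i |X_i-m|\,\mathbf{1}\{|c_i|>\delta\}$ has finite mean by the complete consistency established above; the same exceedance trick then yields $\sum_n \Pro_{\theta}(\tfrac{R}{n}S_\delta > \epsilon/2) \le (2R/\epsilon)\,\Exp_{\theta}[S_\delta]<\infty$. Combining the three bounds gives $\sum_n \Pro_{\theta}(|M_n|>\epsilon)<\infty$ for every $\epsilon>0$, which together with the bound on $D_n$ completes the proof.
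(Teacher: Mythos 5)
Your proof is correct, and while it starts from the same decomposition as the paper's --- writing $\tfrac1n(\ell_*(n)-\ell(n,\theta))$ as the normalized martingale $\tfrac1n\sum_i(\widehat\theta_{i-1}-\theta)^T(X_i-\nabla b(\theta))$ minus the normalized compensator $\tfrac1n\sum_i I(\theta,\widehat\theta_{i-1})$ --- it handles both pieces by a genuinely different and more self-contained route. For the martingale term the paper observes that the differences are bounded in $L^p$ for $p>2$ and invokes a Baum--Katz-type complete convergence theorem for martingales \cite{stoica2007baum}; you instead truncate on the size of the predictable weight, which reduces the problem to the Hsu--Robbins complete LLN for $|X_i-m|$ plus the summability of $\Pro_\theta(|\widehat\theta_{i-1}-\theta|>\delta)$, exploiting the $\cF_{i-1}$-measurability of the weight to compute $\Exp[S_\delta]$. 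For the compensator the paper cites an exponential consistency bound for the MLE (Theorem 5.1 of \cite{pavlov1991sequential}) and passes through quick convergence; you derive the needed complete consistency yourself from the uniform (over the compact constraint set) complete convergence of the normalized log-likelihood and an argmax argument, and then replace the last-exceedance-time bookkeeping by the elementary counting bound $\sum_n\Pro(N_\delta>cn)\le\Exp[N_\delta]/c$, which is essentially the same mechanism in a cleaner form. What your approach buys is independence from the two external citations at the cost of using the specific affine structure $\tfrac1n\ell(n,\phi)-g(\phi)=\phi^T(\bar X_n-m)$ of the exponential family, which the paper's martingale argument does not need. You are also right to flag that the argmax step requires the true $\theta$ to lie in $\Theta_0\cup\Theta_1$, so the lemma's literal statement ``for any $\theta\in\Theta$'' is really used only in that case; the paper's proof carries the same implicit restriction, since its bound on $\eta_3$ likewise presupposes $\widehat\theta_n\to\theta$.
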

\begin{proof}
Since $\Theta_0$ and $\Theta_1$ are compact, there exists $K > 0$ such that
$$
\max\{\lVert \widetilde{\theta} \rVert,\;
I(\theta, \widetilde{\theta})
\}
 < K \;\;\text{ for any } \widetilde{\theta} \in \Theta_0 \cup \Theta_1, 
$$
where we use $\lVert \cdot \rVert$ to denote the Euclidean distance.

Observe that  $\frac{1}{n}(\ell_{*}(n) - \ell(n,\theta)) = \frac{1}{n} M_n - \frac{1}{n} R_n$, where
\begin{align*}
&M_n := \ell_{*}(n) - \ell(n,\theta) + \sum_{i=1}^{n} I(\theta, \widehat{\theta}_{i-1})
= \sum_{i=1}^{n}(\widehat{\theta}_{i-1} - \theta)^{T}(X_i - \nabla b(\theta)),\\
& R_n := \sum_{i=1}^{n} I(\theta, \widehat{\theta}_{i-1})
\end{align*}

Denote $\cF_{n} := \sigma(X_1, \ldots, X_{n})$ the $\sigma$-field generated by the first $n$ observations. Then $\{M_n: n \in \bN\}$ is an $\{\cF_n\}$-martingale, since $\Exp[X_1] = \nabla b(\theta)$ due to the property of the exponential family and 
$\widehat{\theta}_{n-1} \in \cF_{n-1}$.
Further, the martingale difference sequence $\{(\widehat{\theta}_{i-1} - \theta)^{T}(X_i - \nabla b(\theta)): i \in \bN\}$ is bounded in $L^{p}$ for any $p >2$. Indeed, by Cauchy–-Schwarz inequality,
$$
\sup_{i \in \bN}\Exp |(\widehat{\theta}_{i-1} - \theta)^{T}(X_i - \nabla b(\theta))|^p \leq (2K)^p \Exp \lVert X_1 - \nabla b(\theta)\rVert^p < \infty.
$$
Then by~\cite{stoica2007baum}, we conclude $\frac{1}{n}M_n$ converges completely to zero under $\Pro_{\theta}$.

It remains to show that $\frac{1}{n} R_n$ converges completely to zero under $\Pro_{\theta}$.
Fix any $\epsilon> 0$. Since $I(\theta, \widetilde{\theta})$ is continuous in $\widetilde{\theta}$, there exists 
$\delta > 0$ such that if $\lVert \widetilde{\theta} - \theta \rVert \leq \delta$, $I(\theta,\widetilde{\theta}) \leq \epsilon/2$.
Define three random times
\begin{align*}
\eta_1 &:= \sup\{n \in \bN: |R_n| > n \, \epsilon \}, \\
\eta_2 &:= \sup\{n \in \bN: |I(\theta, \widehat{\theta}_n)| > \epsilon/2 \},\quad
\eta_3 := \sup\{n \in \bN: \lVert \widehat{\theta}_n - \theta \rVert > \delta\}
\end{align*}
By Theorem 5.1 in~\cite{pavlov1991sequential}, there exist constant $c_1$ and $c_2$ such that
$\Pro_{\theta}(\eta_3 > n) \leq c_1 \exp(-c_2 n)$ for any $n \in \bN$. In particular,
$$
\Exp_{\theta} [\eta_3] < \infty.
$$
Clearly, $\eta_2 \leq \eta_3$, which implies that $\Exp_{\theta}[\eta_2] < \infty$.
We next show that $\eta_1 \leq 2\epsilon K \eta_2$. Indeed,
for $n \geq 2 K \eta_2/\epsilon$,
$$
\frac{1}{n} |R_n|
\leq \frac{1}{n}\left(\sum_{i=1}^{\eta_2} I(\theta, \widehat{\theta}_{i-1})
+ \sum_{i=\eta_2+1}^{n} I(\theta, \widehat{\theta}_{i-1}) \right)
\leq \frac{K \eta_2 + n \, \epsilon/2 }{n} \leq \epsilon.
$$
Thus $\Exp_{\theta}[\eta_1] < \infty$, which implies $\frac{1}{n}R_n$ converges to zero quickly. (See Chapter 2.4.3 in~\cite{tartakovsky2014sequential} for formal definition of quick convergence.) Due to Lemma 2.4.1 in~\cite{tartakovsky2014sequential}, quick convergence implies complete convergence, and thus 
$\frac{1}{n}R_n$ converges to zero completely.
\end{proof}

\begin{lemma}
Assume the conditions in Theorem~\ref{exp_fam_complete_convergence} hold. Then
$$
\frac{1}{n}\inf_{\theta_0 \in \Theta_0}\left( \ell(n,\theta) - \ell(n,\theta_0) - nI(\theta, \theta_0)\right)
\xrightarrow[n \to \infty]{\Pro_{\theta} \text{ completely } } 0.
$$
\end{lemma}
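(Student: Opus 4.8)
The plan is to exploit the explicit exponential-family form of the log-likelihood, which collapses the quantity inside the infimum into a \emph{linear} functional of the centered sample mean, after which complete convergence reduces to that of an ordinary sample mean. First I would record the algebraic identity. Writing $S_n := \sum_{i=1}^{n} X_i$, for any $\theta_0 \in \Theta_0$ we have $\ell(n,\theta)-\ell(n,\theta_0) = (\theta-\theta_0)^{T} S_n - n\bigl(b(\theta)-b(\theta_0)\bigr)$, whereas $I(\theta,\theta_0) = (\theta-\theta_0)^{T}\nabla b(\theta) - \bigl(b(\theta)-b(\theta_0)\bigr)$. Subtracting $nI(\theta,\theta_0)$, the terms $n(b(\theta)-b(\theta_0))$ cancel and one is left with
\[
\ell(n,\theta)-\ell(n,\theta_0) - nI(\theta,\theta_0) = (\theta-\theta_0)^{T}\bigl(S_n - n\nabla b(\theta)\bigr).
\]
Dividing by $n$ and setting $W_n := S_n/n - \nabla b(\theta)$, the quantity whose complete convergence we must establish is exactly $\inf_{\theta_0 \in \Theta_0}(\theta-\theta_0)^{T} W_n$.

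Next I would control this infimum uniformly over $\Theta_0$. Because $\Theta_0$ is compact, $M := \sup_{\theta_0 \in \Theta_0}\lVert \theta-\theta_0\rVert$ is finite, so the Cauchy--Schwarz inequality gives
\[
\Bigl| \inf_{\theta_0 \in \Theta_0}(\theta-\theta_0)^{T} W_n \Bigr| \;\leq\; \sup_{\theta_0 \in \Theta_0}\lVert \theta-\theta_0\rVert\,\lVert W_n\rVert \;=\; M\lVert W_n\rVert.
\]
Consequently, for every $\epsilon>0$,
\[
\Pro_{\theta}\Bigl( \bigl| \inf_{\theta_0 \in \Theta_0}(\theta-\theta_0)^{T} W_n \bigr| > \epsilon \Bigr) \;\leq\; \Pro_{\theta}\bigl( \lVert W_n\rVert > \epsilon/M \bigr),
\]
and it therefore suffices to prove that $W_n = S_n/n - \nabla b(\theta)$ converges completely to zero under $\Pro_{\theta}$.

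Finally I would invoke complete convergence of the sample mean. Under $\Pro_{\theta}$ the vectors $X_i$ are i.i.d. with $\Exp_{\theta}[X_1] = \nabla b(\theta)$, and since $\theta$ lies in the open natural parameter space $\Theta$, the moment generating function of $X_1$ is finite in a neighborhood of the origin, so every moment of $X_1$ is finite; in particular each coordinate has a finite second moment. By the Hsu--Robbins theorem~\cite{hsu1947complete}, each coordinate of $S_n/n$ converges completely to the corresponding coordinate of $\nabla b(\theta)$, and a union bound over the $d$ coordinates yields $\sum_{n=1}^{\infty}\Pro_{\theta}\bigl(\lVert W_n\rVert > \epsilon/M\bigr) < \infty$. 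Combined with the previous display, this establishes the claim.

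The argument is largely routine once the identity is in place; the only step that genuinely requires care is the uniform control of the infimum over $\Theta_0$, which is precisely where compactness of $\Theta_0$ enters. Without it, $M$ could be infinite, the bound $M\lVert W_n\rVert$ would fail, and the reduction of a problem involving an infimum over a whole parameter set to the complete convergence of a single sample mean would break down.
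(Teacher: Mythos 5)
Your proposal is correct and follows essentially the same route as the paper: the same algebraic cancellation reduces the expression to $\inf_{\theta_0 \in \Theta_0}(\theta-\theta_0)^{T}(S_n/n - \nabla b(\theta))$, compactness of $\Theta_0$ gives a uniform bound (you via Cauchy--Schwarz, the paper via a coordinate-wise triangle inequality), and both conclude by coordinate-wise complete convergence of the sample mean from Hsu--Robbins. Your remark that finiteness of second moments follows from $\theta$ lying in the open natural parameter space is a small but welcome addition the paper leaves implicit.
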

\begin{proof}
By definition, we have
\begin{align*}
&\frac{1}{n}\inf_{\theta_0 \in \Theta_0}\left( \ell(n,\theta) - \ell(n,\theta_0) - nI(\theta, \theta_0)\right) \\
=\;& \frac{1}{n} \inf_{\theta_0 \in \Theta_0} \sum_{i=1}^{n}(\theta - \theta_0)^{T}(X_i - \nabla b(\theta)) \\
=\; &\inf_{\theta_0 \in \Theta_0} (\theta - \theta_0)^T (\frac{1}{n}\sum_{i=1}^{n}(X_i - \nabla b(\theta))).
\end{align*}
Denote $\theta_j$, $\theta_{0,j}$, $X_{i,j}$ and $\nabla_{j} b(\theta)$ the $j^{th}$ dimension of the $\bR^d$ vectors 
$\theta$,
$\theta_0$, $X_{i}$ and $\nabla b(\theta)$.
Since $\Theta_0, \Theta_1$ is compact, there exists $K > 0$ such that
$$
|\theta_j|, |\theta_{0,j}|  \leq K, \text{ for any } 1 \leq j \leq d, \;\;\theta_0 \in \Theta_0.
$$
By triangle inequality,
\begin{align*}
&\left\vert \frac{1}{n}\inf_{\theta_0 \in \Theta_0}\left(  \ell(n,\theta) - \ell(n,\theta_0) - nI(\theta, \theta_0)\right)  \right\vert 
\leq\;\;  2K \sum_{j=1}^{d} \left|\frac{1}{n}\sum_{i=1}^{d}(X_{i,j} - \nabla_j b(\theta))\right|.
\end{align*}
But for each $1 \leq j \leq d$,  since $\Exp_{\theta} [X_{i,j}^2] <\infty$, by~\cite{hsu1947complete},
$$
\frac{1}{n}\sum_{i=1}^{d}(X_{i,j} - \nabla_j b(\theta)) 
\xrightarrow[n \to \infty]{\Pro_{\theta} \text{ completely } } 0,
$$
which completes the proof.
\end{proof}

\section{Two renewal-type lemmas}
In this section, we present two renewal-type lemmas about general discrete stochastic process, 
which may be of independent interest.

\begin{lemma}\label{T_small_ST_larger_lemma}
Let $\{\xi_i(n): n \in \bN\}$ ($i=1,2$) be two stochastic processes on some probability space $(\Omega, \mathcal{F}, \Pro)$. Suppose that for some positive $ \mu_1, \mu_2$,
\begin{equation*}
\Pro\left(\lim_{n \to \infty}\; \frac{1}{n} \xi_i(n) = \mu_i\right) = 1\quad \text{ for }\; i = 1,2.
\end{equation*}
Let $c$ be a fixed constant. Then for any $q \in (0,1)$,
\begin{align}
\label{one_process}
\lim_{b \to \infty} \;&\sup_{T}\;\Pro \left( T \leq q \frac{b}{\mu_1},\; \xi_1(T) \geq b + c \right) = 0,\\
\label{two_processes}
\lim_{a,\,b \to \infty} \;&\sup_{T}\;\Pro \left(T \leq q (\frac{a}{\mu_1} \bigvee \frac{b}{\mu_2}),\; \xi_1(T) \geq a + c,\; \xi_2(T) \geq b + c \right) = 0.
\end{align}
where the supremum is taken over all random time $T$.
\end{lemma}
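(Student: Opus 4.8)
The plan is to remove the supremum over random times $T$ by a purely pathwise argument driven by the strong law, thereby reducing each probability to the tail of an almost surely finite random variable. The single device I would use is the following: for each $i\in\{1,2\}$ and each $\epsilon>0$, introduce
$$
Y_i^{(\epsilon)} := \sup_{n\geq 1}\bigl(\xi_i(n) - (\mu_i+\epsilon)n\bigr).
$$
Since $\xi_i(n)/n\to\mu_i$ almost surely and $\mu_i+\epsilon>\mu_i$, the sequence $\xi_i(n)-(\mu_i+\epsilon)n$ tends to $-\infty$ almost surely, so $Y_i^{(\epsilon)}<\infty$ almost surely. The only property I will invoke is the resulting pathwise bound $\xi_i(n)\leq(\mu_i+\epsilon)n+Y_i^{(\epsilon)}$, valid for every $n$ simultaneously.

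For \eqref{one_process}, the key point is that on the event in question $T$ equals some deterministic index $n\leq qb/\mu_1$ at which $\xi_1$ is large, so
$$
\Bigl\{T\leq q\tfrac{b}{\mu_1},\ \xi_1(T)\geq b+c\Bigr\}
\subset \Bigl\{\xi_1(n)\geq b+c\ \text{for some integer}\ n\leq q\tfrac{b}{\mu_1}\Bigr\}
\subset \Bigl\{(\mu_1+\epsilon)\tfrac{qb}{\mu_1}+Y_1^{(\epsilon)}\geq b+c\Bigr\},
$$
where the last inclusion uses the pathwise bound together with the monotonicity of $n\mapsto(\mu_1+\epsilon)n$. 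The right-hand set is free of $T$, so it controls the supremum over all $T$. Choosing $\epsilon$ small enough that $q':=q(1+\epsilon/\mu_1)<1$ (possible since $q<1$), the threshold becomes $(1-q')b+c\to\infty$, and the tail of the almost surely finite $Y_1^{(\epsilon)}$ vanishes, giving \eqref{one_process}.

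For \eqref{two_processes} I would run the same reduction but exploit only \emph{one} of the two simultaneous constraints, namely the one attached to whichever term realizes $\frac{a}{\mu_1}\vee\frac{b}{\mu_2}$. Pick $\epsilon$ with $q(1+\epsilon/\mu_i)<1$ for both $i$ and set $q_i':=q(1+\epsilon/\mu_i)$. If $\frac{a}{\mu_1}\geq\frac{b}{\mu_2}$ the threshold is $qa/\mu_1$ and the $\xi_1$-constraint forces $Y_1^{(\epsilon)}\geq(1-q_1')a+c$; in the opposite case the $\xi_2$-constraint forces $Y_2^{(\epsilon)}\geq(1-q_2')b+c$. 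In either case the event lies in
$$
\Bigl\{Y_1^{(\epsilon)}\geq(1-q_1')a+c\Bigr\}\cup\Bigl\{Y_2^{(\epsilon)}\geq(1-q_2')b+c\Bigr\},
$$
a containment that holds uniformly in $T$ for all $a,b$; since both thresholds diverge as $a,b\to\infty$, both tail probabilities vanish and \eqref{two_processes} follows.

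The conceptual heart of the argument, and the step I expect to require the most care, is precisely the reduction that eliminates the arbitrary random time $T$: because each event constrains the processes only at the single (random) index $T$, the containment in a $T$-free event is a pathwise statement, which is exactly what makes the bound uniform over all random times without any stopping-time or martingale structure. The remaining work is the routine bookkeeping of selecting $\epsilon$ to defeat the factor $q<1$ and, in the two-process case, tracking which of the two constraints to invoke according to the case split.
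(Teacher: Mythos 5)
Your proof is correct and follows essentially the same route as the paper's: both arguments eliminate the arbitrary random time $T$ by containing the event in a $T$-free event about the process over the deterministic horizon $\lfloor qb/\mu_1\rfloor$, and both handle \eqref{two_processes} by reducing to whichever single constraint corresponds to the binding maximum $\frac{a}{\mu_1}\vee\frac{b}{\mu_2}$. The only differences are cosmetic: where the paper invokes an external maximal-convergence lemma, you prove that step directly via the almost surely finite $Y_i^{(\epsilon)}=\sup_n(\xi_i(n)-(\mu_i+\epsilon)n)$, and you replace the paper's contradiction-and-subsequence argument for the two-process case with a direct case split.
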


\begin{proof} Since $c$ is fixed, we assume $c=0$ without loss of generality. Denote $N_b = \lfloor q\frac{b}{\mu_1} \rfloor$, and 
$\epsilon_q = \frac{1}{q} - 1 > 0$. Notice that $\Pro(T \leq q \frac{b}{\mu_1},\; \xi_1(T) \geq b)$ is upper bounded by
\begin{align*}
 \Pro \left(\max_{1 \leq n  \leq N_b} \xi_1(n) \geq b \right)\; \leq\;  \Pro \left(\frac{1}{N_b}\max_{1 \leq n \leq N_b} \xi_1(n) \geq (1 + \epsilon_q) \mu_1 \right) \to 0
\end{align*}
where the convergence follows directly from  \cite[Lemma A.1]{fellouris2017multichannel}. Thus, the proof of \eqref{one_process} is complete.

For the second part, assume \eqref{two_processes} does not hold. Then, there exists some $\epsilon > 0$ and	 a sequence $(a_n, b_n)$ with $a_n, b_n \to \infty$ such that $p_n \geq \epsilon$  for large  $n \in \bN$, where 
$$
p_n := \sup_{T} \Pro \left(T \leq q (\frac{a_n}{\mu_1} \bigvee \frac{b_n}{\mu_2}),\, \xi_1(T) \geq a_n, \, \xi_2(T) \geq b_n \right).
$$
We can assume $a_n / \mu_1 \geq b_n/ \mu_2$ for every $n \in \bN$, since otherwise we can take a subsequence and the following argument will still go through. Thus,
$$
\epsilon \;\leq \;	p_n \; \leq \; \sup_{T} \Pro\left(T \leq q \frac{a_n}{\mu_1}, \xi_1(T) \geq a_n \right),
$$
which contradicts with \eqref{one_process}. Thus the proof is complete.
\end{proof}
\begin{remark}
Note that in \eqref{two_processes} there is no restriction on the way $a,\,b$ approach infinity, and that $T$ is not required to be a stopping time.
\end{remark}

The next lemma provides an upper bound on the expectation of the first time when multiple processes simultaneous cross given thresholds. 
\begin{lemma}\label{multiple_cross}
Let $L \geq 2$ and $\{\xi_\ell(n): n \in \bN \}_{\ell \in [L]}$ be $L$ stochastic processes on some probability space $(\Omega, \mathcal{F}, \Pro)$. Define the stopping time
$$
\nu(\vec{b}) \;:= \; \inf\{n \geq 1:\; \xi_{\ell}(n) \geq b_{\ell}\; \text{ for every }\; \ell \in [L]\}
$$
where $\vec{b} = \{b_1,\ldots, b_L\}$. Then for some positive $\mu_1,\ldots, \mu_L$, we have
\begin{align}\label{eq:upper_bound_general_lemma}
\Exp[\nu(\vec{b})]\; \leq \;  \max_{\ell \in [L]}\;\left\{ \frac{b_{\ell}}{\mu_\ell} \right\} (1 + o(1))
 \; \text{ as }\; \min_{\ell \in [L]} \{b_\ell \} \to \infty
\end{align}
if \textbf{one} of the following conditions holds:
(i). For each $\ell \in [L]$ and any $\epsilon > 0$, 
\begin{equation*} 
\sum_{n=1}^{\infty} \Pro\left( \big|\frac{1}{n} \xi_\ell(n) - \mu_\ell \big| \geq \epsilon \right) < \infty.
\end{equation*}
(ii). For each $\ell \in [L]$, $\{\xi_\ell(n): n \in \bN\}$ has independent and identically distributed increment, and 
\begin{equation*}
\Pro\left(\lim_{n \to \infty}\; \frac{1}{n} \xi_\ell(n) = \mu_\ell\right) = 1.
\end{equation*}
\end{lemma}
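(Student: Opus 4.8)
The plan is to establish the upper bound $\Exp[\nu(\vec{b})]\le m^*(1+o(1))$, where $m^*:=\max_{\ell\in[L]} b_\ell/\mu_\ell$, by cutting the expectation at a deterministic horizon slightly larger than $m^*$ and showing that the overshoot beyond it is negligible. Fix $\epsilon\in(0,1)$ and set $N:=\lceil(1+\epsilon)m^*\rceil$. Writing $\Exp[\nu(\vec{b})]=\sum_{n\ge 0}\Pro(\nu(\vec{b})>n)$ and splitting the sum at $N$ gives
$$
\Exp[\nu(\vec{b})]\;\le\; N+\sum_{n\ge N}\Pro(\nu(\vec{b})>n)\;\le\;(1+\epsilon)\,m^*+1+R_N,\qquad R_N:=\sum_{n\ge N}\Pro(\nu(\vec{b})>n).
$$
Since $m^*\ge(\min_\ell b_\ell)/(\max_\ell\mu_\ell)\to\infty$ as $\min_\ell b_\ell\to\infty$, it suffices to prove that $R_N=o(m^*)$; letting $\epsilon\to 0$ at the end then yields \eqref{eq:upper_bound_general_lemma}.

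The crucial elementary observation is that $\nu(\vec{b})$ is a first-passage time, so $\{\nu(\vec{b})>n\}$ is the event that the $L$ processes have not yet been \emph{simultaneously} above their thresholds at any instant up to $n$; in particular $\{\nu(\vec{b})>n\}\subseteq\bigcup_{\ell\in[L]}\{\xi_\ell(n)<b_\ell\}$. Moreover, for $n\ge N$ we have $b_\ell/n\le\mu_\ell/(1+\epsilon)$ for every $\ell$, hence $\{\xi_\ell(n)<b_\ell\}\subseteq\{\,\xi_\ell(n)/n-\mu_\ell<-\delta_\ell\,\}$ with $\delta_\ell:=\mu_\ell\,\epsilon/(1+\epsilon)>0$. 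Under condition (i), Boole's inequality then gives
$$
R_N\;\le\;\sum_{\ell\in[L]}\sum_{n\ge N}\Pro\!\left(\Big|\tfrac{1}{n}\xi_\ell(n)-\mu_\ell\Big|>\delta_\ell\right),
$$
and each inner sum is the tail of a convergent series by complete convergence, hence tends to $0$ as $N\to\infty$. Thus $R_N\to 0=o(m^*)$ and the claim follows in case (i).

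The main obstacle is case (ii), where only the Strong Law (finite mean) is assumed, so complete convergence may fail: the lower-tail probabilities $\Pro(\xi_\ell(n)<b_\ell)$ need not be summable when the increments have a heavy left tail, and the crude union bound above becomes useless. Here one must retain the full first-passage structure rather than only the value of the processes at the single time $n$, namely
$$
\{\nu(\vec{b})>n\}=\Big\{\,\max_{1\le m\le n}\ \min_{\ell\in[L]}\big(\xi_\ell(m)-b_\ell\big)<0\,\Big\}.
$$
The plan is to exploit the i.i.d.\ increments together with the positive drift through a regeneration/blocking argument over blocks of length $\asymp m^*$: within each such block every coordinate rises above any fixed level with probability bounded away from $0$, independently across blocks, which forces $\Pro(\nu(\vec{b})>n)$ to decay geometrically in $n$ and hence $R_N$ to be summable and $o(m^*)$. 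Equivalently, one combines the almost-sure convergence $\nu(\vec{b})/m^*\to 1$ (immediate from the Strong Law, since $\nu(\vec{b})$ is sandwiched between first-passage levels) with uniform integrability of $\nu(\vec{b})/m^*$, the latter obtained from renewal-theoretic bounds on first-passage times of random walks with positive drift. Controlling this tail under finite mean alone is the delicate part of the proof; the rest of the argument is common to both cases.
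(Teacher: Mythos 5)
Your case (i) argument is correct and is essentially the paper's: truncate $\Exp[\nu(\vec{b})]=\sum_{n\ge 0}\Pro(\nu(\vec{b})>n)$ just beyond $m^*$, use $\{\nu(\vec{b})>n\}\subseteq\bigcup_{\ell}\{\xi_\ell(n)<b_\ell\}$ with Boole's inequality, and absorb the tail of the convergent series. Case (ii), however, has a genuine gap, in two places. First, the regeneration/blocking route does not work under finite mean alone: grouping time into blocks only tells you that the block increments are i.i.d.\ with large positive mean, and the containment $\{\nu(\vec b)>Jk\}\subseteq\{\xi_\ell(Jk)<b_\ell\}$ leaves you with a lower-tail probability of a sum of i.i.d.\ variables, which need not decay geometrically (or even summably) when the increments have a heavy left tail; the conditional probability of crossing within a block is also not bounded away from one, because the process may have drifted arbitrarily far below its threshold at the start of the block.

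Second, and more importantly, your fallback (a.s.\ convergence of $\nu(\vec b)/m^*$ to $1$ plus uniform integrability) is indeed the paper's route, but the step you call ``immediate from the Strong Law, since $\nu(\vec b)$ is sandwiched between first-passage levels'' only yields the direction you do not need. Sandwiching below by the single-coordinate passage times gives $\liminf \nu(\vec b)/m^*\ge 1$; the only available upper sandwich is by the passage time $\nu'(b_{\max})$ of all coordinates above the common level $b_{\max}$, and that gives merely $\limsup \nu(\vec b)/m^*\le \mu_{\max}/\mu_{\min}$ in general (take $L=2$, $b_1=100t$, $\mu_1=100$, $b_2=t$, $\mu_2=1$: then $m^*=t$ while $\nu'(b_{\max})\approx 100t$). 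The bound $\Exp[\nu(\vec b)]\le m^*(1+o(1))$ requires precisely $\limsup \nu(\vec b)/m^*\le 1$, which the paper obtains by a separate overshoot argument: at time $\nu(\vec b)-1$ some coordinate $\ell'$ is still below $b_{\ell'}$, so its overshoot at $\nu(\vec b)$ is at most one increment, whence
\begin{equation*}
\frac{m^*}{\nu(\vec b)}=\max_{\ell}\frac{b_\ell}{\nu(\vec b)\mu_\ell}\;\ge\;\min_{\ell}\frac{\xi_\ell(\nu(\vec b))}{\nu(\vec b)\mu_\ell}-\max_{\ell}\frac{\xi_\ell(\nu(\vec b))-\xi_\ell(\nu(\vec b)-1)}{\nu(\vec b)\mu_\ell}\;\longrightarrow\;1
\end{equation*}
almost surely. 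The uniform integrability also needs a concrete source: the paper dominates $\nu(\vec b)/m^*$ by $\mu_{\max}\,\nu'(b_{\max})/b_{\max}$ and invokes Farrell's theorem that $\{\nu'(c)/c:\,c>0\}$ is uniformly integrable for i.i.d.\ increments with positive finite mean. Without these two steps your case (ii) is not a proof.
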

\begin{proof} Denote $N(\vec{b}) = \max_{\ell \in [L]} \left\{ b_{\ell}/\mu_\ell \right\}$, and $\vec{b}_{min} = \min\{b_1,\ldots, b_L\}$.  

First, assume condition (i) holds. Fix $\epsilon \in (0,1)$, and denote $N_{\epsilon}(\vec{b}) = \big\lfloor  N(\vec{b})/(1-\epsilon) \big\rfloor$. By definition of $\nu(\vec{b})$, we have
$$
\{\nu(\vec{b}) > n\} \; \subset \;\bigcup_{\ell \in [L]} \{\xi_\ell(n) < b_\ell \}
$$
By Boole's inequality, for $n > N_{\epsilon}(\vec{b})$,
\begin{align*}
\Pro(\nu(\vec{b}) > n) & 
\leq \sum_{\ell \in [L]} \Pro(\xi_\ell(n) < b_\ell ) 
\leq \sum_{\ell \in [L]}  \Pro\left(\frac{1}{n} \xi_\ell(n) < \frac{b_\ell}{N_{\epsilon}(\vec{b})+1} \right) \\
&\leq \sum_{\ell \in [L]}  \Pro\left(\frac{1}{n} \xi_\ell(n) < (1-\epsilon)\mu_\ell \right)  \\
& \leq \sum_{\ell \in [L]}  \Pro\left(\big|\frac{1}{n} \xi_\ell(n) - \mu_\ell| > \epsilon \mu_\ell \right),
\end{align*}
where we used the fact that $n \geq N_{\epsilon}(\vec{b})+1 \geq \frac{N(\vec{b})}{1-\epsilon} \geq 
\frac{b_{\ell}}{(1-\epsilon) \mu_\ell}$. Thus
\begin{align*}
\Exp[\nu(\vec{b})] &= \int_{0}^{\infty} \Pro(\nu(\vec{b}) > t)\,dt \;\leq N_{\epsilon}(\vec{b}) + 1 + \sum_{n > N_{\epsilon}(\vec{b})} \Pro(\nu(\vec{b}) > n) \\
&\leq N_{\epsilon}(\vec{b}) + 1 + \sum_{\ell \in [L]} \sum_{n > N_{\epsilon}(\vec{b})}    \Pro\left(\big|\frac{1}{n} \xi_\ell(n) - \mu_\ell \big| > \epsilon \mu_\ell \right)
\end{align*}
Due to condition (i), we have
\begin{align*}
\limsup_{\vec{b}_{min} \to \infty} \;
\frac{\Exp[\nu(\vec{b})]}{ N(\vec{b}) }
\; = \;
\limsup_{\vec{b}_{min} \to \infty} \;
(1-\epsilon)\frac{\Exp[\nu(\vec{b})]}{ N_{\epsilon}(\vec{b}) }
\;\leq\; 1 - \epsilon
\end{align*}
Since $\epsilon \in (0,1)$ is arbitrary, \eqref{eq:upper_bound_general_lemma} holds.

Now assume that condition (ii) holds. Clearly, $\nu(\vec{b}) \geq \nu_{\ell}(b_\ell)$, where
$$
\nu_{\ell}(b_\ell) := \inf\{n \geq 1: \xi_\ell(n) \geq b_\ell \}\; \text{ for }\; \ell \in [L].
$$
Due to condition (ii), we have
$$
\liminf_{b_\ell \to \infty} \, \frac{\nu(\vec{b})}{b_{\ell}/\mu_\ell} \; \geq \; 
\lim_{b_\ell \to \infty} \frac{\nu_{\ell}(b_\ell)}{b_{\ell}/\mu_\ell} = 1 \; \text{ for } \; \ell \in [L], 
$$
which implies $\liminf_{\vec{b}_{min} \to \infty} \nu(\vec{b}) / N(\vec{b}) \geq 1$. On the other hand, by the definition of $\nu(\vec{b})$, there exists $\ell' \in [L]$ such that
\begin{align*}
\xi_{\ell'}(\nu(\vec{b}) - 1) < b_{\ell'} \; \iff \; 
\frac{\xi_{\ell'}(\nu(\vec{b})) - b_{\ell'}}{\nu(\vec{b}) \mu_{\ell'}} \leq
\frac{\xi_{\ell'}(\nu(\vec{b})) - \xi_{\ell'}(\nu(\vec{b}) -1)}{\nu(\vec{b}) \mu_{\ell'}}.
\end{align*}
Taking the minimum on the l.h.s., and maximum on the right, we have
\begin{align*}
\min_{\ell \in [L]} \frac{\xi_{\ell}(\nu(\vec{b})) - b_{\ell}}{\nu(\vec{b}) \mu_{\ell}} \leq
\max_{\ell \in [L]} \frac{\xi_{\ell}(\nu(\vec{b})) - \xi_{\ell}(\nu(\vec{b}) -1)}{\nu(\vec{b}) \mu_{\ell}}.
\end{align*}
which  implies
\begin{align*}
\frac{N(\vec{b})}{\nu(\vec{b})} \,=\, \max_{\ell \in [L]} \frac{b_\ell}{\nu(\vec{b}) \mu_{\ell}} \,\geq \, 
\min_{\ell \in [L]} \frac{\xi_{\ell}(\nu(\vec{b}))}{\nu(\vec{b}) \mu_{\ell}} -
\max_{\ell \in [L]} \frac{\xi_{\ell}(\nu(\vec{b})) - \xi_{\ell}(\nu(\vec{b}) -1)}{\nu(\vec{b}) \mu_{\ell}} 
\end{align*}
where the last term will goes to $1$ as $\vec{b}_{min} \to \infty$ due to condition (ii). Thus, $\liminf N(\vec{b})/\nu(\vec{b})  \geq 1$ as $\vec{b}_{min} \to \infty$, which together with previous reverse inequality, shows that $\nu(\vec{b}) / N(\vec{b}) \to 1$ almost surely as $\vec{b}_{min} \to \infty$. Thus, the proof would be
complete if we can show the following:
\begin{align*}
(*)\quad \quad \mathcal{C}_1 = \left\{\frac{\nu(\vec{b})}{N(\vec{b})}:\; b_1,\ldots, b_L > 0 \right\} \; \text{is uniformly integrable}
\end{align*}
Define $\mu_{max} = \max\{\mu_1,\ldots, \mu_L\} > 0$, $b_{max} = \max\{b_1,\ldots, b_L\}$ and 
$$\nu'(c) = \inf \{n\geq 1: \xi_\ell \geq c \;\;\text{for every }\; \ell \in [L]\} \; \text{ for }\; c>0.
$$ 
By Theorem 3 of \cite{farrell1964}, $\mathcal{C}_2 = \{\nu'(c)/c: c > 0\}$ is uniformly integrable. Observe that 
$$
\nu(\vec{b}) \leq \nu'(b_{max})\;,\;
N(\vec{b}) \geq \frac{b_{max}}{\mu_{max}}
\; \Rightarrow \;
\frac{\nu(\vec{b})}{N(\vec{b})}  \,\leq\,
\mu_{max} \frac{\nu'(b_{max})}{b_{max}} \; \in \; \mu_{max}\; \mathcal{C}_2.
$$
Since $\mu_{max}$ is a constant, $\mathcal{C}_1$ is dominated by a uniformly integrable family. Thus condition $(*)$ holds, and the proof is complete.
\end{proof}

\section{Generalized Chernoff's lemma}
In this section we present a generalization of Chernoff's lemma  \citep[Corollary 3.4.6]{dembo1998large} that allows for different requirements on the type I and type II errors. Let $\{X_n, n \in \bN\}$ be a sequence of independent random variables with common density $f$ relative to some $\sigma$-finite measure $\nu$ and consider the following simple versus simple testing problem:
$$
{\Hyp_0}: f = f_0 \quad \text{ vs. } \quad
{\Hyp_1}: f = f_1.
$$
Let $\mathcal{S}_n$ be the class of $\cF_n$-measurable random variables taking value in $\{0,1\}$,
where $\cF_n = \sigma(X_1,\ldots,X_n)$.  
For any procedure $D_n \in \mathcal{S}_n$, denote
$$
p_n(D_n) := \Pro_0(D_n = 1),\quad
q_n(D_n) := \Pro_1(D_n = 0),
$$
where $\Pro_i$ is the probability measure under $\Hyp_i$ for $i = 0,1$.
Further, denoting $Y := {f_1(X_1)}/{f_0(X_1)}$, we define
$$
\Phi(z) := \sup_{\theta \in \bR}\left\{
z\theta - \log\left(\Exp_0
[Y^{\theta}]
\right)
\right\}, \;
I_0 := \Exp_0[-\log(Y)], \;
I_1 := \Exp_1[\log(Y)],
$$
with the possibility that either $I_0$ or $I_1$ is equal to $\infty$. We assume that there exists  $h_d \in (-I_0, I_1)$ such that
\begin{equation} \label{hd}
\Phi(h_d) / d = \Phi(h_d) - h_d.
\end{equation}
In particular, if $d = 1$, we can set $h_d = 0$. 

\begin{lemma}(Generalized Chernoff's Lemma)\label{generalized_chernoff_lemma}
For any $d > 0$,
$$
\lim_{n \to \infty} \inf_{D_n \in \mathcal{S}_n}\frac{1}{n} 
\log\left(p_n^{1/d}(D_n) + q_n(D_n) \right)
= - \frac{\Phi(h_d)}{d}.
$$
\end{lemma}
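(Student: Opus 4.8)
The plan is to prove matching upper and lower bounds on the exponent: the optimal value is attained by a single Neyman--Pearson test, and is shown to be unbeatable by one well-chosen change of measure. Throughout I write $S_n := \sum_{i=1}^n \log(f_1(X_i)/f_0(X_i))$ for the cumulative log-likelihood ratio and $\Lambda(\theta) := \log \Exp_0[Y^\theta]$ for the cumulant generating function of $\log Y$ under $\Pro_0$, so that $\Phi = \Lambda^{*}$ is its Legendre transform, and I record the dual identity $\Lambda_1^{*}(t) = \Phi(t) - t$ for the rate function of $\log Y$ under $\Pro_1$ (which follows from $\Exp_1[Y^\theta] = \Exp_0[Y^{\theta+1}]$). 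Since $-I_0 = \Lambda'(0)$ and $I_1 = \Lambda'(1)$, the hypothesis $h_d \in (-I_0, I_1)$ yields a unique tilting parameter $\theta_d \in (0,1)$ with $\Lambda'(\theta_d) = h_d$ and $\Phi(h_d) = h_d \theta_d - \Lambda(\theta_d)$; all quantities are finite because $\Lambda$ is finite on $[0,1]$, with $\Lambda(0) = \Lambda(1) = 0$. The defining relation \eqref{hd}, namely $\Phi(h_d)/d = \Phi(h_d) - h_d$, is the mechanism that balances the two error exponents.

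For the upper bound (achievability) I would use the threshold test $D_n^{*} := \mathbf{1}\{S_n \ge n h_d\}$. The exponential Markov (Chernoff) inequality under each hypothesis gives $p_n(D_n^{*}) = \Pro_0(S_n \ge n h_d) \le e^{-n\Phi(h_d)}$ and $q_n(D_n^{*}) = \Pro_1(S_n < n h_d) \le e^{-n(\Phi(h_d) - h_d)}$, both valid since $h_d$ lies strictly between the means $-I_0$ and $I_1$ of $\log Y$. Raising the first bound to the power $1/d$ and invoking \eqref{hd} makes both terms equal to $e^{-n\Phi(h_d)/d}$, so $p_n^{1/d} + q_n \le 2\, e^{-n\Phi(h_d)/d}$, and hence $\limsup_n \inf_{D_n \in \mathcal{S}_n} \tfrac1n \log(p_n^{1/d} + q_n) \le -\Phi(h_d)/d$.

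The converse is the crux, and the hard part is to make the lower bound hold \emph{uniformly over all tests} while respecting the asymmetric exponent $1/d$. My approach is a single change of measure to the tilted law $\Qro$ under which each $X_i$ has density proportional to $f_0^{1-\theta_d} f_1^{\theta_d}$; by construction $\Exp_{\Qro}[\log Y] = \Lambda'(\theta_d) = h_d$, so by the law of large numbers the typical event $G_n := \{\,|S_n/n - h_d| \le \epsilon\,\}$ satisfies $\Qro(G_n) \to 1$. Writing $p_n$ and $q_n$ as $\Qro$-expectations of the likelihood ratios $e^{-\theta_d S_n + n\Lambda(\theta_d)}$ and $e^{(1-\theta_d)S_n + n\Lambda(\theta_d)}$, restricting each expectation to $G_n$, and bounding $S_n$ by $n(h_d \pm \epsilon)$, I obtain $p_n \ge e^{-n\Phi(h_d) - n|\theta_d|\epsilon}\, \Qro(\{D_n = 1\}\cap G_n)$ and $q_n \ge e^{-n(\Phi(h_d)-h_d) - n|1-\theta_d|\epsilon}\, \Qro(\{D_n = 0\}\cap G_n)$, where the exponents emerge precisely because $h_d\theta_d - \Lambda(\theta_d) = \Phi(h_d)$.

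To finish, observe $\Qro(\{D_n=1\}\cap G_n) + \Qro(\{D_n=0\}\cap G_n) = \Qro(G_n) \ge 2/3$ for large $n$, so at least one of these $\Qro$-probabilities exceeds $1/3$; in the first case $p_n^{1/d}$ alone, and in the second case $q_n$ alone, is bounded below by a constant times $e^{-n\Phi(h_d)/d - Cn\epsilon}$ for some constant $C$, where \eqref{hd} once more collapses both target exponents to $\Phi(h_d)/d$. Crucially this bound does not depend on $D_n$, so taking $\liminf_n$ and then $\epsilon \downarrow 0$ yields $\liminf_n \inf_{D_n \in \mathcal{S}_n}\tfrac1n \log(p_n^{1/d} + q_n) \ge -\Phi(h_d)/d$. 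Combined with the achievability bound, this shows the limit exists and equals $-\Phi(h_d)/d$, completing the proof.
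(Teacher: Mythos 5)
Your proof is correct, and achievability is handled essentially as in the paper (a single likelihood-ratio threshold test at $h_d$, with the relation $\Phi(h_d)/d=\Phi(h_d)-h_d$ equalizing the two exponents). Where you genuinely diverge is the converse. The paper first invokes the Neyman--Pearson lemma to reduce the infimum to threshold tests, then uses the monotonicity of $p_n(\delta_n(h))$ and $q_n(\delta_n(h))$ in $h$ to bound the infimum below by $\min\{p_n^{1/d}(\delta_n(h_d)),\, q_n(\delta_n(h_d))\}$, and finally cites the exact Cram\'er asymptotics \cite[Theorem 3.4.3]{dembo1998large} for that one test. You instead prove the lower bound uniformly over \emph{all} tests by a single exponential tilting to the measure $\Qro$ with density proportional to $f_0^{1-\theta_d}f_1^{\theta_d}$, under which $S_n/n\to h_d$, and then use the pigeonhole observation $\Qro(\{D_n=1\}\cap G_n)+\Qro(\{D_n=0\}\cap G_n)=\Qro(G_n)\ge 2/3$ to force one of $p_n^{1/d}$, $q_n$ to be at least a constant times $e^{-n\Phi(h_d)/d-Cn\epsilon}$. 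Your route dispenses with both the Neyman--Pearson reduction and the citation of the exact large-deviations lower bound, at the cost of essentially re-deriving the lower-bound half of Cram\'er's theorem by hand; the paper's route is shorter but leans on the quoted result. The only points worth making fully explicit in a write-up are that $\theta_d\in(0,1)$ (which follows from $h_d\in(-I_0,I_1)=(\Lambda'(0^+),\Lambda'(1^-))$ and the convexity of $\Lambda$, and is exactly what makes both of your pointwise bounds on the likelihood ratios over $G_n$ go in the right direction) and that $\Exp_{\Qro}[\,|\log Y|\,]<\infty$ so the law of large numbers under $\Qro$ applies; both hold here because $\Lambda$ is finite on $[0,1]$.
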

\begin{remark}
When $d = 1$, since we can select $h_d = 0$, it reduces to Chernoff's Lemma \citep[Corollary 3.4.6]{dembo1998large}.
For $d \neq 1$, the proof is essentially the same, and we present it here for completeness. 
\end{remark}

\begin{proof}[Proof of Lemma \ref{generalized_chernoff_lemma}]
For fixed $n \in \bN$, due to the Neyman-\-Pearson Lemma, it suffices to consider the tests of the following form:
$$
\delta_n(h) := 1 \;\; \Leftrightarrow  \;\;\frac{1}{n}\lambda(n) \geq h, \quad \text{where} \quad \lambda(n) := \sum_{i=1}^{n} \log\frac{f_1(X_i)}{f_0(X_i)}.
$$
Then, we have
$$
\inf_{D_n \in \mathcal{S}_n}
\log\left(p_n^{1/d}(D_n) + q_n(D_n) \right)
=
\inf_{h \in \bR} 
\log\left(p_n^{1/d}(\delta_n(h)) + q_n(\delta_n(h)) \right)
$$
Since $p_n(\delta_n(h))$ is decreasing in $h$ and $q_n(\delta_n(h))$  increasing in $h$, 
for any $h \in \bR$,
$\text{ either } p_{n}(\delta_n(h)) \geq p_{n}(\delta_n(h_d))
\text{ or } q_{n}(\delta_n(h)) \geq q_{n}(\delta_n(h_d))$.
Thus 
\begin{align*}
\inf_{D_n \in \mathcal{S}_n}
\log\left(p_n^{1/d}(D_n) + q_n(D_n) \right)
\geq 
\log \min \left\{p_n^{1/d}(\delta_n(h_d)),q_n(\delta_n(h_d)) \right\}.
\end{align*}
By   \citep[Theorem 3.4.3]{dembo1998large}, as $n \to \infty$,
\begin{align*}
\frac{1}{n}\log(p_n^{1/d}(\delta_n(h_d)) \to -\frac{\Phi(h_d)}{d},\quad
\frac{1}{n}\log(q_n(\delta_n(h_d))) \to -(\Phi(h_d) - h_d).
\end{align*}
Thus, by the definition of $h_d$ in \eqref{hd} and letting $n \to \infty$ we obtain
$$
\liminf_{n \to \infty} \inf_{D_n \in \mathcal{S}_n}\frac{1}{n} \log(
p_n^{1/d}(D_n) +  q_n(D_n))
\geq - \frac{\Phi(h_d)}{d}.
$$
Clearly, the lower bound is attained by the Neyman-Pearson rule with threshold $h_d$,  $\delta_n(h_d)$, which completes the proof.
\end{proof}

\section*{Acknowledgements}
This work was supported by the National Science Foundation under Grants  CCF 1514245, DMS 1737962 and in part by the Simons Foundation under Grant C3663.

\bibliographystyle{imsart-number}
\bibliography{GE}

\end{document}